\newcommand{\R}{{\mathbb R}}
\newcommand{\C}{\mathbb{C}}
\newcommand{\Z}{\mathbb{Z}}
\newcommand{\I}{\mathbb{I}}
\newcommand{\T}{\mathbb{T}}
\newcommand{\grad}{\nabla}
\newcommand{\la}{\lambda}
\newcommand{\al}{\alpha}
\newcommand{\ep}{\varepsilon}
\newcommand{\pa}{\partial}
\newcommand{\va}{\varphi}
\newcommand{\ut}{\tilde{u}}
\newcommand{\tX}{\tilde{X}}
\newcommand{\Om}{\Omega}
\newcommand{\tv}{\tilde{v}}
\newcommand{\p}[1]{\left(#1\right)}
\newcommand{\gradj}{\grad_{J}}
\newcommand{\htn}[2]{\left|\left|#1\right|\right|_{H_{(0)}^{ht,#2}}}
\newcommand{\hhn}[3]{\left|\left|#1\right|\right|_{H^{#2}_{(0)}H^{#3}}}
\newcommand{\li}[2]{\left|\left|#1\right|\right|_{L^\infty H^{#2}}}
\newcommand{\lit}[2]{\left|\left|#1\right|\right|_{L^\infty_{1/4}H^{#2}}}
\newcommand{\htb}[2]{\left|#1\right|_{H_{(0)}^{ht,#2}}}
\newcommand{\hhb}[3]{\left|#1\right|_{H_{(0)}^{#2}H^{#3}}}
\newcommand{\fn}[1]{\left|\left|#1\right|\right|_{F^{s+1}}}
\newcommand{\lhn}[2]{\left|\left|#1\right|\right|_{L^2H^{#2}}}
\newcommand{\hln}[2]{\left|\left|#1\right|\right|_{H_{(0)}^{#2}L^2}}
\newcommand{\kt}[1]{\left|\left|#1\right|\right|_{\overline{H}_{(0)}^{ht,s}}}
\newcommand{\lhb}[2]{\left|#1\right|_{L^2H^{#2}}}
\newcommand{\hlb}[2]{\left|#1\right|_{H_{(0)}^{#2}L^2}}
\newcommand{\lib}[2]{\left|#1\right|_{L^\infty H^{#2}}}
\newcommand{\xn}{X^{(n)}}
\newcommand{\xm}{X^{(n-1)}}
\newcommand{\vn}{v^{(n)}}
\newcommand{\vm}{v^{(n-1)}}
\newcommand{\zn}{\zeta^{(n)}}
\newcommand{\zm}{\zeta^{(n-1)}}
\newcommand{\wn}{w^{(n)}}
\newcommand{\wm}{w^{(n-1)}}
\newcommand{\qn}{q^{(n)}}
\newcommand{\qm}{q^{(n-1)}}
\newtheorem{thm}{Theorem}[section]
\newtheorem{cor}[thm]{Corollary}
\newtheorem{defi}[thm]{Definition}
\newtheorem{rem}[thm]{Remark}
\newtheorem{proposition}[thm]{Proposition}
\newtheorem{lemma}[thm]{Lemma}
\newenvironment{proof}{\begin{trivlist} \item[] {\em Proof:}}{\hfill $\Box$
                      \end{trivlist}}
\newenvironment{prooflemma}[1]{\begin{trivlist} \item[] {\em Proof of Lemma \ref{#1}:}}{\hfill $\Box$
                       \end{trivlist}}
\newenvironment{proofthm}[1]{\begin{trivlist} \item[] {\em Proof of Theorem \ref{#1}:}}{\hfill $\Box$
                       \end{trivlist}}
\title{Splash singularities for the free boundary Navier-Stokes equations}
\date{\today}
\author{ Angel Castro, Diego C\'ordoba, Charles Fefferman, \\ Francisco Gancedo and Javier G\'omez-Serrano}
\begin{document}

\maketitle

\begin{abstract}

In this paper, we prove the existence of smooth initial data for the 2D free boundary incompressible Navier-Stokes equations, for which the smoothness of the interface breaks down in finite time into a splash singularity.

\vskip 0.3cm
\textit{Keywords: singularities, splash, Navier-Stokes, free boundary, incompressible}

\end{abstract}

\section{Introduction}

In this paper, we prove that an initially smooth solution of the 2D water wave equation with non-zero viscosity may break down in finite time by forming a splash singularity, see Figure \ref{figcharlie1}.

The analogous result for inviscid water waves was proven in our previous paper \cite{Castro-Cordoba-Fefferman-Gancedo-GomezSerrano:finite-time-singularities-free-boundary-euler}. The strategy there was to start with a splash configuration and solve backwards in time. To do so, we first made a conformal map (essentially a branch of the square root) $P(z)$ from physical space to the ``tilde domain'' and then adapted the proof of Ambrose-Masmoudi \cite{Ambrose-Masmoudi:zero-surface-tension-2d-waterwaves} (see also \cite{Cordoba-Cordoba-Gancedo:interface-water-waves-2d}) of short time existence of solutions of the inviscid water wave equation.

The strategy of \cite{Castro-Cordoba-Fefferman-Gancedo-GomezSerrano:finite-time-singularities-free-boundary-euler} cannot work for the present case of nonzero viscosity, since the equations cannot be solved backwards in time. We will instead make use of the transformation to the tilde domain in a new way, which we explain below.

We refer the reader to the further historical discussion at the end of the introduction, including references to alternate proofs by Coutand-Shkoller of several of our results.

\begin{figure}
\centering
\includegraphics[scale=0.7]{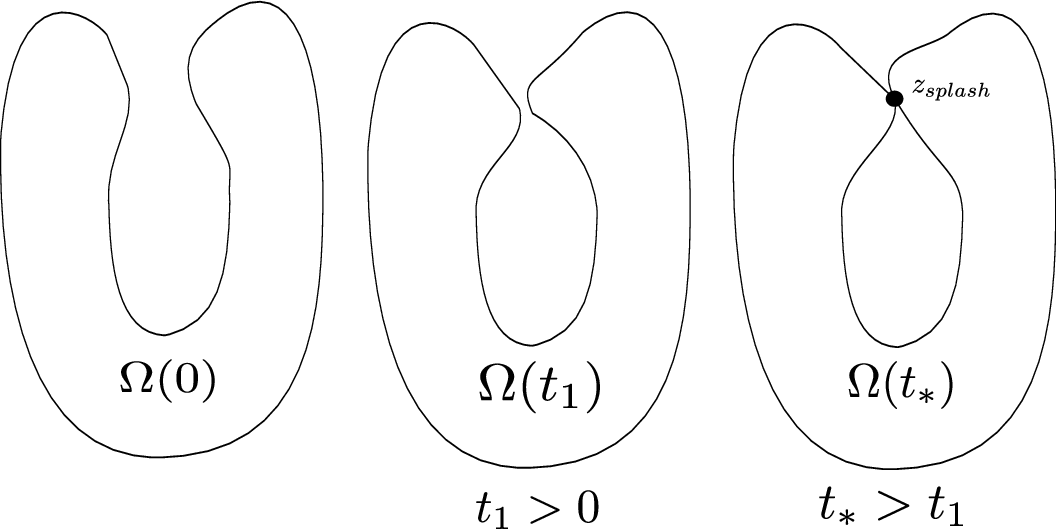}
\caption{How a splash forms.}
\label{figcharlie1}
\end{figure}

Let us first present the inviscid and viscous water wave equations, next define a splash singularity, and then state our main result.

The 2D water wave equations govern a system consisting of water, vacuum and the interface between them. At time $t \in \R$, the water occupies a region $\Omega(t) \subset \R^{2}$, and the vacuum occupies the complementary region $\R^{2} \setminus \Omega(t)$. For points $x$ in the water region $\Omega(t)$, the velocity of the water at position $x$ and time $t$ is $u(x,t) \in \R^{2}$, and the pressure is $p(x,t) \in \R$. Thus, $u(x,t), p(x,t)$ are defined only for $x \in \Omega(t)$; finding $\Omega(t)$ is part of the problem.

We assume here that the interface $\pa \Omega(t) \subset \R$ is a smooth simple closed curve, which we write in parametric form:

\begin{align*}
\pa \Omega(t) = \{z(\al,t): \al \in \R/\Z\},
\end{align*}

where $z: \R/\Z \to \R^{2}$ is smooth and satisfies the chord-arc condition

\begin{align*}
|z(\al,t) - z(\al',t)| \geq CA(t)\|\al-\al'\| \text{ for } \al,\al' \in \R/\Z.
\end{align*}

Here, $CA(t) > 0$ is the ``chord-arc constant'', and $\|\al-\al'\|$ denotes the distance from $\al$ to $\al'$ in $\R/\Z$.

The parametrization of the interface has no physical meaning, and can be picked to simplify our analysis.

The inviscid water wave equations are as follows:

\begin{align}
(\pa_{t} + u \cdot \nabla_{x}) u(x,t) & = -\nabla_{x} p(x,t) \text{ for } x \in \Omega(t) \nonumber \\
\text{div} u(x,t) & = 0 \text{ for } x \in \Omega(t) \nonumber \\
\text{curl} u(x,t) & = 0 \text{ for } x \in \Omega(t) \nonumber \\
p(x,t) & = 0 \text{ for } x \in \pa \Omega(t) \nonumber \\
\pa_{t} z(\al,t) & = u(z(\al,t),t) + c(\al,t) \pa_{\al}z(\al,t) \text{ for } \al \in \R/\Z. \label{inviscidww}
\end{align}

(The last equation asserts that the interface moves with the fluid. The function $c(\al,t)$ affects only the parametrization of the interface, and may be chosen arbitrarily).

The initial conditions for inviscid water waves are as follows:

\begin{itemize}
\item $\Omega(0) = \Omega_{0}$ (a given domain bounded by a smooth simple closed curve).
\item $u(x,0) = u_0$ (a given smooth divergence-free irrotational vector field) for $x \in \Omega_{0}$.
\end{itemize}

For water waves with nonzero viscosity, the relevant equations take the following form in suitable units:

\begin{align}
(\pa_{t} + u \cdot \nabla_{x}) u(x,t) & = \Delta_{x} u(x,t) -\nabla_{x} p(x,t) \text{ for } x \in \Omega(t) \nonumber \\
\text{div} u(x,t) & = 0 \text{ for } x \in \Omega(t) \nonumber \\
\left(p\, \mathbb{I}-\left(\nabla u + \left(\nabla u\right)^* \right)\right)n & = 0, \text{ for } x \in \pa \Omega(t) \nonumber \\
\pa_{t} z(\al,t) & = u(z(\al,t),t) + c(\al,t) \pa_{\al}z(\al,t) \text{ for } \al \in \R/\Z. \label{viscousww}
\end{align}

Again, $c(\al,t)$ may be chosen arbitrarily.

The initial conditions are:

\begin{itemize}
\item $\Omega(0) = \Omega_{0}$ (as before).
\item $u(x,0) = u_0$ for $x \in \Omega_{0}$, where $u_0$ is a given smooth divergence-free vector field on $\Omega_{0}$, satisfying the constraint
\item $n_0^{\bot}\left(\left(\nabla u_{0} + \left(\nabla u_{0}\right)^* \right)\right)n_0  = 0$ on $\pa \Omega_{0}$.
\end{itemize}

Next, we adapt from \cite{Castro-Cordoba-Fefferman-Gancedo-GomezSerrano:finite-time-singularities-free-boundary-euler} the definition of a splash singularity for the compact case:

Note that the inviscid water wave equations \eqref{inviscidww} have a symmetry under time reversal, but the viscous equations \eqref{viscousww} have no such symmetry. This reflects the presence of the Euler equation in \eqref{inviscidww} and the Navier-Stokes equation in \eqref{viscousww}.

\begin{defi}
\label{defsplash}
We say that $z(\al) = (z_1(\al),z_2(\al))$ is a \emph{splash curve} if
\begin{enumerate}
\item $z_{1}(\al), z_2(\al)$ are smooth functions and $2\pi$-periodic.
\item $z(\al)$ satisfies the arc-chord condition at every point except at $\alpha_1$ and $\alpha_2$, with $\alpha_1 < \alpha_2$ where $z(\al_1) = z(\al_2)$ and $|z_{\al}(\al_1)|, |z_{\al}(\al_2)| > 0$. This means $z(\al_1) = z(\al_2)$, but if we remove either a neighborhood of $\al_1$ or a neighborhood of $\al_2$ in parameter space, then the arc-chord condition holds.
\item The curve $z(\alpha)$ separates the complex plane into two regions; a  connected water region and a vacuum region (not necessarily connected). We choose the parametrization such that the normal vector $n=\frac{(-\pa_\alpha z_2(\alpha), \pa_\alpha z_1(\alpha))}{|\pa_\alpha z(\alpha)|}$ points to the vacuum region. We regard the interface to be part of the water region.

\item We can choose a point $c$ outside the water region and a single-valued branch of the function $P(z) = \sqrt{z-c}$ on the water region with the following properties:

The image of the water region under $P$ is bounded by a curve $\tilde{z}(\al) = (\tilde{z}_1(\al),\tilde{z}_2(\al)) = P(z(\al))$ satisfying:
\begin{enumerate}
\item $\tilde{z}_1(\al)$ and $\tilde{z}_2(\al)$ are smooth and $2\pi$-periodic.
\item $\tilde{z}$ is a closed contour.
\item $\tilde{z}$ satisfies the arc-chord condition.
\end{enumerate}
% We will choose the branch of the root that produces that
% $$ \lim_{y \to -\infty}P(x+iy) = -e^{-i \pi/4}$$
% independently of $x$.
% \item $P(w)$ is analytic at $w$ and $\frac{dP}{dw}(w) \neq 0$ if $w$ belongs to the interior of the water region. Furthermore, $(\pm \pi, 0)$ and $(0,0)$ belong to the vacuum region.
% \item $\tilde{z}(\al) \neq q^l$ for $l=0,...,4$, where
% \begin{equation}\label{points}
% q^0=\left(0,0\right),\quad
% q^1=\left(\frac{1}{\sqrt{2}},\frac{1}{\sqrt{2}}\right),\quad
% q^2=\left(\frac{-1}{\sqrt{2}},\frac{1}{\sqrt{2}}\right),\quad
% q^3=\left(\frac{-1}{\sqrt{2}}, \frac{-1}{\sqrt{2}}\right),\quad
% q^4=\left(\frac{1}{\sqrt{2}}, \frac{-1}{\sqrt{2}}\right).
% \end{equation}
\end{enumerate}
\end{defi}

See Figure \ref{figcharlie2} and Figure \ref{figcharlie5} for examples of splash and non-splash curves. Although we have taken the slit $\Gamma$ in Figure \ref{figcharlie2} to be a half-line, we could just as well have taken any smooth arc joining the origin to infinity, passing through the splash point but otherwise avoiding the water region.

The referee points out another type of splash scenario indicated in Figure \ref{boceto}. Our proof can be easily adapted to this scenario by replacing $P(z) = \sqrt{z-c}$ by a branch of $\sqrt{\frac{z-a}{z-b}}$ with suitable $a$ and $b$.

\begin{figure}[h!]\centering
\includegraphics[scale=0.5]{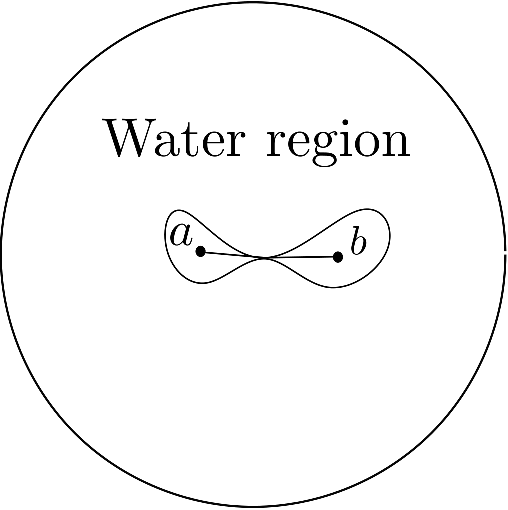}
\caption{An additional splash scenario.}
\label{boceto}
\end{figure}

%Here are two examples of curves which are not splash curves (see Figure \ref{figcharlie5}).

\begin{figure}[ht]
\centering
\subfigure
{
\includegraphics[width=0.45\textwidth, height=0.3\textwidth]{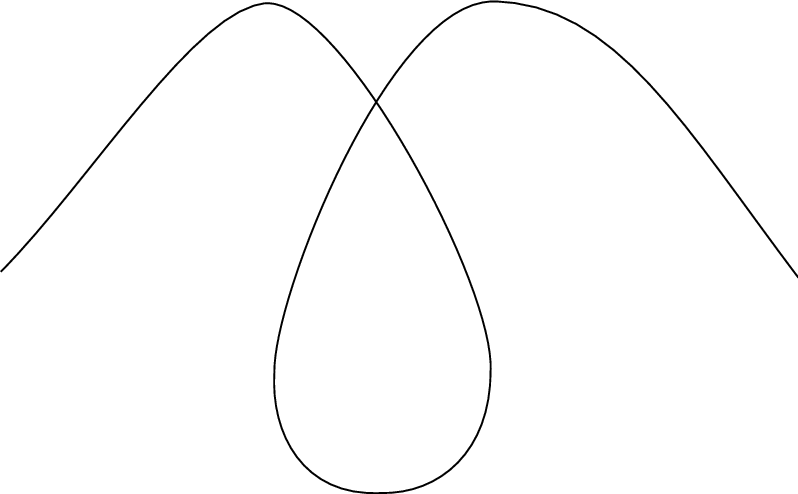}
\label{figcharlie5a}
}
\subfigure
{
\includegraphics[width=0.45\textwidth, height=0.3\textwidth]{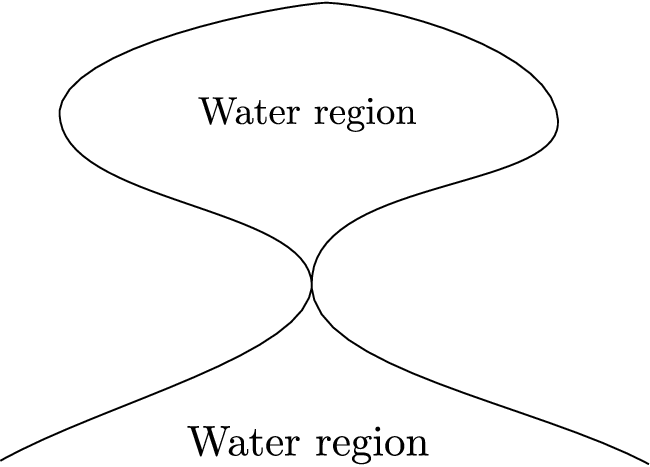}
\label{figcharlie5b}
}
\caption{Two examples of non splash curves.}
\label{figcharlie5}
\end{figure}

%No me descomentes
%\begin{figure}[h!]\centering
%\includegraphics[width=0.45\textwidth, height=0.3\textwidth]{gotadeaguaPolished.jpg}
%\includegraphics[width=0.45\textwidth, height=0.3\textwidth]{charliePolished.jpg}
%\caption{Two examples of non splash curves.}
%\label{nosplash}
%\end{figure}

We can now state the main result of this paper:

\begin{thm}
There exists a solution of the viscous water wave equation that remains smooth for times $t \in [0,t_{*})$ but forms a splash singularity at time $t_{*}$.
\end{thm}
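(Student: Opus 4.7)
The plan is to exploit the conformal map $P$ (a branch of the square root) used in the inviscid paper \cite{Castro-Cordoba-Fefferman-Gancedo-GomezSerrano:finite-time-singularities-free-boundary-euler}, but now to solve forward in time in the tilde domain, since the parabolic character of \eqref{viscousww} destroys the time-reversibility exploited before. Setting $Q=P^{-1}$ and $\tilde u = u\circ Q$, $\tilde p = p\circ Q$, $\tilde z = P\circ z$, one derives a transformed free boundary viscous system on $\tilde\Omega(t) = P(\Omega(t))$. Because $P$ is holomorphic, the Laplacian pulls back to $|Q'(\tilde x)|^{2}\Delta_{\tilde x}$ plus lower-order terms coming from the non-scalar transformation law of the vector $\tilde u$. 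The transformed momentum equation is therefore still strongly parabolic with smooth, strictly positive diffusion coefficient, as long as $\tilde z$ stays away from the branch point of $P$. The stress boundary condition and the kinematic equation transform analogously, giving a well-defined free boundary parabolic system in the tilde domain.

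Next I would prove short-time existence of smooth solutions to this transformed system, for initial data consisting of a smooth simple closed curve $\tilde z^{0}$ satisfying the arc-chord condition and staying away from the branch locus, together with a divergence-free $\tilde u^{0}$ on $\tilde \Omega_{0} = P(\Omega_{0})$ satisfying the transformed compatibility condition. This can be built along the lines of Beale/Tani-type energy methods for free boundary Navier-Stokes, adapted to the variable coefficient $|Q'|^{2}$ and to the parametric formulation used in \cite{Castro-Cordoba-Fefferman-Gancedo-GomezSerrano:finite-time-singularities-free-boundary-euler}. Crucially, the guaranteed lifespan $T$ and the Sobolev estimates depend only on $\tilde z^{0}, \tilde u^{0}$ measured in the tilde domain, and not on the physical-space chord-arc of $z^{0} = Q\circ\tilde z^{0}$; this is the whole point of working downstairs.

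Then I would construct initial data that forces a splash. Fix a splash curve $z^{*}$ as in Definition \ref{defsplash}, and set $\tilde z^{*} = P(z^{*})$, which is smooth and satisfies the arc-chord condition by item $4$ of the definition. Perturb $\tilde z^{*}$ slightly into a smooth simple closed curve $\tilde z^{0}$ corresponding via $Q$ to a physical curve $z^{0}$ whose two approaching lobes near the would-be splash points $\al_{1}, \al_{2}$ are separated by a small gap $\ep$. Choose $\tilde u^{0}$ so that the physical velocity $u^{0} = \tilde u^{0}\circ P$ drives the two lobes towards each other with enough momentum that they collide at some time $t_{*}<T$. At the collision, $\tilde z(\al_{1},t_{*})\neq \tilde z(\al_{2},t_{*})$ but $Q(\tilde z(\al_{1},t_{*})) = Q(\tilde z(\al_{2},t_{*}))$, which is precisely a splash of the physical curve $z(\cdot,t_{*})$, while the tilde-domain interface remains smooth and self-avoiding. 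Compatibility of $u^{0}$ on $\pa\Omega_{0}$ can be arranged by adding a small correction supported near the boundary.

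The main obstacle is the quantitative step of ensuring collision before loss of tilde-domain regularity: viscosity damps the prescribed motion, so the initial velocity must be large enough to produce a physical splash within the time $T$ provided by the local existence theorem, but not so large as to blow up the a priori estimates in the tilde domain. I would handle this by a shooting/continuity argument in a one-parameter family of initial data $(\tilde z^{0},\la \tilde u^{0})_{\la\ge 0}$: for $\la$ small, no collision occurs on $[0,T]$; for $\la$ large, the two pieces of $z(\cdot,t)$ meet while the tilde-domain solution remains smooth by the uniform estimates of Step 2. By continuous dependence of $Q(\tilde z(\al,t))$ on $\la$ and the intermediate value theorem applied to the minimum distance between the two approaching arcs, there exists a value of $\la$ at which the first collision is a clean splash at some $t_{*}\in(0,T)$. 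Pulling back via $Q$ yields the desired solution of \eqref{viscousww} that is smooth on $[0,t_{*})$ and forms a splash singularity at $t_{*}$.
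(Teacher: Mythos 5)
Your general strategy --- pass to the tilde domain via $P$, prove local well-posedness of the transformed free-boundary Navier--Stokes system there, and then engineer a splash in physical space --- is the same as the paper's. The difference is in how the splash is produced, and here your argument has a real gap which you flag yourself but do not resolve. You propose a shooting argument in the velocity amplitude $\la$: for $\la$ small, no collision on $[0,T]$; for $\la$ large, collision. But the lifespan $T=T(\la)$ guaranteed by the local existence theorem depends on the size of the initial data, hence shrinks as $\la\to\infty$, so the claim that ``the tilde-domain solution remains smooth by the uniform estimates of Step~2'' cannot be uniform in $\la$. It is therefore not clear that any value of $\la$ simultaneously gives a long enough lifespan \emph{and} drives the two arcs together before that lifespan runs out. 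Moreover, the intermediate value theorem in $\la$ is not the right tool: for a fixed solution, the first time the physical arc-chord constant degenerates is already the splash time, so what you actually need is not a threshold in $\la$ but a parameter whose limiting value trivially produces the desired endpoint behaviour.

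The paper resolves this by a different ordering of ideas that keeps the time interval fixed. It starts the tilde-domain evolution \emph{from the splash configuration itself}, i.e.\ with $P^{-1}(\pa\tilde{\Omega}(0))$ as in Figure~\ref{figcharlie4b}, which is a perfectly smooth domain upstairs, so no amplitude shooting is needed. By pointing $\tilde u(\cdot,0)$ at the two would-be splash points it shows that $P^{-1}(\pa\tilde{\Omega}(T))$ overshoots into the self-intersecting configuration of Figure~\ref{figcharlie4c} at a \emph{fixed} short time $T$. It then perturbs the initial domain by a small translation of size $\ep$, so that $P^{-1}(\pa\tilde{\Omega}_\ep(0))$ is a genuine simple curve as in Figure~\ref{figcharlie4a}, and invokes a structural stability theorem (Section~6 of the paper) to conclude that for $\ep$ small the perturbed solution at the same time $T$ is still in the configuration of Figure~\ref{figcharlie4c}. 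The splash time $t_*$ is then obtained by continuity \emph{in $t$}, not by an intermediate value argument in a parameter. The two essential ingredients beyond local existence that your write-up is missing are precisely this stability estimate (nearby tilde initial data give nearby solutions on a uniform time interval, proved by comparing the fixed-point iterations) and the decision to shrink the geometric perturbation $\ep$ rather than grow the amplitude $\la$: the former keeps the local existence time uniform in the shooting parameter, whereas the latter does not.
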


Next, we recall from \cite{Castro-Cordoba-Fefferman-Gancedo-GomezSerrano:finite-time-singularities-free-boundary-euler} how to produce the inviscid water waves \footnote{We treat here the case in which the water region $\Omega(t) \subset \R^{2}$ is a bounded region. In \cite{Castro-Cordoba-Fefferman-Gancedo-GomezSerrano:finite-time-singularities-free-boundary-euler}, we studied the case in which $\Omega(t)$ is periodic with respect to horizontal translation as in Figure \ref{figcharliex}. In this introduction we ignore the distinction between the compact and periodic cases.} that end in a splash at time $t_{*} > 0$. We start with the splash $\Omega(t_{*}), u(\cdot, t_{*})$ and solve the inviscid equations \eqref{inviscidww} backwards in time. It is well known that the inviscid equations \eqref{inviscidww} can be solved (forward or backwards in time) starting from smooth initial data (See S. Wu \cite{Wu:well-posedness-water-waves-2d} and \cite{Lannes:water-waves-book} for a comprehensive list of references) The difficulty here is that the initial $\Omega(t_{*})$ is singular. To overcome this difficulty, we make a slit $\Gamma$ in the complex plane as in Figure \ref{figcharlie2}, and then make the conformal mapping $\tilde{z} = P(z)$ for $z \in \C \setminus \Gamma$; here $P(z)$ is a branch of $\sqrt{z}$. The inverse map is simply $P^{-1}(\tilde{z}) \equiv \tilde{z}^{2}$, which of course is well defined and smooth on the whole complex plane. We remark that we can apply this procedure to any splash curve (see Definition \ref{defsplash}) simply picking the conformal map $P(z) = \sqrt{z-c}$ with a suitable $c \in \mathbb{C}$ and choosing a branch of the square root that separates the two splash points.

\begin{figure}
\centering
\includegraphics[scale=0.7]{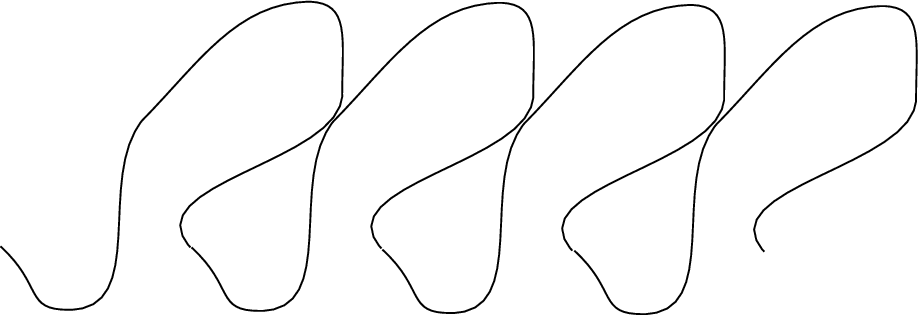}
\caption{Horizontally periodic setting.}
\label{figcharliex}
\end{figure}

\begin{figure}[ht]
\centering
\subfigure[The $z$-plane.]
{
\includegraphics[width=0.25\textwidth]{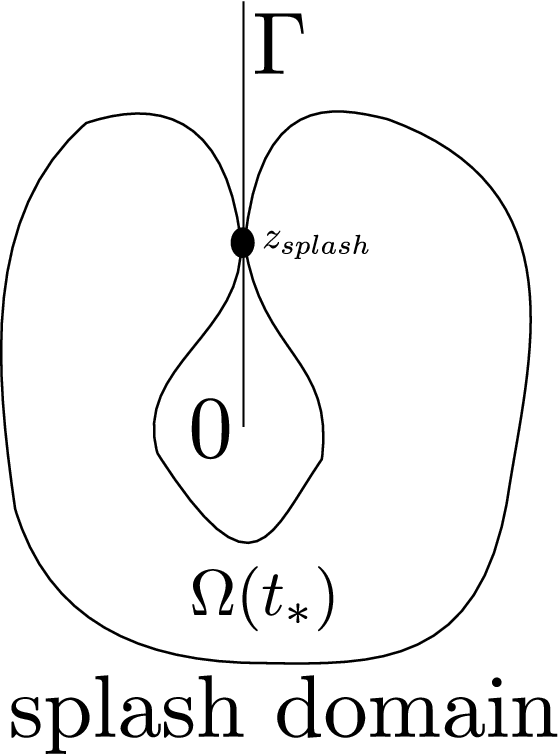}
\label{figcharlie2a}
}
\subfigure[The $\tilde{z}$-plane.]
{
\includegraphics[width=0.45\textwidth]{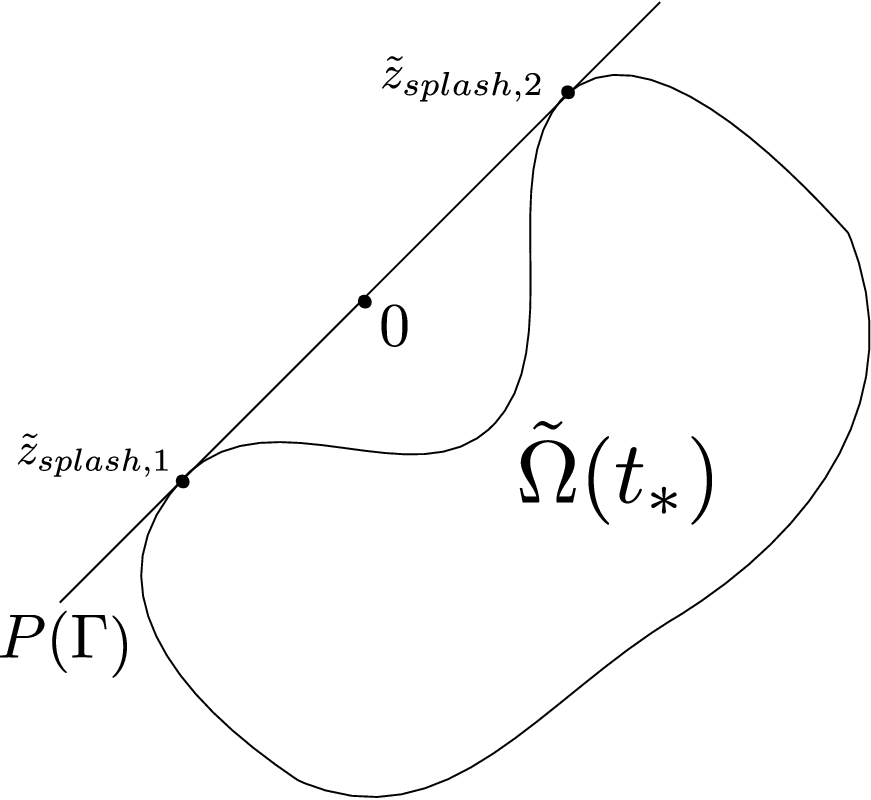}
\label{figcharlie2b}
}
\caption{Desingularization of the splash domain.}
\label{figcharlie2}
\end{figure}

\begin{figure}[ht]
\centering
\subfigure[ ]
{
\includegraphics[width=0.45\textwidth]{pictures/Figure2b_tilted.eps}
\label{figcharlie3a}
}
\subfigure[ ]
{
\includegraphics[width=0.45\textwidth]{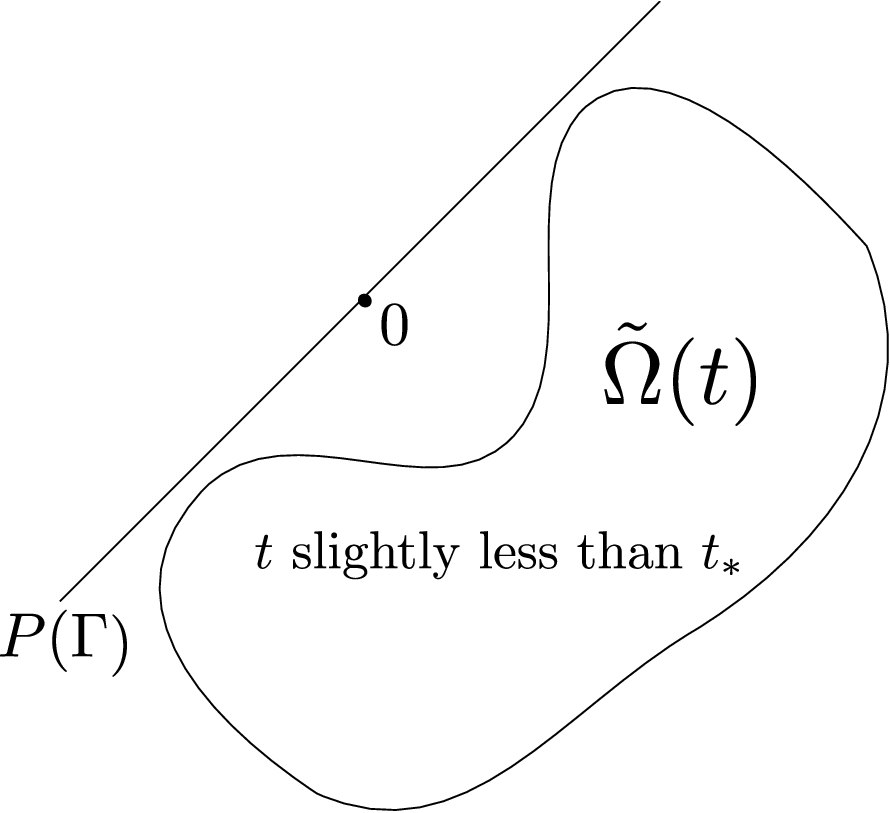}
\label{figcharlie3b}
}
\label{figcharlie3}
\caption{Evolution of a splash in the tilde world for $t = t_{*}$ and $t < t_{*}$.}
\end{figure}

\begin{figure}[ht!]
\centering
\subfigure[ ]
{
\includegraphics[width=0.65\textwidth]{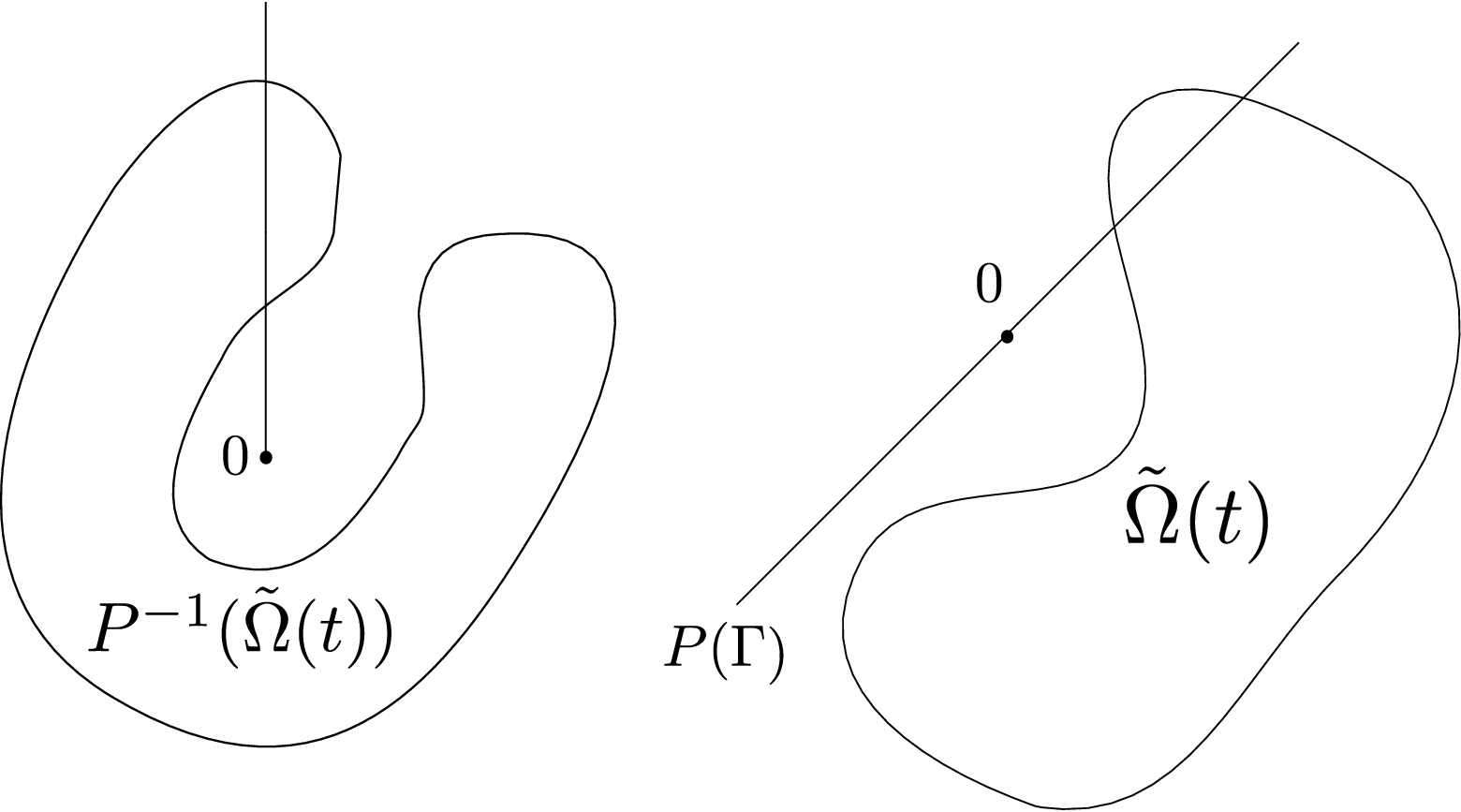}
\label{figcharlie4a}
}
\subfigure[ ]
{
\includegraphics[width=0.65\textwidth]{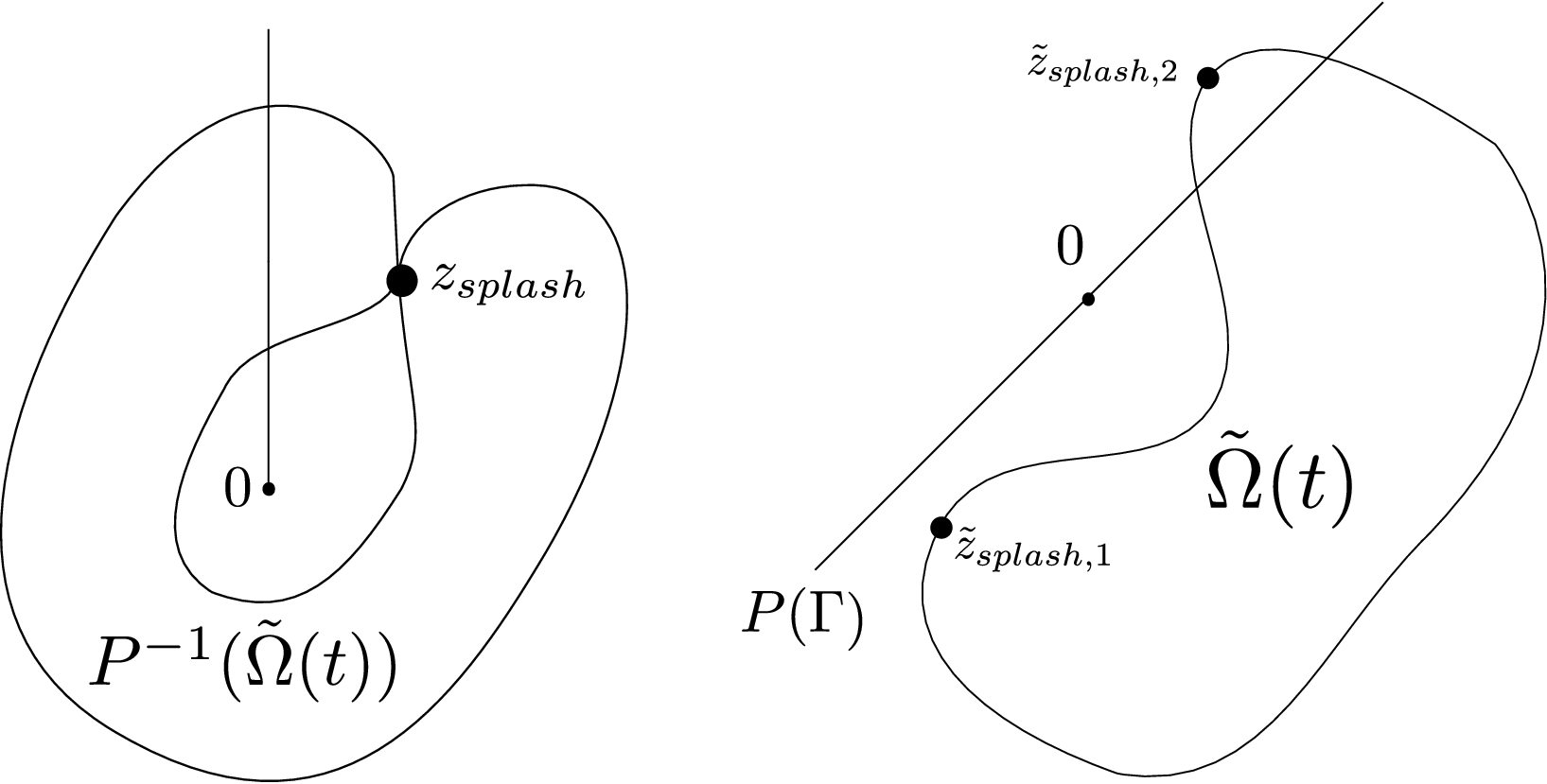}
\label{figcharlie4b}
}
\subfigure[ ]
{
\includegraphics[width=0.65\textwidth]{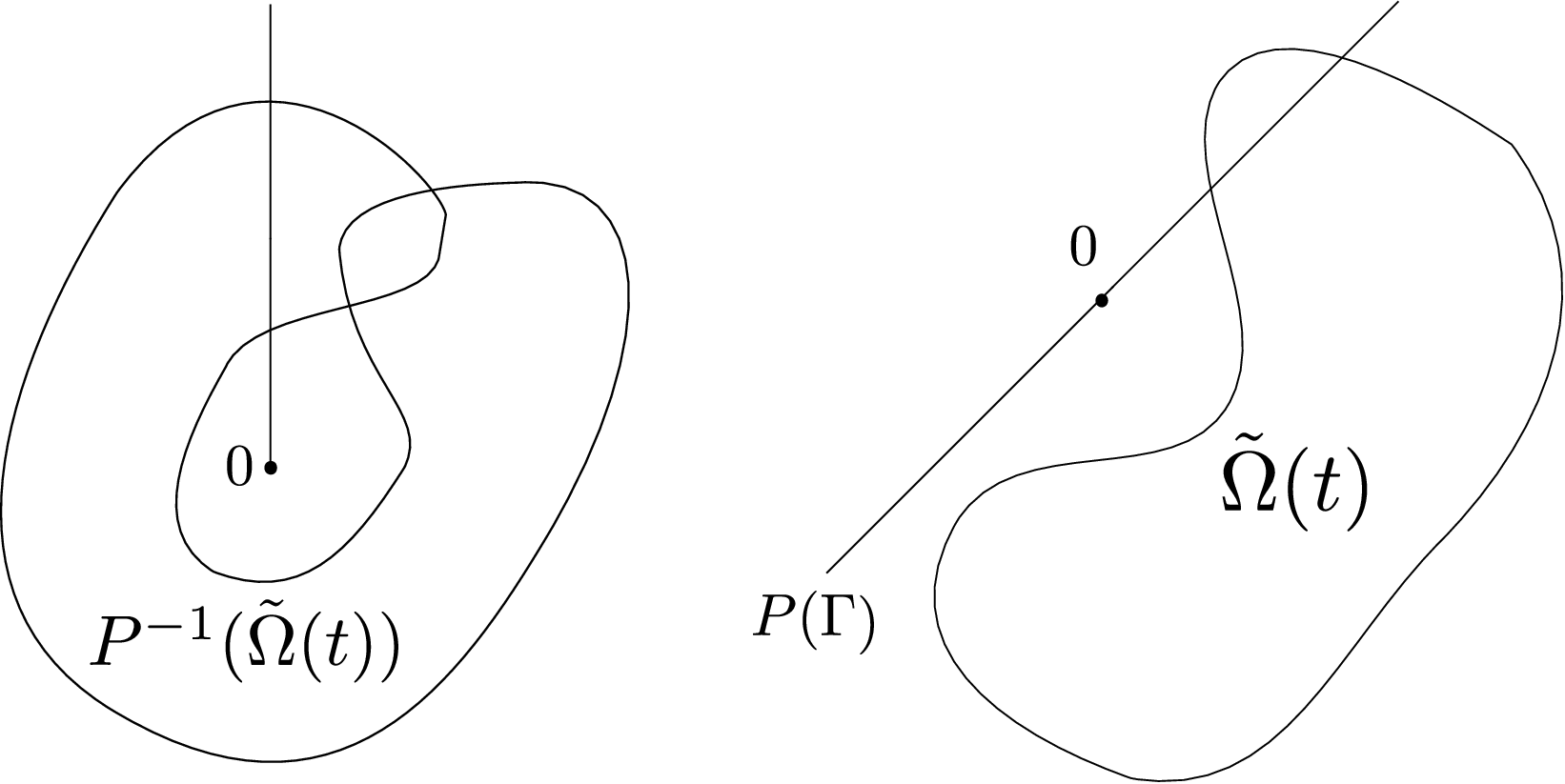}
\label{figcharlie4c}
}
\label{figcharlie4}
\caption{Possibilities for $P^{-1}(\tilde{\Omega}(t))$.}
\end{figure}

We want a solution of \eqref{inviscidww} for which $\Omega(t_{*})$ is as in Figure \ref{figcharlie2}, but for times $t < t_{*}$ ($t$ close to $t_{*}$), $\Omega(t)$ avoids the slit $\Gamma$. The corresponding domains $\tilde{\Omega}(t)$ in the tilde domain (i.e. $\tilde{\Omega}(t) = P(\Omega(t))$ ) behave as in Figure \ref{figcharlie3}.

In \cite{Castro-Cordoba-Fefferman-Gancedo-GomezSerrano:finite-time-singularities-free-boundary-euler} we give simple transformation laws that allow us to pass from the velocity $u(x,t)$ and pressure $p(x,t)$ (defined for $x\in \Omega(t)$) to a transformed velocity and pressure $\tilde{u}(\tilde{x},t), \tilde{p}(\tilde{x},t)$, defined for $\tilde{x} \in \tilde{\Omega}(t)$.

We can then write down the analogue of the inviscid water wave equations \eqref{inviscidww} in the tilde domain. We call these equations $\tilde{\eqref{inviscidww}}$; they govern the evolution of $\tilde{\Omega}(t), \tilde{u}(\tilde{x},t)$ backwards in time, with initial conditions at $t = t_{*}$.

Note that, whereas $\Omega(t_{*})$ is singular (its chord-arc constant is zero), $\tilde{\Omega}(t_{*})$ is bounded by a smooth simple closed curve. Moreover, the transformed water wave equation $\tilde{\eqref{inviscidww}}$ behave much like the original equations \eqref{inviscidww}. Adapting the energy estimates of Ambrose and Masmoudi \cite{Ambrose-Masmoudi:zero-surface-tension-2d-waterwaves}, we prove existence of a smooth solution of $\tilde{\eqref{inviscidww}}$ for times $t \in [t_{*}-\varepsilon, t_{*}+\varepsilon]$, with $\tilde{\Omega}(t_{*})$ as in Figure \ref{figcharlie2}. As long as $\tilde{\Omega}(t)$ avoids $P(\Gamma)$ for all $t \in [t_{*}-\varepsilon, t_{*})$, we obtain a corresponding solution of \eqref{inviscidww} with $\Omega(t) = P^{-1}(\tilde{\Omega}(t))$ a smooth simple closed curve for $t \in [t_{*}-\varepsilon, t_{*})$, We can guarantee that $\tilde{\Omega}(t)$ avoids $P(\Gamma)$ for $t \in [t_{*}-\varepsilon, t_{*})$ by taking $\varepsilon$ smaller and carefully picking the initial velocity $\tilde{u}$ at the two points $\tilde{z}_{splash,1}$ and $\tilde{z}_{splash,2}$ in Figure \ref{figcharlie3}. Thus, we have produced an inviscid water wave that starts out smooth at time $t_{*}-\varepsilon$ and ends in a splash at time $t_{*}$.

This concludes our discussion of the inviscid water wave equations \eqref{inviscidww}.

We pass to the viscous case, where we no longer have the luxury of solving backwards in time. Just as in the inviscid case, there is an analogue of the water wave equations \eqref{viscousww} in the tilde domain, which we call equations $\tilde{\eqref{viscousww}}$ (we won't write them out in the introduction, see section \ref{sectionequations} for a precise definition).

The unknowns in the tilde domain are a time-varying domain $\tilde{\Omega}(t)$, a velocity field $\tilde{u}(\tilde{x},t)$ and a pressure $\tilde{p}(\tilde{x},t)$ with $\tilde{u}, \tilde{p}$ defined for $\tilde{x} \in \tilde{\Omega}(t)$.

The relationship between equations \eqref{viscousww} and equations $\tilde{\eqref{viscousww}}$ is as follows:

Every solution $\Omega(t), u(\cdot,t), p(\cdot,t)$ of \eqref{viscousww} such that $\Omega(t)$ avoids the slit $\Gamma$ gives rise to a solution $\tilde{\Omega}(t), \tilde{u}(\cdot,t), \tilde{p}(\cdot,t)$ of $\tilde{\eqref{viscousww}}$, with $\tilde{\Omega}(t) = P(\Omega(t))$. On the other hand, let $\tilde{\Omega}(t), \tilde{u}(\cdot,t), \tilde{p}(\cdot,t)$ be a solution of $\tilde{\eqref{viscousww}}$. We would like to define a solution $\Omega(t), u(\cdot,t), p(\cdot,t)$ of \eqref{viscousww} such that $\Omega(t) = P^{-1}(\tilde{\Omega}(t))$. However, this may not be possible, because $P^{-1}(\pa \tilde{\Omega}(t))$ may self-intersect, as in Figures \ref{figcharlie4b} and \ref{figcharlie4c}. In particular, $P^{-1}(\pa \tilde{\Omega}(t))$ in Figure \ref{figcharlie4c} is clearly not the boundary of any physically meaningful water region.

The good news is that Figures  \ref{figcharlie4b} and \ref{figcharlie4c} are the only obstructions; as long as $P^{-1}(\pa\tilde{\Omega}(t))$ is as in Figure \ref{figcharlie4a}, we can easily pass from our solution $\tilde{\Omega}(t), \tilde{u}(\cdot,t), \tilde{p}(\cdot,t)$ of $\tilde{\eqref{viscousww}}$ to a solution $\Omega(t), u(\cdot,t), p(\cdot,t)$ of \eqref{viscousww} with $\Omega(t) = P^{-1}(\tilde{\Omega}(t))$.

Let us now solve equations $\tilde{\eqref{viscousww}}$ for times $t>0$, starting with smooth initial $\tilde{\Omega}(0)$ and $\tilde{u}(\cdot,0)$. Adapting the analysis of Beale \cite{Beale:initial-value-problem-navier-stokes} from \eqref{viscousww} to the tilde domain, we prove that smooth solutions $\tilde{\Omega}(t), \tilde{u}(\cdot,t), \tilde{p}(\cdot,t)$ of $\tilde{\eqref{viscousww}}$ with the given initial conditions exist for short time, i.e. for $t \in [0,T]$ with small positive $T$ depending on $\tilde{\Omega}(0), \tilde{u}(\cdot,0)$. Moreover, the solutions of $\tilde{\eqref{viscousww}}$ depend stably on the initial conditions.

For suitable one-parameter families of initial conditions $\tilde{\Omega}_{\ep}(0), \tilde{u}_{\ep}(\cdot,0)$ depending on a small parameter $\ep$, there is a family of smooth solutions $\tilde{\Omega}_{\ep}(t), \tilde{u}_{\ep}(\cdot,t), \tilde{p}_{\ep}(\cdot,t)$ solving $\tilde{\eqref{inviscidww}}$ up to time $T$, with $\|\pa \tilde{\Omega}_{\ep}(t) - \pa \tilde{\Omega}(t)\| = O(\ep)$ in a suitable norm $\|\cdot\|$.

We are ready to combine the ingredients above. We start with smooth initial conditions $\tilde{\Omega}(0), \tilde{u}(\cdot,0)$ with $P^{-1}(\pa \tilde{\Omega}(0))$ as in Figure \ref{figcharlie4b}. Solving $\tilde{\eqref{viscousww}}$ for a short time, we obtain smooth solutions $\tilde{\Omega}(t), \tilde{u}(\cdot,t), \tilde{p}(\cdot,t)$ for times $t \in [0,T]$ (some $T > 0$). By making $T$ smaller and picking the initial velocity $\tilde{u}(\cdot,0)$ so that $\tilde{u}(z_{splash,1},0)$ and $\tilde{u}(z_{splash,2},0)$ point in the right direction, we can guarantee that $P^{-1}(\pa \tilde{\Omega}(T))$ behaves as in Figure \ref{figcharlie4c}.

Next, we pick a one-parameter family of initial conditions $\tilde{\Omega}_{\ep}(0)$ and $\tilde{u}_{\ep}(\cdot,0)$ perturbing our $\tilde{\Omega}(0), \tilde{u}(\cdot,0)$. We can easily arrange that for small positive $\ep$, $P^{-1}(\pa \tilde{\Omega}_{\ep}(0))$ looks like Figure \ref{figcharlie4a}, even though $P^{-1}(\pa \tilde{\Omega}(0))$ is as in Figure \ref{figcharlie4b}. The perturbed solution $\tilde{\Omega}_{\ep}(t), \tilde{u}_{\ep}(\cdot,t), \tilde{p}_{\ep}(\cdot,t)$ will satisfy $\|\pa \tilde{\Omega}_{\ep}(T) - \pa \tilde{\Omega}(T)\| = O(\ep)$. Hence, for $\ep > 0$ small enough, $P^{-1}(\pa \tilde{\Omega}_{\ep}(T))$ will be as in Figure \ref{figcharlie4c}, since the same holds for $P^{-1}(\pa \tilde{\Omega}(T))$. For such $\ep$, $P^{-1}(\pa \tilde{\Omega}_{\ep}(t))$ starts out as in Figure \ref{figcharlie4a} for $t = 0$  and ends as in Figure \ref{figcharlie4c} for $t = T$. Fix such an $\ep$ and let

\begin{align*}
t_{*} = \inf\left\{t \in [0,T]: P^{-1}(\pa \tilde{\Omega}_{\ep}(t)) \text{ is as in Figure \ref{figcharlie4b} or \ref{figcharlie4c}}\right\}
\end{align*}

Then, $0 < t_{*} < T$, $P^{-1}(\pa \tilde{\Omega}_{\ep}(t_{*}))$ is as in Figure \ref{figcharlie4b}, and $P^{-1}(\pa \tilde{\Omega}_{\ep}(t))$ is as in Figure \ref{figcharlie4a} for $0 \leq t < t_{*}$. Consequently, $\tilde{\Omega}_{\ep}(t), \tilde{u}_{\ep}(\cdot,t), \tilde{p}_{\ep}(\cdot,t)$ gives rise to a solution of \eqref{viscousww}, the viscous water wave equation, for $t \in [0,t_{*})$, ending in a splash at time $t_{*}$.

The paper is organized as follows: in Section \ref{sectionequations} we derive the equations in the tilde domain, in Section \ref{sectionspaceslemmas} we setup the different spaces and we prove the auxiliary technical lemmas that we will use throughout the estimates. Section \ref{sectionlineal} is devoted to the study of the linear part of the Navier-Stokes equation, whereas Section \ref{fixedpoint} incorporates the effects of the nonlinear part. Section \ref{sectionstability} closes the argument by showing the structural stability of the equation. Finally, in Section \ref{sectioninitialvel} we show that we can pick an initial velocity in such a way that the splash is formed.

We discuss briefly the types of singular interfaces that our methods produce.

Given any splash curve $\Gamma$, our main result produces initially smooth solutions of the viscous water wave equations that end in splash curves $\tilde{\Gamma}$ arbitrarily close to $\Gamma$ in, say, $C^2$. Other scenarios are possible. For instance, we believe it will be easy to produce initially smooth solutions that end with an interface as in Figure \ref{figcharliexx}. Thus, at the moment of breakdown, the interface self-intersects at two points $A$ and $B$.

\begin{figure}
\centering
\includegraphics[scale=0.4]{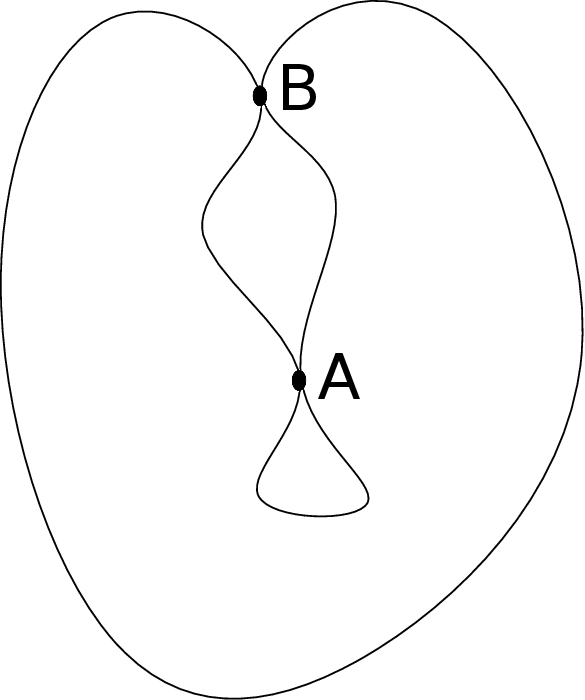}
\caption{A splash singularity forms at two points simultaneously.}
\label{figcharliexx}
\end{figure}

To do so, we introduce a suitable two-parameter family of initial conditions. Let these initial conditions be parametrized by $(\lambda,\mu)$ in a small neighborhood of the origin in $\mathbb{R}^2$. The initial condition corresponding to any $(\lambda,\mu)$ will be smooth and will depend smoothly on $\lambda,\mu$ when viewed in the tilde domain. We write $\Gamma(\lambda,\mu)$ to denote the time-zero interface for the initial conditions arising from $(\lambda,\mu)$. We can arrange that $\Gamma(0,0)$ is as in Figure \ref{figcharliexx}, and that for small positive $\lambda,\mu$, the curve $\Gamma(\lambda,\mu)$ is as in Figure \ref{figcharlieyy}, where the distance from $A_1$ to $A_2$ is comparable to $\lambda$, and the distance from $B_1$ to $B_2$ is comparable to $\mu$.

\begin{figure}
\centering
\includegraphics[scale=0.4]{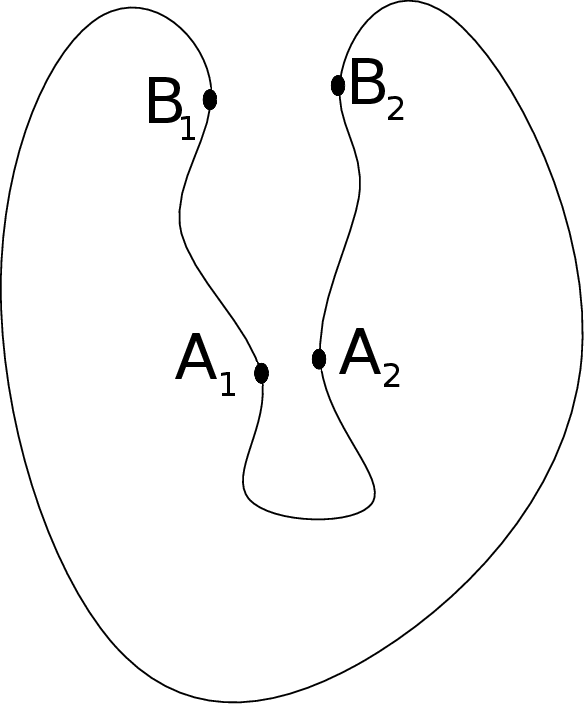}
\caption{Sketch of the situation before a splash singularity.}
\label{figcharlieyy}
\end{figure}

We arrange the initial velocities so that at time zero, points $A_1$ and $A_2$ are moving towards each other with velocity $\sim$ 1, and similarly for points $B_1$ and $B_2$.

The time evolution of a viscous water wave with initial conditions arising from $(\lambda,\mu)$ can be controlled by passing to the tilde domain and applying the analysis in the forthcoming sections below. We find that the interface begins to self-intersect near point $A$ at time $t_{A}(\lambda,\mu) \sim \lambda$. Similarly, the interface begins to self-intersect near point $B$ at time $t_B(\lambda,\mu) \sim \mu$. If $0 < \lambda \ll \mu \ll 1$, then $t_B(\lambda,\mu) < t_A(\lambda,\mu)$; but if $0 < \mu \ll \lambda \ll 1$, then $t_A(\lambda,\mu) < t_B(\lambda,\mu)$. Hence, for some small nonzero $(\lambda,\mu)$, we have that $t_A(\lambda,\mu) = t_B(\lambda,\mu)$, at which time the interface looks like Figure \ref{figcharliexx}. Thus, we can produce initially smooth viscous water waves that end with an interface that self-intersects at two distinct points.

We close the introduction by citing some of the relevant literature on viscous water waves.

Some of the earliest papers in this topic were written by Solonnikov. He studied the problem of a viscous fluid bounded by a free boundary in the vacuum (the fluid domain is bounded). He showed local existence of solutions with \cite{Solonnikov:solvability-motion-viscous-incompressible} using H\"older spaces in the frame of parabolic systems for bounded domains and without surface tension \cite{Solonnikov:an-initial-bvp-stokes-free-boundary,Solonnikov:solvability-problem-evolution-bounded-free-surface}.  Local existence and uniqueness of solutions was shown by Beale in \cite{Beale:initial-value-problem-navier-stokes} in Sobolev spaces with non-slip boundary condition at a regular bottom for the fluid and extended to $L^q$ spaces by Abels \cite{Abels:initial-value-problem-navier-stokes-free-surface-lq}. A second theorem in the former paper showed that for any $T>0$ there exist solutions of sufficiently small initial data on $[0,T]$. For the case with surface tension, see also Tani \cite{Tani:small-time-existence-3d-navier-stokes} and Coutand-Shkoller \cite{Coutand-Shkoller:unique-solvability-free-boundary-navier-stokes-surface-tension}. Masmoudi and Rousset \cite{Masmoudi-Rousset:uniform-regularity-navier-stokes} studied the case where the viscosity tends to zero for the free boundary problem (see \cite{Elgindi-Lee:uniform-regularity-free-boundary-navier-stokes-surface-tension} for the case with surface tension). The case of a viscous fluid lying above a bottom has been extensively studied. For the case without surface tension Sylvester, in \cite{Sylvester:large-time-small-viscous-waves-no-surface-tension}, showed global well-posedness for small initial data. Hataya, in \cite{Hataya:decaying-solution-navier-stokes-no-surface-tension}, showed the existence of solutions which decay algebraically in time for a periodic in the horizontal variable surface. Guo and Tice, in \cite{Guo-Tice:local-well-posedness-surface-wave-no-surface-tension,Guo-Tice:decay-viscous-surface-waves-no-surface-tension,Guo-Tice:almost-exponential-decay-surface-waves-no-surface-tension}, have proved algebraic decay rate in time for asymptotically flat surfaces and almost exponential decay rate in time for periodic in the horizontal variable surfaces.

%Local existence for large initial data and surface tension was given in \cite{Tani:small-time-existence-3d-navier-stokes}.
 Global in time regularity was first given for small initial data in \cite{Beale:large-time-regularity-viscous-waves} by adding to the system surface tension effects (see also \cite{Bae:solvability-free-boundary-navier-stokes} for an alternative proof).
In \cite{Sylvester:large-time-small-viscous-waves-no-surface-tension, Tani-Tanaka:large-time-surface-waves-viscous-fluids} global existence is obtained without the help of surface tension. Decay rates have been also considered to understand the long time behavior of the solution. For the case with surface tension see \cite{Beale-Nishida:large-time-viscous-surface-waves, Nishida-Teramoto-Yoshihara:global-existence-viscous-surface-waves-horizontally-periodic}.

In the case of a two fluid problem there are some recent results where local well-posedness and global existence for small data is shown. The situations consider low regular initial velocities in critical spaces, in some cases within the chain of Besov spaces (see \cite{Danchin-Mucha:lagrangian-approach-incompressible-navier-stokes-variable-density,Danchin-Mucha:incompressible-flows-piecewise-constant-density, Huang-Paicu-Zhang:global-well-posedness-bounded-density-non-lipschitz-velocity, Paicu-Zhang-Zhang:global-existence-inhomogeneous-navier-stokes,Liao-Zhang:global-regularity-2d-patch-navier-stokes} and the references therein for more details).

Existence of splash singularities for inviscid water waves was proven in our paper \cite{Castro-Cordoba-Fefferman-Gancedo-GomezSerrano:finite-time-singularities-free-boundary-euler}, see also Coutand-Shkoller \cite{Coutand-Shkoller:finite-time-splash} for an alternate proof with applications to a 3D setting.

The inviscid splash is not prevented by taking into account gravity or surface tension (see \cite{Castro-Cordoba-Fefferman-Gancedo-GomezSerrano:finite-time-singularities-free-boundary-euler,Castro-Cordoba-Fefferman-Gancedo-GomezSerrano:finite-time-singularities-water-waves-surface-tension}), but it is prevented by replacing the vacuum in $\R^{2} \setminus \Omega(t)$ by an incompressible opposing fluid; see \cite{Fefferman-Ionescu-Lie:absence-splash-singularities} as well as an alternate proof by Coutand-Shkoller \cite{Coutand-Shkoller:no-splash-vortex-sheet}. It has been shown in \cite{Cordoba-Enciso-Grubic:splash-almost-splash-stationary-Euler} that there exist ``almost splash'' stationary solutions in the two fluid case. We caution that the nonexistence of a splash does not rule out a breakdown in which the chord-arc constant degenerates and the solution loses smoothness; again, see \cite{Fefferman-Ionescu-Lie:absence-splash-singularities}.

Our strategy for the viscous splash, as outlined above, was announced at the \emph{40$\grave{e}$mes journ\'ees EDP (2013)} in Biarritz, at the \emph{2013 Clay Research Conference} (Oxford) and more recently at the \emph{Minerva Distinguished Visitor Lectures} in Princeton in 2014. Finally, in this paper we provide details.

We remark that the estimates we make in the tilde domain are given in detail in the Appendices for the reader's convenience, but they are simply adapted from well known estimates for the nontilde domain.

We refer the reader to Coutand-Shkoller \cite{Coutand-Shkoller:splash-navier-stokes} for a different proof of the formation of splash singularities for viscous water waves.

\section{Equations: transformation to a nonsplash domain}
\label{sectionequations}

We have to solve the 2D-Navier-Stokes equations in the fluid domain $\Omega(t)$,
\begin{align}\label{NSO}
\pa_t u + (u\cdot\nabla) u-\Delta u & = -\nabla p,\quad \text{in $\Omega(t)$},\\
\nabla\cdot u & = 0, \quad \text{in $\Omega(t)$},\label{div0}\\
\left(p\, \mathbb{I}-\left(\nabla u + \left(\nabla u\right)^* \right)\right)n\quad & = 0,\quad \text{on $\pa \Omega(t)$},\label{tensor}\\
\left.u(t)\right|_{t=0}& = u_0,\quad \text{in $\Omega(0)=\Omega_0$,} \label{di}
\end{align}
where
\begin{align*}
\Omega(t)=X(\Omega(0),t)
\end{align*}
with $X(\alpha,t)$ solving
\begin{align*}
\frac{d X(\alpha,t)}{dt}=& u(X(\alpha,t),t)\\
X(\alpha,0)=&\alpha, \quad \al\in \Omega(0),
\end{align*}
and $n(t)$ is unit normal vector to $\pa\Omega(t)$ (pointing out).
Here, $\Omega_0$ and $u_0$ are given and satisfy the compatibility conditions
\begin{align*}
\nabla \cdot u_0 & = 0,\quad \text{in $\Omega_0$}\\
n_0^\perp\left(\nabla u_0 + \left(\nabla u_0\right)^*\right)n_0 & = 0 \quad \text{on $\pa\Omega_0$,}
\end{align*}
with $n_0=n(0)$. The condition
$$\left.\left(p\, \mathbb{I}-\left(\nabla u + \left(\nabla u\right)^* \right)\right)\right|_{\pa\Omega(t)} n = 0$$
states the continuity of the normal stress through the interface. We will use the symbol $^*$ to denote the transpose of a matrix.

%Since $(0,1)=\nabla x_2$,  the system \eqref{NSO},...,\eqref{di} is usually written  using the function $p'=p+x_2$, i.e.,
%\begin{align}\label{NSO2}
%\pa_t u + (u\cdot\nabla) u-\Delta u = & -\nabla p',\quad \text{in $\Omega(t)$},\\
%\nabla\cdot u =& 0, \quad \text{in $\Omega(t)$},\label{div02}\\
%\left(p'\, \mathbb{I}-\left(\nabla u + \left(\nabla u\right)^* \right)\right)n\quad = &x_2 n,\quad \text{on $\pa \Omega(t)$},\label{tensor2}\\
%\left.u(t)\right|_{t=0}=&u_0,\quad \text{in $\Omega(0)=\Omega_0$} \label{di2}.
%\end{align}
We notice that the pressure, $p$, will be given by the following elliptic problem
\begin{align*}
-\Delta p = & \nabla\cdot ((u\cdot\nabla)u)\quad \text{in $\Omega(t)$}\\
p = &  n\left(\nabla u + \left(\nabla u\right)^*\right)n\quad\text{on $\pa\Omega(t)$}.
\end{align*}

%If we work with bottom we would need the extra boundary condition $u|_{\text{bott}}=0$ and $p'|_{\text{bott}}=0$.

Let $P(z)$ be defined as in the introduction and let $\tilde{\Om}(t) = P(\Om(t))$. Next we will write the system (\ref{NSO}-\ref{di}) in the domain $\tilde{\Omega}(t)$ and after that, by using Lagrangian coordinates, we will fix the domain in order to work in the domain $\tilde{\Omega}(0)$ rather than in $\tilde{\Omega}(t)$.
\begin{rem}
At this point one should notice that we are assuming that $\tilde{\Omega}(t)$ is the projection by $P$ of $\Omega(t)$ and that $\tilde{\Omega}(t)$ is a simply connected bounded domain. Therefore $P^{-1}$ is a well defined analytic function. Once we have written the N-S system in $\tilde{\Omega}(t)$ this assumption will not be needed anymore.
\end{rem}
We define $\ut= u \circ P^{-1}$ and $\tilde{p} = p \circ P^{-1}$ and
\begin{align}\label{definiciondeQ}Q^2=\left|\frac{dP}{dz}\circ P^{-1}\right|^2.\end{align} Then
$$ \pa_j u^i=(\pa_k \tilde{u}^i\circ P )\pa_jP^k$$ and therefore
$$\pa_j u^i\circ P^{-1}= A_{kj}\pa_k \ut^i,$$
where
\begin{align}\label{definiciondeA}A_{kj}= \pa_j P^k\circ P^{-1}.\end{align}
In addition, let's assume that we traverse (clockwise) the boundary of $\Omega(t)$ using the parametrization $z(\gamma,t)$, i.e.,
$$\pa\Omega(t)=\{ z(\gamma,t)\in \R^2 \,:\, \gamma \in [-\pi, \pi)\times [0,T]\}.$$
and
$$n= (-z^2_\gamma(\gamma,t), z^1_\gamma(\gamma,t)).$$

Since the boundary of $\tilde{\Om}(t)$ can be parameterized by $\tilde{z}(\gamma,t)=P(z(\gamma,t))$ we have that
$$\tilde{z}^i_\gamma=\pa_k P^i\circ P^{-1} (\tilde{z}(\gamma,t))z^k_\gamma(\gamma,t)$$ thus

$$\tilde{n}=-JA|_{\pa\tilde{\Omega}(t)}J  n,$$

where $$J=\left(\begin{array}{cc} 0 & -1\\ 1 & 0\end{array}\right).$$

Using the previous expression and the fact that $P$ is a conformal map we can write the system (\ref{NSO}-\ref{di}) in the domain $\tilde{\Omega}(t)$ as follows:

\begin{align}\label{NSOtilde}
\pa_t \ut + (A\ut\cdot\nabla) \ut-Q^2\Delta \ut = & -A^*\nabla \tilde{p},\quad \text{in $\tilde{\Omega}(t)$},\\
Tr\left(\nabla \tilde u A\right)  =& 0, \quad \text{in $\tilde{\Omega}(t)$},\label{div0tilde}\\
\left(\tilde{p}\, \mathbb{I}-\left(\nabla \tilde{u} A + \left(\nabla \tilde{u} A\right)^* \right)\right)A^{-1}\tilde{n}\quad = & 0,\quad \text{on $\pa \tilde{\Omega}(t)$},\label{tensortilde}\\
\left.\tilde{u}(t)\right|_{t=0}=&\tilde{u}_0,\quad \text{in $\tilde{\Omega}(0)=\tilde{\Omega}_0$} \label{ditilde}.
\end{align}
where $Tr(A)$ is the trace of the matrix $A$ and

$$\tilde{\Omega}(t)=\tilde{X}(\tilde{\Omega}(0),t)$$ with $\tilde{X}(\alpha,t)$ solving

\begin{align*}
\frac{d\tilde{X}(\alpha,t)}{dt}& =\left(A\circ \tilde{X}(\alpha,t)\right)\left(\ut\circ \tilde{X}(\alpha,t)\right)\\
\tilde{X}(\alpha,0)& = \alpha, \quad \alpha\in \tilde{\Omega}(0).
\end{align*}
Here we have used that
$$Q^2A^{-1}= -JAJ.$$

Now we will fix the domain by working with the variables
\begin{align*}
\tv(\alpha,t)= & \ut\circ \tilde{X}(\alpha,t)\quad \alpha\in \tilde{\Omega},\\
\tilde{q}(\alpha,t)= & \tilde{p}\circ \tX(\alpha,t)\quad \alpha\in \tilde{\Omega}.
\end{align*}

The system (\ref{NSOtilde}-\ref{ditilde}) in terms of $(\tilde{v},\tilde{q})$ reads
\begin{align}
\pa_t \tv_i -Q^2\circ \tX \tilde{\zeta}_{kj}\pa_k\left(\tilde{\zeta}_{lj}\pa_l\tv_i\right)&=-A_{ki}\circ \tX\tilde{\zeta}_{jk}\pa_j \tilde{q}\quad \text{in $\tilde{\Om}_0$},\label{NSOf}\\
Tr\left(\nabla \tv \tilde{\zeta}A\circ \tX\right) & = 0 \quad \text{in $\tilde{\Om}_0$}\label{divf}\\
\left(\tilde{q}\mathbb{I}-\left(\left(\nabla \tv \tilde{\zeta} A\circ \tX\right)+\left(\nabla \tv \tilde{\zeta} A\circ \tX\right)^*\right)\right)A^{-1}\circ \tX \gradj \tX\tilde{n}_0 & = 0 \quad \text{ on $\partial \tilde{\Om} _0$},\label{tensorf}\\
\tv|_{t=0}\equiv &  \tv_0 = \ut_0, \label{dif}
\end{align}
where
$$
\tilde{\zeta}=\left(\nabla \tX\right)^{-1},\quad \left(\tilde{\zeta}^{\,-1}\right)_{ij}=\pa_j \tX^i,\quad \gradj \tX= -J \nabla \tX J$$
and
\begin{align}
\frac{d \tX(\alpha,t)}{dt}=& A\circ \tX(\alpha,t) \tilde{v}(\alpha,t)\label{dtx} \\
\tX(\alpha,0)=& \alpha,\quad \alpha \in \tilde{\Om}_0\label{dtx0}.
\end{align}

We will solve the system (\ref{NSOf}-\ref{dif}) by iteration towards a fixed point. We study first the linear system that will be used for that purpose.

\begin{align}
\pa_t \tv^{(n+1)} -Q^2\Delta \tv^{(n+1)}=-A^*\nabla \tilde{q}^{(n+1)}+\tilde{f}^{(n)}&\quad \text{in $\tilde{\Om}_0$},\label{NSOfl}\\
Tr\left(\nabla \tv^{(n+1)} A\right)=   \tilde{g}^{(n)}&\quad \text{in $\tilde{\Om}_0$}\label{divfl}\\
\left(\tilde{q}^{(n+1)}\mathbb{I}-\left(\left(\nabla \tilde{v}^{(n+1)}  A\right)+\left(\nabla \tilde{v}^{(n+1)}  A\right)^*\right)\right)A^{-1}\tilde{n}_0=\tilde{h}^{(n)}& \quad \text{on $\partial \tilde{\Om} _0$},\label{tensorfl}\\
\tilde{v}^{(n+1})|_{t=0}\equiv   \tilde{v}^{(n+1)}_0 = \ut_0,& \label{difl}
\end{align}
where
\begin{align}
\tilde{f}^{(n)}_i=&Q^2\circ \tX^{(n)} \tilde{\zeta}^{(n)}_{kj}\pa_k\left(\tilde{\zeta}^{(n)}_{lj}\pa_l\tilde{v}^{(n)}_i\right)-Q^2\Delta \tv_i^{(n)} -A_{ki}\circ \tX^{(n)} \tilde{\zeta}^{(n)}_{jk}\pa_j \tilde{q}^{(n)}+ A_{ki}\pa_k \tilde{q}^{(n)} \label{efe} \\
\tilde{g}^{(n)}= & -Tr\left(\nabla \tv^{(n)} \tilde{\zeta}^{(n)}A\circ \tX^{(n)}\right)+Tr\left(\nabla \tv^{(n)}A\right)\label{ge}\\
\tilde{h}^{(n)}=& \left(\left(\nabla \tv^{(n)} \tilde{\zeta}^{(n)} A\circ \tX^{(n)}\right)+\left(\nabla \tv^{(n)} \tilde{\zeta}^{(n)} A\circ \tX^{(n)}\right)^*\right)A^{-1}\circ \tX^{(n)} \gradj\tX^{(n)}\tilde{n}_0\nonumber\\
&-\left(\left(\nabla \tv^{(n)}  A\right)+\left(\nabla \tv^{(n)}  A\right)^*\right)A^{-1}\tilde{n}_0+\tilde{q}^{(n)}A^{-1}\tilde{n}_0-\tilde{q}^{(n)}A^{-1}\circ\tX^{(n)}\gradj \tX^{(n)}\tilde{n}_0 ,\label{ache}
\end{align}
We define $\tX^{(n+1)}$ as
\begin{align}\label{Xtilda}
\tX^{(n+1)}(\alpha,t)=\alpha+\int_0^t A\circ \tX^{(n)}(\alpha,\tau)\tv^{(n)}(\alpha,\tau) d\tau,
\end{align}
and \begin{align*}\tilde{\zeta}^{(n)}=\left(\nabla \tX^{(n)}\right)^{-1} && \gradj\tX^{(n)}=-J\nabla \tX^{(n)}J.\end{align*}

Formally, assuming convergence as $n\to\infty$, it is easy to check that in the limit we find a solution of the nonlinear system. In what follows, we will either remove the tilde from the notation or it will become clear from the context.

\section{Definitions of the spaces and auxiliary lemmas}
\label{sectionspaceslemmas}

This section is devoted to present the main tools used for both the linear and the nonlinear case. We will also define all the spaces used for the construction of the solutions, and their norms.

\subsection{The spaces $H^s$}

For a positive integer $m$, we will denote the standard Sobolev space by $H^{m}([0,T])$ with norm $\|v\|_{H^m}^2=\sum_{j=0}^m\|\partial_t^j v\|_{L^2}^2$. We will indistinctly refer to $L^{2}([0,T])$ as either $L^{2}([0,T])$ or $H^{0}([0,T])$.

Here we will give a precise definition of the Sobolev spaces with fractional derivatives in time that we are going to use.

As in the classical paper \cite{Beale:initial-value-problem-navier-stokes} we define $H^s_{(0)}([0,T])$, for $0<s<1$, as the interpolation between $L^2([0,T])$ and $H^1_{(0)}([0,T])$, where  to  interpolate we use  the operator $S=1-\pa_t^2$, with domain $D(S)=\{ u \in H^2[0,T] : v(0)=\pa_t v(T)=0\}$. The reader can consult \cite[p. 9]{Lions-Magenes:non-homogeneous-bvp-I}, for further information about this interpolation (notice that in this book the operator $S$ is called $\Lambda$). After that, one can define the norm in $H^s_{(0)}([0,T])$ as the graph norm of the operator $\Lambda^s$, with $\Lambda=S^\frac{1}{2}$. An explicit computation shows that $\left\{\sin\p{\frac{(2n+1)\pi}{2T}t}\sqrt{\frac{2}{T}}\right\}_{n=1}^\infty$ is an orthogonal basis of $L^2([0,T])$ of eigenfunctions of $S$, with eigenvalues $\left\{1+\frac{(2n+1)^2\pi^2}{4T^2}\right\}_{n=0}^\infty$ and that  $H^s_{(0)}([0,T])$ consists of functions $v\in L^2([0,T])$ such that $$||v||_{H^s_{(0)}}^2=\sum_{n=0}^\infty\p{v_n^s}^2\p{\frac{(2n+1)\pi}{2T}}^{2s}<\infty \quad  \text{where $v^s_n=\int_{0}^T v(t)\sin\p{\frac{(2n+1)\pi}{2T}t}\sqrt{\frac{2}{T}}dt$}.$$
 For $s>\frac{1}{2}$ we have that $v\in H^s_{(0)}([0,T])$ implies $v(0)=0$. An important fact, remarked in \cite{Beale:initial-value-problem-navier-stokes} is, that if $A$ is an operator bounded from $L^2([0,T])$ to $Y$ and from $H^1_0([0,T])$ to $Z$ with constant independent of $T$, where $Y$ and $Z$ are Hilbert spaces with $Z\subseteq Y$, then $A$ maps $H^s_{(0)}$ onto the interpolated space $[Z,Y]_{1-s}$ with norm bounded independent of $T$.

 For larger exponents,  the space $H^{m+s}_{(0)}([0,T])$, $m=1,2,3,...$ and $0 \leq s<1$, is regarded as the subspace of $\{ v\in H^{m}([0,T])\,:\, (\pa^k_tv)(0)=0,\, k=0,...,m-1\}$ with $\pa^m_t v\in H^s_{(0)}([0,T])$. We equip this space with the norm
 \begin{align}\label{norm0}
 ||v||^2_{H^{m+s}_{(0)}([0,T])}=||v||^2_{L^2([0,T])}+||\pa_t v||^2_{L^2([0,T])}+...+||\pa^m_tv||^2_{H^s_{(0)}([0,T])}.
\end{align}

This is the norm for fractional derivatives in time that we will use in this paper.

Again as in \cite{Beale:initial-value-problem-navier-stokes}, we also  introduce the space $H^s([0,T])$. This space is defined, for $0 < s <1$, as the interpolation of $H^1([0,T])$ and $L^2([0,T])$ with $S=1-\pa^2_t$ and domain $D(S)=\{v\in H^2([0,T])\,:\, (\pa_tv)(0)=(\pa_t v)(T)=0\}$. In this case $\left\{\frac{1}{\sqrt{T}},\, \left\{\cos\p{\frac{n\pi}{T}t}\sqrt{\frac{2}{T}}\right\}_{n=1}^\infty\right\} $ is a basis of $L^2([0,T])$ of eigenfunctions of $S$ with eigenvalues $\left\{1+\frac{n^2\pi^2}{T^2}\right\}_{n=0}^\infty$. Thus we can define \begin{align}\label{norma1}||v||_{H^s([0,T])}^2=\sum_{n=0}^\infty \p{1+\frac{n^2\pi^2}{T^2}}^s\p{v^c_n}^2,\end{align}
where
\begin{align*} v_0^c=\int_{0}^T\frac{v(t)}{\sqrt{T}}dt,\quad v^c_n=\int_{0}^Tv(t)\cos\p{\frac{n\pi}{T}t}\sqrt{\frac{2}{T}}dt\quad n\geq 1.
\end{align*} A similar interpolation statement holds in this space.  For larger derivatives we regard $H^{m+s}([0,T])$, $m=1,2,3...$, $0<s<1$ as the subspace of $H^m([0,T])$ with $\pa^m_t v\in H^s([0,T])$. It happens that $H^{m+s}_{(0)}([0,T])=\{ v\in H^{m+s}([0,T])\,:\, (\pa_t^kv)(0)=0,\ k=0,1,...,m\}$, for $s>\frac{1}{2}$ and $H^{m+s}_{(0)}([0,T])=\{ v\in H^{m+s}([0,T])\,:\, (\pa_t^kv)(0)=0,\ k=0,1,...,m-1\}$, for $s<\frac{1}{2}$ .  Here we remark that this space will always be equipped with the norm  \eqref{norm0}.

%The space $H^s((-\infty,\infty))$ will be the classical Sobolev space in the real line. This space is equipped with the norm $$||v||_{H^s((-\infty,\infty))}^2=\int_{\R}\p{1+|\tau|^2}^s|\widehat{v}(\tau)|^2 d\tau,$$ where $\widehat{v}$ is the Fourier transform (on time) of $v$.
The space $H^s(\T)$ will be the classical Sobolev space of $2\pi$-periodic functions such that
 $$
 ||f||_{H^s(\T)}^2=\sum_{n\in \Z}(1+|n|^2)^{s}|f_n^p|^2\quad\mbox{ is finite with }\quad f_n^p=\frac1{2\pi}\int_{0}^{2\pi}f(\theta)e^{- i n\theta}d\theta.
 $$
The space $H^s(\R^n)$, $n\geq 1$, will be the classical Sobolev space equipped with the norm $$||f||_{H^s(\R^n)}^2=\int_{\R^n}\p{1+|\xi|^2}^s|\widehat{f}(\xi)|^2 d\xi,$$ where $\widehat{f}$ is the Fourier transform of $f$ in $\R^n$. For a domain $\Omega_0\subset\R^2$ with regular boundary (see discussion in Theorem \ref{nolineal} below for the regularity of $\partial\Omega_0$) the space $H^s(\Omega_0)$ is defined classically as the space of functions with $s$ derivatives in $L^2(\Omega_0)$ if $s$ is an integer, or the usual generalization otherwise. It can be related with $H^s(\R^2)$ through the classical extension map. The space $H^{r}(\partial\Omega_0)$ is given by functions $f$ defined on $\partial\Omega_0=\{z_0(\theta):\,\theta\in[0,2\pi] \}$ such that $f(z_0(\theta))\in H^r(\T)$. In this paper $r>1/2$ so that the classical restriction (or trace) map properties on Sobolev spaces can be applied. Finally, $H^{-1}(\Omega_0)$ is defined as the dual space of $H^{1}(\Omega_0)$.

\subsection{Space-time definitions}\label{spacedefi}

Once we have defined the spaces $H^s$ we introduce the spaces we will use to solve the free boundary Navier-Stokes equations in the tilde domain, where $\nabla$ denotes the space gradient:

$$H^{ht, s+1}_{(0)}\left([0,T]; \Omega_0\right)= L^2\left([0,T];\,H^{s+1}(\Om_0)\right)\cap H_{(0)}^{\frac{s+1}{2}}\left([0,T];\, L^2(\Omega_0)\right), \quad s > 0$$
%\begin{align*}
%H^{ht, s}_{pr}\left([0,T]; \Omega_0\right)= &\left\{q\in L^{\infty}\left([0,T];\dot{H}^1(\Omega_0)\right)\,:\, \nabla q \in %H^{ht,s-1}\left([0,T]; \Omega_0\right)\right. \\ & \left.\text{and}\quad q\in H^{ht,s-\frac{1}{2}}\left([0,T]; %\partial\Omega_0\right)\right\},
%\end{align*}
$$
H^{ht, s}_{pr\, (0)}\left([0,T]; \Omega_0\right)=\left\{q\in L^{\infty}([0,T];\dot{H}^1(\Omega_0))\,:\, \nabla q \in H_{(0)}^{ht,s-1}\left([0,T]; \Omega_0\right),\,q\in H_{(0)}^{ht,s-\frac{1}{2}}\left([0,T]; \partial\Omega_0\right)\right\}, \quad 2 < s < \frac{5}{2}
$$
$$
\overline{H}_{(0)}^{ht,s}([0,T];\Omega_0)=L^{2}([0,T]; H^{s}(\Omega_0))\cap H_{(0)}^{\frac{s+1}{2}}([0,T]; H^{-1}(\Omega_0)), \quad s > 0
$$
We now fix $s$ with $2 < s < \frac{5}{2}$ and pick a small enough $\ep > 0$ depending only on $s$. We also set
\begin{equation*}F^{s+1}_{\gamma}\left([0,T];\,\Omega_0\right)= L^{\infty}_{1/4}([0,T];H^{s+1}(\Omega_0))\cap  H_{(0)}^{2}\left([0,T];H^\gamma(\Om_0)\right),\quad s-1-\ep<\gamma<s-1
\end{equation*}
with
$$||f||_{L^{\infty}_{1/4}}=\sup_{t\in [0,T]} t^{-1/4} |f(t)|.$$

For the spaces $H^{ht, s+1}\left([0,T]; \Omega_0\right)$, $H^{ht, s}_{pr}\left([0,T]; \Omega_0\right)$ and $\overline{H}^{ht,s}([0,T];\Omega_0)$ we give analogous definitions than above %for $H_{(0)}^{ht, s+1}\left([0,T]; \Omega_0\right)$, $H^{ht, s}_{pr\, (0)}\left([0,T]; \Omega_0\right)$ and $\overline{H}_{(0)}^{ht,s}([0,T];\Omega_0)$
but removing the subscript $(0)$ in the time Sobolev spaces, i.e., removing the vanishing conditions at $t=0$.

%We emphasize that in these definitions the  $H^s_{(0)}$-spaces arise instead of the $H^s$-spaces.

% Throughout the paper we always use the norm \eqref{norm0} to measure derivatives in time. This fact will not appear explicitly in the notation and we will write $||\cdot||_{H^s}$ instead of $||\cdot||_{H^s_{(0)}}$. There will not be any confusion since \eqref{norm0} is the only Sobolev norm we will use in time (other than in section \ref{reduction} where we will explicitly state the norm we are using there).

 Also we will use the following notation:
\begin{align*}
&||\cdot||_{H_{(0)}^{ht, s}([0,T],\Om_0)} = ||\cdot||_{H_{(0)}^{ht,s}}\qquad\qquad &||\cdot||_{\overline{H}^{ht, s}_{(0)}([0,T],\Om_0)} =||\cdot||_{\overline{H}_{(0)}^{ht,s}}\\
&||\cdot||_{H_{(0)}^{ht, s}([0,T],\pa\Om_0)} =  |\cdot|_{H_{(0)}^{ht,s}}\qquad\qquad &||\cdot||_{F^{s+1}_{\gamma}([0,T],\Om_0)} =  ||\cdot||_{F^{s+1}}\\
&||\cdot||_{H_{(0)}^r\left([0,T];\,H^{s}(\Om_0)\right)} = ||\cdot||_{H_{(0)}^rH^s} \qquad\qquad &||\cdot||_{H_{(0)}^r\left([0,T];\,H^{s}(\pa\Om_0)\right)} = |\cdot|_{H_{(0)}^rH^s}\\
&||\cdot||_{L^{\infty}([0,T];H^{s}(\Omega_0))} =  ||\cdot||_{L^\infty H^s} \qquad\qquad
&||\cdot||_{L^{\infty}_{1/4}([0,T];H^{s}(\Omega_0))} =  ||\cdot||_{L^\infty_{1/4}H^s}\\
&||\cdot||_{H^{ht,\,s}_{pr\,(0)}([0,T],\Om_0)} =||\cdot||_{H^{ht,\, s}_{pr\, (0)}} \qquad\qquad &.
\end{align*}

\subsection{Auxiliary lemmas}

\begin{lemma}\label{lema22} Let $B$ be a Hilbert space.
\begin{enumerate}
\item For $s\geq 0$, there is a bounded extension operator $H^s((0,T); B)\to H^s((-\infty,\infty);B)$.
\item For $0\leq s\leq 2$, $s-\frac{1}{2}$ not an integer, there is an extension operator from
$$\left\{ v\in H^s((0,T);B)\,;\, \pa_t^kv|_{t=0}=0, 0\leq k< s-\frac{1}{2}\right\}\to H^s((-\infty,\infty);B)$$ with norm bounded uniformly if $T$ is bounded above. Furthermore, if $v^\sharp$ is the extension of $v$, $$||v^\sharp||_{H^s((-\infty,\infty); B)}\leq C ||v||_{H_{(0)}^s((0,T);B)}.$$
\item Similar %\color{red} ?? \color{black}
statements apply to the extension of $H^{ht, 2s}$ and $H^{ht, 2s}_{(0)}$.
\end{enumerate}
\end{lemma}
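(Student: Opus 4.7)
Since $B$ is a Hilbert space, the Bochner-type space $H^s((a,b);B)$ admits the Fourier characterization $\|v\|^2 = \int (1+\tau^2)^s |\hat{v}(\tau)|_B^2\,d\tau$ on $\mathbb{R}$, and all classical scalar extension constructions carry over verbatim to the $B$-valued setting, because every operation is linear in $v$ and pointwise compatible with the norm $|\cdot|_B$. So the plan is to mimic the standard Stein--Lions--Magenes extension theory for scalar Sobolev functions on intervals, upgrading constants to work uniformly in $B$.

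\textbf{Part 1.} For the extension $H^s((0,T);B) \to H^s((0,\infty);B)$, I use a Stein-type higher-order reflection across $t=T$. Fix an integer $N>s+1$ and constants $\{a_j,c_j\}_{j=1}^{N}$ with $a_j>0$ and $\sum_{j} c_j(-a_j)^k = 1$ for $k=0,1,\ldots,N-1$ (a Vandermonde system, hence solvable). Pick a smooth cutoff $\chi(t)$ supported in $(T/2,3T)$ with $\chi\equiv 1$ on $(T,2T)$, and set
\[
(Ev)(t) = v(t) \quad \text{for } t\in(0,T), \qquad (Ev)(t) = \chi(t)\sum_{j=1}^{N} c_j\, v\!\left(T - a_j(t-T)\right) \quad \text{for } t>T.
\]
The matching condition makes $Ev$ of class $C^{N-1}$ across $t=T$ whenever $v$ is smooth up to $T$, and one gets $\|Ev\|_{H^s((0,\infty);B)} \leq C\|v\|_{H^s((0,T);B)}$ with $C$ depending only on an upper bound for $T$; the general case follows by density.

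\textbf{Part 2.} Compose $E$ with extension by zero on $(-\infty,0)$. The only nontrivial point is that a function $u\in H^s((0,\infty);B)$ with $\partial_t^k u(0)=0$ for $0\le k<s-\tfrac12$ has its zero extension in $H^s(\mathbb{R};B)$ with an estimate uniform in the same norm, under the hypothesis that $s-\tfrac12$ is not an integer. This is the Lions--Magenes characterization of $H^s_0$ by vanishing traces (Theorem 11.4 of Lions--Magenes, Vol.~I), and it transfers to the Hilbert-valued setting via the Fourier representation above. Uniformity in $T\le T_0$ is obtained by choosing the cutoff scale in Step 1 proportional to $\min(1,T)$, so that once $T_0$ is fixed, the operator norm of the composite is bounded.

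\textbf{Part 3.} For the anisotropic space $H^{ht,2s} = L^2([0,T];H^{2s}(\Omega_0))\cap H^s([0,T];L^2(\Omega_0))$, apply the operator $E$ from Step 1 (or its zero-extended version from Step 2) in the time variable, taking $B = H^{2s}(\Omega_0)$ for the first factor and $B = L^2(\Omega_0)$ at regularity $s$ for the second. Since $E$ acts only in $t$, it commutes with all spatial Sobolev norms and simultaneously extends both components with the same constant.

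\textbf{Main obstacle.} The genuinely delicate point is the uniform-in-$T$ control in part 2 together with the half-integer exclusion: without the vanishing traces $\partial_t^k v(0)=0$, zero extension drops regularity by $\tfrac12$; and at the excluded values $s-\tfrac12\in\mathbb{Z}$ the correct trace space is a finer Lions--Magenes space, not a Sobolev space, which is why those cases are not claimed. Everything else is routine vector-valued Sobolev calculus, and indeed the scalar version is precisely what is quoted from Beale in Lemma \ref{lema21}.
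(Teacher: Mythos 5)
Your construction --- a Stein higher-order reflection across $t=T$ followed by zero extension past $t=0$, carried out verbatim in the $B$-valued setting via the Fourier characterization of $H^s(\mathbb{R};B)$ --- is the natural one, and the paper simply cites Beale's Lemma~2.3, which has exactly this shape (with the minor remark that the range can be taken to be $(-\infty,\infty)$ rather than $(0,\infty)$). Parts 1 and 3 of your write-up are fine. The gap is in the uniformity-in-$T$ claim of Part 2.

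You say that choosing the cutoff scale proportional to $\min(1,T)$ yields uniformity for $T\le T_0$, but this assertion needs to be verified and, as written, points in the wrong direction: shrinking the cutoff support to scale $T$ gives $\|\chi^{(k)}\|_\infty\sim T^{-k}$, so on the \emph{full} space $H^s((0,T);B)$ the operator norm of your Stein extension $E$ grows like $T^{-s}$ as $T\to 0$ (test it on $v\equiv\mathrm{const}$). What rescues the claim is the vanishing-trace hypothesis, which you invoke only for the zero-extension step but which is in fact also essential for the Stein step. When $\partial_t^k v|_{t=0}=0$ for $0\le k<s-\frac12$, Poincar\'e-type inequalities on $(0,T)$ give
$\|\partial_t^j v\|_{L^2((0,T);B)}\le C\,T^{\,k-j}\|\partial_t^k v\|_{L^2((0,T);B)}$
for $j<k$, together with their fractional analogues, and these gains of $T$ exactly cancel the $T^{-k}$ losses coming from cutoff derivatives falling on the reflection term. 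That cancellation is the entire content of the uniform-in-$T$ clause and is precisely the reason the subspace with vanishing traces appears in the statement; it has to be carried out explicitly rather than asserted. Once it is, the rest of your argument goes through: the Lions--Magenes zero-extension constant is $T$-free (the half-line $(0,\infty)$ carries no length scale, and the non-half-integer restriction is, as you say, what keeps the trace characterization a Sobolev one), and the composition is uniformly bounded for $T\le T_0$.
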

\begin{proof}
The proof can be found in \cite[p.365, Lemma 2.2]{Beale:initial-value-problem-navier-stokes}. In this paper the  statements asserts that the operator extends to $H^s((0,\infty);B)$ rather than $H^s((-\infty,\infty);B)$. But one can easily adapt the proof to the case  $H^s((-\infty, \infty); B)$.
\end{proof}

\begin{lemma}[Parabolic Trace]\label{lema21}

1. Suppose $\frac{1}{2}<r\leq 5$. The mapping $v\to \pa_n^j v$ extends to a bounded operator $H^{ht, r}([0,T];\Omega_0)\to H^{ht, r-j-\frac{1}{2}}([0,T];\partial\Omega_0)$, where $j$ is an integer with $0\leq j<r-\frac{1}{2}$. The mapping $v\to \pa_t^kv(x,0)$ also extends to a bounded operator $H^{ht,r}\to H^{r-2k-1}(\Omega_0)$ if $k$ is an integer with $0\leq k < \frac{1}{2}(r-1)$.

2. Suppose $\frac{3}{2}<r<5$, $r\neq 3$, and $r-\frac{1}{2}$ is not an integer. Let
$$W^r = \prod_{0\leq j <r-\frac{1}{2}} H^{ht,r-j-\frac{1}{2}}([0,T];\partial\Omega_0)\times \prod_{0\leq k < \frac{r-1}{2}}H^{ht, r-2k-1}([0,T];\Omega_0),$$
and let $W_0^r$ be the subspace consisting of $\{a_j,w_j\}$ so that
$$\pa^k_t a_j(x,0)=\pa_n^j w_k(x)\quad x\in\pa\Omega,$$
for $j+2k<r-\frac{3}{2}$. Then the restrictions at $\pa\Omega_0$ together with the restrictions at time $t=0$ in point 1. above form a bounded operator $H^{ht,r}\to W^r_0$, and this operator has a bounded right inverse.
\end{lemma}

\begin{proof}
See \cite[Lemma 2.1]{Beale:initial-value-problem-navier-stokes}.
\end{proof}

\begin{lemma}\label{2.3}
Suppose $0\leq r\leq 4$.
\begin{enumerate}
\item The identity extends to a bounded operator $$H^{ht,r}\to H^pH^{r-2p},$$
$p\leq \frac{r}{2}$.
\item If $r$ is not an odd integer, the restriction of this operator to the subspace with $\pa_t^kv|_{t=0}$, $0\leq k < \frac{r-1}{2}$ is bounded uniformly if $T$ is bounded above. Indeed
    $$\hhn{v}{p}{r-2p}\leq C||v||_{H^{ht,\,r}_{(0)}},$$
    where $C$ does not depend on $T$ if   $T$ is bounded above.
\end{enumerate}
\end{lemma}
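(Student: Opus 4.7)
The plan is to reduce the statement to a pointwise inequality on the Fourier side in time. The endpoints $p=0$ and $p=r/2$ are immediate from the definition $H^{ht,r} = L^2([0,T]; H^r(\Omega_0)) \cap H^{r/2}([0,T]; L^2(\Omega_0))$, so I focus on intermediate $0 < p < r/2$. The first step is to extend $v$ from $[0,T]$ to $\mathbb{R}$ in time. For part 1 this uses Lemma \ref{lema22}(1), which may lose uniformity in $T$. For part 2, the assumption that $r$ is not an odd integer forces $r/2 - 1/2 = (r-1)/2$ not to be an integer, so Lemma \ref{lema22}(2) applies with $s = r/2$ and $B = L^2(\Omega_0)$: using the vanishing conditions $\partial_t^k v|_{t=0} = 0$ for $0 \leq k < (r-1)/2$, I obtain an extension $\tilde v$ whose norm is controlled uniformly for bounded $T$.

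Second, after extension I would Fourier transform in $t$. The norms become equivalent to
\begin{align*}
\|\tilde v\|_{H^{ht,r}}^2 &\sim \int_{\mathbb{R}} \left( \|\hat v(\tau)\|_{H^r(\Omega_0)}^2 + (1+\tau^2)^{r/2} \|\hat v(\tau)\|_{L^2(\Omega_0)}^2 \right) d\tau, \\
\|\tilde v\|_{H^p H^{r-2p}}^2 &\sim \int_{\mathbb{R}} (1+\tau^2)^p \|\hat v(\tau)\|_{H^{r-2p}(\Omega_0)}^2 d\tau,
\end{align*}
which reduces the problem to the pointwise spatial estimate
$$(1+\tau^2)^p \|w\|_{H^{r-2p}(\Omega_0)}^2 \leq C \left[ (1+\tau^2)^{r/2} \|w\|_{L^2(\Omega_0)}^2 + \|w\|_{H^r(\Omega_0)}^2 \right]$$
for every $w \in H^r(\Omega_0)$ and every $\tau \in \mathbb{R}$.

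Third, I would prove the pointwise bound by spatial interpolation. The standard inequality $\|w\|_{H^{r-2p}(\Omega_0)} \leq C \|w\|_{L^2(\Omega_0)}^{2p/r} \|w\|_{H^r(\Omega_0)}^{1-2p/r}$ holds on the smooth bounded domain $\Omega_0$. Squaring and multiplying by $(1+\tau^2)^p$, the right-hand side takes the form $C \cdot A^{2p/r} B^{1-2p/r}$ with $A = (1+\tau^2)^{r/2} \|w\|_{L^2}^2$ and $B = \|w\|_{H^r}^2$. Young's inequality with conjugate exponents $r/(2p)$ and $r/(r-2p)$ gives $A^{2p/r} B^{1-2p/r} \leq \tfrac{2p}{r} A + \tfrac{r-2p}{r} B$, completing the pointwise bound. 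Integrating in $\tau$ yields the claim, with the uniform-in-$T$ control in part 2 inherited from the extension step.

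The main technical point I expect is verifying the spatial interpolation for the possibly non-integer exponent $r-2p$ on the bounded domain $\Omega_0$; this can be handled intrinsically via the definition of fractional Sobolev spaces on $\Omega_0$ by complex interpolation, or by a Stein-type extension to $\mathbb{R}^2$ followed by interpolation there. A secondary routine point is the Fourier-side norm equivalence for Banach-valued Sobolev spaces, and ensuring that the time-cutoff concern is fully absorbed by the extension step rather than reappearing as a boundary term.
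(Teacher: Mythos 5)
Your argument is correct and is the standard proof of this fact; the paper itself offers no proof and simply cites Beale's Lemma~2.3, whose argument is exactly this reduction via extension in time, Fourier transform, and spatial interpolation combined with Young's inequality. The endpoints $p=0$ and $p=r/2$ being definitional, the pointwise bound $(1+\tau^2)^p\|w\|_{H^{r-2p}}^2\leq C\big[(1+\tau^2)^{r/2}\|w\|_{L^2}^2+\|w\|_{H^r}^2\big]$ is precisely what is needed, and your interpolation/Young computation gives it.

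One small precision point worth tightening: you invoke Lemma~\ref{lema22}(1)--(2) with $B=L^2(\Omega_0)$, which only controls the $H^{r/2}L^2$ component of the extended function; you also need the same extension operator to carry $L^2 H^r$ boundedly, which is exactly what part~3 of Lemma~\ref{lema22} (``Similar statements apply to the extension of $H^{ht,2s}$'') provides, with the uniform-in-$T$ bound when $r$ is not odd and the initial traces vanish. (The usual Seeley-type reflection extension does handle both norms simultaneously, so this is a matter of citing the right part, not a gap in the argument.) Also, for part~1 of the lemma, Lemma~\ref{lema22}(1) extends only to $(0,\infty)$; a further reflection to all of $\mathbb{R}$ is needed before applying Plancherel, which is harmless but should be stated. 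With those two bookkeeping adjustments, the proof is complete.
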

\begin{proof}
The proof can be found in \cite[p.365, Lemma 2.3]{Beale:initial-value-problem-navier-stokes}.
\end{proof}

\begin{lemma}\label{2.4}Let $T_0>0$ be arbitrary and $B$ a Hilbert space, and choose $T\leq T_0$.
 For $v\in H^0((0,T); B)$, we define $V\in H^1((0,T);B)$ by
$$V(t)=\int_0^t v(\tau)d\tau.$$
Suppose $0<s<1$, $s\neq \frac{1}{2}$, for $s>\frac{1}{2}$ we impose  $v|_{t=0}=0$,    and $0\leq \ep<s$. Then $v\to V$ is a bounded operator from $H^s_{(0)}((0,T);B)$ to $H^{s+1-\ep}_{(0)}((0,T);B)$, and
$$||V||_{H_{(0)}^{s+1-\ep}((0,T);B)}\leq C_0 T^\ep||v||_{H_{(0)}^s((0,T);B)}.$$
where $C_0$ is independent of $T$ for $0<T\leq T_0$.
\end{lemma}

\begin{proof}

By definition
\begin{align*}
||V||^2_{H_{(0)}^{1+s-\ep}([0,T];B)}=||V||_{L^2}^2+||\pa_tV||^2_{H_{(0)}^{s-\ep}([0,T];B)}=||V||_{L^2}^2+||v||^2_{H_{(0)}^{s-\ep}([0,T];B)}
\end{align*}
On one hand, since $V(t)=\int_{0}^t v(\tau)d\tau$ we have that $||V||_{L^2([0,T];B)}\leq C T||v||_{L^2([0,T];B)}$.
On the other hand
\begin{align*}
&||v||^2_{H_{(0)}^{s-\ep}([0,T];B)}=\sum_{n=0}^\infty\left(\frac{(2n+1)^2\pi^2}{(2T)^2}\right)^{s-\ep}||v_n||^2_B=T^{-2s+2\ep}
\sum_{n=0}^\infty\left(\left(\frac{2n+1}{2}\right)^2\pi^2\right)^{s-\ep}||v_n||^2_B\\
&\leq T^{-2s+2\ep}
\sum_{n=0}^\infty\left(\left(\frac{2n+1}{2}\right)^2\pi^{2}\right)^{s}||v_n||^2_B=T^{2\ep}||v||^2_{H_{(0)}^{s}([0,T];B)}.
\end{align*}
\end{proof}

\begin{lemma}\label{2.5}
\begin{enumerate}
 \item Suppose $r>1$ and $r\geq s\geq 0$. If $v\in H^r(\Omega)$ and $w\in H^s(\Om)$, then $v w\in H^s(\Om)$, and
$$ || v w||_{H^s}\leq C ||v||_{H^r}||w||_{H^s}.$$
\item If $v, w \in H^1(\Om)$, then $vw\in H^0(\Om)$, and $$||vw||_{H^0}\leq C||v||_{H^1}||w||_{H^1}.$$
\item If $v\in H^r(\Om)$, $r>1$, and $w\in H^{-1}(\Om)$, the dual space of $H^1(\Om)$, then $vw$ is defined in $H^{-1}(\Om)$ and
$$||vw||_{H^{-1}}\leq ||v||_{H^r}||w||_{H^{-1}}. $$
\item If $v\in H^1(\Om)$ and $w\in H^0(\Om)$, then $vw$ is defined in $H^{-1}(\Om)$; and
$$||vw||_{H^{-1}}\leq C||v||_{H^1}||w||_{H^0}.$$
\end{enumerate}
\end{lemma}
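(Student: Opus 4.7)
The plan is to prove statements (1) and (2) directly from standard Sobolev-space multiplication estimates in dimension $n=2$, and then to deduce (3) and (4) by duality against $H^1(\Omega)$. Throughout, I would use the fact that $\Omega$ is a bounded domain with smooth boundary, so that a universal extension operator sends $H^\sigma(\Omega) \to H^\sigma(\R^2)$ continuously for every $\sigma$ in the relevant range, and one may reduce all estimates to the case of $\R^2$.

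For (1) I would first recall the embedding $H^r(\R^2) \hookrightarrow L^\infty(\R^2)$ valid for $r > 1 = n/2$. This gives (1) immediately when $s = 0$, since $\|vw\|_{L^2} \leq \|v\|_{L^\infty}\|w\|_{L^2} \leq C\|v\|_{H^r}\|w\|_{L^2}$. For $0 < s \leq r$ I would invoke the Kato--Ponce / Moser-type commutator estimate
\[
\|vw\|_{H^s(\R^2)} \leq C\bigl(\|v\|_{L^\infty}\|w\|_{H^s} + \|v\|_{H^s}\|w\|_{L^\infty}\bigr)
\]
when it applies, and more generally the paraproduct decomposition $vw = T_v w + T_w v + R(v,w)$, estimating each piece using that the high frequencies of one factor are paired with the low frequencies (i.e.\ $L^\infty$ part) of the other. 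Since $r > 1$, the factor $v$ supplies the $L^\infty$ control, and since $s \leq r$, the $H^s$-regularity of $w$ survives. Statement (2) is the sharpest case of multiplication below the embedding threshold: in $n=2$ one has $H^1 \hookrightarrow L^4$, hence $\|vw\|_{L^2} \leq \|v\|_{L^4}\|w\|_{L^4} \leq C\|v\|_{H^1}\|w\|_{H^1}$.

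Statements (3) and (4) then follow by a duality argument, using that $H^{-1}(\Omega)$ is the dual of $H^1(\Omega)$. For (3), define $vw$ as a distribution by
\[
\langle vw, \phi\rangle = \langle w, v\phi\rangle, \qquad \phi \in H^1(\Omega).
\]
Since $r > 1$, part (1) applied with $s = 1$ yields $\|v\phi\|_{H^1} \leq C\|v\|_{H^r}\|\phi\|_{H^1}$, so
\[
|\langle vw, \phi\rangle| \leq \|w\|_{H^{-1}} \|v\phi\|_{H^1} \leq C\|v\|_{H^r}\|w\|_{H^{-1}}\|\phi\|_{H^1},
\]
giving the claimed bound. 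For (4), the same definition yields $|\langle vw,\phi\rangle| \leq \|w\|_{L^2}\|v\phi\|_{L^2}$, and part (2) (or directly $H^1 \hookrightarrow L^4$) gives $\|v\phi\|_{L^2} \leq C\|v\|_{H^1}\|\phi\|_{H^1}$.

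The one step that deserves real care is the Moser/paraproduct estimate in (1): one must handle the borderline case $s$ close to $r$, where both factors should be controlled in fractional Sobolev norms, and verify the estimate on the domain $\Omega$ rather than on $\R^2$. Once the extension operator is set up and the $\R^2$ estimate is in hand, restriction back to $\Omega$ is routine. Everything else is either a direct embedding or a duality pairing, and no cancellation or delicate structure is needed.
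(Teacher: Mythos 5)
The paper does not actually give a proof of Lemma~\ref{2.5}: it simply cites \cite[p.~366, Lemma~2.5]{Beale:initial-value-problem-navier-stokes} and remarks that the dimension is 2 rather than 3. Your proposal supplies a self-contained argument, and the outline is correct: extend to $\R^2$, use $H^r(\R^2)\hookrightarrow L^\infty(\R^2)$ for $r>1=n/2$ to handle $s=0$ in (1), use a product estimate for $0<s\le r$, get (2) from $H^1\hookrightarrow L^4$ in two dimensions plus Cauchy--Schwarz, and obtain (3) and (4) by pairing $vw$ against $\phi\in H^1(\Omega)$ and applying (1) with $s=1$ (respectively (2)) to $v\phi$. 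One small caution: the Kato--Ponce/Moser inequality you quote, $\|vw\|_{H^s}\le C(\|v\|_{L^\infty}\|w\|_{H^s}+\|v\|_{H^s}\|w\|_{L^\infty})$, genuinely requires $w\in L^\infty$, which is not guaranteed when $s\le 1$; so for those $s$ it is not the right tool, and the paraproduct decomposition $vw=T_v w+T_w v+R(v,w)$ that you fall back on is the argument that actually closes. You noted this (``when it applies''), but it is worth being explicit that the paraproduct route is doing the work, not the Leibniz rule. With that understood, your approach is a legitimate and more transparent alternative to the paper's citation, and it correctly exploits the dimension-two improvement in part (2).
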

\begin{proof}
The proof can be found in \cite[p. 366, Lemma 2.5]{Beale:initial-value-problem-navier-stokes}. Here we notice that we work in dimension 2 rather than in dimension 3 as in \cite{Beale:initial-value-problem-navier-stokes} and the second statement of lemma \ref{2.5} can be improved in dimension 2.
\end{proof}
\begin{lemma}\label{2.5m} If $v\in H^{\frac{1}{q}}$ and $w\in H^{\frac{1}{p}}$ with $\frac{1}{p}+\frac{1}{q}=1$ and $1<p<\infty$  then
$$||vw||_{H^0}\leq C ||v||_{H^{\frac{1}{q}}}|| w ||_{H^{\frac{1}{p}}}.$$
\end{lemma}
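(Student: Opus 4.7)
The plan is to reduce the bound to a two-step combination of Hölder's inequality and a fractional Sobolev embedding on the 2D bounded domain $\Omega_0$. Since $H^0 = L^2$, what needs to be shown is $\|vw\|_{L^2(\Omega_0)} \leq C\|v\|_{H^{1/q}}\|w\|_{H^{1/p}}$.

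First, I would pick the companion Lebesgue exponents $2p$ and $2q$. These are conjugate with respect to $L^2$, in the sense that
$$\frac{1}{2p}+\frac{1}{2q}=\frac{1}{2}\left(\frac{1}{p}+\frac{1}{q}\right)=\frac{1}{2},$$
so Hölder's inequality yields
$$\|vw\|_{L^2(\Omega_0)}\leq \|v\|_{L^{2p}(\Omega_0)}\,\|w\|_{L^{2q}(\Omega_0)}.$$

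Second, I would invoke the standard fractional Sobolev embedding in dimension $d=2$: for $0<s<1$, one has $H^s(\Omega_0)\hookrightarrow L^r(\Omega_0)$ whenever $\tfrac{1}{r}=\tfrac{1}{2}-\tfrac{s}{2}$. Taking $s=1/q$ gives $\tfrac{1}{r}=\tfrac{1}{2}-\tfrac{1}{2q}=\tfrac{1}{2p}$ (using $1/p+1/q=1$), i.e.\ $r=2p$; thus $\|v\|_{L^{2p}}\leq C\|v\|_{H^{1/q}}$. Symmetrically, taking $s=1/p$ gives $\|w\|_{L^{2q}}\leq C\|w\|_{H^{1/p}}$. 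The hypothesis $1<p<\infty$ ensures $0<1/p,1/q<1$, so both exponents lie strictly below the critical threshold $d/2=1$ and the embeddings are non-endpoint and continuous on the bounded smooth domain $\Omega_0$.

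Combining the two steps yields the claimed inequality with $C$ depending only on $\Omega_0$ and $p$. There is essentially no obstacle here beyond verifying that the Hölder exponents produced by the dual pairing in $L^2$ match, via the condition $1/p+1/q=1$, exactly the exponents given by the 2D Sobolev embedding — a verification performed in the computation above. The restriction to dimension two is crucial: it is what makes the endpoint computation $\tfrac{1}{2}-\tfrac{1/q}{2}=\tfrac{1}{2p}$ work out, which is precisely the improvement over the 3D version of Lemma \ref{2.5} noted in the paper.
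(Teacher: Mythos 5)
Your proof is correct and follows essentially the same route as the paper: Hölder's inequality with the dual pair $(2p,2q)$, followed by the 2D fractional Sobolev embedding $H^{s}\hookrightarrow L^{r}$ with $1/r = 1/2 - s/2$ (which the paper phrases as the Gagliardo--Nirenberg inequality). No gaps.
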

\begin{proof}
Applying H{\"o}lder's inequality yields
$$||v w||_{H^0}\leq || v||_{L^{2p}}||w||_{L^{2q}},$$
Now the Gagliardo-Nirenberg inequality provides
$$|| v ||_{L^{2p}}\leq C || v||_{H^s}$$
for $\frac{1}{2p}=\frac{1}{2}-\frac{s}{2}$, that  implies $s=\frac{1}{q}$. Proceeding similarly for $w$ we have that
$$||v w||_{H^0}\leq C||v||_{H^{\frac{1}{q}}}||w||_{H^{\frac{1}{p}}}.$$
\end{proof}

\begin{lemma}\label{2.6}
Suppose $B, Y, Z$ are Hilbert spaces, and $M: B\times Y \to Z$ is a bounded, bilinear operator. Suppose $v\in H^s((0,T);B)$ and $w\in H^s((0,T);Y)$, where $s>\frac{1}{2}$. If $vw$ is defined by $M(v,w)$, then $vw\in H^s((0,T);Z)$ and
\begin{enumerate}
\item $$||vw||_{H^s((0,T);Z)}\leq C ||v||_{H^s((0,T);B)}||w||_{H^s((0,T);Y)}.$$
\item Also, if $s\leq 2$ and $v$, $w$ satisfy the additional condition $\pa^k_t v|_{t=0}=\pa^k_t w|_{t=0}=0$, $0\leq k< s-\frac{1}{2}$, and $s-\frac{1}{2}$ is not an integer, then the constant $C$ in 1 can be chosen independent of $T$. Indeed
    $$||vw||_{H_{(0)}^s((0,T);Z)}\leq C ||v||_{H_{(0)}^s((0,T);B)}||w||_{H_{(0)}^s((0,T);Y)}$$ where $C$ does not depend on $T$.
\end{enumerate}
\end{lemma}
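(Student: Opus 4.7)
The plan is to reduce everything to the standard Banach-algebra property of $H^s(\R)$ for $s>1/2$, using the bilinear bound $\|M(a,b)\|_Z \le C\|a\|_Y\|b\|_B$ in place of pointwise multiplication. This property is harmless in the vector-valued setting, since the usual Littlewood--Paley proof only uses the norm bound.

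For part 1, I would combine the one-dimensional Sobolev embedding $H^s((0,T);X) \hookrightarrow C^0([0,T];X)$ (valid for $s>1/2$) with the commutator-type estimate
\begin{equation*}
\|vw\|_{H^s((0,T);Z)} \le C\,\|v\|_{L^\infty((0,T);Y)}\|w\|_{H^s((0,T);B)} + C\,\|v\|_{H^s((0,T);Y)}\|w\|_{L^\infty((0,T);B)},
\end{equation*}
obtained by extending $v, w$ to functions on $\R$ via Lemma \ref{lema22}, multiplying, and restricting. The embedding constant is allowed to depend on $T$ here, so the argument is clean.

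For part 2, the point is to replace the $L^\infty$-embedding argument by an argument that sees no $T$ at all. Under the vanishing-trace conditions on $v$ and $w$, Lemma \ref{lema22}(2) supplies extensions $\bar v, \bar w$ to $\R$ whose $H^s(\R;\cdot)$-norms are bounded, uniformly for $T\leq T_0$, by the $H^s((0,T);\cdot)$-norms of $v,w$. On $\R$ the Banach-algebra property for $s>1/2$ gives
\begin{equation*}
\|\bar v\bar w\|_{H^s(\R;Z)} \le C\|\bar v\|_{H^s(\R;Y)}\|\bar w\|_{H^s(\R;B)},
\end{equation*}
and restriction to $(0,T)$ yields the claimed $T$-independent bound.

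For part 3, only $w$ vanishes to the required order at $t=0$, so I would decompose $v = v_0 + v_1$, where $v_0(t) = \sum_{0\le k<s-1/2}\frac{t^k}{k!}\partial_t^k v|_{t=0}$ absorbs the initial traces and $v_1 := v - v_0$ satisfies $\partial_t^k v_1|_{t=0}=0$ for $0\le k<s-1/2$. Then $vw = v_1 w + v_0 w$. The first term falls under part 2, since both $v_1$ and $w$ satisfy the vanishing hypotheses, yielding a $T$-independent bound by $C\|v_1\|_{H^s}\|w\|_{H^s}\le C\|v\|_{H^s}\|w\|_{H^s}$ plus lower-order terms controlled by $\|\partial_t^k v|_{t=0}\|_Y$. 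For the second term, $v_0 w = \sum_k \frac{t^k}{k!}M(\partial_t^k v|_{t=0},w)$, and multiplication by $t^k$ is bounded on $H^s((0,T))$ uniformly for $T\le T_0$, so
\begin{equation*}
\|v_0 w\|_{H^s((0,T);Z)} \le C\Bigl(\sum_k \|\partial_t^k v|_{t=0}\|_Y\Bigr)\|w\|_{H^s((0,T);B)},
\end{equation*}
which is exactly the extra $C\|w\|_{H^s}$ term in the statement, with $C$ depending on $\partial_t^k v|_{t=0}$ but not on $T$. The main obstacle I expect is bookkeeping: the Sobolev embedding $H^s\hookrightarrow L^\infty$ has a $T$-dependent constant, so one must always route through the extensions of Lemma \ref{lema22}(2) in parts 2 and 3, and the vanishing-trace hypotheses cannot be dropped because Lemma \ref{lema22}(2) genuinely requires them.
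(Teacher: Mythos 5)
Your approach is correct and is essentially the same as the paper's: parts 1 and 2 are cited from Beale (as you reconstruct via the extension operator of Lemma \ref{lema22}(2) and the Banach-algebra property of $H^s$ for $s>1/2$), and for part 3 both you and the paper subtract off a smooth function matching the initial traces of $v$, apply part 2 to the difference paired with $w$, and then bound the residual product. The only cosmetic difference is the correction: you use the Taylor polynomial $v_0(t)=\sum_k \frac{t^k}{k!}\partial_t^k v|_{t=0}$, while the paper uses $\sum_j t^j e^{-jt^2}\partial_t^j v|_{t=0}$; both match the traces up to order $<s-\tfrac12$, and both have $H^s((0,T);Y)$-norm bounded uniformly in $T\leq T_0$. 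One small remark: when you invoke that ``multiplication by $t^k$ is bounded on $H^s((0,T))$ uniformly for $T\leq T_0$,'' this is indeed correct, but it deserves a word of justification — e.g.\ replace $t^k$ on $[0,T_0]$ by a compactly supported smooth cutoff $\chi(t)=t^k\eta(t)$ with $\eta\equiv 1$ on $[0,T_0]$; multiplication by $\chi$ on $H^s(\R;Z)$ has a $T$-independent constant, and restriction/extension gives the claim on $(0,T)$. The Gaussian factor in the paper's correction is a small optimization making the $j\geq 1$ terms honest $H^s(\R)$ functions directly, but for bounded $T$ your polynomial choice is equally valid.
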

\begin{proof}
The proof of 1 and 2 can be found in \cite[p. 366, Lemma 2.6]{Beale:initial-value-problem-navier-stokes}.
\end{proof}

\begin{lemma}\label{bestiario}
For $2<s<2.5$, $\delta$, $\ep>0$ small enough and $v\in F^{s+1}$ the following estimates hold:

1. $\hhn{v}{\frac{s+1}{2}}{1-\ep}\leq C\fn{v}.$

2. $\hhn{v}{\frac{s+1}{2}+\ep}{1+\delta}\leq C \fn{v}.$

3. $\hhn{v}{\frac{s-1}{2}+\ep}{2+\delta}\leq C\fn{v}.$

4. $\hhn{v}{\frac{s}{2}-\frac{1}{4}+\ep}{2+\delta}\leq C\fn{v}.$

5. $\hhn{v}{1}{s-1}\leq C \fn{v}.$

6. $\hhn{v}{\frac{1}{2}+2\ep }{s}\leq C \fn{v}.$
\end{lemma}

\begin{proof}

We will show the most singular cases, which are 2. and 4. The rest is proved in an analogous way.

%Let us consider the space $L^{2} H^{s+1} \cap H_{(0)}^{2} H^{\gamma} \supset F^{s+1}$ and we will show the inequality on the first space.

We will use the extension given by the lemma \ref{lema22} and thus we can consider $t \in \R$. Since $\Omega_{0}$ is a regular domain, we also consider the extension to the whole plane $\R^{2}$. This way, we can think of $v: \R \times \R^{2} \to \R$.

\underline{Case 2:}
We first consider $\ep = \delta = 0$. Using the Fourier transform in $\R \times \R^{2}$, we have

\begin{align}
\label{pacoestrella41}
\|v\|_{H^{\frac{s+1}{2}}H^{1}}^{2} \sim \int_{\R \times \R^{2}} (1 + |\tau|^{s+1}) (1 + |\xi|^{2}) |\hat{v}(\tau,\xi)|^{2} d\tau d\xi
= \int_{\R \times \R^{2}}(1 + |\xi|^{2} + |\tau|^{s+1} + |\tau|^{s+1}|\xi|^{2})|\hat{v}(\tau,\xi)|^{2} d\tau d\xi.
\end{align}

We only need to bound the previous integral by

\begin{align*}
 \|v\|_{L^{2}H^{s+1}}^{2} + \|v\|_{H^{2}H^{\gamma}}^{2} \sim \int_{\R \times \R^{2}}(2 + |\xi|^{2(s+1)} + |\tau|^{4} + |\xi|^{2\gamma} + |\tau|^{4} |\xi|^{2\gamma})|\hat{v}(\tau,\xi)|^{2} d\tau d\xi.
\end{align*}

We are only left to bound the integral with $|\tau|^{s+1}|\xi|^{2}$. Using Young's inequality:

\begin{align*}
 |\tau|^{s+1}|\xi|^{2\lambda}|\xi|^{2(1-\lambda)} \leq C(|\xi|^{2\lambda p } + |\tau|^{(s+1)q}  |\xi|^{2(1-\lambda)q}),
\end{align*}

where $p^{-1} + q^{-1} = 1$. Taking $q = \frac{4}{s+1}, p = \frac{4}{3-s}, \lambda = \frac{(s+1)(3-s)}{4}$, we are only left to check that

\begin{align*}
 (1-\lambda)q < \gamma < s-1,
\end{align*}

but elementary formulas show it as long as $s > 1$. It is easy to see that in the previous calculation there is room for the case $\ep, \delta > 0$ as long as $\ep$ and $\delta$ are sufficiently small.

\underline{Case 4:}

Proceeding similarly to Case 2, it is enough to show that

\begin{align*}
 |\tau|^{s-\frac12}|\xi|^{4} \leq C(1 + |\tau|^{4} + |\xi|^{2(s+1)} + |\tau|^{4} |\xi|^{2\gamma}).
\end{align*}

Using Young

\begin{align*}
 |\tau|^{s-\frac12}|\xi|^{4\lambda}|\xi|^{4(1-\lambda)} \leq C(|\xi|^{4\lambda p} + |\tau|^{(s-\frac12)q}|\xi|^{4(1-\lambda)q}).
\end{align*}

Taking $q = \frac{4}{s-1/2}, p = \frac{4}{9/2-s}, \lambda = \frac{(s+1)(9-2s)}{16}$, we are only left to check that

\begin{align*}
 2(1-\lambda)q < \gamma < s-1,
\end{align*}

which is true for $s > \frac{3}{2}$. Similarly, there is room for the case $\ep, \delta > 0$ and sufficiently small. Finally, the boundedness with respect to $T$ is shown if one considers the extension given by Lemma \ref{lema22}, part 2.

\end{proof}

\begin{lemma}\label{littlelemma} Let $f\in H^s([0,T])$ with $0\leq s \leq 2$ and $\pa_t^kf|_{t=0}=0$, $0\leq k<s-\frac{1}{2}$, with $s-\frac{1}{2}$ not an integer, then:
\begin{align*}||tf||_{H_{(0)}^s}&\leq CT||f||_{H_{(0)}^s}\\ ||t^2f||_{H_{(0)}^s}&\leq CT^2||f||_{H_{(0)}^s}\end{align*}
\end{lemma}
\begin{proof} We take $f\in H^2([0,T])$ such that, $f|_{t=0}=\pa_t f|_{t=0}=0$. We have that $||tf||_{L^2(0,T)}\leq T||f||_{L^2}$ and that $||tf||_{H_{(0)}^2[0,T]}\leq C T||f||_{H_{(0)}^2[0,T]}$. This is because $f(t)=\int_{0}^t\pa_\tau f(\tau)d\tau \Rightarrow ||f||_{L^2([0,T])}\leq T ||f_t||_{L^2([0,T])}$ and because of an analogous inequality for $\pa_t f(t)$. We get the inequality for the rest of the exponents by interpolation. A similar proof holds for $||t^2f||_{H_{(0)}^s}$.
\end{proof}

In the following lemmas we deal with the estimation of composition of functions. The functions $A$ and $Q$ given by the expressions \eqref{definiciondeA} and \eqref{definiciondeQ} respectively will appear together with the initial velocity $v_0 \,:\, \tilde{\Omega}\to \R^2$ which we assume regular enough (see discussion in Theorem \ref{nolineal} below). To indicate the dependence of a constant, $C$, with   respect to a quantity, $K$, we will use the notation $C[K]$.

\begin{lemma} \label{composicion1} Let $X-\alpha-Av_0t\in F^{s+1}$ with $2<s<2.5$. Then, for $T>0$ less than a small enough constant determined by $v_0$, $\inf_{\alpha\in \tilde{\Omega}} |\alpha|$ and $\fn{X-\alpha-Av_0t}$,
\begin{align*}
&||A\circ X||_{L^\infty H^{s+1}}\leq C[M,||X-\alpha-Av_0t||_{F^{s+1}}, v_0,||\alpha||_{L^2(\tilde{\Omega})}]\\
&||A\circ X-A||_{L^\infty H^{s+1}}\leq C[M,||X-\alpha-Av_0t||_{F^{s+1}}, v_0,||\alpha||_{L^2(\tilde{\Omega})}]\p{|| X-\alpha-Av_0t||_{L^\infty H^{s+1}}+||Av_0t||_{L^\infty H^{s+1}}}.
\end{align*}
with
\begin{align*}
M=  \frac{1}{\inf_{\alpha\in \tilde{\Omega}} |\alpha|-C[v_0]T-T^\frac{1}{4}||X-\alpha-Av_0t||_{F^{s+1}}}.
\end{align*}
\end{lemma}
\begin{proof}
By definition the matrix $A_{kj}=\pa_jP^k\circ P^{-1}$ contains terms of the form $\frac{x^i}{|x|^2}$ and therefore, three derivatives of $A\circ X$ contains terms of the form
\begin{align*}
&\frac{\pa^3_{jki}X^l}{|X|^2},\quad \frac{X^l\pa_jX^r\pa^2_{ik}X^u}{|X|^4},\quad \frac{\pa_jX^i\pa_kX^l\pa_uX^r}{|X|^4},\quad \frac{X^pX^q\pa_jX^i\pa_kX^l\pa_uX^r}{|X|^6},\quad \frac{X^iX^j\pa^3_{lmn}X^r}{|X|^4},\\
&\frac{X_iX_jX_k\pa^2_{m}X^l\pa^2_{rn}X^n}{|X|^6}, \frac{X^iX^jX^kX^l\pa_rX^u\pa_mX^p\pa_nX^q}{|X|^8}, \quad \text{for $i,j,k,l,m,n,r,u,p,q=1,2$.}
\end{align*}
Now we notice that $||f||_{H^{s+1}}=||f||_{H^{3+\delta}}=||f||_{L^2}+\sum_{i,j,k=1}^2||\pa^3_{ijk}f||_{H^\delta}$ for some $0<\delta<\frac{1}{2}$. And then, using that, $||fg||_{H^\delta}\leq ||f||_{H^2}||g||_{H^\delta}$ and that $H^2$ is an algebra we can check that
\begin{align*}
||A\circ X||_{H^{s+1}}\leq C \p{\left|\left|\frac{1}{|X|}\right|\right|^2_{L^\infty}+\left|\left|\frac{1}{|X|}\right|\right|_{L^\infty}^{12}}\p{||X||_{H^{s+1}}+||X||^{12}_{H^{s+1}}}
\end{align*}
In addition,
\begin{align*}
|X|\geq |\alpha|-|X-\alpha|\geq \inf_{\alpha\in\tilde{\Omega}} |\alpha|-||X-\alpha||_{L^\infty(\Omega)},
\end{align*}
and
\begin{align} &||X-\alpha||_{L^\infty(\Omega)}\leq \li{X-\alpha}{s+1}\leq \li{X-\alpha-Av_0t}{s+1}+T||Av_0||_{H^{s+1}}\nonumber\\ &\leq T^\frac{1}{4}\lit{X-\alpha-Av_0t}{s+1}+T||Av_0||_{H^{s+1}}. \label{parabajo}\end{align}

Thus
\begin{align*}
\left|\left|\frac{1}{|X|}\right|\right|_{L^\infty}\leq \frac{1}{\inf_{\alpha\in \tilde{\Omega}} |\alpha|-T||Av_0||_{H^{s+1}}-T^\frac{1}{4}\lit{X-\alpha-Av_0t}{s+1}}
\end{align*}

In addition we have that \begin{align}\label{pararriba}\li{X}{s+1}\leq \li{X-\alpha-Av_0t}{s+1}+\li{\alpha+Av_0t}{s+1}.\end{align}

The proof of the second inequality follows similar steps.
\end{proof}

\begin{lemma}\label{composicion2}Let $X-\alpha-Av_0t\in F^{s+1}$ with $2<s<2.5$. Then, for $T>0$ less than a small enough constant determined by $v_0$, $\inf_{\alpha\in \tilde{\Omega}} |\alpha|,$ and $\fn{X-\alpha-Av_0t}$,
\begin{align*}
\hhn{A\circ X -A}{1}{\gamma}\leq C[M,\fn{X-\alpha-Av_0t},v_0]\hhn{X-\alpha}{1}{\gamma}.
\end{align*}
with
\begin{align*}
M=  \frac{1}{\inf_{\alpha\in \tilde{\Omega}} |\alpha|-C[v_0]T-T^\frac{1}{4}||X-\alpha-Av_0t||_{F^{s+1}}}.
\end{align*}
\end{lemma}
\begin{proof}
By definition $\hhn{A\circ X-A}{1}{\gamma}=\lhn{A\circ X-A}{\gamma}+\lhn{\pa_t\p{A\circ X-A}}{\gamma}$. To control $\lhn{A\circ X-A}{\gamma}$ we first notice that we can write $\gamma=1+\delta$ with $0<\delta<1/2$, thus we need to control $\lhn{A\circ X-A}{0}+\lhn{\pa_\alpha \p{A\circ X-A}}{\delta}$. We focus on the term $\lhn{\pa_\alpha \p{A\circ X-A}}{\delta}$. Since $A\circ X$ contains terms of the form $\frac{X_i}{|X|^2}$, one spatial derivative of $A\circ X-A$ contains terms of the form $\frac{\pa_jX^i}{|X|^2}-\frac{\pa_j\al^i}{|\al|^2}$ and $\frac{X^iX^l\pa_j X^k}{|X|^4}-\frac{\alpha^i\alpha^l\pa_j \al^k}{|\al|^4}$. The bound for these different kinds of terms follows similar steps. For example, $$\left|\left|\frac{\pa_jX^i-\pa_j \al^i}{|X|^2}\right|\right|_{H^\delta}\leq ||\frac{1}{|X|^2}||_{H^2}||\pa_j(X^i-\al^i)||_{H^\delta}\leq C[||X||_{H^2}]C\left[\frac{1}{\inf |\alpha|-||X-\alpha||_{H^{s+1}}}\right]||X-\alpha||_{H^\gamma}.$$
This type of estimate allows to prove that
$$\lhn{A\circ X-A}{\gamma}\leq C[||X||_{L^\infty H^2}]C\left[\frac{1}{\inf |\alpha|-||X-\alpha||_{L^\infty H^{s+1}}}\right]||X-\alpha||_{H^0H^\gamma}.$$

The time derivative of $A\circ X-A$ contains terms of the form $\frac{\pa_t X^i}{|X|^2}-\frac{\pa_t \alpha^i}{|\alpha|^2}$ and $\frac{X^iX^j\pa_tX^l}{|X|^4}-\frac{\alpha^i\alpha^j\pa_t\alpha^l}{|\alpha|^4}$. In addition the most singular term we need to control in $\lhn{\pa_t\p{A\circ X-A}}{\gamma}$ is $\lhn{\pa_t\pa_\alpha\p{A\circ X-A}}{\delta}.$ Therefore we can check that
\begin{align*}
\lhn{\pa_t\p{A\circ X}}{\gamma}\leq C[||X||_{L^\infty H^2}]C\left[\frac{1}{\inf |\alpha|-||X-\alpha||_{L^\infty H^{s+1}}}\right]||\pa_t (X-\alpha)||_{H^0H^\gamma}.
\end{align*}
Finally we use \eqref{parabajo} and \eqref{pararriba} and that $\hhn{X-\alpha}{1}{\gamma}\leq \hhn{X-\alpha-Av_0t}{1}{\gamma}+\hhn{Av_0t}{1}{\gamma}$.

\end{proof}

\begin{lemma}\label{composicion3} Let $X-\alpha-Av_0t$, $Y-\alpha-Av_0t\in F^{s+1}$ with $2<s<2.5$. Then, for $T>0$ less than a small enough constant determined by $v_0$, $\inf_{\alpha\in \tilde{\Omega}} |\alpha|,$  $\fn{X-\alpha-Av_0t}$ and $\fn{Y-\alpha-Av_0t}$,
\begin{align*}
&\li{A\circ X-A\circ Y}{s+1}\leq C\left[M,\fn{X-\alpha-Av_0t},\fn{Y-\alpha-Av_0t}\right]\li{X-Y}{s+1},\\
&\hhn{A\circ X-A\circ Y}{1}{\gamma}\leq C\left[M,\fn{X-\alpha-Av_0t},\fn{Y-\alpha-Av_0t}\right]\hhn{X-Y}{1}{\gamma}
\end{align*}
where
\begin{align*}
M=\max\left\{\frac{1}{\inf |\alpha|-C[v_0]T-CT^\frac{1}{4}\lit{X-\alpha-Av_0t}{s+1}}, \frac{1}{\inf |\alpha|-C[v_0]T-CT^\frac{1}{4}\lit{Y-\alpha-Av_0t}{s+1} }\right\}
\end{align*}
\end{lemma}
\begin{proof}  The proof follows similar steps to those in the proofs of lemmas \ref{composicion1} and \ref{composicion2}.
\end{proof}

\begin{lemma}\label{composicionz}Let $X-\alpha-Av_0t\in F^{s+1}$ with $2<s<2.5$ and $\zeta=\p{\nabla X}^{-1}$. Then, for $T>0$ less than a small enough constant determined by $v_0$ and  $\fn{X-\alpha-Av_0t}$,
$$\li{\zeta}{s-1}+\sum_{j=1}^2||\pa_j \zeta||_{L^\infty H^{s-1}}\leq C[M,\fn{X-\alpha-Av_0t}]$$

$$\hhn{\zeta-\I}{\frac{s-1}{2}+\ep}{1+\delta}\leq C[M,\fn{X-\alpha-Av_0t}]\hhn{X-\alpha}{\frac{s-1}{2}+\ep}{2+\delta}$$

%$$\hhn{\zeta-\I}{\frac{s-1}{2}}{1+\delta}\leq \hhn{X-\alpha}{\frac{s-1}{2}}{2+\delta}$$

%$$\hhn{\pa(\zeta-\I)}{\frac{s-1}{2}}{\ep}$$

where
\begin{align*}
M=  \frac{1}{1-C[v_0]T-CT^\frac{1}{4}\fn{X-\alpha-Av_0t}-CT^\frac{1}{2}\fn{X-\alpha-Av_0t}^2}.
\end{align*}
\end{lemma}
\begin{proof} For the first estimate we proceed as follows. We estimate $\det\p{\nabla X}$ from below. We have that $\det \p{\nabla X}=\pa_1X^1\pa_2X^2-\pa_1X^2\pa_2X^1=1+\nabla\cdot (X-\alpha)+\det\nabla\p{X-\alpha}.$ Therefore \begin{align}\label{detbelow}|\det\p{\nabla X}|\geq 1-||\nabla \p{X-\alpha}||_{L^\infty(\Omega)}-||\nabla \p{X-\alpha}||^2_{L^\infty(\Omega)}\geq 1-C\li{X-\alpha}{1+s}-C\li{X-\alpha}{1+s}^2.\end{align} The rest of the proof follows similar steps to those in the proof of lemma \ref{composicion1}. Indeed, since $s=\delta+2$ with $0<\delta<\frac{1}{2}$ we need to look at two derivatives of $\zeta$. In addition $\zeta$ contains terms of the form $\frac{\pa_j X^i}{\det\p{\nabla X}}$. Thus, for example we need to control $\frac{\pa^3_{ijk}X^l}{\det \p{\nabla X}}$ in $H^\delta$. To do it we can proceed as follows, $\left|\left|\frac{\pa^3_{ijk}X^l}{\det \p{\nabla X}}\right|\right|_{H^\delta}\leq \left|\left|\frac{1}{\det \p{\nabla X}}\right|\right|_{H^2}\left|\left|\pa^3_{ijk}X^l\right|\right|_{H^\delta}\leq C[M]C[\li{X}{s+1}]$ thanks to \eqref{detbelow}. Finally we proceed as in \eqref{parabajo} and \eqref{pararriba}.

For the second estimate we first write
\begin{align*}
\zeta=\p{\nabla X}^{-1}=\frac{1}{\det\nabla X}\p{\nabla X}^\dag=\p{\frac{1}{\det{\nabla X}}-1}\p{\nabla X-\I}^\dag+\p{\frac{1}{\det\nabla X}-1}\I+\p{\nabla X-\I}^\dag+\I,
\end{align*}
where for a matrix
$
\left(\begin{array}{cc} a & b\\ c & d \end{array}\right)\quad \text{we define} \quad \left(\begin{array}{cc} a & b\\ c & d \end{array}\right)^\dag= \left(\begin{array}{cc} d & -b\\ -c & a \end{array}\right).
$

Since $\det\nabla X=1+\det \nabla\p{X-\alpha} + \nabla \cdot (X - \al)$ we have that
\begin{align*}
\zeta-\I = -\frac{\det\nabla\p{X-\alpha}+\nabla\cdot\p{X-\alpha}}{1+\nabla\cdot\p{X-\alpha}+\det\nabla\p{X-\alpha}}\p{\nabla\p{X-\alpha}}^\dag
-\frac{\det\nabla\p{X-\alpha}+\nabla\cdot\p{X-\alpha}}{1+\nabla\cdot\p{X-\alpha}+\det\nabla\p{X-\alpha}}\I+\p{\nabla\p{X-\alpha}}^\dag
\end{align*}
Then, we can estimate using lemma \ref{2.6}
\begin{align*}
&\hhn{\zeta-\I}{\frac{s-1}{2}+\ep}{1+\delta}\leq \hhn{\frac{\nabla\cdot\p{X-\alpha}+\det\nabla\p{X-\alpha}}{1+\nabla\cdot\p{X-\alpha}+\det\nabla\p{X-\alpha}}}{\frac{s-1}{2}+\ep}{1+\delta}\hhn{\nabla\p{X-\alpha}}{\frac{s-1}{2}+\ep}{1+\delta}\\
&+\hhn{\frac{\nabla\cdot\p{X-\alpha}+\det\nabla\p{X-\alpha}}{1+\nabla\cdot\p{X-\alpha}+\det\nabla\p{X-\alpha}}}{\frac{s-1}{2}+\ep}{1+\delta}+\hhn{\nabla\p{X-\alpha}}{\frac{s-1}{2}+\ep}{1+\delta},
\end{align*}
where we see that it is enough to estimate $\hhn{\frac{\det\nabla\p{X-\alpha} + \nabla \cdot(X-\al)}{1+\nabla \cdot\p{X-\alpha}\det\nabla\p{X-\alpha}}}{\frac{s-1}{2}+\ep}{1+\delta}$. In order to do it we notice that since $X-\alpha-Av_0t\in F^{s+1}$ and $||\nabla{X-\alpha}||_{L^\infty L^\infty}\leq ||X-\alpha-Av_0t||_{L^\infty H^{s+1}}+C T\leq C[v_0,\fn{X-\alpha-Av_0 t}]T^\frac{1}{4}$. Therefore, for $T$ small enough, we have that
\begin{align*}
\frac{\nabla\cdot\p{X-\alpha}+\det\nabla\p{X-\alpha}}{1+\nabla\cdot\p{X-\alpha}+\det\nabla\p{X-\alpha}}=\sum_{n=1}^\infty (-1)^{n+1} \p{\nabla\cdot\p{X-\alpha}+\det\nabla\p{X-\alpha}}^n.
\end{align*}
Thus using the inequality $\hhn{fg}{\frac{s-1}{2}+\ep}{1+\delta}\leq C\li{f}{1+\delta}\hhn{g}{\frac{s-1}{2}+\ep}{1+\delta}+C\hhn{f}{\frac{s-1}{2}+\ep}{1+\delta}\li{g}{1+\delta}$ yields
\begin{align*}
\hhn{\frac{\nabla\cdot\p{X-\alpha}+\det\nabla\p{X-\alpha}}{1+\nabla\cdot\p{X-\alpha}+\det\nabla\p{X-\alpha}}}{\frac{s-1}{2}+\ep}{1+\delta}\leq C\frac{\hhn{\nabla\p{X-\alpha}}{\frac{s-1}{2}+\ep}{1+\delta}+\hhn{\nabla\p{X-\alpha}}{\frac{s-1}{2}+\ep}{1+\delta}^2}{\p{1-C\li{\nabla\p{X-\alpha}}{1+\delta}-C\li{\nabla\p{X-\alpha}}{1+\delta}^2}^2}.
\end{align*}

This concludes the proof of the lemma.
\end{proof}

\begin{lemma}\label{composicionzfi} Let $\xn-\alpha-Av_0t \in F^{s+1}$ and $\zeta_\phi=\I+t\pa_t\zn|_{t=0}=\I-t\nabla\p{Av_0}$. Then, for $T>0$ less than a small enough constant determined by $v_0$ and  $\fn{\xn-\alpha-Av_0t}$,
$$\hhn{\zn-\zeta_\phi}{\frac{s-1}{2}+\ep}{1+\delta}\leq C[v_0, M]\p{\hhn{\xn-\alpha-Av_0t}{\frac{s-1}{2}+\ep}{2+\delta}+T^\frac{1}{2}}.$$
$$ \hhn{\zn-\zeta_\phi}{\frac{s+1}{2}+\ep}{0}\leq C[v_0, M]\p{\hhn{\xn-\alpha-Av_0t}{\frac{s+1}{2}+\ep}{1}+T^\frac{1}{2}}$$
where
\begin{align*}
M=  \frac{1}{1-C[v_0]T-CT^\frac{1}{4}\fn{\xn-\alpha-Av_0t}-CT^\frac{1}{2}\fn{\xn-\alpha-Av_0t}^2}.
\end{align*}
\end{lemma}
\begin{proof}
We write
\begin{align*}
&\zn=\frac{1}{\det\nabla \xn}\p{\nabla \xn}^\dag=\p{\frac{1}{\det\nabla \xn}-1+t\nabla\cdot\p{Av_0}}\p{\nabla(\xn-\alpha-Av_0t)}^\dag\\
&+\p{\frac{1}{\det\nabla \xn}-1+t\nabla\cdot\p{Av_0}}\p{\nabla\p{\alpha+tAv_0}}^\dag+ (1-t\nabla\cdot\p{Av_0})\p{\nabla\p{\xn-\alpha-Av_0t}}^\dag\\
&+\p{1-t\nabla\cdot\p{Av_0}}\p{\nabla\p{\alpha+Av_0t}}^\dag.
\end{align*}
Since $-\nabla\cdot\p{Av_0}\I+\p{\nabla \p{Av_0}}^\dag=-\nabla\p{Av_0}$ we find that
\begin{align*}
&\zn-\zeta_\phi=\frac{1}{\det\nabla \xn}\p{\nabla \xn}^\dag=\p{\frac{1}{\det\nabla \xn}-1+t\nabla\cdot\p{Av_0}}\p{\nabla(\xn-\alpha-Av_0t)}^\dag\\
&+\p{\frac{1}{\det\nabla \xn}-1+t\nabla\cdot\p{Av_0}}\p{\nabla\p{\alpha+tAv_0}}^\dag+ (1-t\nabla\cdot\p{Av_0})\p{\nabla\p{\xn-\alpha-Av_0t}}^\dag+M[v_0]t^2,
\end{align*}
where $M[v_0]$ is a matrix whose coefficients just depend on $v_0,\alpha$.  Since $||t^2||_{H^2[0,T]}\leq C T^\frac{1}{2}$ we only need to care about the terms different from $M[v_0]t^2$ in the previous expression. Using lemmas \ref{2.5m}, \ref{2.6} and \ref{littlelemma} we also see that actually it is enough to care about the term, $\frac{1}{\det\nabla \xn}-1+t\nabla\cdot\p{Av_0}$. For this term, since $\det\nabla \xn= 1+\nabla\cdot (\xn-\alpha)+\det\nabla (\xn-\alpha)$, we have that \begin{align*}&\frac{1}{\det\nabla X}-1+t\nabla\cdot\p{Av_0}\\&=\frac{-\nabla\cdot(\xn-\alpha-Av_0t)-\det(\nabla\p{\xn-\alpha})
+t\nabla\cdot(Av_0)\nabla\cdot(\xn-\alpha)+t\nabla\cdot(Av_0)\det(\nabla(\xn-\alpha))}
{1+\nabla\cdot (\xn-\alpha)+\det\nabla(\xn-\alpha)}\end{align*}

In the previous expression we can use that $\nabla\cdot(Av_0)\nabla\cdot(\xn-\alpha)=\nabla\cdot(Av_0)\nabla\cdot(\xn-\alpha-Av_0t)+t(\nabla\cdot(Av_0))^2$. In addition, since $\det\nabla (\xn-\alpha)=O(t^2)$ when $t$ goes to zero, we can check that, $\det\nabla(\xn-\alpha)=\det\nabla (\xn-\alpha-Av_0t)+F[v_0](\nabla(\xn-\alpha-Av_0t)+G[v_0]t^2$, where $F[v_0](\alpha)$ is a function linear in $\alpha$ whose coefficients just depend on $v_0$ and $G[v_0]$ is a coefficient that just depends on $v_0$. The previous splitting allows us to prove the lemma by using a similar strategy to the one in the proof of lemma \ref{composicionz}.
\end{proof}

\begin{lemma}\label{composiciondizeta}
Let $\xn-v_0-Av_0t,\, \xm-v_0-Av_0t \in F^{s+1}$ with  $2<s<2.5$. Then, for $T>0$ less than a small enough constant determined by $v_0$, $\fn{\xn-\alpha-Av_0t}$ and $\fn{\xm-\alpha-Av_0t}$,
\begin{align*}
&\hhn{\zeta^{(n)}-\zeta^{(n-1)}}{\frac{s-1}{2}+\ep}{1+\delta}\leq C[v_0, M]\hhn{\xn-\xm}{\frac{s-1}{2}+\ep}{2+\delta}\\
&\hhn{\zeta^{(n)}-\zeta^{(n-1)}}{\frac{s+1}{2}+\ep}{0}\leq C[v_0,M]\hhn{\xn-\xm}{\frac{s+1}{2}+\ep}{1}
\end{align*}
where

\begin{align*}
M= \max_{m=n,n-1}\left\{ \frac{1}{1-C[v_0]T-CT^\frac{1}{4}\fn{X^{(m)}-\alpha-Av_0t}-CT^\frac{1}{2}\fn{X^{(m)}-\alpha-Av_0t}^2}\right\}.
\end{align*}

\end{lemma}
\begin{proof}
The strategy to prove this lemma is similar to the one of lemma \ref{composicionzfi}. Here we need to make the splitting
\begin{align*}
&\zeta^{(n)}-\zeta^{(n-1)}=\frac{1}{\det \nabla \xn}\p{\nabla\xn}^\dag-\frac{1}{\det \nabla \xm}\p{\nabla\xm}^\dag\\
&=\p{\frac{1}{\det \nabla \xn}-\frac{1}{\det \nabla X^{(n-1)}}}\p{\nabla\xn}^\dag+\frac{1}{\det \nabla \xm}\p{\nabla(\xn-\xm)}^\dag\\
&=\p{\frac{1}{\det \nabla \xn}-\frac{1}{\det \nabla X^{(n-1)}}}\p{\nabla(\xn-\alpha-Av_0t)}^\dag+\p{\frac{1}{\det \nabla \xn}-\frac{1}{\det \nabla X^{(n-1)}}}\p{\nabla(\alpha-Av_0t)}^\dag\\
&+\p{\frac{1}{\det \nabla \xm}+1-t\nabla\cdot(Av_0)}\p{\nabla(\xn-\xm)}^\dag-(1-t\nabla\cdot(Av_0))\p{\nabla\p{\xn-\xm}}^\dag.
\end{align*}
\end{proof}

\begin{lemma} \label{composicionQ1} Let $X-v_0-Av_0t\in F^{s+1}$ with  $2<s<2.5$. Then, for $T>0$ less than a small enough constant determined by $v_0$, $\inf_{\alpha\in\tilde{\Omega}}|\alpha|$ and  $\fn{X-\alpha-Av_0t}$,
\begin{align*}
&||A\circ X-A||_{L^\infty H^{s-1}}\leq C[M,\fn{X-\alpha-Av_0t},v_0]\p{\li{X-\alpha-Av_0t}{s+1}+T||Av_0||_{H^{s+1}}}\\
&\hhn{A\circ X-A}{\frac{s-1}{2}}{1+\delta}\leq C[M,\fn{X-\alpha-Av_0t},v_0]\p{\hhn{X-\alpha-Av_0t}{\frac{s-1}{2}+\ep}{1+\delta}+T}\\
&||Q^2\circ X-Q^2||_{L^\infty H^{s-1}}\leq C[M,\fn{X-\alpha-Av_0t},v_0]\p{\li{X-\alpha-Av_0t}{s+1}+T||Av_0||_{H^{s+1}}}\\
&\hhn{Q^2\circ X-Q^2}{\frac{s-1}{2}}{1+\delta}\leq C[M,\fn{X-\alpha-Av_0t},v_0]\p{\hhn{X-\alpha-Av_0t}{\frac{s-1}{2}+\ep}{1+\delta}+T}
\end{align*}
with
\begin{align*}
M=  \frac{1}{\inf_{\alpha\in \tilde{\Omega}} |\alpha|-C[v_0]T-CT^\frac{1}{4}||X-\alpha-Av_0t||_{H^{s+1}}}.
\end{align*}
\end{lemma}

\begin{proof} The proof is similar to the ones of lemmas \ref{composicion1}, \ref{composicion2} and \ref{composicionz}. Notice that $Q^2(\alpha)=\frac{1}{|\alpha|^2}.$
\end{proof}

\begin{lemma}\label{composicionAfi} Let $\xn-v_0-Av_0t\in F^{s+1}$, with  $2<s<2.5$, and $\left(A_\phi\right)_{ij}=A_{ij}+t\left.\left(\frac{d}{dt}\left(A^{ij}\circ X^{(n)}\right)\right)\right|_{t=0}=A_{ij}+t\pa_kA_{ij}A_{kl}v_{0l}.$ Then, for $T>0$ less than a small enough constant determined by $v_0$, $\inf_{\alpha\in\tilde{\Omega}}|\alpha|$ and  $\fn{\xn-\alpha-Av_0t}$,
$$\hhn{A\circ\xn-A_\phi}{\frac{s-1}{2}+\ep}{1+\delta}\leq C[v_0,M]\p{\hhn{\xn-\alpha-Av_0t}{\frac{s-1}{2}}{1+\delta}+T^\frac{1}{2}}$$
$$\hhn{A\circ\xn-A_\phi}{\frac{s+1}{2}+\ep}{1+\delta}\leq C[v_0,M]\p{\hhn{\xn-\alpha-Av_0t}{\frac{s+1}{2}}{1+\delta}+T^\frac{1}{2})}$$
with
\begin{align*}
M=  \frac{1}{\inf_{\alpha\in \tilde{\Omega}} |\alpha|-C[v_0]T-CT^\frac{1}{4}||X-\alpha-Av_0t||_{H^{s+1}}}.
\end{align*}
\end{lemma}
\begin{proof}
The proof is similar to that one for lemma \ref{composicionzfi}.
\end{proof}

%\begin{lemma}\label{composicion}Let $u,v \in X$, where $X$ is either $H^{p}H^{q}$ or $L^{\infty}H^{p}$. Assume $\|v\|_{X},\,\|u\|_{X}\leq K$, $u(\Omega) \cap B_{\delta}(0) = v(\Omega) \cap B_{\delta}(0) = \emptyset$, and $G\in C^\infty(\mathbb{R}^{2} \setminus B_{\delta}(0))$ for a small enough $\delta$. Then
%
%\begin{align*}
%\|G \circ v\|_{X} \leq C(K,\delta) \|v\|_{X}, \quad \text{ and } \|G \circ u - G \circ v\|_{X} \leq C(K,\delta)\|u-v\|_{X}.
%\end{align*}
%\end{lemma}
%
%\begin{proof}
%The proof follows easily since the most singular terms of $\pa^{k}_{x}\pa^{l}_{t}(G \circ v)$  are $G'(v) \pa_{x}^{k}\pa_{t}^{l}(v)$ and they can be bounded respectively in $L^{2}$ or $L^{\infty}$  by $C\|\pa_{x}^{k}\pa_{t}^{l} v\|_{L^{2}}$ or $C\|\pa_{x}^{k}\pa_{t}^{l} v\|_{L^{\infty}}$. The rest of the (less singular) terms are bounded by interpolation.
%
%The second inequality follows straightaway from the Fundamental Theorem of Calculus and interpolation.
%\end{proof}

\section{Solving the linear equation}

In this section we want to solve the system given by:
\begin{align}
v_t - \nu Q^{2} \Delta v + A^{*} \nabla q & = f & \text{ in } \Omega_0 \times [0,T] \nonumber \\
Tr(\nabla v A) & = g & \text{ in } \Omega_0 \times [0,T] \nonumber \\
(q + (\nabla v A) + (\nabla v A)^{*})A^{-1} n & = h & \text{ on } \pa \Omega_0 \times [0,T] \nonumber \\
 v(x,0) & = 0 & \text{ in } \Omega_0 \label{plg}
\end{align}

Defining the following spaces:
\begin{align*}
 X_0 & := \{(v,q) : v \in H_{(0)}^{ht,s+1}, q \in H^{ht,s}_{pr,\,(0)}\} \\
Y_0 & := \{(f,g,h): f \in H_{(0)}^{ht,s-1}, g \in \overline{H}_{(0)}^{ht,s}, h \in H_{(0)}^{ht,s-\frac12}(\pa \Om \times [0,T]), \}
\end{align*}
(here we remark that in $X_0$ and $Y_0$: $v(0)=\pa_tv(0)=q(0)=f(0)=g(0)=\pa_tg(0)=h(0)=0$), then, we can write \eqref{plg} as:
\begin{align*}
 L(v,q) = (f,g,h,0); \quad L: X_0 \rightarrow Y_0, \quad 2 < s < \frac{5}{2}.
\end{align*}

\begin{thm}\label{Lmenos1}
 $L: X_0 \to Y_0$ is invertible for $2<s< \frac{5}{2}$. Moreover, $\|L^{-1}\|$ is bounded uniformly if $T$ is bounded above.
\end{thm}

\begin{proofthm}{Lmenos1}
The proof can be found in Appendix \ref{sectionlineal}.
\end{proofthm}

\section{Fixed point argument}\label{fixedpoint}

In section \ref{sectionlineal} the system \eqref{NSOfl}, \eqref{divfl} and \eqref{tensorfl} was solved uniformly in $T$ (for small $T$) but with the initial conditions $\tv|_{t=0}=0$ and $\pa_t \tv|_{t=0}=0$. Then we still need to carry out a small modification of (\ref{NSOfl}-\ref{difl}) to be able to apply the result of that section.

Let's define the approximated solution $\phi$ in $\tilde{\Om}_0 \times \mathbb{R}$ as follows
\begin{align}\label{fi}\phi= \tv_0 + t\left(Q^2\Delta \tv_0 -A^*\nabla \tilde{q}_\phi \right)\equiv \tv_0+t\psi,\end{align}
We shall show we can choose $\tilde{q}_\phi$ in such a way that $\pa_t\phi|_{t=0}=\pa_t v^{(n)}|_{t=0}$ for all $n$.

We specify who $\tilde{q}_\phi$ is. Given the system (\ref{NSOfl}-\ref{difl}),
for \begin{align*} v^{(n)}=&\tilde{v}^{(n)}\circ P\\ q^{(n)}=&\tilde{q}^{(n)}\circ P\end{align*} we have that
\begin{align*}
\pa_t v^{(n+1)} -\Delta v^{(n+1)} = & -\nabla q^{(n+1)}+f^{(n)} \quad \text{in $\Omega_0$}\\
\nabla \cdot v^{(n+1)} =&  g^{(n)}\quad \text{in $\Omega_0$}\\
\left(q^{(n+1)}\mathbb{I}-(\nabla v^{(n+1)}+(\nabla v^{(n+1)})^*)\right)n_0 = & h^{(n)} \quad \text{on $\pa\Omega_0$}\\
v^{(n+1)}|_{t=0}=& v_0,\end{align*}
where $v_0=\tilde{v}_0\circ P$, $f^{(n)}=\tilde{f}^{(n)}\circ P$, $g^{(n)}=\tilde{g}^{(n)}\circ P$ and $h^{(n)}=\tilde{h}^{(n)}\circ P$.
Taking the divergence on the first equation yields
$$\pa_t \nabla\cdot v^{(n+1)} -\Delta \nabla \cdot v^{(n+1)} =-\Delta q^{(n+1)} +\nabla\cdot f^{(n)}.$$
Thus, taking into account the third equation, we find $q^{(n+1)}$ by solving
\begin{align*}
-\Delta q^{(n+1)} = & \pa_t g^{(n)} -\Delta g^{(n)} -\nabla\cdot f^{(n)}\quad \text{in $\Omega_0$}\\
q^{(n+1)}|_{\pa\Omega_0}= & n_0 \left(\nabla v^{(n)} +(\nabla v^{(n)})^*\right)|_{\pa\Omega_0} \cdot n_0 +h^{(n)}|_{\pa\Omega_0}\cdot n_0.
\end{align*}
Next notice that $\tilde{f}^{(n)}$, $\tilde{g}^{(n)}$ and $\tilde{h}^{(n)}$ in \eqref{efe}, \eqref{ge} and \eqref{ache} are equal to zero at $t=0$.  Thus, calling $q^{(n)}_0=q^{(n)}|_{t=0}$, we have that
\begin{align*}
-\Delta q^{(n+1)}_0 = &(\pa_t g^{(n)})|_{t=0}\\
q^{(n)}_0|_{\pa\Omega_0}= & n_0 \left(\nabla v_0 +(\nabla v_0)^*\right)n_0.
\end{align*}
Now we focus our attention in the structure of $\tilde{g}^{(n)}$ in \eqref{ge}. It easy to check that $\tilde{g}^{(n)}\circ P$ can be written as
$$\tilde{g}^{(n)}\circ P= -Tr\left(\nabla v'^{(n)} \left(\nabla X^{(n)}\right)^{-1}\right)+Tr\left( \nabla v'^{(n)}\right),$$
where
$$v'^{(n)}=\tv^{(n)}\circ \tilde{X}^{-1}\circ P\circ X$$
with
\begin{align*}
\frac{dX^{(n)}}{dt}=&v'^{(n)}(\alpha,t)\\
X^{(n)}(\alpha,0)=&\alpha\quad \alpha\in \Omega_0
\end{align*}

Therefore
$$(\pa_t g^{(n)})|_{t=0}=Tr\left(\nabla u_0 \nabla u_0\right)=\nabla \cdot\left((u_0\cdot \nabla) u_0\right).$$

Because of the previous discussion we will choose $q_\phi$ solving
\begin{align*}
-\Delta q_\phi= & \nabla\cdot \left((u_0\cdot\nabla) u_0\right)\quad \text{in $\Omega_0$}\\
q_\phi|_{\pa\Omega_0}= & n_0\left(\nabla u_0 +(\nabla u_0)^*\right)|_{\pa\Omega_0}n_0.
\end{align*}
which is independent on the superscript $n$.

Finally one finds $\tilde{q}_\phi$ changing from $\Omega_0$ to $\tilde{\Omega}_0$,
\begin{align}
-Q^2\Delta \tilde{q}_\phi = & Tr\left(\nabla \tilde{u}_0 A \nabla \tilde{u}_0 A\right)\quad \text{in $\tilde{\Omega}_0$}\label{q0}\\
\tilde{q}_\phi|_{\pa\tilde{\Omega}_0} \left(A^{-1}\tilde{n}_0\cdot A^{-1}\tilde{n}_0\right) = & A^{-1}\tilde{n}_0\left(\nabla \ut_0A+(\nabla \ut_0 A)^*\right)A^{-1}\tilde{n}_0\quad \text{on $\pa\tilde{\Omega}_0$}\label{q0b}.
\end{align}

Once we have defined $\phi$ we define the velocity $\tilde{w}^{(n)}$ and the pressure $q^{(n)}_w$ by the expression
$$\tilde{w}^{(n)}=\tilde{v}^{(n)}-\phi \quad q^{(n)}_w=q^{(n)}-q_\phi$$
Now it is easy to check that
\begin{align*}\tilde{w}^{(n)}|_{t=0}=0, \quad  \left(\pa_t\tilde{w}^{(n)}\right)|_{t=0}=0.\end{align*}

Then it is better for our purpose to write the system (\ref{NSOfl}-\ref{difl}) in terms of $\tilde{w}^{(n)}$ rather than in terms of $\tv^{(n)}$. We obtain that

\begin{align}
&\pa_t \tilde{w}^{(n+1)} -Q^2\Delta \tilde{w}^{(n+1)}=-A^*\nabla \tilde{q}_w^{(n+1)}+\tilde{f}^{(n)}-\pa_t\phi+Q^2\Delta\phi- A^*\nabla \tilde{q}_\phi\quad \text{in $\tilde{\Om}_0$},\label{NSOflw}\\
&Tr\left(\nabla \tilde{w}^{(n+1)} A\right)=   \tilde{g}^{(n)}-Tr\left(\nabla \phi A\right)\quad \text{in $\tilde{\Om}_0$}\label{divflw}\\
&\left(\tilde{q}_w^{(n+1)}\mathbb{I}-\left(\left(\nabla \tilde{w}^{(n+1)}  A\right)+\left(\nabla \tilde{w}^{(n+1)}  A\right)^*\right)\right)A^{-1}\tilde{n}_0\nonumber\\&=\tilde{h}^{(n)}
-\tilde{q}_\phi A^{-1}\tilde{n}_0+\left(\left(\nabla \phi  A\right)+\left(\nabla \phi  A\right)^*\right)A^{-1}\tilde{n}_0 \quad \text{on $\partial \tilde{\Om} _0$},\label{tensorflw}\\
&\tilde{w}^{(n+1)}|_{t=0}=  0  \label{diflw}
\end{align}
where $\tilde{f}^{(n)}$, $\tilde{g}^{(n)}$ and $\tilde{h}^{(n)}$ are given by \eqref{efe}, \eqref{ge} and \eqref{ache} with $\tilde{v}^{(n)}=\tilde{w}^{(n)}+\phi$.
\begin{rem}
With this choice of the function $\phi$ we lose regularity of the solution with respect to the initial data. One could look for more sophisticated choices of $\phi$, like Beale in \cite{Beale:initial-value-problem-navier-stokes}, in order to avoid this loss. However, this exceeds the scope of this paper.
\end{rem}

We now prove the following theorem:

\begin{thm}\label{nolineal} Let $\left(\{\tilde{w}^{(n)}\}_{n=0}^\infty,\,\, \{\tilde{q}_
w^{(n)}\}_{n=0}^\infty,\,\,\{\tilde{X}^{(n)}\}_{n=0}^\infty\right)$ be the sequence given by the system \eqref{NSOflw}, \eqref{divflw}, \eqref{tensorflw} and \eqref{diflw} and \eqref{Xtilda} with
\begin{align*}\tilde{w}^{(0)}=0, && \tilde{q}^{(0)}_w=0, && \tilde{X}^{(0)}=\alpha+A\tv_0t,\end{align*}
where $\tilde{q}_\phi$ is given by \eqref{q0} and \eqref{q0b}, $\phi$ is given by \eqref{fi} and $\tilde{v}^{n}=\tilde{w}^{(n)}+\phi$.
Then $$\left(\{\tilde{w}^{(n)}\}_{n=0}^\infty,\,\,\{\tilde{q}_w^{(n)}\}_{n=0}^\infty,\,\,\{\tilde{X}^{(n)}\}_{n=0}^\infty-\alpha-Av_0t\right)$$ is a Cauchy sequence in
$$H_{(0)}^{ht, s+1}\left([0,T],\,\Om_0\right)\times H^{ht, s}_{pr\, (0)}\left([0,T],\,\Om_0\right)\times F^{s+1}\left([0,T],\,\Omega_0\right)$$
for $1<\gamma<s-1$, $2<s<2.5$ and
\begin{align*} ||\tilde{u}_0||_{H^{100}(\tilde{\Om}_0)}<C && |\pa\tilde{\Om}_0|_{C^{100}}< C,
\end{align*}

for $T$ sufficiently small \footnote{We assume a large number of derivatives mostly to simplify the exposition. However one can likely reduce this space to, say, $H^{10}$
by a slightly more careful analysis.}.
\end{thm}

In order to prove this result we use propositions \ref{prop1} and \ref{prop2} presented below concerning the system (\ref{NSOflw}-\ref{diflw}), together with \eqref{Xtilda}. We start by writing this system in the more concise form
\begin{equation}\label{conciso}
L (w^{(n+1)},q_w^{(n+1)})= \left(f^{(n)},g^{(n)}, h^{(n)}\right)+\left(f^{L}_{\phi},g^{L}_{\phi}, h^{L}_{\phi}\right).
\end{equation}
%and we will call the solutions given by the linear case
%\begin{align} (w^{(n+1)},q_w^{(n+1)})= & L^{-1}\left(f^{(n)}+f_\phi,\,g^{(n)}+g_\phi,\, h^{(n)}+h_\phi\right)\nonumber %\\  = &  L^{-1}\left(f^{(n)},g^{(n)}, h^{(n)}\right)+L^{-1}\left(f_{\phi},g_{\phi}, h_{\phi}\right).\end{align}
where
\begin{align}
 f^L_{\phi} & = - \pa_{t} \phi + Q^{2} \Delta \phi- A^{*} \nabla q_\phi,\nonumber\\
g^L_{\phi} & =  -Tr(\nabla \phi A),\label{fghL}\\
h^L_{\phi} & = -q_{\phi} A^{-1} n_0 + (\nabla \phi A + (\nabla \phi A)^{*}) A^{-1} n_0. \nonumber
\end{align}

For technical reasons we rewrite the right hand side in a different way.  First we notice that
\begin{align*}
\pa_t \zn=\pa_t \left[\left(\nabla \xn\right)^{-1}\right]=-\left(\nabla \xn\right)^{-1}\nabla \pa_t \xn\left(\nabla \xn\right)^{-1},
\end{align*}
and therefore
\begin{align*}
\pa_t \zn|_{t=0}= -\nabla\left(Av_0\right).
\end{align*}
We define $\zeta_\phi$, independent of $n$ by the following expression
$$\zeta_\phi= \I+t\left(\pa_t\zn|_{t=0}\right),$$
and also we define $A_\phi$ as the matrix with entries
$$\left(A_\phi\right)_{ij}=A_{ij}+t\left.\left(\frac{d}{dt}\left(A^{ij}\circ X\right)\right)\right|_{t=0}=A_{ij}+t\pa_kA_{ij}A_{kl}v_{0l}.$$
%and $Q_\phi$, $B_{1\phi}$ and  $B_{2\phi}$ (independent of $n$) by the expressions
%\begin{align*}
%Q^2_\phi = & Q^2 +te^{-t^2}\pa_t\left.\left(Q^2\circ \xn\right)\right|_{t=0},\\
%B_{1\phi}=&A^{-1}+te^{-t^2}\pa_t \left.\left(A^{-1}\circ \xn\right) \right|_{t=0},\\
%B_{2\phi}=&B_2+te^{-t^2}\left.\pa_tB_2\right|_{t=0}.
%\end{align*}
By using $\zeta_\phi$, $A_\phi$, %$Q_\phi$, $B_{1\phi}$ and $B_{2\phi}$
we will write the system \eqref{conciso} in the following way
$$L(w^{(n+1)},q^{(n+1)}_w)=(f^{(n)},\overline{g}^{(n)},h^{(n)})+(f_{\phi}^{L},\overline{g}^{L}_{\phi},h^{L}_{\phi}),$$
with
\begin{align}\label{overlinegn}
%\overline{f}^{(n)}=&f^{(n)}-Q^2_\phi\left(\zeta_\phi\right)_{kj}\pa_k\left(\left(\zeta_\phi\right)_{lj}\pa_l\phi\right)+Q^2\Delta \phi+ A_{\phi ki}\zeta_{\phi jk}\pa_jq_\phi-A_{ki}\pa_k q_\phi\\
\overline{g}^{(n)}=&g^{(n)}+Tr\left(\nabla \phi \zeta_\phi A_\phi\right)-Tr\left(\nabla\phi A\right)
\\ = &-Tr\p{\nabla \wn \zn A\circ \xn}+Tr\p{\nabla \wn A}-Tr\p{\nabla \phi \zn A\circ \xn}+Tr\p{\nabla \phi \zeta_\phi A_\phi}
%\overline{h}^{(n)}=&h^{(n)}-\left(\left(\nabla \phi \zeta_\phi A_\phi\right)+\left(\nabla \phi \zeta_\phi A_\phi\right)^*\right)B_{1\phi}\overline{B}_{2\phi}n_0+\left(\left(\nabla \phi A\right)+\left(\nabla \phi A\right)^*\right)A^{-1} n_0\\
\end{align}
and
\begin{align}\label{gLphi}
%\overline{f}_{\phi i}=&f_{\phi i}+Q^2_\phi\left(\zeta_\phi\right)_{kj}\pa_k\left(\left(\zeta_\phi\right)_{lj}\pa_l\phi_i\right)-Q^2\Delta \phi_i- A_{\phi ki}\zeta_{\phi jk}\pa_jq_\phi+A_{ki}\pa_k q_\phi\\
\overline{g}^{L}_{\phi}=&g^{L}_{\phi}-Tr\left(\nabla \phi\zeta_\phi A_\phi\right)+Tr\left(\nabla\phi A\right)=-Tr\p{\nabla \phi \zeta_\phi A_{\phi}}.
%\overline{h}_\phi=&h_\phi+\left(\left(\nabla \phi \zeta_\phi A_\phi\right)+\left(\nabla \phi \zeta_\phi A_\phi\right)^*\right)B_{1\phi}\overline{B}_{2\phi}n_0-\left(\left(\nabla \phi A\right)+\left(\nabla \phi A\right)^*\right)A^{-1} n_0\\
\end{align}

In this way we have that $\pa_t \overline{g}^L_\phi|_{t=0}=\overline{g}^L_\phi|_{t=0}=0$.

 In this situation we have the following result.

\begin{proposition}\label{prop1}\begin{enumerate}
\item Let $\xn-\alpha-Av_0t\in F^{s+1}$, $q_w^{(n)}\in H^{ht,\,s}_{pr\, (0)}$
 and $\wn\in H_{(0)}^{ht, s+1}$ and such that
\begin{align*}
\xn-\alpha-Av_0t\in & \left\{X-\alpha-A v_0t\in F^{s+1}\,:\, \fn{X-\alpha-\int_0^tA\phi\, d\tau}\leq N \right\}\\
&\equiv B_{A\psi} \\
(\wn, q_w^{(n)})\in &  \left\{(w,q)\in H_{(0)}^{ht,\, s+1}\times H^{ht,s}_{pr\, (0)}\,:\, w|_{t=0}=0, \pa_t w|_{t=0} = 0,\right.\\ &\left.\left|\left|(w,q)-L^{-1}(f_{\phi},\overline{g}_\phi,h_\phi)\right|\right|_{H_{(0)}^{ht,s+1}\times H^{ht,s}_{pr\, (0)}}\right.\left.\leq N\right\} \\
& \equiv B_{L^{-1}(f_\phi,\overline{g}_\phi,h_\phi)}
\end{align*}
where
\begin{align*}
N\equiv \max\left\{\fn{\int_0^t\tau A\psi\, d\tau},\,\left|\left|L^{-1}(f_\phi,\overline{g}_\phi,h_\phi)\right|\right|_{H_{(0)}^{ht,s+1}\times H^{ht,s}_{pr \, (0)}}, ||v_0||_{H^{100}}\right\}
\end{align*}
Then, for small enough $T>0$, depending only on $v_0$.
\begin{align*}
X^{(n+1)}-\alpha-Av_0t\in & B_{A\psi}
\end{align*}
\item Let $X^{(n)}-\alpha,X^{(n-1)}-\alpha\in B_{A\phi}$ and  $\left(w^{(n)},q^{(n)}\right),\left(w^{(n-1)},q^{(n-1)}\right)\in B_{L^{-1}(f_\phi,\overline{g}_\phi,h_\phi)}$  Then
\begin{align*}
||X^{(n+1)}-X^{(n)}||_{F^{s+1}}\leq C[v_0]T^\ep\left(\htn{w^{(n)}-w^{(n-1)}}{s+1}+||X^{(n)}-X^{(n-1)}||_{F^{s+1}}\right)
\end{align*}
for a small enough $\ep$.
\end{enumerate}
\end{proposition}

\begin{proof}
We refer the reader to Appendix \ref{appendixprop1} for details about the proof.
\end{proof}

\begin{proposition}
\label{prop2}
\begin{enumerate}

\item \;Let $\xn-\alpha-Av_0t\in F^{s+1}$, $q_w^{(n)}\in H^{ht,\,s}_{pr \, (0)}$ and $\wn\in H_{(0)}^{ht,s+1}$,  and such that
\begin{align*}
\xn-\alpha-Av_0t\in & \left\{X-\alpha-A v_0t\in F^{s+1}\,:\, \fn{X-\alpha-\int_0^tA\phi\, d\tau}\leq N \right\}\\
&\equiv B_{A\psi} \\
(\wn, q_w^{(n)})\in &  \left\{(w,q)\in H_{(0)}^{ht,\, s+1}\times H^{ht,s}_{pr\, (0)}\,:\, w|_{t=0}=0, \pa_t w|_{t=0} = 0, q_w|_{t=0} = 0,\right.\\ &\left.\left|\left|(w,q)-L^{-1}(f_{\phi},\overline{g}_\phi,h_\phi)\right|\right|_{H_{(0)}^{ht,s+1}\times H^{ht,s}_{pr\, (0)}}\right.\left.\leq N\right\} \\
& \equiv B_{L^{-1}(f_\phi,\overline{g}_\phi,h_\phi)}
\end{align*}
where
\begin{align*}
N\equiv \max\left\{\fn{\int_0^tA\psi\,\tau d\tau},\,\left|\left|L^{-1}(f_\phi,\overline{g}_\phi,h_\phi)\right|\right|_{H_{(0)}^{ht,s+1}\times H^{ht,s}_{pr \, (0)}},||v_0||_{H^{100}}\right\}
\end{align*}

Then
\begin{align*}
(w^{(n+1)}, q_w^{(n+1)})\in &  B_{L^{-1}(f_\phi,\overline{g}_\phi,h_\phi)}.
\end{align*}
\item Let $X^{(n)}-\alpha,X^{(n-1)}-\alpha\in B_{A\phi}$ and  $\left(w^{(n)},q^{(n)}\right),\left(w^{(n-1)},q^{(n-1)}\right)\in B_{L^{-1}(f_\phi,\overline{g}_\phi,h_\phi)}$
Then
\begin{align*}
&\htn{w^{(n+1)}-\wn}{s+1}+||q^{(n+1)}-\qn||_{H^{ht,s}_{pr \, (0)}}\\ &\leq C[v_0]T^\ep \left(\htn{\wn-\wm}{s+1}+\fn{\xn-\xm}+||\qn-\qm||_{H^{ht,s}_{pr \, (0)}}\right),
\end{align*}
for $\ep$ small enough.
\end{enumerate}

\end{proposition}

\begin{proof}
We refer the reader to Appendix \ref{appendixprop2} for details about the proof.
\end{proof}

\begin{cor}\label{corcontraction}
By the contraction mapping principle, there is a unique fixed point $(X-\alpha-Av_0t, w, q_w)\in F^{s+1}\times H_{(0)}^{ht,s+1}\times H^{ht,\,s}_{pr \, (0)}$, with $2<s<2.5$, for $T$ small enough which is the solution of (\ref{NSOf}-\ref{dif}).
\end{cor}

\section{Structural stability}\label{sectionstability}
Let us assume that $(v,q,X)$ is a solution with initial data $v_0$ and $\Om_0$, where $\Om_0$ is the projection by $P$ of a splash domain. We choose $v_0$ so that the normal component of $v_0$ in a neighbourhood of the splash points is directed towards $P(\Gamma)$, as shown in the next section. We also assume that $(v'_\ep, q'_\ep,X'_\ep)$ is a solution with initial data $v'_{\ep\,0}$ and $\Om_\ep$ where $\Om_\ep$ is a translation of $\Om_0$ of size $\ep$, i.e.,
$$\Om_\ep=\Om_0+\ep b$$
where $b$ is a constant vector, $|b|=1$ and such that $P^{-1}(\Omega_\ep)$ is a good domain. We define $(v_\ep, q_\ep, X_\ep)$ in the following way
\begin{align*}
v_\ep(\alpha,t)= v'_\ep(\alpha+\ep b,t),\qquad q_\ep(\alpha,t)= q'_\ep(\alpha+\ep b, t),\quad X_\ep(\alpha,t)= & X'_\ep(\alpha +\ep b,t),
\end{align*}
for $\alpha\in \Omega_0$.

For $\ep > 0$, $P^{-1}(\Omega_{\ep})$ is a good domain without self-intersections (as opposed to $\ep \leq 0$). By Corollary \ref{corcontraction}, we have local existence of solutions, and we can find a time of existence which is uniform in $\ep$. The existence of a splash singularity follows from the perturbative argument explained in the introduction.

 We choose $v'_{\ep 0}$ to have $v_\ep(\alpha,0)=v_0(\alpha)$. Since the change of variables from the variables with prime to the variables without prime is just a constant translation, the functions $(v_\ep, q_\ep, X_\ep)$ satisfy \eqref{NSOf}-\eqref{dif}
and \eqref{dtx}. However instead of \eqref{dtx0} we have that $X_\ep(\alpha,0)=\alpha+\ep b$.

We denote by $Q_{\ep}^ {2}(\al)$ and $A_{\ep}(\al)$ the following functions:

\begin{align*}
 Q_{\ep}^{2}(\al) = Q^{2}(\al + \ep b), \quad A_{\ep}(\al) = A(\al + \ep b).
\end{align*}

Therefore, we have

\begin{align*}
 \pa_{t} v_{\ep} - Q_{\ep}^{2} \Delta v_{\ep} + A_{\ep}^{*} \nabla q_{\ep} & = f_{\ep} \text{ in } \Omega_{0} \\
Tr(\nabla v_{\ep} A_{\ep}) & = g_{\ep} \text{ in } \Omega_0 \\
(q_{\ep} \I - ((\nabla v_{\ep} A_{\ep}) + (\nabla v_{\ep} A_{\ep})^{*}))A_{\ep}^{-1} n_0 & = h_{\ep} \text{ in } \pa \Omega_{0},
\end{align*}

where

\begin{align*}
 f_{\ep,i} & = Q^{2} \circ X_{\ep} (\zeta_{\ep})_{kj} \pa_{k} ((\zeta_{\ep})_{lj} \pa_{l} v_{i}) - Q^{2}_\ep \Delta v_{\ep,i} + (A_{\ep})_{ki}\pa_{k} q_{\ep} - A_{ki} \circ X_{\ep}(\zeta_{\ep})_{jk} \pa_{j} q_{\ep} \\
g_{\ep} & = Tr(\nabla v_{\ep} A_{\ep}) - Tr(\nabla v_{\ep} \zeta_{\ep} A \circ X_{\ep}) \\
h_{\ep} & = q_{\ep} A_{\ep}^{-1} n_0 - q_{\ep} A^{-1} \circ X_{\ep} \nabla_{J} X_{\ep} n_0 + (\nabla v_{\ep} \zeta_{\ep} A \circ X_{\ep} + (\nabla v_{\ep} \zeta_{\ep} A \circ X_{\ep})^{*}) A^{-1} \circ X_{\ep} \nabla_{J} X_{\ep} n_0 \\
& - (\nabla v_{\ep} A_{\ep} + (\nabla v_{\ep} A_{\ep})^{*}) A^{-1}_{\ep} n_0
\end{align*}

We construct $\phi_{\ep}$ in an analogous way: ensuring that $v_{\ep} = w_{\ep} + \phi_{\ep}$, with $w_{\ep} = \pa_{t} w_{\ep} = 0$. This yields:

\begin{align*}
 \phi_{\ep} & = v_0 + t  (Q^{2}_{\ep} \Delta v_0 - A_{\ep}^{*} \nabla q_{\phi,\ep})\equiv v_0+t\psi_\ep,
\end{align*}

where

\begin{align*}
 -Q^{2}_{\ep} \Delta q_{\phi,\ep} & = Tr(\nabla v_0 A_{\ep} \nabla v_0 A_{\ep}) \text{ in } \Omega_{0} \\
q_{\phi,\ep}(A_{\ep}^{-1} n_0 \cdot A_{\ep}^{-1} n_0) & = A_{\ep}^{-1}n_0(\nabla v_0 A_{\ep} + (\nabla v_0 A_{\ep})^{*}) A_{\ep}^{-1} n_0 \text{ in } \pa \Omega_{0},
\end{align*}
and we also take $q_{w,\ep}=q_\ep-q_{\phi,\ep}.$

Thus, we have the following system:

\begin{align*}
 \pa_t w_{\ep} - Q^{2}_{\ep} \Delta w_{\ep} + A_{\ep}^{*} \nabla q_{w,\ep} & = f_{\ep} + f^L_{\phi,\ep} \text{ in } \Omega_0, \\
Tr(\nabla w_{\ep} A_{\ep}) & = g_{\ep} + g^L_{\phi,\ep} \text{ in } \Omega_{0}, \\
(q_{w,\ep}I - ((\nabla w_{\ep} A_{\ep}) + (\nabla w_{\ep} A_{\ep})^{*})) A_{\ep}^{-1} n_0 & = h_{\ep} + h^L_{\phi,\ep} \text{ on } \pa \Omega_{0},
\end{align*}

where

\begin{align}
 f^L_{\phi,\ep} & = - \pa_{t} \phi_{\ep} + Q_{\ep}^{2} \Delta \phi_{\ep} - A_{\ep}^{*} \nabla q_{\phi_\ep}\nonumber\\
g^L_{\phi,\ep} & =  -Tr(\nabla \phi_{\ep} A_{\ep})\label{fghLe}\\
h^L_{\phi,\ep} & = -q_{\phi,\ep} A_{\ep}^{-1} n_0 + (\nabla \phi_{\ep} A_{\ep} + (\nabla \phi_{\ep} A_{\ep})^{*}) A_{\ep}^{-1} n_0. \nonumber
\end{align}

The next step will be to compare both solutions $(w,q_{w},X)$ and $(w_{\ep},q_{w,\ep},X_{\ep})$. Subtracting one equation from the other:

\begin{align*}
 \pa_t(w-w_{\ep}) - Q^{2} \Delta(w-w_{\ep}) + A^{*} \nabla (q_{w} - q_{w,\ep}) & = F_{\ep} \\
Tr(\nabla (w-w_{\ep}) A) & = G_{\ep} \\
((q_{w} - q_{w,\ep})I - ((\nabla(w-w_{\ep})A) + (\nabla(w-w_{\ep})A)^{*}))A^{-1} n_0 & = H_{\ep},
\end{align*}

where $F_{\ep}, G_{\ep}, H_{\ep}$ are given by

\begin{align}
 F_{\ep}  =& f - f_{\ep} + f^L_{\phi} - f^L_{\phi,\ep} - (Q^{2} - Q^{2}_{\ep}) \Delta w_{\ep} + (A - A_{\ep})^{*} \nabla q_{w,\ep} \label{Fe}\\
G_{\ep}  =& g - g_{\ep} + g^L_{\phi} - g^L_{\phi,\ep} - Tr(\nabla w_{\ep}(A-A_{\ep})) \label{Ge}\\
H_{\ep}  =& h - h_{\ep} + h^L_{\phi} - h^L_{\phi,\ep} - q_{w,\ep}(A^{-1} - A^{-1}_{\ep}) n_0 - ((\nabla w_{\ep} (A - A_{\ep})) + (\nabla w_{\ep} (A - A_{\ep}))^{*}) A_{\ep}^{-1} n_0 \label{He}\\
& - (\nabla w_{\ep} A + (\nabla w_{\ep} A)^{*})(A^{-1} - A^{-1}_{\ep})n_0.\nonumber
\end{align}

This means

\begin{align*}
 (w-w_{\ep},q_{w}-q_{w\,\ep}) = L^{-1}(F_{\ep}, G_{\ep}, H_{\ep}, 0).
\end{align*}

Taking norms as in the previous section, we obtain the following estimates:

\begin{align*}
 \|w-w_{\ep}\|_{H_{(0)}^{ht,s+1}} + \|q_{w}-q_{w,\ep}\|_{H^{ht,s}_{pr \, (0)}} \leq C(\|F_{\ep}\|_{H_{(0)}^{ht,s-1}}+\|G_{\ep}\|_{\overline{H}_{(0)}^{ht,s}}+|H_{\ep}|_{H_{(0)}^{ht,s-\frac12}}).
\end{align*}

Our goal is to prove the next lemmas:

\begin{lemma}
 \label{lemapacoestabilidad1}
We have the following estimate, for $\delta > 0$ small enough:

\begin{align*}
 &\fn{X-X_{\ep}+b\ep+(A_\ep-A)v_0t} \\
 &\leq C[v_0]\ep + C[v_0]T^{\delta}\p{\|w-w_{\ep}\|_{H_{(0)}^{ht,s+1}}+\fn{X-X_{\ep}+b\ep+(A_\ep-A)v_0t}}
\end{align*}

\end{lemma}

\begin{lemma}
 \label{lemapacoestabilidad2}
We have the following estimate, for $\delta > 0$ small enough:

\begin{align*}
&\|F_{\ep}\|_{H_{(0)}^{ht,s-1}} + \|G_{\ep}\|_{\overline{H}_{(0)}^{ht,s}} + |H_{\ep}|_{H_{(0)}^{ht,s-\frac{1}{2}}}  \leq C\ep + CT^{\delta}(\|w-w_{\ep}\|_{H_{(0)}^{ht,s+1}}+ \|q_{w}-q_{w,\ep}\|_{H^{ht,s}_{pr \, (0)}} \\
& + \|X-X_{\ep}+b\ep+(A_\ep-A)v_0t\|_{F^{s+1}}).
\end{align*}

\end{lemma}

We remark that the constants in the previous lemmas are uniformly bounded if $T$ is bounded above. Combining both inequalities, for $0 < T \leq \frac{1}{(2C)^{1/\delta}}$:

\begin{align*}
& \|w-w_{\ep}\|_{H_{(0)}^{ht,s+1}}+ \|q_{w}-q_{w,\ep}\|_{H^{ht,s}_{pr \, (0)}} + \|X-X_{\ep}+b\ep+(A_\ep-A)v_0t\|_{F^{s+1}} \leq 2C\ep
\end{align*}

Thus, for $0 < T \leq \frac{1}{(2C)^{1/\delta}}$:

\begin{align*}
 \|X-X_{\ep}+b\ep+(A_\ep-A)v_0t\|_{L^{\infty}H^{s+1}} \leq 2C\ep\quad\Rightarrow\quad \|X-X_{\ep}\|_{L^{\infty}H^{s+1}} \leq C\ep
\end{align*}
and therefore both functions are as close as we want for a time that only depends on the local existence time of the solution, but it does not depend on $\ep$. We therefore have

\begin{align*}
 X(\Omega_0,t) \approx X_{\ep}(\Omega_{0,\ep}, t)
\end{align*}

and for $\ep$ small enough there exists a $t_s$ such that

\begin{align*}
 P^{-1}(X_{\ep}(\Omega_{0,\ep},t_s)) = P^{-1}(X_{\ep}(\Omega_{0} + \ep b, t_s))
\end{align*}

is a splash domain. Next, we will show the proofs of the lemmas:

\begin{prooflemma}{lemapacoestabilidad1}
 We recall that

\begin{align*}
 \frac{dX_{\ep}}{dt}  = A \circ X_{\ep} v_{\ep}, \qquad &\qquad X_{\ep}(\al,0)  = \al + \ep b.
\end{align*}

and that,  for $T<T_0$, $T_0$ small enough, we control the following norms,

\begin{align*}
\fn{X-\alpha-Av_0t}\leq C[v_0], && \fn{X_\ep-\alpha-b\ep -A_\ep v_0t}\leq C[v_0],\\
\htn{w}{s+1}\leq C[v_0] && \htn{w_\ep}{s+1}\leq C[v_0],\\
||q||\leq C[v_0] && ||q_\ep||\leq C[v_0].
\end{align*}

We first deal with the $L^\infty_{\frac{1}{4}}H^{s+1}$-norm. We can write

\begin{align*}
 \|X-X_{\ep}+b\ep-(A-A_\ep)v_0t\|_{H^{s+1}}(t) & \leq \int_{0}^{t} \|A \circ X v - A \circ X_{\ep} v_{\ep}-(A-A_\ep)v_0\|_{H^{s+1}} d\tau
\end{align*}
In order to bound this norm we will split in the following way, $A \circ X v - A \circ X_{\ep} v_{\ep}-(A-A_\ep)v_0=A\circ X w-A\circ X_\ep w_\ep+(A\circ X-A\circ X_\ep -A+A_\ep)v_0+ t(A\circ X\psi-A\circ X_\ep \psi_\ep)=d_1+d_2+d_3$. For $d_1$ we write $d_1=(A\circ X-A-A\circ X_\ep+A_\ep)w+(A-A_\ep)w+A\circ X_\ep (w-w_\ep)$ and estimate $\int_{0}^t ||d_1||_{H^{s+1}}\leq ||A\circ X-A-A\circ X_\ep+A_\ep||_{L^\infty H^{s+1}}t^\frac{1}{2}||w||_{L^2H^{s+1}}+||A-A_\ep||_{H^{s+1}}t^\frac{1}{2}||w||_{L^2H^{s+1}}+||A\circ X_\ep||_{L^\infty H^{s+1}}t^\frac{1}{2}||w-w_\ep||_{L^2H^{s+1}}$. The second term on the right hand side of the this last inequality is bounded by $C[v_0]\ep t^\frac{1}{2}$ and the third one, by using lemma \ref{composicion1}, by $C[v_0]t^\frac{1}{2}||w-w_\ep||_{L^2H^{s+1}}$. For the first one we have that
\begin{align*}
||A\circ X-A-A\circ X_\ep+A_\ep||_{L^\infty H^{s+1}}\leq&||A\circ (X+b\ep)-A\circ X_\ep ||_{L^\infty H^{s+1}}\\
&+||A\circ X-A-A\circ (X+b\ep)+A_\ep||_{L^\infty H^{s+1}}
\end{align*}
and therefore
\begin{align*}
||A\circ X-A-A\circ X_\ep+A_\ep||_{L^\infty H^{s+1}}\leq& C[v_0] (||X-X_\ep+b\ep||_{L^\infty H^{s+1}}+\ep)\\
\leq& C[v_0](||X-X_\ep+b\ep-(A-A_\ep)v_0t||_{F^{s+1}}+\ep),
\end{align*} where this last inequality is proven in the same way that lemma \ref{composicion3}. This is enough to bound $||\int_{0}^td_1||$. The term $d_2$ can be bounded in a similar way after estimating $||\int_{0}^t d_2||_{H^{s+1}}\leq ||A\circ X-A-A\circ X_\ep+A_\ep||_{L^\infty H^{s+1}}tC[v_0]$. To bound $d_3$ we proceed in a similar way, the main difference is that we have to get the estimate $||\psi-\psi_\ep||_{H^{s+1}}\leq C[v_0]\ep$. Let us  prove this estimate. We have that
\begin{align}\label{estrella}
 \|A-A_{\ep}\|_{H^{r}} \leq C\ep, \quad \|Q^{2} - Q^{2}_{\ep}\|_{H^{r}} \leq C\ep, \text{ for all } r,
\end{align}
since $Q$ and $A$ are $C^{\infty}$ functions in $\Omega$. Thus
\begin{align*}
 -\Delta (q_{\phi} - q_{\phi,\ep}) & = \frac{1}{Q^{2}}Tr(\nabla v_0 A \nabla v_0 A) - \frac{1}{Q^{2}_{\ep}}Tr(\nabla v_0 A_{\ep} \nabla v_{0} A_{\ep})\qquad\mbox{in }\Omega_0 \\
q_{\phi} - q_{\phi,\ep} & = \frac{A^{-1} n_0(\nabla v_0 A + (\nabla v_0 A)^{*})A^{-1}n_0}{|A^{-1} n_0|^{2}} - \frac{A_{\ep}^{-1}n_0((\nabla v_0 A_{\ep}) + (\nabla v_0 A_{\ep})^{*})A_{\ep}^{-1}n_0}{|A_{\ep}^{-1}n_0|^{2}}\qquad\mbox{on }\partial\Omega_0.
\end{align*}
This implies:

\begin{align}\label{qmqe}
 \|q_{\phi} - q_{\phi,\ep}\|_{H^{r+1}} \leq C(\|\Delta(q_{\phi} - q_{\phi,\ep})\|_{H^{r-1}} + |q_{\phi}-q_{\phi,\ep}|_{H^{r+\frac12}}) \leq C\ep, \text{ for all $r\geq 0$} .
\end{align}

Using the definition of $\psi, \psi_{\ep}$:

\begin{align*}
 \psi - \psi_{\ep} & = ((Q-Q_{\ep})\Delta v_{0} - (A^{*} \nabla q_{\phi} - A_{\ep}^{*} \nabla q_{\phi,\ep}))
\end{align*}

yields

\begin{align*}
 \|\psi - \psi_{\ep}\|_{H^{s+1}} \leq C \ep,
\end{align*}

for sufficiently smooth $v_0$.

To get the $H^{2}H^{\gamma}$ estimate, we find that for all $t \leq 1$:

\begin{align*}
 \|X-X_{\ep}+b\ep-(A-A_\ep)v_0t\|_{H_{(0)}^{2}H^{\gamma}} &   \leq \left\|\int_{0}^{t}(A \circ X v - A \circ X_{\ep} v_{\ep}) - (A-A_{\ep})v_0 d\tau\right\|_{H_{(0)}^{2}H^{\gamma}}.
\end{align*}
We will need to make the same splitting as before $A \circ X v - A \circ X_{\ep} v_{\ep}-(A-A_\ep)v_0 = A\circ X w-A\circ X_\ep w_\ep+\p{A\circ X-A\circ X_\ep-(A-A_\ep)}v_0+t\p{A\circ X\psi-A\circ X_\ep\psi_\ep }\equiv d_1+d_2+d_3.$

In addition we split $d_1=(A\circ X-A\circ X_\ep-(A-A_\ep))w+(A-A_\ep)w+ (A\circ X_\ep-A_\ep)(w-w_\ep)+A_\ep(w-w_\ep)=d_{11}+d_{12}+d_{13}+d_{14}$. We have that $$\hhn{\int_{0}^td_{11}}{2}{\gamma}\leq \hhn{d_{11}}{1}{\gamma}\leq \hhn{A\circ X-A\circ X_\ep-(A-A_\ep)}{1}{\gamma}\hhn{w}{1}{\gamma}.$$ Analogous to the proof of lemma \ref{composicion3} we can prove that $\hhn{A\circ X-A\circ X_\ep-(A-A_\ep)}{1}{\gamma}\hhn{w}{1}{\gamma}\leq C[v_0](\hhn{X-X_\ep+b\ep}{1}{\gamma}+\ep)\leq C[v_0](T^\delta \hhn{X-X_\ep+b\ep}{1+\delta}{\gamma}+\ep)$. In addition, we have that  $\hhn{X-X_\ep+b\ep}{1+\delta}{\gamma}\leq \hhn{X-X_\ep+b\ep+(A_\ep-A)v_0t}{1+\delta}{\gamma}+C[v_0]\ep T^{\frac{1}{2}-\delta}$. For $\hhn{\int_{0}^td_{12}}{2}{\gamma}$ we find the bound $C[v_0]\ep$.  For $\hhn{\int_{0}^td_{13}}{2}{\gamma}$ we have the bound $C[v_0]T^\delta$ and  $\hhn{\int_{0}^td_{14}}{2}{\gamma}\leq C[v_0]T^\delta.$ To bound $\hhn{\int_{0}^td_2}{2}{\gamma}$ we use that $\hhn{A\circ X-A\circ X_\ep -(A-A_\ep)}{1}{\gamma}\leq C[v_0]( \hhn{X-X_\ep+b\ep}{1}{\gamma}+\ep)$, thus we can bound finally by $C[v_0]\p{T^\delta\fn{X-X_\ep+b\ep +(A-A_\ep)v_0t}+\ep}$. In order to be done for $d_{3}$ we have that $d_3=(A\circ X-A)t\psi+ t(A\psi-A_\ep\psi)-(A\circ X_\ep-A_\ep)t\psi_\ep$. By using this splitting and lemma \ref{littlelemma} we find that $\hhn{\int_{0}^td_3}{2}{\gamma}\leq C[v_0]\ep T$. This concludes the estimation of $\|X-X_{\ep}+b\ep-(A-A_\ep)v_0t\|_{H_{(0)}^{2}H^{\gamma}}$.

\end{prooflemma}

\begin{prooflemma}{lemapacoestabilidad2}\\

\underline{Part I: Estimates for $F_{\ep}$:}\\

We consider first in \eqref{Fe} the terms

\begin{align*}
 -(Q^{2} - Q^{2}_{\ep})\Delta w_{\ep} + (A - A_{\ep})^{*} \nabla q_{w,\ep}.
\end{align*}

Using \eqref{estrella} it is easy to check that

\begin{align*}
 \|(Q^{2} - Q^{2}_{\ep})\Delta w_{\ep}\|_{H_{(0)}^{ht,s-1}} + \|(A-A_{\ep})^{*} \nabla q_{w,\ep}\|_{H_{(0)}^{ht,s-1}} \leq C[v_0] \ep.
\end{align*}

Concerning $f$, we split as in \eqref{fsplit} (ignoring the superindices), with $f = f_{w} + f_{\phi} + f_{q}$. In an analogous way, in \eqref{Fe} we consider $f_{\ep} = f_{w,\ep} + f_{\phi,\ep} + f_{q,\ep}$.

We then split as:

\begin{align*}
 f_{w} - f_{w,\ep} & = Q^{2} \circ X \zeta \pa(\zeta \pa w) - Q^{2} \Delta w - (Q^{2} \circ X_{\ep} \zeta_{\ep} \pa(\zeta_{\ep} \pa w_{\ep}) - Q^{2}_{\ep} \Delta w_{\ep}) \\
& = d^{e}_{1} + d^{e}_{2} + d^{e}_{3} + d^{e}_{4} + d^{e}_{5} + d^{e}_{6} + d^{e}_{1,\ep},
\end{align*}

with

\begin{align*}
&d^{e}_{1} = (Q^{2} \circ X - Q^{2} \circ X_{\ep}) \zeta \pa (\zeta \pa w),
&d^{e}_{2} = Q^{2} \circ X_{\ep} (\zeta -\zeta_{\ep})\pa (\zeta \pa w) \\
&d^{e}_{3} = Q^{2} \circ X_{\ep} \zeta_{\ep}\pa ((\zeta -\zeta_{\ep})\pa w),
&d^{e}_{4} = (Q^{2} \circ X_{\ep} - Q^{2}_{\ep})\zeta_{\ep}\pa (\zeta_{\ep}\pa (w-w_{\ep})) \\
&d^{e}_{5} = Q^{2}_{\ep}(\zeta_{\ep}-I)\pa (\zeta_{\ep}\pa (w-w_{\ep})),
&d^{e}_{6} = Q^{2}_{\ep}\pa((\zeta_{\ep}-I)\pa (w-w_{\ep})) \\
& d^{e}_{1,\ep}  = (Q^{2}_{\ep} - Q^{2}) \Delta w &
\end{align*}

As before, \eqref{estrella} yields

\begin{align*}
 \|d^{e}_{1,\ep}\|_{H_{(0)}^{ht,s-1}} \leq C[v_0] \ep.
\end{align*}

To estimate $d^{e}_{j}$, $1 \leq j \leq 6$, we will compare the procedure with the one for $d_j$ \eqref{diferenciaf}. It is easy to see that they are similar, by identifying $X^{n}, \zeta^{n}, w^{n}$ with $X, \zeta, w$ and $X^{n-1}, \zeta^{n-1}, w^{n-1}$ with $X_{\ep}, \zeta_{\ep}, w_{\ep}$. As an illustration, we will discuss $d^{e}_{1}$ with detail. We first split $d^e_1=(Q^2\circ X-Q^2\circ X_\ep-(Q^2-Q^2_\ep))\zeta\pa(\zeta\pa w)+(Q^2-Q^2_\ep)\zeta\pa(\zeta\pa w).$ Then $\lhn{(Q^2-Q^2_\ep)\zeta\pa(\zeta\pa w)}{s-1}\leq C[v_0]\ep$ and, in addition, the $\lhn{\cdot}{s-1}$-norm of the first one is bounded by $C[v_0]||X-X_\ep+b\ep||_{L^\infty H^{s-1}}\leq C[v_0]\p{\ep+T^\frac{1}{4}\fn{X-X_\ep+b\ep-(A-A_\ep)v_0t}}$.  Concerning the $H^{\frac{s-1}{2}}L^{2}$ the same splitting yields
\begin{align*}
 \|d_1^{e}\|_{H_{(0)}^{\frac{s-1}{2}}L^{2}}  & \leq C\ep + CT^{\delta}(\|w-w_{\ep}\|_{H_{(0)}^{ht,s+1}} + \|X-X_{\ep}+b\ep-(A-A_\ep)v_0t\|_{F^{s+1}} ,
\end{align*}
Analogous estimates can be deduced for $d_{j}^{e}, 2 \leq j \leq 6$, achieving
\begin{align*}
 \sum_{j=1}^{6} \|d_{j}^{e}\|_{H_{(0)}^{ht,s-1}} \leq C\ep + CT^{\delta}(\|w-w_{\ep}\|_{H_{(0)}^{ht,s+1}} + \|X-X_{\ep}+b\ep-(A-A_\ep)v_0 t\|_{F^{s+1}})
\end{align*}
We next consider
\begin{align*}
 f_{\phi} - f_{\phi,\ep} & = Q^{2} \circ X \zeta \pa(\zeta \pa \phi) - Q^{2} \Delta \phi - Q^{2} \circ X_{\ep} \zeta_{\ep} \pa(\zeta_{\ep} \pa \phi_{\ep}) + Q^{2}_{\ep} \Delta \phi_{\ep}.
\end{align*}
We need to make the following splitting
\begin{align*}
f_{\phi}-f_{\phi, \ep}  =& ((Q^2\circ X-Q^2\circ X_\ep)\zeta\pa(\zeta\pa \phi)-Q^2 \Delta \phi +Q^2_\ep \Delta \phi)\\
&+ Q^2\circ X_\ep (\zeta-\zeta_\ep)\pa\p{\zeta\pa \phi}\\
&+ Q^2\circ X_\ep \zeta_\ep\pa\p{(\zeta-\zeta_\ep)\pa\phi}\\
&+Q^2\circ X_\ep \zeta_\ep \pa\p{\zeta_\ep \pa(\phi-\phi_\ep)}\equiv d^\phi_1+d^\phi_2+d^\phi_3+d^\phi_4.
\end{align*}
In addition
\begin{align*}
d_1^\phi =& (Q^2\circ X-Q^2\circ X_\ep -Q^2+Q^2_\ep)\zeta\pa(\zeta\pa\phi)+(Q^2-Q^2_\ep)(\zeta-\I)\pa(\zeta\pa\phi)\\&
+(Q^2-Q^2_\ep)\pa((\zeta-\I)\pa\phi)+(Q^2-Q^2_\ep)\Delta \phi-Q^2\Delta \phi+Q^2_\ep\Delta\phi_\ep.
\end{align*}
where we notice that $(Q^2-Q^2_\ep)\Delta \phi-Q^2\Delta \phi+Q^2_\ep\Delta\phi_\ep = tQ^2_\ep\Delta(\psi_\ep-\psi)$. This splitting allows us to prove a suitable bound for $d_1^\phi$. The rest of the term $d^\phi_i$ needs of similar splitting in order to be bounded.

We now estimate $f_{q} - f_{q,\ep}$. We  split $f_{q} - f_{q,\ep}=f_{q_w}-f_{q_{w\ep}}+f_{q_\phi}-f_{q_{\phi\ep}}$ where
\begin{align*}
&f_{q_{w}}-f_{q_{w\ep}}=\p{A\circ X_\ep\zeta_\ep}^*\nabla q_{w\ep}-(A\circ X\zeta)^*\nabla q_w-A_\ep^*\nabla q_{w\ep}+A^*\nabla q_w\\
&f_{q_\phi}-f_{q_{\phi\ep}}=\p{A\circ X_\ep\zeta_\ep}^*\nabla q_{\phi\ep}-(A\circ X\zeta)^*\nabla q_\phi-A_\ep^*\nabla q_{\phi\ep}+A^*\nabla q_\phi
\end{align*}
For $f_{q_{w}}-f_{q_{w\ep}}$ we can write
\begin{align*}
&f_{q_{w}}-f_{q_{w\ep}}=\p{(A\circ X_\ep-A\circ X-A_\ep+A)\zeta_\ep}^*\nabla q_{w_\ep}+\p{(A_\ep-A)(\zeta_\ep-\I)}^*\nabla q_{w_\ep}+\p{(A\circ X-A)(\zeta-\zeta_\ep)}^*\nabla q_{w_\ep}\\
&\p{A(\zeta-\zeta_\ep)}^*\nabla q_w+\p{(A\circ X-A)(\zeta-\I)}^*\nabla(q_{w\ep}-q_w)+\p{(A\circ X-A)}^*\nabla(q_{w_\ep}-q_w)+A^*\nabla(q_{w_\ep}-q_w).
\end{align*}
After this splitting the way to control $||f_{q_{w}}-f_{q_{w\ep}}||_{H_{(0)}^{ht,\, s-1}}$ is similar to the way we control $f_{q_w}^{(n)}-f_{q_w}^{(n-1)}$ in proposition \ref{prop2}.

To control $||f_{q_{\phi}}-f_{q_{\phi\ep}}||_{H_{(0)}^{ht,\, s-1}}$ we make a similar splitting
\begin{align*}
&f_{q_{\phi}}-f_{q_{\phi\ep}}=\p{(A\circ X_\ep-A\circ X-A_\ep+A)\zeta_\ep}^*\nabla q_{\phi_\ep}+\p{(A_\ep-A)(\zeta_\ep-\I)}^*\nabla q_{\phi_\ep}+\p{(A\circ X-A)(\zeta-\zeta_\ep)}^*\nabla q_{\phi_\ep}\\
&\p{A(\zeta-\zeta_\ep)}^*\nabla q_\phi+\p{(A\circ X-A)(\zeta-\I)}^*\nabla(q_{\phi\ep}-q_\phi)+\p{(A\circ X-A)}^*\nabla(q_{\phi_\ep}-q_\phi)+A^*\nabla(q_{\phi_\ep}-q_\phi).
\end{align*}
And we just need to use the definitions of $q_\phi$ and $q_{\phi_\ep}$ to obtain a suitable estimate.

With this estimate we finish the control of $f - f_\ep$. We are left to estimate $f^{L}_{\phi} - f^{L}_{\phi,\ep}$ given in \eqref{fghL} and \eqref{fghLe}. Here we notice that $f^L_{\phi}=tQ^2\Delta \psi$, $f^L_{\phi_\ep}=tQ^2_\ep \Delta \psi_\ep$, thus,  proceeding as before, the control for $\psi - \psi_{\ep}, Q - Q_{\ep}, A - A_{\ep}$ makes us conclude that
\begin{align*}
 \|f^{L}_{\phi} - f^{L}_{\phi,\ep}\|_{H_{(0)}^{ht,s-1}} \leq C\ep,
\end{align*}
and we are done with $F_{\ep}$.\\

\underline{Part II: Estimates for $G_{\ep}$:}\\

We consider first the following splitting
$$
G_\ep=-Tr(\nabla w_{\ep}(A-A_{\ep}))+(g-g_\ep)+(g_\phi^L-g_{\phi,\ep}^L).
$$
The first term can be estimated using \eqref{estrella} in a way such that
\begin{align*}
	\|Tr(\nabla w_{\ep}(A-A_{\ep}))\|_{\overline{H}_{(0)}^{ht,s}} \leq C[v_0]\ep,
\end{align*}
due to $w_\ep=\pa_t w_\ep=0$ and $A-A_\ep$ does not depend on time.

Next we consider the $L^{2}H^{s}$ norm for the two terms left. We consider $g - g_{\ep} = \sum_{j=1}^{5} d_{j}^{e}$
where
\begin{align*}
	d_{1}^{e}  = -Tr(\nabla(v-v_{\ep})(\zeta -I) A \circ X),&\qquad
	d_{2}^{e} = -Tr(\nabla v_{\ep}(\zeta - \zeta_{\ep})A \circ X), \\
	d_{3}^{e}  = -Tr(\nabla v_{\ep} \zeta_{\ep}(A \circ X - A \circ X_{\ep})),&\qquad
	d_{4}^{e}  = -Tr(\nabla(v-v_{\ep})(A \circ X - A), \\
	d_{5}^{e}  = -Tr(\nabla v_{\ep}(A_{\ep} - A)).&\qquad
\end{align*}

To estimate $d_{1}^{e}$ we compare it with \eqref{d1g}. In an analogous way we get

\begin{align*}
	\|d_{1}^{e}\|_{L^{2}H^{s}} &  \leq \|\nabla(w-w_{\ep})(\zeta - I)A \circ X\|_{L^{2}H^{s}}
	+ \|\nabla(\phi - \phi_{\ep})(\zeta - I)A \circ X\|_{L^{2}H^{s}} \\
	& \leq  C[v_0](\|X-\al-Av_0t\|_{L^\infty H^{s+1}}+C[v_0]T)\|w-w_{\ep}\|_{L^{2}H^{s+1}}+C[v_0]\ep\\
	& \leq C[v_0]\ep+C[v_0]T^{\frac14}\|w-w_{\ep}\|_{L^{2}H^{s+1}}.
\end{align*}
For $d_{2}^{e}$, we have:
\begin{align*}
	\|d_{2}^{e}\|_{L^{2}H^{s}} & \leq C[v_0]\|\zeta - \zeta_{\ep}\|_{L^{\infty}H^{s}} \leq C[v_0]\|X-X_{\ep}+\ep b-(A-A_\ep)v_0t\|_{L^{\infty}H^{s+1}}+C[v_0]\ep \\
	& \leq C[v_0]\ep + C[v_0]T^{\frac14}\|X-X_{\ep}+\ep b-(A-A_\ep)v_0t\|_{F^{s+1}},
\end{align*}
as before. The same procedure applied to $d_{3}^{e}$ yields:
\begin{align*}
	\|d_{3}^{e}\|_{L^{2}H^{s}} \leq C[v_0]\|X-X_{\ep}\|_{L^{\infty}H^{s}} \leq C[v_0]\ep + C[v_0]T^{\frac14}\|X-X_{\ep}+\ep b-(A-A_\ep)v_0t\|_{F^{s+1}}.
\end{align*}
In an analogous way to $d_1^e$ we get for $d_{4}^{e}$:
\begin{align*}
	\|d_{4}^{e}\|_{L^{2}H^{s}} &\leq  C[v_0]\ep+C[v_0]T^{\frac14}\|w-w_{\ep}\|_{L^{2}H^{s+1}}.
\end{align*}
Finally, it is easy to check using \eqref{estrella} that $$\|d_{5}^{e}\|_{L^{2}H^{s}} \leq C[v_0]\ep.$$ We are then done with $g-g_\ep$. It remains to control $g^L_\phi-g_{\phi,\ep}^L$, but in a similar manner to $d_5^e$ we find
$$\|g^L_\phi-g_{\phi,\ep}^L\|_{L^{2}H^{s}} \leq C[v_0]\ep,$$
due to the formulas for $\phi$ and $\phi_\ep$.
We now move on to the $H^{\frac{s+1}{2}}H^{-1}$ norm. In order to handle it, we consider a different splitting taking
$$
g-g_\ep+g^L_\phi-g^L_{\phi,\ep}=(\overline{g}_{w}-\overline{g}_{w,\ep})+(\overline{g}_{\phi}-\overline{g}_{\phi,\ep})+(\overline{g}^L_\phi-\overline{g}^L_{\phi,\ep}),
$$
where
$$
\overline{g}_{w,\ep}=-Tr(\nabla w_\ep \zeta_\ep A\circ X_\ep)+Tr(\nabla w_\ep A_\ep),\quad \overline{g}_{\phi,\ep}=-Tr(\nabla \phi_\ep \zeta_\ep A\circ X_\ep)+Tr(\nabla \phi_\ep \zeta_{\phi,\ep} A_{\phi,\ep}),
$$
and
$$
\overline{g}^L_{\phi,\ep}=-Tr(\nabla \phi_\ep \zeta_{\phi,\ep}A_{\phi,\ep}),
$$
are defined analogously as $\overline{g}^{(n)}$ and $\overline{g}^{L}_{\phi}$ in \eqref{overlinegn} and \eqref{gLphi} respectively. Above
$$\zeta_{\phi,\ep}= \I+t(\partial_t \zeta_\ep)_{|_{t=0}}=\I-t\nabla(A_\ep v_0),
\qquad\mbox{ and }\qquad
A_{\phi,\ep}=A_\ep+t(\pa_t A\circ X_\ep)_{|_{t=0}}=A_\ep+t\nabla A_\ep A_\ep v_0.$$
The terms $\overline{g}$, $\overline{g}_\phi$ and $\overline{g}^L_{\phi}$ are defined similarly removing the epsilons everywhere as it was done before.

We consider first the following splitting $\overline{g}_{w}-\overline{g}_{w,\ep}=\sum_{j=1}^{14}D^e_{j}$ where
$$
D^e_{1}=-Tr(\nabla(w-w_\ep)(\zeta-\zeta_\phi)(A\circ X-A_{\phi})),\qquad D^e_{2}=-Tr(\nabla(w-w_\ep)(\zeta-\zeta_\phi)A_{\phi}),
$$
$$
D^e_{3}=-Tr(\nabla(w-w_\ep)\zeta_\phi(A\circ X-A_{\phi})),\qquad D^e_{4}=-Tr(\nabla(w-w_\ep)(\zeta_\phi -\I)A_{\phi}),
$$
$$
D^e_{5}=-Tr(\nabla w_\ep(\zeta-\zeta_\phi-(\zeta_\ep-\zeta_{\phi,\ep}))(A\circ X-A_{\phi})),\qquad D^e_{6}=-Tr(\nabla w_\ep(\zeta-\zeta_\phi-(\zeta_\ep-\zeta_{\phi,\ep}))A_{\phi}),
$$
\begin{equation}\label{DeHs1p2}
	D^e_{7}=-Tr(\nabla w_\ep(\zeta_\phi-\zeta_{\phi,\ep})(A\circ X-A_{\phi})),\qquad D^e_{8}=-Tr(\nabla w_\ep(\zeta_\phi-\zeta_{\phi,\ep})A_{\phi}),
\end{equation}
$$
D^e_{9}=-Tr(\nabla w_\ep(\zeta_\ep\!-\!\zeta_{\phi,\ep})(A\circ X\!-\!A_{\phi}\!-\!(A\circ X_\ep\!-\!A_{\phi,\ep}))),\quad D^e_{10}=-Tr(\nabla w_\ep(\zeta_\ep-\zeta_{\phi,\ep})(A_{\phi}-A_{\phi,\ep})),
$$
$$
D^e_{11}=-Tr(\nabla w_\ep\zeta_{\phi,\ep}(A\circ X\!-\!A_{\phi}\!-\!(A\circ X_\ep\!-\!A_{\phi,\ep}))),\qquad D^e_{12}=-Tr(\nabla w_\ep\zeta_{\phi,\ep}(A_{\phi}-A_{\phi,\ep})),
$$
$$
D^e_{13}=Tr(\nabla (w-w_\ep)(A-A_\phi)),\qquad D^e_{14}=Tr(\nabla w_\ep(A-A_{\ep})),
$$
Next we decompose further $D_{1}^e=D_{1,1}^e+D_{1,2}^e+D_{1,3}^e$ so that
\begin{align*}
	D_{1,1}^{e} & = -\int_{0}^{t}Tr(\nabla(\pa_t(w-w_\ep))(\zeta-\zeta_\phi)(A\circ X-A_{\phi}))d\tau, \\
	D_{1,2}^{e} & = -\int_{0}^{t} Tr(\nabla(w-w_\ep)\pa_t(\zeta-\zeta_\phi)(A\circ X-A_{\phi}))d\tau,\\
	D_{1,3}^{e} & = -\int_{0}^{t} Tr(\nabla(w-w_\ep)(\zeta-\zeta_\phi)\pa_t(A\circ X-A_{\phi}))d\tau.
\end{align*}
One obtains
\begin{align*}
	\|D_{1,1}^{e}\|_{H_{(0)}^{\frac{s+1}{2}}H^{-1}} & \leq \|\nabla(\pa_t(w-w_\ep))(\zeta-\zeta_\phi)(A\circ X-A_{\phi})\|_{H_{(0)}^{\frac{s-1}{2}}H^{-1}} \\
	& \leq C\|\nabla(\pa_t(w-w_{\ep})\|_{H_{(0)}^{\frac{s-1}{2}}H^{-1}}\|\zeta-\zeta_\phi\|_{H_{(0)}^{\frac{s-1}{2}}H^{1+\delta}}\|A\circ X-A_{\phi}\|_{H_{(0)}^{\frac{s-1}{2}}H^{1+\delta}} \\
	& \leq C[v_0]T^{\delta}\|w-w_{\ep}\|_{H_{(0)}^{\frac{s+1}{2}}L^2}\|X-\al-Av_0t\|_{H_{(0)}^{\frac{s-1}{2}+\delta}H^{2+\delta}} \leq C[v_0]T^{\delta}\|w-w_{\ep}\|_{H_{(0)}^{ht,s+1}}.
\end{align*}
We share derivatives in a different way to estimate $D_{1,2}^{e}$:
\begin{align*}
	\|D_{1,2}^{e}\|_{H_{(0)}^{\frac{s+1}{2}}H^{-1}} & \leq  C\|\nabla(w-w_{\ep})\|_{H_{(0)}^{\frac{s-1}{2}}H^{1}}\|\pa_t(\zeta-\zeta_\phi)\|_{H_{(0)}^{\frac{s-1}{2}}L^2}\|A\circ X-A_{\phi}\|_{H_{(0)}^{\frac{s-1}{2}}H^{1+\delta}}  \\
	& \leq C[v_0]T^{\delta}\|w-w_{\ep}\|_{H_{(0)}^{\frac{s-1}{2}}H^2}\|X-\al-Av_0t\|_{H_{(0)}^{\frac{s+1}{2}+\delta}H^{1}} \leq C[v_0]T^{\delta}\|w-w_{\ep}\|_{H_{(0)}^{ht,s+1}}.
\end{align*}
The term $D_{1,3}^{e}$ is estimated as $D_{1,2}^{e}$ yielding:
\begin{align*}
	\|D_{1,3}^{e}\|_{H_{(0)}^{\frac{s+1}{2}}H^{-1}} & \leq  C[v_0]\|\nabla(w-w_{\ep})\|_{H_{(0)}^{\frac{s-1}{2}}H^{1}}\|X-\al-Av_0t\|_{H_{(0)}^{\frac{s-1}{2}}H^{2+\delta}}\|\pa_t(A\circ X-A_{\phi})\|_{H_{(0)}^{\frac{s-1}{2}}L^2}  \\
	& \leq C[v_0]T^{\delta}\|w-w_{\ep}\|_{H_{(0)}^{ht,s+1}}.
\end{align*}
We are done with $D_{1}^e$. In order to deal with $D_2^e$ we split it further into
$$
D_{2,1}^e=-Tr(\nabla(w-w_\ep)(\zeta-\zeta_\phi)A),\qquad D_{2,2}^e=-Tr(t\nabla(w-w_\ep)(\zeta-\zeta_\phi)\nabla A A v_0).
$$
The term $D_{2,1}^e$ is estimated as $D_1^e$ and $D_{2,2}^e$ is controlled using lemma \ref{littlelemma}. The same approach works to bound $D_3^e$ and $D_4^e$.

In order to deal with $D_5^e$ one could consider the following splitting $D_5^e=D_{5,1}^{e}+D_{5,2}^{e}+D_{5,1}^{e}$
where
\begin{align*}
	D_{5,1}^{e} & = -\int_{0}^{t}Tr(\nabla \pa_tw_\ep(\zeta-\zeta_\phi-(\zeta_\ep-\zeta_{\phi,\ep}))(A\circ X-A_{\phi}))d\tau, \\
	D_{5,2}^{e} & = -\int_{0}^{t}Tr(\nabla w_\ep\pa_t(\zeta-\zeta_\phi-(\zeta_\ep-\zeta_{\phi,\ep}))(A\circ X-A_{\phi}))d\tau,\\
	D_{5,3}^{e} & = -\int_{0}^{t}Tr(\nabla w_\ep(\zeta-\zeta_\phi-(\zeta_\ep-\zeta_{\phi,\ep}))\pa_t(A\circ X-A_{\phi}))d\tau.
\end{align*}
We share derivatives as for $D_{1,1}^e$ to get
\begin{align*}
	\|D_{5,1}^{e}\|_{H_{(0)}^{\frac{s+1}{2}}H^{-1}} &  \leq C\|\nabla\pa_tw_{\ep}\|_{H_{(0)}^{\frac{s-1}{2}}H^{-1}}\|\zeta-\zeta_\phi-(\zeta_\ep-\zeta_{\phi,\ep})\|_{H_{(0)}^{\frac{s-1}{2}}H^{1+\delta}}\|A\circ X-A_{\phi}\|_{H_{(0)}^{\frac{s-1}{2}}H^{1+\delta}} \\
	& \leq C[v_0]T^{\delta}\|w_{\ep}\|_{H_{(0)}^{\frac{s+1}{2}}L^2}\|X-X_\ep+\ep b-(A-A_\ep)v_0t\|_{H_{(0)}^{\frac{s-1}{2}+\delta}H^{2+\delta}}\\ &\leq C[v_0]T^{\delta}\|X-X_\ep+\ep b-(A-A_\ep)v_0t\|_{F^{s+1}}.
\end{align*}
Next we consider $D_{5,2}^{e}$ as $D_{1,2}^{e}$ to find:
\begin{align*}
	\|D_{5,2}^{e}\|_{H_{(0)}^{\frac{s+1}{2}}H^{-1}} & \leq  C\|\nabla w_{\ep}\|_{H_{(0)}^{\frac{s-1}{2}}H^{1}}\|\pa_t(\zeta-\zeta_\phi-(\zeta_\ep-\zeta_{\phi,\ep}))\|_{H_{(0)}^{\frac{s-1}{2}}L^2}\|A\circ X-A_{\phi}\|_{H_{(0)}^{\frac{s-1}{2}}H^{1+\delta}}  \\
	& \leq C[v_0]T^{\delta}\|X-X_\ep+\ep b-(A-A_\ep)v_0t\|_{F^{s+1}}.
\end{align*}
As for $D_{1,3}^{e}$ it is possible to get
\begin{align*}
	\|D_{5,3}^{e}\|_{H_{(0)}^{\frac{s+1}{2}}H^{-1}} & \leq C[v_0]T^{\delta}\|X-X_\ep+\ep b-(A-A_\ep)v_0t\|_{F^{s+1}}.
\end{align*}
We are done with $D_{5}^e$. To bound $D_{6}^e$ it is possible to split further using identity $A_\phi=A+t\grad A A v_0$ and lemma \ref{littlelemma} getting
$$
\|D_{6}^{e}\|_{H_{(0)}^{\frac{s+1}{2}}H^{-1}} \leq C[v_0]T^{\delta}\|X-X_\ep+\ep b-(A-A_\ep)v_0t\|_{F^{s+1}}.
$$
Sharing the time derivative as before, the terms $D_{7}^e$ and $D_{8}^e$ are bounded by
$$
\|D_{7}^{e}\|_{H_{(0)}^{\frac{s+1}{2}}H^{-1}} \leq C[v_0]\ep,\qquad \|D_{8}^{e}\|_{H_{(0)}^{\frac{s+1}{2}}H^{-1}} \leq C[v_0]\ep,
$$
using \eqref{estrella}. An analogous approach to the composition lemma \ref{composicionAfi} provides
$$
\|D_{9}^{e}\|_{H_{(0)}^{\frac{s+1}{2}}H^{-1}} \leq C[v_0]T^{\delta}\|X-X_\ep+\ep b-(A-A_\ep)v_0t\|_{F^{s+1}}
$$
by a similar splitting as for $D_5^e$. We control the terms $D_{10}^e$ and $D_{12}^e$ as $D_{7}^e$ to obtain
$$
\|D_{10}^{e}\|_{H_{(0)}^{\frac{s+1}{2}}H^{-1}} \leq C[v_0]\ep,\qquad \|D_{12}^{e}\|_{H_{(0)}^{\frac{s+1}{2}}H^{-1}} \leq C[v_0]\ep.
$$
As for $D_{6}^e$, the use of lemmas \ref{2.4}, \ref{bestiario} and \ref{littlelemma} provides
$$
\|D_{11}^{e}\|_{H_{(0)}^{\frac{s+1}{2}}H^{-1}} \leq C[v_0]T^{\delta}\|X-X_\ep+\ep b-(A-A_\ep)v_0t\|_{F^{s+1}}.
$$
At this point, it is easy to find as before
$$
\|D_{13}^{e}\|_{H_{(0)}^{\frac{s+1}{2}}H^{-1}} \leq C[v_0]T^\delta\|w-w_{\ep}\|_{H_{(0)}^{ht,s+1}},\qquad \|D_{14}^{e}\|_{H_{(0)}^{\frac{s+1}{2}}H^{-1}} \leq C[v_0]\ep.
$$
We are therefore done with $\overline{g}_{w}-\overline{g}_{w,\ep}$. The same approach can be used to handle $\overline{g}_{\phi}-\overline{g}_{\phi,\ep}$ but replacing $w$ by $\phi$ and $w_\ep$ by $\phi_\ep$. This provides
$$
\|\overline{g}_{\phi}-\overline{g}_{\phi,\ep}\|_{H_{(0)}^{\frac{s+1}{2}}H^{-1}} \leq C[v_0]\ep+C[v_0]T^{\delta}\|X-X_\ep+\ep b-(A-A_\ep)v_0t\|_{F^{s+1}}.
$$
It remains to deal with $\overline{g}^L_\phi-\overline{g}^L_{\phi,\ep}$. Using that $\overline{g}^L_\phi=O(t^2)=\overline{g}^L_{\phi,\ep}$, \eqref{estrella} together with lemma \ref{littlelemma} we finally obtain
$$
\|\overline{g}^L_\phi-\overline{g}^L_{\phi,\ep}\|_{H_{(0)}^{\frac{s+1}{2}}H^{-1}} \leq C[v_0]\ep.
$$

\underline{Part III: Estimates for $H_{\ep}$:}\\

We first consider in formula \eqref{He} the splitting
$$
H_{\ep}  = h - h_{\ep} + h^L_{\phi} - h^L_{\phi,\ep}+\overline{H}_{\ep}
$$
where the term $\overline{H}_{\ep}$ is given by
\begin{align*}
	\overline{H}_{\ep} = &  -q_{w,\ep}(A^{-1} - A^{-1}_{\ep}) n_0 - ((\nabla w_{\ep}(A-A_{\ep})) + (\nabla w_{\ep}(A-A_{\ep}))^{*}) A^{-1}_{\ep} n_0 \\
	& - ((\nabla w_{\ep} A) + (\nabla w_{\ep} A)^{*})(A^{-1} - A_{\ep}^{-1}) n_{0}
\end{align*}

The first estimate in \eqref{estrella} yields
\begin{align*}
	|\overline{H}_{\ep}|_{H_{(0)}^{ht,s-\frac12}} &\leq C(|q_{w,\ep}|_{H_{(0)}^{ht,s-\frac12}}|A^{-1}\!-\!    A^{-1}_{\ep}|_{H^{s-\frac12}}+|\nabla w_\ep|_{H_{(0)}^{ht,s-\frac12}}(|A- A_{\ep}|_{H^{s-\frac12}}+|A^{-1}\!-\!A^{-1}_{\ep}|_{H^{s-\frac12}})\\
	&\leq C[v_0]\ep
\end{align*}

Next, the use of \eqref{estrella}, \eqref{qmqe} and the smallness of $\phi - \phi_{\ep}$ allows us to obtain
\begin{align*}
	|h^{L}_{\phi} - h^{L}_{\phi,\ep}|_{L^2H^{s-\frac12}}\leq C 	\|h^{L}_{\phi} - h^{L}_{\phi,\ep}\|_{L^2H^s}  \leq C[v_0]\ep.
\end{align*}
The compatibility condition and the formulas for $q_{\phi,\ep}$, $q_{\phi}$ provide
\begin{align*}
	|h^{L}_{\phi} - h^{L}_{\phi,\ep}|_{H_{(0)}^{\frac s2-\frac14}L^2}&=|t\exp(-t^2) ((\nabla \psi A + (\nabla \psi A)^{*}) A^{-1} - (\nabla \psi_\ep A_{\ep} + (\nabla \psi_\ep A_{\ep})^{*}) A_{\ep}^{-1}) n_0|_{H_{(0)}^{\frac s2-\frac14}L^2}.
\end{align*}
Together with \eqref{estrella}, \eqref{qmqe} yield
\begin{align*}
	|h^{L}_{\phi} - h^{L}_{\phi,\ep}|_{H_{(0)}^{\frac s2-\frac14}L^2}&\leq C[v_0]\ep,\qquad  \mbox{ and finally }\qquad
	|h^{L}_{\phi} - h^{L}_{\phi,\ep}|_{H_{(0)}^{ht,s-\frac12}}\leq C[v_0]\ep.
\end{align*}
It remains to deal with $h - h_{\ep}$. As we did in \eqref{hsplit}, we split $h = h_v  + h_{v^{*}} + h_q$, and similarly $h_{\ep} = h_{v,\ep} + h_{v^{*}\!,\ep} + h_{q,\ep}$. We estimate first $h_{v} - h_{v,\ep}$ using the splitting

\begin{align*}
	h_{v} - h_{v,\ep} = d_{1}^{e} + d_{2}^{e} + d_{3}^{e} + d_{4}^{e},
\end{align*}
where
\begin{align*}
	d_{1}^{e} = \nabla v(\zeta - \zeta_{\ep}) \nabla_{J} X n_0, &\qquad
	d_{2}^{e} = \nabla v\zeta_{\ep}( \nabla_{J} X - \nabla_{J} X_{\ep}) n_0, \\
	d_{3}^{e}  = \nabla (v - v_{\ep}) (\zeta_{\ep} - \I) \nabla_{J} X_{\ep} n_0, &\qquad
	d_{4}^{e}  = \nabla(v-v_{\ep})(\nabla_{J} X_{\ep} - \I) n_0,
\end{align*}
in a similar way to \eqref{diferenciahv}. As before, using the splitting $v=w+v_0+t\psi$, we are able to bound as follows:

\begin{align*}
	|d_{1}^{e}|_{H_{(0)}^{ht,s-\frac12}} + |d^{e}_{2}|_{H_{(0)}^{ht,s-\frac12}}
	& \leq C[v_0]\ep+C[v_0](\|X-X_{\ep}+\ep b-(A-A_{\ep})v_0t\|_{L^{\infty}H^{s+1}}\\
	& \qquad\qquad\qquad\qquad\qquad\qquad+\|X-X_{\ep}+\ep b-(A-A_{\ep})v_0t\|_{H_{(0)}^{\frac s2-\frac14}H^{2+\eta}}) \\
	& \leq C[v_0]\ep + C[v_0]T^{\delta}\|X-X_{\ep}+\ep b-(A-A_{\ep})v_0t\|_{F^{s+1}},
\end{align*}
where $\eta>0$ small enough and  we have used Lemma \ref{lemapacoestabilidad1}. Thus, we have obtained the appropriate estimate. Repeating the procedure in the splitting \eqref{diferenciahv} to $d_3^e$ and $d_4^e$, we find
\begin{align*}
	|d_{3}^{e}|_{H_{(0)}^{ht,s-\frac12}} + |d_{4}^{e}|_{H_{(0)}^{ht,s-\frac12}} & \leq  C[v_0]\ep+C[v_0]T^{\delta}\|w-w_{\ep}\|_{H_{(0)}^{ht,s+1}}.
\end{align*}

We split further for $h_{v^{*}} - h_{v^{*}\!,\ep} = \sum_{j=1}^{5} d_{j}^{*,e}$, where

$$
d_{1}^{*,e}  = [(\nabla v\zeta (A \circ X\!-\!A))^{*}(A^{-1} \circ X\!-\!A^{-1}) \nabla_J X-(\nabla v_\ep\zeta_\ep (A \circ X_\ep\!-\!A_\ep))^{*}(A^{-1}\circ X_\ep\!-\!A^{-1}_\ep)  \nabla_J X_\ep] n_0,
$$
$$
d_{2}^{*,e}  = [(\nabla v\zeta (A \circ X-A))^{*}A^{-1}  \nabla_J X-(\nabla v_\ep\zeta_\ep (A \circ X_\ep-A_\ep))^{*}A^{-1}_\ep  \nabla_J X_\ep)] n_0,
$$
$$
d_{3}^{*,e}  = [(\nabla v\zeta A)^{*}(A^{-1}\circ X-A^{-1})\nabla_J X-(\nabla v_\ep\zeta_\ep A_\ep)^{*} (A^{-1}\circ X_\ep-A^{-1}_\ep)\nabla_J X_\ep] n_0,
$$
$$
d_{4}^{*,e}  = [(\nabla v (\zeta-\I) A)^{*} A^{-1} \nabla_J X -(\nabla v_\ep (\zeta_\ep-\I) A_\ep)^{*} A_\ep^{-1} \nabla_J X_\ep] n_0,
$$
and
$$
d_{5}^{*,e}  = [(\nabla v A)^*A^{-1}(\nabla_{J} X_{\ep}-I)-(\nabla v_\ep A_\ep)^*A_\ep^{-1}(\nabla_{J} X_{\ep}-I)]n_0.
$$
Further decomposing provides $d_1^{*,e}=\sum_{j=1}^{5} d_{1,j}^{*,e}$ with
$$
d_{1,1}^{*,e}  = (\nabla v(\zeta-\zeta_\ep) (A \circ X\!-\!A))^{*}(A^{-1} \circ X\!-\!A^{-1}) \nabla_J Xn_0,
$$
$$
d_{1,2}^{*,e}  = (\nabla v\zeta_\ep) (A \circ X\!-\!A-A \circ X_\ep\!+\!A_\ep))^{*}(A^{-1} \circ X\!-\!A^{-1}) \nabla_J Xn_0,
$$
$$
d_{1,3}^{*,e}  = (\nabla v\zeta_\ep (A \circ X_\ep\!-\!A_\ep))^{*}(A^{-1} \circ X\!-\!A^{-1}-A^{-1} \circ X_\ep\!+\!A^{-1}_\ep) \nabla_J Xn_0,
$$
$$
d_{1,4}^{*,e}  = (\nabla v\zeta_\ep (A \circ X_\ep\!-\!A_\ep))^{*}(A^{-1} \circ X_\ep\!-\!A^{-1}_\ep)(\nabla_J X-\nabla_J X_\ep)n_0,
$$
and
$$
d_{1,5}^{*,e}  = (\nabla (v-v_\ep)\zeta_\ep (A \circ X_\ep\!-\!A_\ep))^{*}(A^{-1} \circ X_\ep\!-\!A^{-1}_\ep)\nabla_J X_\ep n_0.
$$
As for $d_1^e$ and $d_2^e$ we can find
\begin{align*}
	\sum_{j=1}^4|d_{1,j}^{*,e}|_{H_{(0)}^{ht,s-\frac12}}
	& \leq C[v_0]\ep+C[v_0](\|X-X_{\ep}+\ep b-(A-A_{\ep})v_0t\|_{L^{\infty}H^{s+1}}\\
	& \qquad\qquad\qquad\qquad\qquad\qquad+\|X-X_{\ep}+\ep b-(A-A_{\ep})v_0t\|_{H_{(0)}^{\frac s2-\frac14}H^{2+\eta}}) \\
	& \leq C[v_0]\ep + C[v_0]T^{\delta}\|X-X_{\ep}+\ep b-(A-A_{\ep})v_0t\|_{F^{s+1}},
\end{align*}
for $\eta>0$ small enough. As we did for $d_3^e$ and $d_4^e$ it is possible to get
\begin{align*}
	|d_{1,5}^{*,e}|_{H_{(0)}^{ht,s-\frac12}} & \leq  C[v_0]\ep+C[v_0]T^{\delta}\|w-w_{\ep}\|_{H_{(0)}^{ht,s+1}}.
\end{align*}
In an analogous manner, we estimate $d_{j}^{*,e}$ for $j=2,...,5$ so that
\begin{align*}
	\sum_{j=2}^5|d_{j}^{*,e}|_{H_{(0)}^{ht,s-\frac12}}&\leq
	C[v_0]\ep + C[v_0]T^{\delta}(\|w-w_{\ep}\|_{H_{(0)}^{ht,s+1}}+\|X-X_{\ep}+\ep b-(A-A_{\ep})v_0t\|_{F^{s+1}}).
\end{align*}
The estimates for $h_{v^*}-h_{v^*\!,\ep}$ are done. To finish, we consider $h_q - h_{q,\ep} = d_{1}^{q,e} + d_{2}^{q,e}$, where

\begin{align*}
	d_{1}^{q,e} & = [q_{\ep}(A^{-1} \circ X_{\ep} - A^{-1}_\ep)\nabla_{J} X_{\ep}-q(A^{-1} \circ X - A^{-1})\nabla_{J} X] n_0, \\
	d_{2}^{q,e} & = [q_{\ep} A^{-1}_\ep(\nabla_{J} X_{\ep}-\I)-q A^{-1}(\nabla_{J} X-\I)] n_0.
\end{align*}
The last detailed splitting $d_{1}^{q,e}=d_{1,1}^{q,e}+d_{1,2}^{q,e}+d_{1,3}^{q,e}$ provides
$$
d_{1,1}^{q,e}=q_{\ep}(A^{-1} \circ X_{\ep} - A^{-1}_\ep-A^{-1} \circ X+ A^{-1})\nabla_{J} X_{\ep}n_0,
$$
$$
d_{1,2}^{q,e}=q_{\ep}(A^{-1} \circ X-A^{-1})(\nabla_{J} X_{\ep}-\nabla_{J} X) n_0,
$$
$$
d_{1,3}^{q,e}=(q_{\ep}-q)(A^{-1} \circ X-A^{-1})\nabla_{J} X n_0,
$$
which allows us to estimate as before
\begin{align*}
	\sum_{j=1}^3|d_{1,j}^{q,e}|_{H_{(0)}^{ht,s-\frac12}}
	& \leq C[v_0]\ep+C[v_0]T^{\delta}(
	\|q_{w}-q_{w,\ep}\|_{H^{ht,s}_{pr \, (0)}}+\|X-X_{\ep}+\ep b-(A-A_{\ep})v_0t\|_{F^{s+1}}).
\end{align*}
We end the bounds getting
\begin{align*}
	|d_{2}^{q,e}|_{H_{(0)}^{ht,s-\frac12}} & \leq C[v_0]\ep+C[v_0]T^{\delta}(
	\|q_{w}-q_{w,\ep}\|_{H^{ht,s}_{pr \, (0)}}+\|X-X_{\ep}+\ep b-(A-A_{\ep})v_0t\|_{F^{s+1}}).
\end{align*}
by a similar splitting. We are done with $h_q-h_{q,\ep}$ and therefore with $H_\ep$.

\end{prooflemma}

\section{Setting the right initial normal velocity}\label{sectioninitialvel}

We consider the following parametrization of the boundary of $\Omega$:
$$z(s),\quad |z_s(s)|=1,$$
we also consider a small enough neighborhood of the boundary, $U$. In $U$ one can use the coordinates $(s,\la)$ given by
$$x(s,\lambda)= z(s)+\lambda z_s^\perp(s).$$
The stream function $\psi$, in $U$,will be given by
 \begin{align*}
 \overline{\psi}(s,\lambda)=&\psi_0(s)+\psi_1(s)\lambda+ \frac{1}{2}\psi_2(s)\lambda^2\\
 \psi(x(s,\la))=& \overline{\psi}(s,\la).
\end{align*}
Then we can extend in a smooth way $\psi$ to the rest of the domain $\Omega$ and take $v_{0} = \nabla^{\perp} \psi$. $v_{0}$ is clearly divergence free.

The initial velocity $v_{0}(x)$ must satisfy
$$t\left.\left(\nabla v_{0} + \nabla v_{0}^*\right)\right|_{\partial \Omega}n=0,$$
where $t$ and $n$ are the tangential and normal vectors to the boundary of $\Omega$ respectively.

If $T$ and $N$ are an extension of $t$ and $n$ to $U$ respectively, we can write
$$\left(T\left(\nabla v_{0} + \nabla v_{0}^*\right)N\right)|_{\partial \Omega}=0.$$

We will take
\begin{align*}
T(s,\lambda)= & x_s(s,\lambda)=z_s(s)+\lambda z_{ss}^\perp(s)=(1-\lambda\kappa(s))z_s(s)\\
N(s,\lambda)= & x_\lambda(s,\la)= z_s(s)^\perp,
\end{align*}
where $$\kappa(s)= z_{ss}(s)\cdot z_s^\perp,$$
and we notice that
$$\nabla \psi\circ x(s,\la)=\frac{1}{1-\kappa\la}\overline{\psi}_s\, z_s+\overline{\psi}_\la\,z_s^\perp.$$

By defining  $$\overline{v_{0}}(s,\la)= v_{0} \circ x(s,\la)$$
we have that
$$\overline{v_{0}}(s,\lambda)=\frac{1}{1-\kappa\la}\overline{\psi}_s \, z^\perp_s- \overline{\psi}_\la\,z_s.$$
Now we can compute that
\begin{align*}
 T^i\pa_i v_{0}^j\circ x(s,\lambda)=&\pa_s \overline{v_{0}}^{j} (s,\lambda)\\
 N^j\pa_j v_{0}^i \circ x(s,\lambda)=& \pa_\lambda \overline{v_{0}}^{i}(s,\lambda).
 \end{align*}

And then
\begin{align*}
T^i\pa_i v_{0}^j\circ x(s,\lambda)N^j=\pa_s\left(\overline{v_{0}}\cdot N\right)-\overline{v_{0}}\cdot N_s\\
N^j\pa_j v_{0}^i\circ x(s,\lambda)T^i=\pa_\lambda\left(\overline{v_{0}}\cdot T\right)-\overline{v_{0}}\cdot T_\lambda.
\end{align*}
But
\begin{align*}
T_\lambda=& z_{ss}^\perp(s)\\
N_s= & z_{ss}^\perp(s)=-\kappa(s)z_s(s).
\end{align*}
In addition
\begin{align*}
\overline{v_{0}}\cdot N = \frac{1}{1-\lambda\kappa}\overline{\psi}_s.
\end{align*}
Then
$$\pa_s(\overline{v_{0}}\cdot N)|_{\la=0}=\pa^2_s\overline{\psi}(s,0).$$
Also
$$\overline{v_{0}}\cdot T=-(1-\lambda \kappa)\overline{\psi}_\lambda.  $$
Therefore
$$\pa_\lambda(\overline{v_{0}}\cdot T)|_{\lambda=0}=\kappa(s)\pa_\lambda\overline{\psi}(s,0)-\pa^2_\lambda\overline{\psi}(s,0).$$
Finally
$$(\overline{v_{0}}\cdot T_\lambda)|_{\lambda=0}= \overline{v_{0}}\cdot N_s|_{\la=0} = \kappa \overline{\psi}_\lambda(s,0).$$
Thus, we have that
$$\pa^2_s \overline{\psi}(s,0)-\kappa\pa_\lambda \overline{\psi}(s,0)-\pa^2_\lambda\overline{\psi}(s,0)=0.$$
Taking $\psi_1=0$ yields
$$\pa^2_s\psi_0=\psi_2(s).$$

Just to conclude we notice that $v_{0} \cdot n|_{\pa\Omega}=\pa_s\psi_0(s)$. We first pick up $\psi_0$ in order to choose the normal component of the velocity to be strictly positive and outward at the interface to guarantee a splash. Then $\psi_2$ is taken to satisfy the continuity of the stress tensor.

\section*{Acknowledgments}

AC, DC, CF, FG and JGS were partially supported by the grants MTM2011-26696, MTM2014-59488-P (Spain) and ICMAT Severo Ochoa projects SEV-2011-0087 and SEV-2015-0554. AC was partially supported by the Ram\'on y Cajal program RyC-2013-14317 and ERC grant 307179-GFTIPFD. CF was partially supported by NSF grant DMS-1265524. FG was partially supported by the Ram\'on y Cajal program RyC-2010-07094, by the P12-FQM-2466 grant from Junta de Andaluc\'ia, Spain and by the ERC Grant H2020-EU.1.1.-639227. JGS was partially supported by the Simons Collaboration Grant 524109 and by the NSF grant DMS-1763356. We thank the anonymous referee for his or her useful comments on previous versions of this manuscript.

\appendix

\section{Results on the linear system}

\label{sectionlineal}

This appendix is devoted to prove Theorem \ref{Lmenos1}. The proof is an adaptation
of \cite[Theorem 4.3]{Beale:initial-value-problem-navier-stokes} to the tilde domain.

\subsection{Case $g=h=v_0=0$}
\label{subsectionlinealceros}

We would like to solve the following system:

\begin{align}
 v_t - Q^{2} \Delta v + A^{*} \nabla q & = f \nonumber \\
Tr(\nabla v A) & = 0 \nonumber \\
(qI - ((\nabla v A) +(\nabla v A)^{*})B_1 n & = 0 \nonumber \\
\left.v\right|_{t=0} & = 0 \label{uno},
\end{align}
where
\begin{align*}
B_1 = -JA^{-1}J=\frac{1}{Q^2}A^*
\end{align*}
and
$$A_{ij}=\pa_{j}P^i\circ P^{-1}.$$

Our first purpose will be to obtain a weak formulation of the time independent part of this system. In order to do it we will use the following identities
\begin{align*}
 & AA^{*} = Q^{2} I\\
 &\Delta v = \text{div}\left(AA^{*}\frac{1}{Q^{2}}\nabla v\right)
\end{align*}
and also
%\begin{align*}
% \int_{\Om} \Delta v^{i} \phi^{i} & = \int_{\Om} \pa_{j}(\pa_{j}v^{i} + \pa_{i}v^{j}) \phi^{i}
%= -\int_{\Om} (\pa_{j}v^{i} + \pa_{i}v^{j}) \pa_{j} \phi^{i} \\
%& = -\int_{\Om} (\pa_{j}v^{i} \circ P^{-1} + \pa_{i}v^{j} \circ P^{-1}) \pa_{j} \phi^{i} \circ P^{-1} \frac{1}{Q^{2}} d\tilde{x} \\
%& = -\int_{\tilde{\Om}} (A_{kj} \pa_{k} \tilde{v}^{i} + A_{ki} \pa_{k} \tilde{v}^{j}) A_{lj}\pa_{l} \tilde{\phi}^{i} \frac{1}{Q^{2}} d\tilde{x} \\
%& = -\int_{\tilde{\Om}} \frac{1}{Q^{2}}A_{lj}(A_{kj} \pa_{k} \tilde{v}^{i} + A_{ki} \pa_{k} \tilde{v}^{j})\pa_{l} \tilde{\phi}^{i}  d\tilde{x} \\
%& = \int_{\tilde{\Om}} \pa_{l}\left(\frac{1}{Q^{2}}A_{lj}(A_{kj} \pa_{k} \tilde{v}^{i} + A_{ki} \pa_{k} \tilde{v}^{j})\right)\tilde{\phi}^{i}  d\tilde{x} \\
%& = \int_{\Om} Q^{2} \pa_{l}\left(\frac{1}{Q^{2}}A_{lj}(A_{kj} \pa_{k} \tilde{v}^{i} + A_{ki} \pa_{k} \tilde{v}^{j})\right) \circ P \phi^{i}  dx \\
%\end{align*}
%
%This shows that

\begin{align*}
Q^{2} \pa_{l}\left(\frac{1}{Q^{2}}A_{lj}(A_{kj} \pa_{k} v^{i} + A_{ki} \pa_{k} v^{j})\right) = Q^{2} \Delta v
\end{align*}

Using this relation it is easy to arrive to the following identity:

\begin{align*}
 &\int_{\Om} \pa_{l}\left(\frac{1}{Q^{2}}A_{lj}(A_{kj} \pa_{k} v^{i} + A_{ki} \pa_{k} v^{j})\right)\phi^{i}  dx\\ &
 = -\int_{\Om}\frac{1}{Q^{2}}A_{lj}(A_{kj} \pa_{k} v^{i} + A_{ki} \pa_{k} v^{j})\pa_{l}\phi^{i}  dx
+ \int_{\pa \Om} n^{l} A_{lj}(A_{kj} \pa_{k} v^{i} + A_{ki} \pa_{k} v^{j}) \phi^{i}\frac{1}{Q^{2}}d\sigma \\
& = -\int_{\Om}\frac{1}{Q^{2}}(A_{kj} \pa_{k} v^{i} + A_{ki} \pa_{k} v^{j})A_{lj}\pa_{l}\phi^{i}  dx
+ \int_{\pa \Om} n^{l} A_{lj}(A_{kj} \pa_{k} v^{i} + A_{ki} \pa_{k} v^{j}) \phi^{i} \frac{1}{Q^2}d\sigma \\
& = -\frac12\int_{\Om}\frac{1}{Q^{2}}(A_{kj} \pa_{k} v^{i} + A_{ki} \pa_{k} v^{j})(A_{lj}\pa_{l}\phi^{i}+A_{li}\pa_{l} \phi^{j})  dx
+ \int_{\pa \Om} n^{l} A_{lj}(A_{kj} \pa_{k} v^{i} + A_{ki} \pa_{k} v^{j}) \phi^{i}\frac{1}{Q^2} d\sigma \\
\end{align*}

for $\phi^{i} \in C^{\infty}(\overline{\Omega})$.

We also have that

\begin{align*}
 \int_{\Om} A_{ki} \pa_{k} q \phi^{i} \frac{1}{Q^{2}} dx = - \int_{\Om} q \pa_{k} \left(A_{ki} \phi^{i} \frac{1}{Q^{2}}\right) dx + \int_{\pa \Om} q A_{ki} n^{k} \phi^{i}\frac{d\sigma}{Q^2}
\end{align*}

The following identities hold:

\begin{align*}
 \pa_{k}\left(A_{ki} \phi^{i} \frac{1}{Q^{2}}\right) = A_{ki} \pa_{k} \phi^{i} \frac{1}{Q^{2}}, \quad \pa_{k} \left(\frac{A_{ki}}{Q^2}\right) = 0
\end{align*}

%Moreover:

%\begin{align*}
% \int_{\Om} A_{ki} \pa_{k} p \phi^{i} \frac{1}{Q^{2}} dx = - \int_{\Om} p \pa_{k} \left(A_{ki} \phi^{i} \frac{1}{Q^{2}}\right) dx + \int_{\pa %\Om} p A_{ki} \tilde{n}_{k} \phi^{i} \frac{d\sigma}{Q^{2}}
%\end{align*}

The energy identity for the time independent version of \eqref{uno} reads:

\begin{align*}
 & \frac12\int_{\Om}\frac{1}{Q^{2}}(A_{kj} \pa_{k} v^{i} + A_{ki} \pa_{k} v^{j})(A_{lj}\pa_{l}\phi^{i}+A_{li}\pa_{l} \phi^{j})  dx - \int_{\Om} q \pa_{k} \left(A_{ki} \frac{1}{Q^{2}} \phi^{i}\right) dx \\
& - \int_{\pa \Om} n^{l} A_{lj}(A_{kj} \pa_{k} v^{i} + A_{ki} \pa_{k} v^{j}) \phi^{i} \frac{1}{Q^{2}} d\sigma  + \int_{\pa \Om} \phi^{i} q A_{ki} n^{k} \frac{d\sigma}{Q^{2}} = \int_{\Om} f \cdot \phi \frac{1}{Q^{2}} dx
\end{align*}
and therefore
\begin{align*}
 & \frac12\int_{\Om}\frac{1}{Q^{2}}(A_{kj} \pa_{k} v^{i} + A_{ki} \pa_{k} v^{j})(A_{lj}\pa_{l}\phi^{i}+A_{li}\pa_{l} \phi^{j})  dx - \int_{\Om} q \pa_{k} \left(A_{ki} \frac{1}{Q^{2}} \phi^{i}\right) dx \\
& = -\int_{\pa \Om} (q \delta^{ij} - (A_{kj} \pa_{k} v^{i} + A_{ki} \pa_{k} v
^{j}))A_{lj} n^{l} \phi^{i} \frac{d\sigma}{Q^{2}} + \int_{\Om} f \cdot \phi \frac{1}{Q^{2}} dx, \\
\end{align*}
where

\begin{align*}
 \pa_{k} \left(A_{ki} \frac{1}{Q^{2}} \phi^{i}\right) = \frac{A_{ki}}{Q^{2}} \pa_{k} \phi^{i} =\frac{1}{Q^2} Tr(\nabla \phi A),
\end{align*}

so that we finally write as follows

\begin{align}\label{energy}
&\frac{1}{2}\int_{\Om}Tr\left(\left(\nabla v A+A^*\nabla v^*\right)\left(\nabla\phi A +A^*\nabla\phi^*\right)\right)\frac{1}{Q^2}dx-\int_{\Om}q \,Tr(\nabla\phi A) \frac{1}{Q^2} dx\\
&=\int_{\Om} f\cdot \phi \frac{dx}{Q^2} -\int_{\pa\Omega} \left(q\I -\left(\nabla v A+ A^*\nabla v^*\right)\right)A^* n \cdot \phi \frac{1}{Q^2}d\sigma.\nonumber
\end{align}
Therefore \eqref{energy} is the time independent weak formulation  of our system.

Next we will define a kind of Leray projector. Let $H^{0}_{d}$ the subspace of $H^{0}$ formed by vectors $A^{*} \nabla \phi$ such that $\phi \in H^{1}_{0}$. Let $H^{0}_{\sigma}$ the orthogonal complement of $H^{0}_{d}$ with the following vector valued $H^{0}$ scalar product:

\begin{align*}
 ( f,g )_{H^{0}} = \int_{\Om} f \cdot g \frac{1}{Q^{2}} dx.
\end{align*}
Then it is easy to check that if $v\in H^1\cap H^{0}_\sigma$, then $v$ must satisfy
$$Tr(\nabla v A) = 0.$$
For $v\in L^2$ we define $Rv$ to be  the orthogonal projection of $v$ onto $H^{0}_{\sigma}$ . For $v\in H^1$ we have that
\begin{align}\label{defR}
 Rv = v - A^{*} \nabla \psi,
\end{align}
where
\begin{align}
 Q^{2} \Delta \psi  = Tr(\nabla v A)\quad&\text{in }\Omega_0 \nonumber\\
\psi = 0\quad&\text{on }\partial\Omega_0,\label{vmRv}
\end{align}

The next lemma will deal with some properties about this projector $R$. Note that we have defined $R$ for functions of $x$. For functions of $(x,t)$ we apply $R$ for every $t$.
\begin{lemma}
 \label{lema31}
Let $0 \leq s \leq 4$. We have:
\begin{enumerate}
 \item[i)] $R$ is a bounded operator on $H^{s}$.
 \item[ii)] $R$ is a bounded operator on $H_{(0)}^{ht,s}$, with norm bounded uniformly if $T$ is bounded above.
 \item[iii)] If $\phi \in H^{s+1}$, then $R(A^{*} \nabla \phi) = A^{*} \nabla \phi_1$, with $Q^{2} \Delta \phi_1 = 0$ in $\Omega_0$, $\phi_1 = \phi$ on $\partial\Omega_0$.
\end{enumerate}
\end{lemma}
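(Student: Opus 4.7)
The cornerstone of the whole lemma is the algebraic identity
\[
Tr(\nabla v\, A) \;=\; Q^{2}\,\di\!\left(\frac{Av}{Q^{2}}\right),
\]
which follows immediately from the identity $\pa_{k}(A_{ki}/Q^{2})=0$ recorded just above the lemma by expanding the divergence with the product rule. This rewrites the defining elliptic problem for $\psi$ as
\[
\Delta\psi = \di\!\left(\frac{Av}{Q^{2}}\right) \text{ in }\Om_{0}, \qquad \psi|_{\pa\Om_{0}}=0,
\]
which is in divergence form and therefore well-posed even for $v\in L^{2}$.

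Part (iii) is the quickest. Assuming $v=A^{*}\grad\phi$, the identity $AA^{*}=Q^{2}\I$ gives $(Av)^{k}=A_{ki}A_{ji}\pa_{j}\phi=Q^{2}\pa_{k}\phi$, so $Av/Q^{2}=\grad\phi$ and the right-hand side above reduces to $\Delta\phi$. Setting $\phi_{1}:=\phi-\psi$, one immediately has $\phi_{1}=\phi$ on $\pa\Om_{0}$ and $Q^{2}\Delta\phi_{1}=Q^{2}\Delta\phi-Q^{2}\Delta\psi=0$; moreover $Rv=v-A^{*}\grad\psi=A^{*}\grad\phi_{1}$, as claimed.

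For part (i), I handle $s=0$ and $s>0$ separately. At $s=0$, $R$ is by construction the orthogonal projector onto $H^{0}_{\sigma}$ for the inner product $\int \cdot/Q^{2}\,dx$; since $Q^{2}$ is smooth and bounded above and below on $\overline{\Om_{0}}$, this norm is equivalent to the usual $L^{2}$ norm, so the $L^{2}$-operator norm of $R$ is bounded by a constant. For $0<s\leq 4$, standard elliptic regularity for the Dirichlet problem above with smooth coefficients and divergence-form data yields $\|\psi\|_{H^{s+1}}\leq C\|Av/Q^{2}\|_{H^{s}}\leq C\|v\|_{H^{s}}$ (the product bound uses Lemma \ref{2.5} and the smoothness of $A$ and $1/Q^{2}$). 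Since $A^{*}\in C^{\infty}(\overline{\Om_{0}})$, the same lemma gives $\|A^{*}\grad\psi\|_{H^{s}}\leq C\|\psi\|_{H^{s+1}}$, and hence $\|Rv\|_{H^{s}}\leq C\|v\|_{H^{s}}$.

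Part (ii) follows from (i) because $R$ acts only in space. Squaring (i) and integrating in $t$ gives the $L^{2}([0,T];H^{s})$ bound. For the $H^{s/2}([0,T];L^{2})$ component, extend $v$ to $t\in\R$ via Lemma \ref{lema22}(1) with extension constant uniform in $T$. The time Fourier transform commutes with $R$, i.e.\ $\widehat{Rv}(\tau,\cdot)=R\widehat{v}(\tau,\cdot)$, and (i) at $s=0$ supplies $\|R\widehat{v}(\tau)\|_{L^{2}}\leq C\|\widehat{v}(\tau)\|_{L^{2}}$ with $C$ independent of $\tau$. Plancherel then yields $\|Rv\|_{H^{s/2}(\R;L^{2})}\leq C\|v\|_{H^{s/2}(\R;L^{2})}$; restricting to $[0,T]$ preserves the estimate, and the $T$-uniformity is inherited entirely from the extension lemma. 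The only delicate point in the whole argument is the endpoint $s=0$ in (i), which forces the divergence-form rewriting (or, equivalently, the weighted-$L^{2}$ projection viewpoint); everything else is routine elliptic regularity together with a time Fourier argument.
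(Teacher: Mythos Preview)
Your proof is correct and tracks the paper's argument closely: define $\psi$ via the Dirichlet problem, use elliptic regularity for (i), exploit that $R$ is purely spatial for (ii), and set $\phi_1=\phi-\psi$ for (iii). Your divergence-form rewriting $\Delta\psi=\di(Av/Q^{2})$ is in fact more careful than the paper at the endpoint $s=0$; the paper simply records $Tr(\nabla v A)\in H^{s-1}$ and invokes elliptic theory without further comment.

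The one point where your route diverges from the paper is (ii). The paper observes directly that $\pa_{t}^{k}Rv=R\pa_{t}^{k}v$ (since $A,Q$ are time-independent), gets the integer-order bounds $\|\pa_{t}^{k}Rv\|_{L^{2}L^{2}}\leq C\|\pa_{t}^{k}v\|_{L^{2}L^{2}}$ from (i), and interpolates; the operator-norm interpolation inequality then yields the fractional bound with the \emph{same} constant $C=\|R\|_{L^{2}\to L^{2}}$, automatically uniform in $T$. Your Fourier argument is equivalent in spirit but rests on Lemma~\ref{lema22}(1) providing an extension to $\R$ with $T$-uniform norm, and that part of the lemma asserts neither the target $\R$ nor the uniformity (only part (2), under vanishing initial traces, claims uniformity). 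This is a minor slip rather than a real gap: either switch to the paper's commutation-plus-interpolation argument, or note that a bounded spatial operator acting pointwise in $t$ is trivially bounded on $H^{s/2}([0,T];L^{2})$ with the same norm, by the very interpolation the paper uses.
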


\begin{proof}
\begin{enumerate}
\item[i)] For $v \in H^{s}, 0 \leq s \leq 4$, it is easy to see that $Tr(\nabla v A) \in H^{s-1}$. Therefore the solution of the system \eqref{vmRv} satisfies:

\begin{align*}
\|\psi\|_{H^{s+1}} \leq C \left\|\frac{Tr(\nabla v A)}{Q^{2}}\right\|_{H^{s-1}} \leq \|v\|_{H^{s}},
\end{align*}

by elliptic theory, since both $A$ and $Q^{2}$ are regular. The identity \eqref{defR} provides

\begin{align*}
\|R v \|_{H^{s}} \leq \|v\|_{H^{s}} + \|\nabla \psi\|_{H^{s}} \leq C \|v\|_{H^{s}}.
\end{align*}

\item[ii)] It is easy to check that $\pa_{t}$ commutes with $R$, since

\begin{align*}
\pa_{t}^{k} Rv = \pa_{t}^{k} v - A^{*} \nabla \pa_{t}^{k} \psi = R \pa_{t}^{k} v.
\end{align*}

This proves the result for an integer number of derivatives. By interpolation we get the result for fractional derivatives.

\item[iii)] For $\phi \in H^{s+1}$, if $v = A^{*} \nabla \phi$, then

\begin{align*}
R(A^{*} \nabla \phi) = A^{*} \nabla \phi - A^{*} \nabla \psi = A^{*} \nabla (\phi - \psi).
\end{align*}

Thus, we have that

\begin{align*}
0 = Tr(\nabla (R(A^{*} \nabla \phi)) A ) = Q^{2} \Delta (\phi - \psi), \quad \left.\psi\right|_{\pa \Omega_{0}} =  0,
\end{align*}

which implies that $\left.\phi - \psi \right|_{\pa \Omega_{0}} = \phi$ and we can take $\phi_1 = \phi - \psi$.

\end{enumerate}
\end{proof}

Once we have obtained the energy identity \eqref{energy} and Lemma \ref{lema31} we pass to announce the main theorem of this section:

\begin{thm}\label{thm3.2}

Let $f \in H_{(0)}^{ht,s-1}$, let $v,q$ solve \eqref{uno} and $2<s\leq 3$. Then

\begin{align*}
 \|v\|_{H_{(0)}^{ht,s+1}} + \|\nabla q\|_{H_{(0)}^{ht,s-1}} + |q|_{H_{(0)}^{ht,s-\frac12}} \leq C \|f\|_{H_{(0)}^{ht,s-1}}.
\end{align*}
The constant is independent of $T$.
\end{thm}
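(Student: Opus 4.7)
The strategy is to adapt Beale's linear Stokes theory to this variable-coefficient system using the projector $R$ and the weak formulation \eqref{energy}. My first move would be to apply $R$ to \eqref{uno}: because $Tr(\nabla v A)=0$ gives $v=Rv$, because $A^{*}\nabla q\in H^{0}_d$ is annihilated by $R$, and because $\partial_t$ commutes with $R$ by Lemma \ref{lema31}, the system collapses to the single parabolic equation
$$\partial_t v - R(Q^{2}\Delta v) = Rf,\qquad v|_{t=0}=0,$$
supplemented by the natural stress boundary condition inherited from \eqref{tensor}. The compatibility $Rf(0)=0$ is exactly what makes $\partial_t v(0)=0$ consistent with the vanishing initial datum, so that time-differentiated versions of the equation again have zero-trace initial data.

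Next, I would test the weak form \eqref{energy} with $\phi=v$ to get the basic energy estimate. The boundary integral vanishes by the stress condition, and the bilinear form on the left is coercive on $H^{1}\cap H^{0}_\sigma$: after absorbing the smooth positive weight $1/Q^{2}$, coercivity reduces to a Korn-type inequality for the $A$-symmetric gradient, and since $A$ comes from a smooth conformal map, composing with $P^{-1}$ reduces this to the classical Korn inequality on $\Omega_0$. The outcome is
$$\|v\|_{L^{\infty}L^{2}}+\|v\|_{L^{2}H^{1}}\le C\|f\|_{L^{2}H^{-1}}.$$

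To bootstrap to $H^{ht,s+1}$, I would differentiate the reduced parabolic equation in time. Setting $w=\partial_t v$, the function $w$ solves a system of the same form with source $\partial_t f$ and zero initial data, which yields control of $\partial_t v$ in parabolic norms; iterating and interpolating via Lemmas \ref{2.3} and \ref{2.4} produces the required time regularity of $v$. Spatial regularity is then recovered pointwise in $t$ from the elliptic Stokes-type problem
$$-Q^{2}\Delta v + A^{*}\nabla q = f-\partial_t v,\quad Tr(\nabla v A)=0,\quad (qI-(\nabla v A+(\nabla v A)^{*}))B_{1}n=0.$$
Pulling back by $P$ turns this into the standard inhomogeneous Stokes system with the usual free-surface stress condition on $\Omega_0$, to which the Agmon--Douglis--Nirenberg estimates used by Beale apply directly; the constants transfer back using Lemma \ref{composicion}. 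The pressure bounds $\|\nabla q\|_{H^{ht,s-1}}$ and $|q|_{H^{ht,s-1/2}}$ on the boundary come from the same elliptic step, combined with the parabolic trace lemma \ref{lema21}.

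The main obstacle will be matching the parabolic time regularity with the elliptic spatial regularity at the endpoint $s+1$ without losing a fraction of a derivative across the boundary. Following Beale, I would interlock the time-differentiated energy estimates with Lemma \ref{2.6} applied to the quadratic boundary terms, and use the vanishing initial conditions together with the extension result of Lemma \ref{lema22} to keep all constants uniform in $T$.
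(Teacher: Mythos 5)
Your proposal has the right broad structure (project, energy, elliptic regularity, trace) but contains several gaps and one outright error, and it omits the mechanism by which the paper actually obtains the fractional time regularity.

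First, the statement that ``$A^{*}\nabla q\in H^{0}_d$ is annihilated by $R$'' is false. The pressure $q$ here is not in $H^{1}_{0}(\Omega_0)$: the stress boundary condition \eqref{tensor} couples $q|_{\partial\Omega_0}$ to $\nabla v$, so $q$ has a nontrivial boundary trace. By Lemma \ref{lema31}(iii), $R(A^{*}\nabla q)=A^{*}\nabla q_{1}$, where $q_{1}$ is the harmonic (in the $Q^{2}\Delta$ sense) extension of the boundary value of $q$; this is nonzero in general. After projection the paper retains a pressure term $A^{*}\nabla q_{1}$, and the operator $S_{A}u=-Q^{2}\Delta u + u + A^{*}\nabla q_{1}$ is exactly what is inverted. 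Collapsing to $\partial_t v - R(Q^{2}\Delta v)=Rf$ with no pressure term misses this.

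Second, coercivity does not come for free from a Korn inequality in the form you state. The paper first substitutes $u=e^{-t}v$, which introduces a zeroth-order term $+u$ in the equation; only then is the bilinear form $(u,\phi)+\langle u,\phi\rangle$ coercive on $RH^{1}$ (via Neff's Korn inequality for nonconstant coefficients). Without that zeroth-order term, $\langle\cdot,\cdot\rangle$ alone controls only the symmetric $A$-gradient, not $\|u\|_{H^{1}}$, and the Lax--Milgram step has no teeth.

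Third, and most importantly, the route from the basic energy estimate to $H^{ht,s+1}$ by time differentiation will not close. For $2<s\le 3$ one has $f\in H^{(s-1)/2}([0,T];L^{2})$ with $(s-1)/2\in(1/2,1]$, so $\partial_{t}f$ lives in $H^{(s-3)/2}L^{2}$, which is of \emph{negative} order in time for $s<3$. Hence the time-differentiated problem for $w=\partial_t v$ cannot be estimated directly, and there is no obvious chain of spaces to interpolate between. The paper avoids this by working in frequency: after the $e^{-t}$ substitution and extension to $t\in\mathbb{R}$, it takes the Fourier transform in $t$ and reduces the problem to the family of stationary resolvent equations $(i\tau+S_{A})\hat{v}(\tau)=\hat{\overline{Rf}}(\tau)$. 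Lemma \ref{lema33} (invertibility of $S_{A}$ from $V^{s+1}$ onto $RH^{s-1}$, via ADN estimates) and Lemma \ref{lema34} (the resolvent bound $\|(\lambda+S_A)^{-1}Rf\|_{H^{s+1}}\le C(\|Rf\|_{H^{s-1}}+|\lambda|^{(s-1)/2}\|Rf\|_{L^{2}})$ for $\Re\lambda\ge 0$) then give $\|v\|_{H^{ht,s+1}}\le C\|Rf\|_{H^{ht,s-1}}$ by Parseval, Paley--Wiener recovers $v(t)=0$ for $t<0$ (hence $v(0)=0$), and the pressure bounds come from the elliptic trace decomposition as you sketch. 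The Fourier/resolvent machinery is the essential ingredient that your proposal is missing.
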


The rest of this subsection is devoted to prove this theorem.

First of all we will  modify  the equation by considering the new variables

\begin{align*}
 u = e^{-t} v; \quad p = e^{-t} q; \quad \overline{f} = e^{-t} f
\end{align*}

We should remark that $f \in H_{(0)}^{ht,s-1} \Leftrightarrow \overline{f} \in H_{(0)}^{ht,s-1}$. Then, the equation reads

\begin{align*}
 \pa_t u & = -e^{-t} v + e^{-t} v_t = -u + e^{-t}(Q^{2} \Delta v - A^{*} \nabla q + f) \\
& = -u + Q^{2} \Delta u - A^{*} \nabla p + \overline{f}.
\end{align*}

We will solve therefore

\begin{align*}
 \pa_t u +u - Q^{2} \Delta u + A^{*} \nabla p = \overline{f}.
\end{align*}

%We check the previous claim:
%
%\begin{align*}
% tr(\nabla v A) = 0 \Rightarrow tr(\nabla \tilde{v} A) \circ P = 0
%\end{align*}
%
%$div v = 0$ \color{red} Esto no se donde va, n ose si es hipotesis \color{black}
%
%\begin{align*}
% tr(\nabla (Q^{2} \Delta \tilde{v})A) \circ P = \text{div}(Q^{2} \Delta \tilde{v} \circ P) = div v = 0.
%\end{align*}

Let's start projecting onto $H^0_\sigma$ to obtain
$$\pa_t u +u-Q^2 \Delta u +A^*\nabla q_1 = R\overline{f},$$
since $Tr(\nabla (Q^{2} \Delta \tilde{v}) A) = 0$ and therefore $R(Q^{2} \Delta u) = Q^{2} \Delta u$, where $A^*\nabla q_1=RA^*\nabla p$.

We now introduce the operator $$S_A: V^{s+1}(\Om) \rightarrow RH^{s-1}$$ defined via:

\begin{align*}
 S_Au = -Q^{2} \Delta u + u + A^{*} \nabla q_1, \quad A^{*} \nabla q_1 \equiv RA^{*} \nabla p,
\end{align*}

where $V^{r}(\Om) = \{u \in RH^{r}, A^*t_0\left(\nabla u A+A^*\nabla u^*\right)A^* n_0=0 \text{ on $\pa\Om$}\}$ and $RH^{s-1} = \{Rf, f \in H^{s-1}\}$.

The following lemma deals with the invertibility of $S_A$.
\begin{lemma}
 \label{lema33}
$S_A$ has a bounded inverse for $1\leq s \leq 3$, and

\begin{align*}
\|S_{A}^{-1} f\|_{H^{s+1}} \leq C \|f\|_{H^{s-1}}.
\end{align*}
\end{lemma}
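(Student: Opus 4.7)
My plan is to prove invertibility of $S_{A}$ by combining a variational existence argument with an elliptic-regularity bootstrap, exploiting the fact that $P$ is a smooth conformal diffeomorphism of $\overline{\Om}_{0}$ and that $AA^{*}=Q^{2}I$.

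First, I would set up the weak formulation of $S_{A}u=f$ by testing against $\phi\in V^{1}:=\{w\in H^{1}(\Om_{0}):\,Tr(\grad w\,A)=0\}$ and integrating by parts exactly as in the derivation of \eqref{energy}. The pressure term drops out because $Tr(\grad\phi\,A)=0$, and the boundary integral vanishes by the stress-free condition encoded in $V^{s+1}$. This yields the symmetric bilinear form
\begin{equation*}
B(u,\phi)=\tfrac{1}{2}\int_{\Om_{0}}Tr\!\big((\grad u\,A+A^{*}\grad u^{*})(\grad\phi\,A+A^{*}\grad\phi^{*})\big)\frac{dx}{Q^{2}}+\int_{\Om_{0}}u\cdot\phi\,\frac{dx}{Q^{2}},
\end{equation*}
and the equation becomes $B(u,\phi)=\int_{\Om_{0}}f\cdot\phi\,Q^{-2}\,dx$ for every $\phi\in V^{1}$.

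Next I would check continuity and coercivity of $B$ on $V^{1}$. Continuity is immediate from the smoothness and boundedness above and below of $A$, $A^{*}$ and $Q^{\pm 2}$. For coercivity, the zeroth-order term controls $\|u\|_{L^{2}}^{2}$, while the symmetrized first-order term is controlled via a Korn-type inequality: since $A$ is smooth and pointwise invertible with $AA^{*}=Q^{2}I$, the quantity $\grad u\,A+A^{*}\grad u^{*}$ is, up to bounded multiplicative factors, a conjugate of the standard deformation tensor, and the classical Korn inequality on the smooth bounded domain $\Om_{0}$ yields $\|\grad u\|_{L^{2}}\le C(\|u\|_{L^{2}}+\|\grad u\,A+A^{*}\grad u^{*}\|_{L^{2}})$. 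Lax--Milgram then produces a unique $u\in V^{1}$ for every $f$ in the dual of $V^{1}$, and the associated pressure $q_{1}=R(A^{*}\grad q)$ is recovered by a de Rham argument and uniquely determined once one projects via $R$.

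Finally, to bootstrap to $H^{s+1}$ I would pull the equation back to the $z$-plane via $P^{-1}$. Since $P$ is conformal on $\overline{\Om}_{0}$, under $\tilde{x}\mapsto P^{-1}(\tilde{x})$ the operator $S_{A}$ becomes (up to smooth multiplicative factors) the classical Stokes operator $-\Delta U+U+\grad\Pi$ on the smooth bounded domain $P^{-1}(\Om_{0})$, together with the usual divergence-free constraint and the standard stress-free boundary condition. Classical elliptic regularity for the Stokes system on such a domain then gives $U\in H^{s+1}$ and $\grad\Pi\in H^{s-1}$ whenever the right-hand side lies in $H^{s-1}$, for $1\le s\le 3$; pushing forward by $P$ and using Lemma \ref{2.5} to handle the composition with the smooth factors $A$ and $Q^{2}$ delivers the announced estimate $\|S_{A}^{-1}f\|_{H^{s+1}}\le C\|f\|_{H^{s-1}}$. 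The main obstacle is verifying that the boundary condition encoded in $V^{s+1}$ is precisely the natural boundary condition for $B$, so that the surface integral in \eqref{energy} vanishes for every admissible test function, and that the Korn-type inequality really holds in its weighted form with $A$; both follow from the smoothness and pointwise invertibility of $A$ on $\overline{\Om}_{0}$ guaranteed by the conformality of $P$, but some care is needed to keep the constants independent of $T$.
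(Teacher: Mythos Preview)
Your weak-formulation and Lax--Milgram steps match the paper's approach closely (the paper cites Neff's Korn inequality with nonconstant coefficients rather than arguing by conjugation, but that is a minor point). The genuine gap is in your regularity bootstrap.

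You propose to pull back by $P^{-1}$ to the ``$z$-plane'' and invoke classical Stokes regularity on the ``smooth bounded domain $P^{-1}(\Om_{0})$''. But recall that in this section $\Om_{0}$ is the \emph{tilde} domain $\tilde\Om_{0}$ (the tildes were suppressed), and $P^{-1}(\tilde\Om_{0})$ is precisely the physical splash domain---its boundary self-intersects at the splash point, so it is not a domain bounded by a simple closed curve, and the classical Stokes regularity theory is unavailable there. Desingularising this domain was the whole purpose of passing to the tilde variables; pulling back undoes that and re-introduces the singularity you were trying to avoid. Even in the perturbed setting where $P^{-1}(\tilde\Om_{0,\ep})$ is genuinely a smooth domain, the constants in the Stokes estimate would depend on its chord-arc constant, which degenerates as $\ep\to 0$, destroying the uniformity needed for the stability argument in Section~6.

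The paper therefore carries out the regularity argument \emph{directly in the tilde domain}: after obtaining $u\in H^{1}$ and $p\in L^{2}$, it gets interior regularity, then boundary regularity via a tubular-neighbourhood change of variables and tangential finite differences, and finally invokes the Agmon--Douglis--Nirenberg theory for elliptic systems (checking the supplementary and complementing boundary conditions explicitly for the operator $u-Q^{2}\Delta u+A^{*}\nabla q$) to reach $H^{s+1}\times H^{s}$. That direct route is what replaces your pull-back step.
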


\begin{proof}
 Let $f \in RH^{s-1}$. We will show that there exists $u \in V^{s+1}$ such that $S_Au = f$, i.e. $u - Q^{2} \Delta u + A^{*} \nabla q_1 = Rf$ (we will keep in the notation $Rf$ instead of $f$, although $Rf=f$, to keep in mind this fact). Using the energy identity \eqref{energy}, we observe that

\begin{align*}
 (u,\phi) + \langle u,\phi \rangle = - \int_{\pa \Om}h\cdot \phi \frac{d\sigma}{Q^{2}} + \int_{\Om} R\overline{f} \cdot \phi \frac{dx}{Q^{2}} + \int_{\Om} q_1 Tr(\nabla \phi A) \frac{d\tilde{x}}{Q^{2}}
\end{align*}
is the weak formulation of

\begin{align*}
 u - Q^{2} \Delta u + A^{*} \nabla q_1 & = R \overline{f} \\
\left(q_1-\left(\nabla u A+A^*\nabla u^*\right)\right)A^* n_0 & = h,
\end{align*}

where
\begin{align}
 (u,\phi) & = \int u \phi \frac{dx}{Q^{2}} \\
\langle u, \phi \rangle & = \int_{\Om} Tr((\nabla u A + A^{*} \nabla u^{*})(\nabla \phi A + A^{*} \nabla \phi^{*})) \frac{dx}{Q^{2}}
\end{align}

If we look for $h = 0$ we then have to solve

\begin{align}\label{weakstokes}
 (u,\phi) + \langle u, \phi \rangle = \int_{\Om} R \overline{f} \cdot \phi \frac{dx}{Q^{2}} + \int_{\Om} q_1 Tr(\nabla \phi A) \frac{d\tilde{x}}{Q^{2}}.
\end{align}
We will find a solution to this equation in $RH^1$. By this we mean that there exists $u\in RH^1$ such that \eqref{weakstokes} holds for all $\phi\in RH^1$. We notice that the last term vanishes since $\phi \in RH^{1}$, and henceforth, in $RH^1$, the equation \eqref{weakstokes} is equivalent to
\begin{align}\label{weakstokes2}
 (u,\phi) + \langle u, \phi \rangle = \int_{\Om} R \overline{f} \phi \frac{dx}{Q^{2}}.
\end{align}
for all $\phi\in RH^1$.

Via \cite[Corollary 4.7]{Neff:Korn-first-inequality-nonconstant-coefficients}, $$(u,u) + \langle u, u \rangle \geq C ||u||_{H^1}^2$$
and therefore it is a coercive bilinear form. That implies   Lax-Milgram's Theorem can be applied in $RH^{1}$ to obtain a solution $u \in RH^{1}$ of \eqref{weakstokes2}.

The next step is to show that there exists $p \in L^{2}$ such that

\begin{align*}
 (u,\phi) + \langle u, \phi \rangle = \int_{\Om} Rf \phi + \int_{\Om} p\, Tr(\grad\phi A) \quad \forall \phi \in H^{1}.
\end{align*}

To do this, we decompose $\phi = R\phi + A^{*} \nabla \pi_{\phi}$, and then

%\begin{align*}
% Q^{2} \Delta \pi_{\phi} & = tr(\nabla \phi A) \\
%\left.\phi_{\phi}\right|_{\pa \Om} & = 0.
%\end{align*}
\begin{align*}
 (u,\phi) + \langle u, \phi \rangle & = (u,R\phi) + \langle u, R\phi \rangle + \langle u, A^{*} \nabla \pi_{\phi} \rangle + \underbrace{(u, A^{*} \nabla \pi_{\phi})}_{0} \\
& = (Rf, R\phi) + \langle u, A^{*} \nabla \pi_{\phi} \rangle = (Rf,\phi) + \langle u, A^{*} \nabla \pi_{\phi} \rangle.
\end{align*}

Therefore we have to show that there exists $p\in L^2$ such that

\begin{align*}
 (p,Tr(\nabla \phi A)) = \langle u, A^{*} \nabla \pi_{\phi} \rangle \quad \forall \phi \in H^{1}.
\end{align*}

Let us assume that $u$ is smooth and suppose we take $p$ satisfying:

\begin{align}
 Q^{2} \Delta p & = 0 \nonumber \\
\left. p \right|_{\pa \Om} & = (A^{*}n, ((\nabla u A) + (\nabla u A)^{*}) A^{*}n) \\
& = Q^{2}(n (A \nabla u + \nabla u^{*}A^* ) n). \label{eq44plus}
\end{align}

Then, on the one hand

\begin{align}\label{comparing}
 \int p Tr(\nabla \phi A) \frac{dx}{Q^{2}} & = \int p \Delta \pi_{\phi} dx = \int p \text{div}\left(\frac{AA^{*}}{Q^{2}}\nabla \pi_{\phi}\right) dx \\
& = -\int_{\Om} \nabla p \cdot AA^{*} \nabla \pi_{\phi} \frac{dx}{Q^{2}} + \int_{\pa \Om} p \cdot n AA^{*} \nabla \pi_{\phi} \frac{dx}{Q^{2}} \nonumber \\
& = - \underbrace{\int_{\Om} \nabla p Q^{2} \nabla \pi_{\phi} \frac{dx}{Q^{2}}}_{0} + \int_{\pa \Om} p (A^{*}n) \cdot A^{*} \nabla \pi_{\phi} \frac{dx}{Q^{2}}.\nonumber
\end{align}

On the other hand

\begin{align*}
 \langle u, A^{*} \nabla \pi_{\phi}\rangle = \frac12 \int Tr((\nabla u A + A^{*} \nabla u^{*})(\nabla (A^{*} \nabla \pi_{\phi}) A + A^{*} \nabla (A^{*} \nabla \pi_{\phi}))^{*}) \frac{dx}{Q^{2}},
\end{align*}

where

\begin{align*}
 (\nabla(A^{*} \nabla \pi_{\phi})A)_{ij} & = \pa_{l}(A^{*} \nabla \pi_{\phi})^{i} A_{lj} = \pa_{l}(A_{ki} \pa_{k} \pi_{\phi}) A_{lj} \\
& = A_{lj} \pa_{l} A_{ki} \pa_{k} \pi_\phi + A_{ki} A_{lj} \pa_{lk}^{2} \pi_\phi
\end{align*}
\begin{align*}
 \langle u, A^{*} \nabla \pi_{\phi}\rangle & = \int (\pa_{l} u^{i} A_{lj} + \pa_{l} u^{j} A_{li}) A_{mj} \pa_{m}(\pa_{k} \pi_\phi A_{ki}) \frac{dx}{Q^{2}} \\
& = -\int \pa_{m}\left(\left(A_{mj} \pa_{l} u^{i} A_{lj} + A_{mj} \pa_{l} u^{j} A_{li}\right)\frac{1}{Q^{2}}\right) \pa_{k} \pi_\phi A_{ki} dx \\
& +\int_{\pa\Om} (n_{0})_m\left(A_{mj} \pa_{l} u^{i} A_{lj} + A_{mj} \pa_{l} u^{j} A_{li}\right) \pa_{k} \pi_\phi A_{ki} \frac{dx}{Q^{2}} \\
& = -\int \pa_{m}\left(\left(\delta^{ml} \pa_{l} u^{i} + A_{mj} \pa_{l} u^{j} A_{li}\right)\frac{1}{Q^{2}}\right) \pa_{k} \pi_\phi A_{ki} dx \\
& +\int_{\pa\Om} A^{*}n_0 \cdot (\nabla u A + A^{*} \nabla u) A^{*} \nabla \pi_\phi \frac{dx}{Q^{2}}. \\
\end{align*}

The first term is zero because of the orthogonality and  because of the condition $Tr(\nabla u A)=0$. We do the calculations for the second term:

\begin{align*}
 \int_{\pa \Om} (A^{*} n_0)\cdot (\nabla u A + A^{*} \nabla u^{*}) A^{*} \nabla \pi_\phi \frac{d\sigma}{Q^{2}}
= \int n_0(A \nabla u + \nabla u^{*} A^{*}) \nabla \pi_\phi = \int n_0(A \nabla u + \nabla u^{*} A^{*}) n_0 \pa_{n} \pi_\phi,
\end{align*}
since  $t_0\cdot\nabla \pi_\phi = 0$. Comparing with \eqref{eq44plus} we have that
\begin{align*}
 (p,Tr(\nabla \phi A)) = \langle u, A^{*} \nabla \pi_\phi \rangle, \quad \forall \phi \in H^{1}.
\end{align*}
for smooth $u$. Let $u \in H^{1}$ and $\{u_m\}_{m=1}^{\infty}$ such that $u_m \in H^{\infty}$, and $tr(\nabla u_m A) = 0$ for every $m$, and $u_m\to u$ strongly in $H^1$ (for example we extend $u$ by zero to $\R^2$, we make the convolution with a mollifier $\rho_{\frac{1}{m}}$ and finally we project onto $H_\sigma$).  Let $p_m$ be given by
\begin{align*}
 \Delta p_m & = 0\\
\left.p_m\right|_{\pa \Om} & =A^{-1} n_0 (\nabla u_{m} A + A^{*} \nabla (u_{m})^{*})A^{-1} n_0.
\end{align*}

Then, we have that
\begin{align*}
 (p_m, Tr(\nabla \phi A)) = \langle u_m, A^{*} \nabla \pi_\phi\rangle \quad \forall \phi \in H^{1}.
\end{align*}

In particular, we take $\phi_{m}$ such that $Tr(\nabla \phi_{m}A) = p_m$. This implies that $Q^{2} \Delta \pi_{\phi_{m}} = p_{m}, \left.\pi_{\phi_m}\right|_{\pa \Om} = 0$. Showing the existence of such $\phi_{m}$ is trivial since one can choose $\phi_{m} = A^{*} \nabla \psi$, with $Q^{2} \Delta \psi = p_m$. Then, we can bound the $L^{2}$ norm of $p_m$ in the following way:

\begin{align*}
 \|p_m\|_{L^{2}}^{2} \leq C \langle u_{m}, A^{*} \nabla \pi_{\phi_m}\rangle  \leq C \|u_{m}\|_{H^{1}} \|\pi_{\phi_m}\|_{H^{2}} \leq C \|u\|_{H^{1}} \|p_m\|_{L^{2}},
\end{align*}
which shows that $p_m$ is bounded in $L^{2}$. Therefore there exists a subsequence $p_{m_i}$ which converges weakly to  a function $p$ in $L^{2}$ and
\begin{align*}
 (p, Tr(\nabla \phi A)) = \langle u, \nabla \pi_\phi\rangle \quad \forall \phi \in H^{1}.
\end{align*}
We have shown that there exist $(u,p) \in H^{1} \times L^{2}$ such that
\begin{align*}
 (u,\phi) + \langle u,\phi \rangle = (Rf,\phi) + \int p Tr(\nabla \phi A)  \frac{dx}{Q^{2}} \quad \forall \phi \in H^{1}.
\end{align*}
Indeed, $u \in H^{2}, p \in H^{1}$. We now show that improvement on the regularity.

For every $\Omega^{\flat} \Subset \Omega$, it is easy to obtain the interior regularity estimate $\|u\|_{H^{2}(\Omega^{\flat})} \leq C, \|p\|_{H^{1}(\Omega^{\flat})} \leq C$. We focus here on the boundary estimates. We perform the following change of coordinates in $\Omega'$, where $\Omega'$ is a tubular neighborhood of $\partial \Omega$:

\begin{align*}
 x(s,\lambda) = z(s) + \lambda z_s^{\perp}(s)
\end{align*}

We would like to check that it is indeed a diffeomorphism. We have following:

\begin{align*}
 |z_s|^{2} & = 1 \\
x_s(s) & = z_{s}(s) + \lambda z_{ss}^{\perp}(s) \\
 z_{ss}^{\perp} & = \langle z_{ss}^{\perp}, z_s(s) \rangle z_s(s) = -\kappa(s) z_s(s) \\
x_s(s) & = (1 - \lambda \kappa(s))z_s(s) \\
x_{\lambda} & = z_s^{\perp}(s)
\end{align*}

Computing more,
\begin{align*}
\text{det}
\left(
\begin{array}{cc}
 x^{1}_{s} & x^{2}_{s} \\
x^{1}_{\lambda} & x^{2}_{\lambda}
\end{array}
\right)
= x_{s}^{1} x_{\lambda}^{2} - x_{s}^{2} x_{\lambda}^{2} = x_{s}^{\perp} x_{\lambda} = 1 - \lambda \kappa(s)
\end{align*}

This fixes the width of the tubular neighborhood to be $\lambda_0 < \sup_{s} \frac{1}{\kappa(s)}$. Under these assumptions $x(s,\lambda)$ is a diffeomorphism. Fix $x_{0} \in \pa \Omega$, and consider the following cutoff function $\psi$, defined by

\begin{align*}
% \overline{\psi}(s,\lambda) = \psix(s,\lambda)) =
\psi(x) =
\left\{
\begin{array}{cc}
 1 & B\left(x_0,\frac{\lambda_0}{2}\right) \cap \Omega' \\ %0 < \lambda < \frac{\lambda_0}{2} \\
0 & \left(B\left(x_0,\frac{3\lambda_0}{4}\right) \cap \Omega'\right)^{c} %\frac{3}{4}\lambda_{0} < \lambda < \lambda_0
\end{array}
\right\}, \quad 0 \leq \psi(x) \leq 1, \quad \psi(x) \in C^{\infty}.
\end{align*}

We extend $\psi$ to $\Omega$ by zero. %We define the strip $C$ to be the set $\{(s,\lambda) \in [0,L] \times [0,\lambda_0]\}$ and the ``half-strip'' $C/2$ the set $\{(s,\lambda) \in [0,L] \times [0,\frac{\lambda_0}{2}]\}$.
We define the set $C$ as $C = x^{-1}(B(x_0,\lambda_0) \cap \Omega')$, and the set $C/2 = x^{-1}(B(x_0,\frac{\lambda_0}{2}) \cap \Omega')$.

The energy identity can be written down as:

\begin{align*}
\frac12 \int_{\Omega} (\pa_{l} u^{i} A_{lj} + \pa_{l} u^{j} A_{li}) (\pa_{k} \phi^{i} A_{kj} + \pa_{k} \phi^{j} A_{ki}) \frac{dx}{Q^{2}}
+ \int_{\Omega} u^{i} \phi^{i} \frac{dx}{Q^{2}} = \int_{\Omega} f^{i} \phi^{i} \frac{dx}{Q^{2}} + \int_{\Omega} p \pa_{l} \phi^{i} A_{li} \frac{dx}{Q^{2}}
\end{align*}

We choose as test function $\phi = \va \psi$. Then:

\begin{align*}
\int_{\Omega} (\pa_{l} u^{i} A_{lj} + \pa_{l} u^{j} A_{li}) (\pa_{k} (\va^{i} \psi) A_{kj} + \pa_{k} (\va^{j} \psi)A_{ki}) \frac{dx}{Q^{2}}
+ \int_{\Omega} u^{i} \phi^{i} \frac{dx}{Q^{2}} & = \int_{\Omega} f^{i} \phi^{i} \frac{dx}{Q^{2}} + \int_{\Omega} p \pa_{l} \phi^{i} A_{li} \frac{dx}{Q^{2}} \\
M_1 + M_2 & = M_3 + M_4
\end{align*}

We start developing each of the terms one by one

\begin{align*}
 M_1 & = \int_{\Omega'} (\pa_{l} u^{i} A_{lj} + \pa_{l} u^{j} A_{li}) (\pa_{k} (\va^{i}) A_{kj} + \pa_{k} (\va^{j})A_{ki})  \psi \frac{dx}{Q^{2}} \\
& + \int_{\Omega'} (\pa_{l} u^{i} A_{lj} + \pa_{l} u^{j} A_{li}) (\va^{i} \pa_{k} \psi A_{kj} + \va^{j} \pa_{k} \psi A_{ki}) \frac{dx}{Q^{2}} \\
& = \int_{\Omega'} (\pa_{l} u^{i} A_{lj} + \pa_{l} u^{j} A_{li}) \psi (\pa_{k} (\va^{i}) A_{kj} + \pa_{k} (\va^{j})A_{ki}) \frac{dx}{Q^{2}} \\
& + \int_{\Omega'} (\pa_{l} u^{i} A_{lj} + \pa_{l} u^{j} A_{li})\pa_{k} \psi (\va^{i}  A_{kj} + \va^{j}  A_{ki}) \frac{dx}{Q^{2}} \\
\end{align*}

We now do the change of variables

\begin{align*}
 \underline{dx} = \left(\frac{dx}{dsd\lambda}\right) ds d\lambda = \underbrace{(1+\kappa(s) \lambda)}_{J(s,\lambda)} ds d\lambda,
\end{align*}

and define

\begin{align*}
 \overline{A}(s,\lambda) = A \circ x(s,\lambda),&\quad \overline{Q}(s,\lambda) = Q \circ x(s,\lambda),\quad
 \overline{u}(s,\lambda) = u \circ x(s,\lambda),\quad \overline{p}(s,\lambda)=p\circ x(s,\lambda),\\
 \overline{\varphi}(s,\lambda) = \va \circ x(s,\lambda),&\quad \overline{\psi}(s,\lambda) = \psi \circ x(s,\lambda),\quad \overline{f}(s,\lambda) = f\circ x(s,\lambda). \\
\end{align*}

We can compute the derivatives in the bar-coordinates

\begin{align*}
 u(x) = \overline{u} \circ s(x), \quad \pa_{l} u^{i} = \pa_{k} \overline{u}^{i} \circ s(x) \pa_{l} s^{k} \\
\pa_{l} u^{i} \circ x(s,\lambda) = \pa_{k} \overline{u}^{i}(s,\lambda) \pa_{l} s^{k} \circ x(s,\lambda)
\end{align*}

Setting $x = x(s,\lambda), \quad x^{j} = x^{j}(s,\lambda)$, we obtain:

\begin{align*}
 \pa_{l} x^{j} & = \pa_{l} x^{j}(s,\lambda) \frac{\pa s^{l}}{\pa x^{i}} \\
\delta^{ij} & = \pa_{l} x^{j}(s,\lambda) \frac{\pa s^{l}}{\pa x^{i}} \\
\delta^{ij} & = \pa_{l} x^{j} \circ s(x) \frac{\pa s^{l}}{\pa x^{i}}(x) \\
\delta^{ij} & = (\nabla x)_{jl} (\nabla s)_{li} \circ x(s,\lambda) = (\nabla x \nabla s)_{ji}\\
I & = \nabla x \nabla s \circ x(s,\lambda), \quad \nabla s \circ x(s,\lambda) = (\nabla x)^{-1} \equiv \Sigma
\end{align*}

Plugging this result into the equation for the derivatives of $u$, we get

\begin{align*}
 \pa_{l} u^{i} \circ x(s,\lambda) & = \Sigma_{kl} \pa_{k} \overline{u}^{i} \\
\Sigma & = \frac{1}{1 - \kappa(s)\lambda}
\left(
\begin{array}{cc}
 x^{2}_{\lambda} & -x^{1}_{\lambda} \\
-x^{2}_{s} & x^{1}_{s}
\end{array}
\right)
\end{align*}

Plugging this expression into $M_1$, and letting $\Sigma \overline{A} = B$, we obtain

\begin{align*}
2 M_1 & = \int_{C} (\pa_{l} u^{i} \circ x(s,\lambda) \overline{A}_{lj} + \pa_{l} u^{j} \circ x(s,\lambda) \overline{A}_{li}) \left(\psi (\pa_{k} \overline{\va}^{i} \circ x(s,\lambda) \overline{A}_{kj} + \pa_{k} \overline{\va}^{j} \circ x(s,\lambda) \overline{A}_{ki}\right) \frac{J}{\overline{Q}^{2}} ds d\lambda \\
& + \int_{C} \pa_{a} \overline{\psi} (\pa_{m} \overline{u}^{i} B_{mj} + \pa_{m} \overline{u}^{j} B_{mi}) (\overline{\va}^{i}  B_{aj} + \overline{\va}^{j}  B_{ai}) \frac{J}{\overline{Q}^{2}}dsd\lambda \\
& = \int_{C} (\pa_{m} \overline{u}^{i} \Sigma_{ml} \overline{A}_{lj} + \pa_{m} \overline{u}^{j} \Sigma_{ml} \overline{A}_{li}) \overline{\psi} (\pa_{a} \overline{\va}^{i} \Sigma_{ak} \overline{A}_{kj} + \pa_{a} \overline{\va}^{j} \Sigma_{ak} \overline{A}_{ki}) \frac{J}{\overline{Q}^{2}} ds d\lambda \\
& + \int_{C} \pa_{a} \overline{\psi} (\pa_{m} \overline{u}^{i} B_{mj} + \pa_{m} \overline{u}^{j} B_{mi}) (\overline{\va}^{i}  B_{aj} + \overline{\va}^{j}  B_{ai}) \frac{J}{\overline{Q}^{2}}dsd\lambda \\
& = \int_{C} \overline{\psi} (\pa_{m} \overline{u}^{i} B_{mj} + \pa_{m} \overline{u}^{j} B_{mi})  (\pa_{a} \overline{\va}^{i} B_{aj} + \pa_{a} \overline{\va}^{j} B_{ai}) \frac{J}{\overline{Q}^{2}} ds d\lambda \\
& + \int_{C} \pa_{a} \overline{\psi} (\pa_{m} \overline{u}^{i} B_{mj} + \pa_{m} \overline{u}^{j} B_{mi}) (\overline{\va}^{i}  B_{aj} + \overline{\va}^{j}  B_{ai}) \frac{J}{\overline{Q}^{2}}dsd\lambda
\end{align*}

%and $A_2$ reads:

%\begin{align*}
% A_2 & = \int_{\text{Strip}} \Sigma_{ak}\pa_{a} \overline{\psi} (\pa_{m} \overline{u}^{i} B_{mj} + \pa_{m} \overline{u}^{j} B_{mi}) (\overline{\Phi}^{i}  \overline{A}_{kj} + \overline{\Phi}^{j}  \overline{A}_{ki}) \frac{J}{Q^{2}}dsd\lambda \\
%& = \int_{\text{Strip}} \pa_{a} \overline{\psi} (\pa_{m} \overline{u}^{i} B_{mj} + \pa_{m} \overline{u}^{j} B_{mi}) (\overline{\Phi}^{i}  B_{aj} + \overline{\Phi}^{j}  A_{ai}) \frac{J}{Q^{2}}dsd\lambda \\
%\end{align*}

Moreover,

\begin{align*}
 M_2 & = \int_{\Omega'} u^{i} \varphi^{i} \psi \frac{dx}{Q^{2}} = \int_{C} \overline{u}^{i} \overline{\varphi}^{i} \overline{\psi} \frac{J}{\overline{Q}^{2}} ds d\lambda \\
M_3 & = \int_{C} \overline{f}^{i} \overline{\varphi}^{i} \overline{\psi} \frac{J}{\overline{Q}^{2}} ds d\lambda \\
M_4 & = \int_{C} \overline{p} \pa_{a}(\overline{\varphi}^{i} \overline{\psi}) B_{ai} \frac{J}{\overline{Q}^{2}} ds d\lambda,
\end{align*}

%We now do the a priori estimates:

with

\begin{align*}
 M_1 + M_2 = M_3 + M_4 \Rightarrow M_1 = -M_2 + M_3 + M_4.
\end{align*}

We start computing $M_1$. We have that

\begin{align*}
 2M_1 & = \int_{C} (\pa_{l} \overline{u}^{i} B_{lj} + \pa_{l} \overline{u}_{j} B_{li})(\pa_{k} \overline{\va}^{i} B_{kj} + \pa_{k} \overline{\va}^{j} B_{ki}) \overline{\psi} \frac{J}{\overline{Q}^{2}} \\
 & + \int_{C} (\pa_{l} \overline{u}^{i} B_{lj} + \pa_{l} \overline{u}_{j} B_{li})( \overline{\va}^{i} B_{kj} + \overline{\va}^{j} B_{ki}) \pa_{k} \overline{\psi} \frac{J}{\overline{Q}^{2}} \\
 & = m + l_1.
\end{align*}

We will use the convention that the $m$ terms will denote bad terms from now on. We further split $m$ into four terms:

\begin{align*}
m & = \int_{C} \pa_{l} \overline{u}^{i} B_{lj} \pa_k \overline{\va}^{i}B_{kj} \overline{\psi}\frac{J}{\overline{Q}^{2}} + \int_{C} \pa_{l} \overline{u}^{j} B_{li} \pa_k \overline{\va}^{i}B_{kj} \overline{\psi}\frac{J}{\overline{Q}^{2}} \\
& + \int_{C} \pa_{l} \overline{u}^{i} B_{lj} \pa_k \overline{\va}^{j}B_{ki} \overline{\psi}\frac{J}{\overline{Q}^{2}} + \int_{C} \pa_{l} \overline{u}^{j} B_{li} \pa_k \overline{\va}^{j}B_{ki} \overline{\psi}\frac{J}{\overline{Q}^{2}} \\
& = m_{1} + m_{2} + m_{3} + m_{4}.
\end{align*}

If we are able to estimate any one of them we can estimate all of them.

We will use the following formulas related to finite differences:

\begin{align*}
 D_{s}^{h}(fg) & = \frac{f(s+h)g(s+h) - f(s)g(s)}{h} = \frac{(f(s+h)-f(s))g(s+h)}{h} + f(s)\frac{g(s+h)-g(s)}{h} \\
& = g(s+h)D_{s}^{h}f + f(s)D_{s}^{h}g, \\
D_{s}^{-h}D_{s}^{h}(fg) & = D_{s}^{-h}(D_{s}^{h}f g(s)) + D_{s}^{-h}(f(s)D_{s}^{h}g) \\
& = (D_{s}^{-h}D_{s}^{h}f) g(s) + D_{s}^{h}fD_{s}^{-h}(g(s+h)) + D_{s}^{-h}f D_{s}^{h}g(s-h) + f(s) D_{s}^{-h} D_{s}^{h} g, \\
D_{s}^{-h}(g(s+h)) & = \frac{g(s)-g(s+h)}{-h} = D_{s}^{h} g, \\
D_{s}^{h} (g(s-h)) & = \frac{g(s)-g(s-h)}{h} = D_{s}^{-h} g, \\
\Rightarrow D_{s}^{-h} D_{s}^{h} (fg) & = (D_{s}^{-h} D_{s}^{h} f) g(s) + D_{s}^{h} f D_{s}^{h}g + D_{s}^{-h} f D_{s}^{-h} g + f(s) D_{s}^{-h} D_{s}^{h} g.
\end{align*}

We take $\overline{\va} = D_{s}^{-h} D_{s}^{h} \overline{u}$. Since divergence free condition, we have that
\begin{align*}
 D_{s}^{-h} D_{s}^{h}(B_{lj} \pa_{l} \overline{u}^{j}) & = 0 = (D_{s}^{-h} D_{s}^{h} B_{lj}) \pa_{l} \overline{u}^{j}(s) + D_{s}^{h} B_{lj} D_{s}^{h} \pa_{l} \overline{u}^{j} + D_{s}^{-h} B_{lj} D_{s}^{-h} \pa_{l} \overline{u}^{j} + B_{lj} \pa_{l} D_{s}^{-h} D_{s}^{h} \overline{u}^{j}.
\end{align*}

Therefore

\begin{align*}
 B_{lj} \pa_{l} D_{s}^{-h} D_{s}^{h} \overline{u}^{j}  =& - (D_{s}^{-h} D_{s}^{h} B_{lj}) \pa_{l} \overline{u}^{j}(s) - D_{s}^{h} B_{lj} D_{s}^{h} \pa_{l} \overline{u}^{j} - D_{s}^{-h} B_{lj} D_{s}^{-h} \pa_{l} \overline{u}^{j} \\
B_{lj} \pa_{l} D_{s}^{-h} D_{s}^{h}(\overline{u}^{j} \overline{\psi})  =& D_{s}^{-h} D_{s}^{h}(B_{lj} \pa_{l}(\overline{u}^{j} \overline{\psi})) - D_{s}^{-h} D_{s}^{h} B_{lj} \pa_{l}(\overline{u}^{j}(s)\overline{\psi}(s)) \\
& - D_{s}^{h} B_{lj} D_{s}^{h} \pa_{l} (\overline{u}^{j} \overline{\psi}) - D_{s}^{-h} B_{lj} D_{s}^{-h}(\pa_{l}(\overline{u}^{j} \overline{\psi})),
\end{align*}

and

\begin{align*}
 B_{lj} \pa_{l}(\overline{\psi} \overline{u}^{j}) = B_{lj} \pa_{l} \overline{\psi} \overline{u}^{j}.
\end{align*}

Expanding the calculations, we obtain

\begin{align*}
 B_{lj} \pa_{l} D_{s}^{-h} D_{s}^{h}(\overline{u}^{j} \overline{\psi}) = & D_{s}^{-h} D_{s}^{h}(B_{lj} \pa_{j} \overline{\psi} \overline{u}^{j}) - D_{s}^{-h} D_{s}^{h} B_{lj} \pa_{l}(\overline{u}^{j}(s) \overline{\psi}(s)) \\
& - D_{s}^{h} B_{lj} D_{s}^{h} \pa_{l}(\overline{u}^{j} \overline{\psi}) - D_{s}^{-h} B_{lj} D_{s}^{-h}(\pa_{l} \overline{u}^{j} \overline{\psi}))  \\
\overline{\psi} B_{lj} \pa_{l} D_{s}^{-h} D_{s}^{h} (\overline{u}^{j} \overline{\psi}) = & \overline{\psi} D_{s}^{-h} D_{s}^{h}(B_{lj} \pa_{l} \overline{\psi} \overline{u}^{j}) - \overline{\psi} D_{s}^{-h} D_{s}^{h} B_{lj} \pa_{l} (\overline{u}^{j} \overline{\psi}) \\
& - \overline{\psi} D_{s}^{h} B_{lj} D_{s}^{h} \pa_{l}(\overline{u}^{j} \overline{\psi}) - \overline{\psi} D_{s}^{-h} B_{lj} D_{s}^{-h}(\pa_{l}(\overline{u}^{j} \overline{\psi})).
\end{align*}

Finally, we get to

\begin{align*}
 &\overline{\psi} D_{s}^{-h} D_{s}^{h}(B_{lj} \pa_{l} \overline{\psi} \overline{u}^{j})  = D_{s}^{-h} D_{s}^{h}(B_{lj} \pa_{l} \overline{\psi} \overline{u}^{j}\overline{\psi}) \\&- D_{s}^{-h}(B_{lj}\pa_l \overline{\psi}\overline{u}^j)D_{s}^{-h}\overline{\psi}-D_{s}^{h}(B_{lj}\pa_l \overline{\psi}\overline{u}^j)D_{s}^{h}\overline{\psi}-B_{lj}\pa_l\overline{\psi}\overline{u}^jD_s^{-h}D_s^{h}\overline{\psi}.
\end{align*}

Therefore

\begin{align*}
\overline{\psi} B_{lj} \pa_{l} D_{s}^{-h} D_{s}^{h}(\overline{u}^{j} \overline{\psi}) & = D_{s}^{-h} D_{s}^{h}(B_{lj} \pa_{l} \overline{\psi} \overline{u}^{j}\overline{\psi})
- \overline{\psi} D_{s}^{h} B_{lj} D_{s}^{h} \pa_{l}(\overline{u}^{j} \overline{\psi}) - \overline{\psi} D_{s}^{h} B_{lj} D_{s}^{h} \pa_{l}(\overline{u}^{j} \overline{\psi}) + \text{LOW} \\
& = B_{lj} \pa_{l} \overline{\psi} D_{s}^{-h} D_{s}^{h} (\overline{u}^{j} \overline{\psi}) - \overline{\psi} D_{s}^{h} B_{lj} D_{s}^{-h} \pa_{l}(\overline{u}^{j} \overline{\psi}) - \overline{\psi} D_{s}^{-h} B_{lj} D_{s}^{h} \pa_{l}(\overline{u}^{j} \overline{\psi}) + \text{LOW},
\end{align*}

where we say that a term $T$ is LOW is $\|T\|_{L^{2}} \leq C\|\overline{u}\|_{H^{1}}$. We also say that a term $T$ is SAFE if, for any $\delta > 0$,

\begin{align*}
 \|T\|_{L^{2}} \leq C_{\delta} + \delta(\|\nabla D_{s}^{-h}(\overline{u} \overline{\psi})\|_{L^{2}}^{2} + \|\nabla D_{s}^{h}(\overline{u} \overline{\psi})\|_{L^{2}}^{2}),
\end{align*}

where $C_{\delta}$ may depend on $\|\overline{u}\|_{H^{1}}, \|\overline{p}\|_{L^{2}}, \|\overline{f}\|_{L^{2}}$ and $\overline{\psi}, B, \overline{Q}$ or $J$. We now look at what the terms $M_1, \ldots, M_4$ look like. We have that

\begin{align*}
 M_2 & = \int_{C} \overline{\psi} \overline{u}^{i} D_{s}^{-h} D_{s}^{h} \overline{u}^{i} \frac{J}{\overline{Q}^{2}} = \int_{C} D_{s}^{h}(\overline{\psi} \overline{u}^{i}) D_{s}^{h} \overline{u}^{i} \frac{J}{Q^{2}} + \text{LOW}, \\
M_3 & = \int_{C} \overline{f}^{i} D_{s}^{-h} D_{s}^{h} \overline{u}^{i} \frac{J}{\overline{Q}^{2}} \leq C_{\delta}\|\overline{f}^{i}\|_{L^{2}}^{2} + \delta \|D_{s}^{-h}D_{s}^{h} (\overline{u} \overline{\psi})\|_{L^{2}}^{2}+\text{LOW}, \\
M_4 & = \int_{C} \overline{p} B_{lj} \pa_{j} \overline{\psi} D_{s}^{-h} D_{s}^{h} \overline{u}^{i} \frac{J}{Q^{2}}\leq C_{\delta}\|\overline{p}\|_{L^{2}}^{2} + \delta \|D_{s}^{-h} D_{s}^{h}(\overline{u} \overline{\psi})\|_{L^{2}}^{2}, \\
l_1 & = \int_{C} \pa_{l} \overline{u}^{i} B_{lj} \phi^{j} \pa_{k} \overline{\psi} B_{kj} \frac{J}{\overline{Q}^{2}} \leq C_{\delta} \|\overline{u}\|_{H^{1}}^{2} + \delta \|D_{s}^{-h} D_{s}^{h}(\overline{u} \overline{\psi})\|_{L^{2}}^{2}, \\
M_1 & = \int_{C} \pa_{l} \overline{u}^{i} B_{lj} B_{kj} \pa_{k} D_{s}^{-h} D_{s}^{h}(\overline{u}^{i} \overline{\psi}) \overline{\psi} \frac{J}{\overline{Q}^{2}}
= \int_{C} D_{s}^{h}\left(\overline{\psi} \pa_{l} \overline{u}^{i} B_{lj} B_{kj}\frac{J}{\overline{Q}^{2}}\right) D_{s}^{h}(\pa_{k}(\overline{u}^{i} \overline{\psi})) \\
& = \int_{C} B_{lj} B_{kj} D_{s}^{h}(\overline{\psi} \pa_{l} \overline{u}^{i}) D_{s}^{h} \pa_{k}(\overline{u}^{i} \overline{\psi}) \frac{J}{\overline{Q}^{2}} + \text{SAFE} \\
& = \int_{C} B_{lj} B_{kj} D_{s}^{h}(\pa_{l} \overline{\psi} \overline{u}^{i})D_{s}^{h} \pa_{k}(\overline{u}^{i} \overline{\psi}) \frac{J}{\overline{Q}^{2}} + \text{SAFE}.
\end{align*}

We can then bound

\begin{align*}
 \int_{C} |\nabla D_{s}^{h}(\overline{\psi} \overline{u})|^2 & \leq M_1 \leq C_{\delta} + \delta \|\nabla D_{s}^{-h}(\overline{u} \overline{\psi})\|_{L^{2}}^{2} + \delta \|\nabla D_{s}^{h}(\overline{u} \overline{\psi})\|_{L^{2}}^{2}, \\
 \int_{C} |\nabla D_{s}^{-h}(\overline{\psi} \overline{u})|^2 & \leq M_1 \leq C_{\delta} + \delta \|\nabla D_{s}^{-h}(\overline{u} \overline{\psi})\|_{L^{2}}^{2} + \delta \|\nabla D_{s}^{h}(\overline{u} \overline{\psi})\|_{L^{2}}^{2}.
\end{align*}

Similar computations show that we can control $\|\nabla \pa_{s} u \|_{L^{2}}$. This implies that

\begin{align*}
\|\pa_{s} u \|_{H^{1}(C/2)} \leq C.
\end{align*}

 We proceed to control the pressure terms. From the energy identity:

\begin{align*}
 & \frac12\int_{C}(\pa_{l} \overline{u}^{i} B_{lj} + \pa_{l} \overline{u}^{j} B_{li})(\pa_{k} \overline{\va}^{i} B_{kj} + \pa_{k} \overline{\va}^{j} B_{ki}) \frac{J}{\overline{Q}^{2}} + \int_{C}(\pa_{l} \overline{u}^{i} B_{lj} + \pa_{l} \overline{u}^{j} B_{li})(\overline{\va}^{i} B_{kj} + \overline{\va}^{j} B_{ki}) \pa_{k} \overline{\psi} \frac{J}{\overline{Q}^{2}} + \int_{C} \overline{u}^{i} \overline{\pa}^{i} \psi \frac{J}{\overline{Q}^{2}} \\
& = \int_{C} \overline{f}^{i} \overline{\va}^{i} \overline{\psi} \frac{J}{\overline{Q}^{2}} + \int_{C} \overline{p} Tr(\nabla \overline{\va} B) \overline{\psi} \frac{J}{\overline{Q}^{2}} + \overline{p} B \nabla \overline{\psi} \overline{\va} \frac{J}{\overline{Q}^{2}} \\
& P_{1} + P_{2} = P_{3} + P_{4} + P_{5}
\end{align*}

We choose $\overline{\va} = B^{*} \nabla \overline \Pi$, with

\begin{align*}
 \frac{J}{\overline{Q}^{2}} D_{s}^{-h} D_{s}^{h}(\overline{p} \overline{\psi}) = \text{div}(J\Sigma \Sigma^{*} \nabla \overline{\Pi}).
\end{align*}

Then:

\begin{align*}
 P_{4} & = \int_{C} \overline{p} Tr(\nabla \overline{\va} B) \overline{\psi} \frac{J}{\overline{Q}^{2}} = \int_{C} \overline{\psi} \overline{p} D_{s}^{-h} D_{s}^{h}(\overline{p} \overline{\psi}) \frac{J}{\overline{Q}^{2}} \\
& = -\int_{C}(D_{s}^{h}(\overline{\psi} \overline{p}))^{2} \frac{J}{\overline{Q}^{2}} + \int_{C}\left(D_{s}^{h}\left(\frac{J}{\overline{Q}^{2}} \overline{\psi} \overline{p}\right) - \frac{J}{\overline{Q}^{2}} D_{s}^{h}(\overline{\psi} \overline{p})\right) \\
& = m_{5} + l_{1},
\end{align*}

where

\begin{align*}
 |l_{1}| \leq c \|p\|_{L^{2}}.
\end{align*}

So, if we control $m_{5}$ we can control $\|\pa_{s} \overline{p}\|_{L^{2}\left(C/2\right)}$. It is not hard to see that

\begin{align*}
 |P_1| + |P_2| + |P_3| + |P_5| \leq C_{\delta}(\|\overline{u}\|_{L^{2}}^{2} + \|\nabla \pa_{s} \overline{u}\|_{L^{2}}^{2} + \|\nabla \overline{u}\|_{L^{2}}^{2} + \|\overline{p}\|_{L^{2}}^{2}) + \delta\|D_{s}^{h}(\overline{p} \overline{\psi})\|_{L^{2}}^{2}
\end{align*}

which implies that $\|D_{s}^{h}(\overline{p} \overline{\psi})\|_{L^2} \leq C$ independent of $h$. Because of the interior regularity estimate we have that the solution is strong in the interior of $\Omega$ and
we can write

\begin{align*}
 \frac{\overline{Q}^{2}}{J}\text{div}\left(\frac{1}{J}\left(\begin{array}{cc}1 & 0 \\ 0 & J^{2}\end{array}\right)\nabla \overline{u}^{i}\right) + \overline{u}^{i} + (B^{*} \nabla \overline{p})^{i} = \overline{f}^{i}, \quad Tr(\nabla \overline{u} B) = 0
\end{align*}

where we recall that $J = (1 + \lambda \kappa(s))$ and that we have used
 \begin{align*}
  \Delta u \circ x(s,\lambda) = \frac{1}{J(s,\lambda)} \text{div}\left(J(s,\lambda)\Sigma \Sigma^{*} \nabla \overline{u}\right)(s,\lambda)
 \end{align*}

 \begin{align*}
  \Sigma \Sigma^{*} = \frac{1}{(1+\kappa(s)\lambda)^{2}}\left(\begin{array}{cc}x_{\lambda}^{2} & -x_{\lambda}^{1} \\ -x_{s}^{2} & x_{s}^{1} \end{array}\right)\left(\begin{array}{cc}x_{\lambda}^{2} & -x_{s}^{2} \\ -x_{\lambda}^{1} & x_{s}^{1} \end{array}\right)
 = \frac{1}{(1+\kappa(s)\lambda)^{2}}\left(\begin{array}{cc}|x_{\lambda}|^{2} & -x_{\lambda}\cdot x_{s} \\ -x_{\lambda} \cdot x_{s} & |x_{s}|^{2} \end{array}\right)
 \end{align*}

 This implies that

 \begin{align*}
  \frac{1}{J(s,\lambda)} \text{div}\left(\frac{1}{(1+\kappa(s)\lambda)^{2}}\left(\begin{array}{cc}|x_{\lambda}|^{2} & -x_{\lambda}\cdot x_{s} \\ -x_{\lambda} \cdot x_{s} & |x_{s}|^{2} \end{array}\right)\nabla \overline{u}^{i}\right)
 = \frac{1}{J(s,\lambda)} \text{div}\left(\frac{1}{(1+\kappa(s)\lambda)^{2}}\left(\begin{array}{cc}1 & 0 \\ 0 & (1+\kappa(s)\lambda)^{2} \end{array}\right)\nabla \overline{u}^{i}\right).
 \end{align*}

Let us define

\begin{align*}
 \beta & = \left(\begin{array}{cc}1/J & 0 \\ 0 & J\end{array}\right) \\
 \text{div}(\beta \nabla \overline{u}^{i}) & = \pa_{k}(\beta_{kl} \pa_{l} \overline{u}^{i}) = \underbrace{\pa_{k} B_{kl}}_{\Gamma_{l}} \pa_{l} \overline{u}^{i} + \beta_{kl} \pa_{k} \pa_{l} \overline{u}^{i} \\
& = \Gamma \nabla \overline{u}^{i} + \frac{1}{J} \pa_{s}^{2} \overline{u}^{i} + J \pa_{\lambda}^{2} \overline{u}^{i}.
\end{align*}

And then:

\begin{align*}
  \frac{\overline{Q}^{2}}{J} \pa_{s}^{2} \overline{u}^{i} + \overline{Q}^{2} \pa_{\lambda}^{2} \overline{u}^{i} + \Gamma \nabla \overline{u}^{i} + B_{li} \pa_{l} \overline{p} = \overline{f}^{i} \\
\Rightarrow   \overline{Q}^{2} \pa_{\lambda}^{2} \overline{u}^{i}  + B_{li} \pa_{l} \overline{p} = -\frac{\overline{Q}^{2}}{J} \pa_{s}^{2} \overline{u}^{i}  - \Gamma \nabla \overline{u}^{i} + \overline{f}^{i} = g^{i},
\end{align*}

and we know that $g^{i} \in L^{2}\left(C/2\right)$. We also have that

\begin{align*}
 (\pa_{l} \overline{u}^{i} B_{li}) & = 0 \\
\pa_{s} \overline{u}^{1} B_{11} + \pa_{\lambda} \overline{u}^{i} B_{21} + \pa_{s} \overline{u}^{2} B_{12} + \pa_{\lambda} \overline{u}^{2} B_{22} & = 0
\end{align*}

Thus

\begin{align*}
 \pa_{\lambda}^{2} \overline{u}^{1} B_{21} + \pa_{\lambda}^{2} \overline{u}^{2} B_{22} & = g^{3}, \text{ where } g^{3} \in L^{2}
\end{align*}

Plus

\begin{align*}
 \overline{Q}^{2} \pa_{\lambda}^{2} \overline{u}^{1}  + B_{11} \pa_{s} \overline{p} + B_{21} \pa_{\lambda} \overline{p} & =  g^{1} \\
 \overline{Q}^{2} \pa_{\lambda}^{2} \overline{u}^{2}  + B_{12} \pa_{s} \overline{p} + B_{22} \pa_{\lambda} \overline{p} & =  g^{2} \\
 \overline{Q}^{2} B_{21} \pa_{\lambda}^{2} \overline{u}^{1}  + B_{12} B_{11} \pa_{s} \overline{p} + B_{12}^{2} \pa_{\lambda} \overline{p} & =  B_{12} g^{1} \\
 \overline{Q}^{2} B_{22} \pa_{\lambda}^{2} \overline{u}^{2}  + B_{22} B_{12} \pa_{s} \overline{p} + B_{22}^{2} \pa_{\lambda} \overline{p} & =  B_{22} g^{2} \\
\end{align*}

This implies

\begin{align*}
 (B_{21}^{2} + B_{22}^{2}) \pa_{\lambda} \overline{p} & = g^{4}, \text{ where } g^{4} \in L^{2}\left(C/2\right),
\end{align*}

and since $(B_{21}^{2} + B_{22}^{2}) > 0$ for a small enough $\lambda_0$, and we get that $\pa_{\lambda} \overline{p} \in L^{2}\left(C/2\right)$. Finally, we use this to get that

\begin{align*}
 \overline{Q}^{2} \pa_{\lambda}^{2} \overline{u}^{1} & = g^{5}, \text{ where } g^{5} \in L^{2}\left(C/2\right). \\
 \overline{Q}^{2} \pa_{\lambda}^{2} \overline{u}^{2} & = g^{6}, \text{ where } g^{6} \in L^{2}\left(C/2\right).
\end{align*}

This completes the regularity proof, since we can cover a $\frac{\lambda_0}{2}$ neighborhood by a finite number of sets of type $C/2$.

% This can be shown by taking $\phi=\pa^2_k R\rho_{\ep}*R\rho_\ep *u^\dag$, where $u^\dag$ is an extension by zero of $u$ to $\R^2$.
%
% Integrating by parts one can see that
%
% \begin{align*}
%  &(\pa_k R\rho_\ep * u^\dag, \pa_k R\rho_\ep * u^\dag) + \langle  \pa_k R\rho_\ep * u^\dag, \pa_k R\rho_\ep * u^\dag \rangle = (Rf, \pa_{k}^{2} R\rho_\ep* u^\dag)\\
%  &+\int_{\pa\Om} n^k (\pa_jR\rho_\ep * u^{\dag\,i}+\pa_iR\rho_\ep * u^{\dag\,j})\pa_k(\pa_jR\rho_\ep * u^{\dag\,i}+\pa_iR\rho_\ep * u^{\dag\,j})  \frac{1}{Q^2}d\sigma
%  + \text{l.o.t},
% \end{align*}
% and then
% \begin{align*}
%  \|R\rho_\ep* u^\dag \|_{H^{2}}^{2} \leq C,
% \end{align*}
% (we understand the integral on the boundary as the pairing $H^\frac{1}{2}-H^{-\frac{1}{2}}$). Here $C$ is independent of $\ep$, making $u \in H^{2}$. Since
%
% \begin{align*}
%  \Delta p & = 0 \\
% \left.p\right|_{\pa \Om} & = n(\nabla u A + A^{*} \nabla u) n,
% \end{align*}
%
% $p \in H^{1}$.
%
We are only left to show that we can apply \cite[Theorem 10.5, p.78]{Agmon-Douglis-Nirenberg:estimates-boundary-elliptic-pde-II} to our system in order to obtain higher regularity. Indeed we can apply this theorem to show:

\begin{align}\label{restimate}
 \|u\|_{H^{s+1}} + \|p\|_{H^{s}} \leq \|f\|_{H^{s-1}}.
\end{align}

What follows is a confirmation that our problem fulfills the elliptical conditions of \cite{Agmon-Douglis-Nirenberg:estimates-boundary-elliptic-pde-II}. To adapt our notation to the one in \cite{Agmon-Douglis-Nirenberg:estimates-boundary-elliptic-pde-II} we will write $(u^{1}, u^{2}, u^{3}) = (u^{1}, u^{2}, p)$.

In \cite[pp.38, 42]{Agmon-Douglis-Nirenberg:estimates-boundary-elliptic-pde-II} the system is written like
\begin{align*}
l_{ij}(x,\pa)u^j=F_i,
\end{align*}
with the boundary condition
\begin{align*}
B_{h,j}(x,\pa)u_j= \phi_h.
\end{align*}
In our case we have the correspondence
\begin{align*}
 l_{11} = 1- Q^{2} \Delta, \quad l_{12} = 0, \quad l_{13} = A_{k1} \pa_{l}, \quad F_{1} = f_1 \\
 l_{21} = 0, \quad l_{22} =1 - Q^{2} \Delta, \quad l_{23} = A_{k2} \pa_{l}, \quad F_{2} = f_2 \\
 l_{31} = A_{k1} \pa_{k}, \quad l_{32} = A_{k2} \pa_{k}, \quad l_{33} = 0, \quad F_{3} = 0 \\
\end{align*}
And the indices $t_j$ and $s_j$ can be taken as
\begin{align*}
t_1 = 2, \quad t_2 = 2, \quad t_3 = 1 \\
s_1 = 0, \quad s_2 = 0, \quad s_3 = -1
\end{align*}
It can be checked that with this choice $l'_{ij}$ is given by
 \begin{align*}
 l'_{11} = - Q^{2} \Delta, \quad l'_{12} = 0, \quad l'_{13} = A_{k1} \pa_{l},  \\
 l'_{21} = 0, \quad l'_{22} = - Q^{2} \Delta, \quad l'_{23} = A_{k2} \pa_{l},  \\
 l'_{31} = A_{k1} \pa_{k}, \quad l'_{32} = A_{k2} \pa_{k}, \quad l'_{33} = 0,  \\
\end{align*}

 where $l'_{ij}$ is defined in pages 38-39 of \cite{Agmon-Douglis-Nirenberg:estimates-boundary-elliptic-pde-II}. Also
\begin{align*}
B_{2l} & = (t^{i} A_{il} n_{j} + t^{j} A_{il} n_i) \pa_{j}, \quad l = 1,2 \\
B_{23} & = 0
\end{align*}
and the indices $r_1=-1$ and $r_2=-1$. With this choice $B'_{ij}=B_{ij}$ (see \cite[p.42]{Agmon-Douglis-Nirenberg:estimates-boundary-elliptic-pde-II}).

We can write
\begin{align*}
 (l'(x,\xi))_{ij} =
\left(
\begin{array}{ccc}
 -Q^{2}(\xi_{1}^{2} + \xi_{2}^{2}) & 0 & A_{k1} \xi_{k} \\
 0 & -Q^{2}(\xi_{1}^{2} + \xi_{2}^{2}) & A_{k2} \xi_{k} \\
  A_{k1} \xi_{k} & A_{k2} \xi_{k} & 0\\
\end{array}
\right)
\end{align*}
and
\begin{align*}
 B(x,\xi)_{ij} = \left(
\begin{array}{ccc}
-2n^{i} A_{i1}(n \cdot \xi) & - 2n^{i} A_{i2}(n\cdot \xi) & 1 \\
 t^{i} A_{i1}(\xi \cdot n) + n^{i} A_{i1}(t\cdot \xi)& t^{i} A_{i2} (\xi \cdot n) + n^{i} A_{i2}(t \cdot \xi) & 0
\end{array}
\right)
\end{align*}

Let $L= \det(l'_{ij}(x,\xi))$, i.e.

\begin{align*}
 L(x,\xi) & = \det(l'(x,\xi))
= Q^{2}(\xi_{1}^{2} + \xi_{2}^{2})(A_{k1}\xi_{k})^{2} + Q^{2}(\xi_{1}^{2} + \xi_{2}^{2})(A_{k2}\xi_{k})^{2} \\
(A_{11} \xi_{1} + A_{21} \xi_{2})^{2} & = A_{11}^{2} \xi_{1}^{2} + A_{21}^{2} \xi_{2}^{2} + 2A_{11} A_{21} \xi_{1} \xi_{2} \\
(A_{12} \xi_{1} + A_{22} \xi_{2})^{2} & = A_{12}^{2} \xi_{1}^{2} + A_{22}^{2} \xi_{2}^{2} + 2A_{12} A_{22} \xi_{1} \xi_{2} \\
\Rightarrow L(P,\xi) & = Q^{2}(\xi_{1}^{2} + \xi_{2}^{2})Q^{2}(\xi_{1}^{2} + \xi_{2}^{2}) = Q^{4}(\xi_{1}^{2} + \xi_{2}^{2})^{2} = Q^{4}|\xi|^{4}
\end{align*}

To know whether the system is uniformly elliptic, we are left to check that it satisfies
the Supplementary condition (see \cite[p.39]{Agmon-Douglis-Nirenberg:estimates-boundary-elliptic-pde-II}). The degree is 4 and therefore $m = 2$.
We need to compute the solutions of $L(P,\xi+\tau \xi') = 0$, which are the solutions to $|\xi + \tau \xi'|^{2} = 0$.

\begin{align*}
 |\xi + \tau \xi'|^{2} & = |\xi|^{2} + \tau^{2}|\xi'|^{2} + 2 \tau \xi \cdot \xi' = 0\\
%|\xi + \tau \xi'|^{4} & = |\xi|^{4} + \tau^{4}|\xi'|^{4} + 4 \tau^{2} \left(\xi \cdot \xi'\right)^{2}  + 2|\xi|^{2}|\xi'|^{2}\tau^{2} + 4\tau |\xi|^{2} \xi \cdot \xi' + 4 \tau |\xi'|^{2}(\xi \cdot \xi')
\end{align*}

Solving in $\tau$ yields

\begin{align*}
 \tau = \frac{-2 \xi \cdot \xi' + \sqrt{4(\xi \cdot \xi')^{2} - 4|\xi|^{2}|\xi'|^{2}}}{2|\xi'|^{2}}.
\end{align*}

If $\xi$ and $\xi'$ are linearly independent, then the discriminant is strictly negative, which implies that there is a complex root with positive imaginary part. Since the roots have multiplicity 2, the Supplementary condition is satisfied. In addition uniform ellipticity is easy to obtain.
Next we check that the Complementing Boundary Condition is satisfied (\cite[p. 42]{Agmon-Douglis-Nirenberg:estimates-boundary-elliptic-pde-II}).

Let $t_0$ be tangential vector and $n_0$ the normal one.

Since $i$ is a double root of $L(x,t_0+\tau n)$ and $M^+(x,\xi,\tau)$ (\cite[p. 42]{Agmon-Douglis-Nirenberg:estimates-boundary-elliptic-pde-II}) is given by
\begin{align*}
 M^+(x,\xi,\tau) = (\tau - i)^{2}.
\end{align*}
In addition
\begin{align*}
 (l'(x,t_0+\tau n))_{ij} = \left(
\begin{array}{ccc}
-Q^{2}(1+\tau^{2}) & 0 & A_{k1}(t_0^{k} + \tau n^{k}) \\
0 & -Q^{2}(1+\tau^{2}) & A_{k2}(t_0^{k} + \tau n^{k}) \\
A_{k1}(t_0^{k} + \tau n^{k}) & A_{k2}(t_0^{k} + \tau n^{k}) & 0
\end{array}
\right)
\end{align*}
We define $L_{ij}$ as in \cite[p.42]{Agmon-Douglis-Nirenberg:estimates-boundary-elliptic-pde-II}, and we have that.
$L(x,t_0+\tau n) = \overline{l'(x,t+\tau n)}$.
Also, it can be computed that
\begin{align*}
 B(x,t+\tau n) =
\left(
\begin{array}{ccc}
 -2n^{i} A_{i1} \tau & -2n^{i} A_{i2} \tau & 1  \\
(n+\tau t)^{i} A_{i1} & (n+\tau t)^{i} A_{i2} & 0
\end{array}
\right)
\end{align*}

\begin{align*}
 L_{ij}(x,t_0+\tau n) = \left(
\begin{array}{ccc}
-Q^{2}(1+\overline{\tau}^{2}) & 0 & A_{k1}(t+\overline{\tau}n)^{k} \\
0 & -Q^{2}(1+\overline{\tau}^{2}) & A_{k2}(t+\overline{\tau}n)^{k} \\
A_{k1}(t+\overline{\tau}n)^{k} & A_{k2}(t+\overline{\tau}n)^{k} & 0
\end{array}
\right)
\end{align*}

\scriptsize
\begin{align*}
& (B(x,t+\tau n) L(x,t+\tau n))_{ij} \\
& =
\left(
\begin{array}{ccc}
 2n_{k} A_{k1} \tau Q^{2}(1 + \overline{\tau}^{2}) + A_{k1}(t+\overline{\tau}n)^{k} & 2n_{k} A_{k2} \tau Q^{2}(1+\overline{\tau}^{2}) + A_{k2} (t+\overline{\tau}n)^{k}
& n^{i}A_{i1} \tau A_{k1}(t + \overline{\tau} n)^{k} + \tau n^{i} A_{i2}A_{k2}(t+\overline{\tau}n)^{k} \\
-(n+\tau t)^{k}A_{k1} Q^{2}(1+\overline{\tau}^{2}) & -(n+\tau t)^{i} A_{i2} Q^{2} (1+ \overline{\tau}^{2}) & (n + \tau t)^{i} A_{i1} A_{k1} (t+\overline{\tau}n)^{k} + (n+\tau t)^{i} A_{i2} A_{k2}(t+\overline{\tau}n)^{k}
\end{array}
\right)
\end{align*}

\normalsize

If the rows of $(BL)_{ij}$ are linearly independent modulo   $(\tau-i)^2$, that means that the condition
\begin{align*}
C_h B_{hj}L_{jk}= 0 \quad \mod \,\ M^+
\end{align*}
implies that  all $C_h=0$ (\cite[p.43]{Agmon-Douglis-Nirenberg:estimates-boundary-elliptic-pde-II}). If this condition is satisfied in particular
\begin{align*}
C_h B_{hj}L_{jk}|_{\tau=i}= 0.
\end{align*}
Then $c_3$ must be zero and   the following system of equations must be satisfied:
\begin{align*}
c_1 A_{k1} t^{k} + c_2 A_{k2} t^{k} & = 0 \\
c_1 A_{k1} n^{k} + c_2 A_{k2} n^{k} & = 0
\end{align*}

In matrix form:

\begin{align*}
 \left(
\begin{array}{cc}
 A_{k1} t^{k} & A_{k2} t^{k}\\
A_{k1} n^{k} & A_{k2} n^{k}
\end{array}
\right)
\left(
\begin{array}{c}
 c_1 \\
c_2
\end{array}
\right)
 = \left(
\begin{array}{c}
 0 \\
0
\end{array}
\right)
\end{align*}
But the determinant of this matrix satisfies
\begin{align*}
 A_{k1} t^{k} A_{k2} n^{k} - A_{k2} t^{k} A_{k1} n^{k} \\
(A^{*}t)^{1}(A^{*}n)^{2} - (A^{*}t)^{2}(A^{*}n)^{1} \\
(A^{*} t, JA^{*}n) = (t,AJA^{*}n) \neq 0,
\end{align*}
and therefore $c_1=c_2=0$. Thus we have checked that the complementing Boundary Condition is satisfied.

Finally we notice that our system can be written as in \cite[p. 71]{Agmon-Douglis-Nirenberg:estimates-boundary-elliptic-pde-II}, where the coefficients $a_{ij,P}$ are smooth. In our case the index $l_1$ in \cite[p. 77]{Agmon-Douglis-Nirenberg:estimates-boundary-elliptic-pde-II} is $l_1=0$. The index $l$ in \cite{Agmon-Douglis-Nirenberg:estimates-boundary-elliptic-pde-II} coincides with $s-1$. That means $l=0,1,2$. The regularity we ask for the coefficient $b_{hj,\sigma}$ is $C^{l-r_h}$ (cf. \cite[p. 77]{Agmon-Douglis-Nirenberg:estimates-boundary-elliptic-pde-II}). The most we need is therefore $b_{hj,\sigma}\in C^3$. Since these coefficients are one derivative less regular that the boundary, a $C^4$ boundary is enough for our purpose.  This fact finishes the proof of the inequality \eqref{restimate}  if $s$ is an integer. For the rest of the values we proceed by interpolation.

%
%So far, we have solved the system with $Rf$. We need to do it with a general $f$
%
%\begin{align*}
%u - Q^{2} \Delta u + A^{*} \nabla p = \overline{f}
%\end{align*}
%
%Decomposing $\overline{f} = R\overline{f} + \nabla \pi_{\overline{f}}$, we obtain
%
%\begin{align*}
% u - Q^{2} \Delta u + A^{*} \nabla p = R\overline{f} + \nabla \pi_{\overline{f}},
%\end{align*}
%
%and we choose $p$ in such a way that
%
%\begin{align*}
% A^{*} \nabla p = RA^{*} \nabla p + A^{*} \nabla \pi_{\overline{f}},
%\end{align*}
%
%and we know that
%
%\begin{align*}
% RA^{*} \nabla p = A^{*} \nabla p_1.
%\end{align*}
%
%Hence, we need to solve
%
%\begin{align*}
% u - Q^{2} \Delta u + A^*\nabla p_1 = R\overline{f},
%\end{align*}
%which we have already done before.

This concludes the proof of lemma \ref{lema33}.
\end{proof}

Once we have studied the operator $S_A$ we will solve the time evolution. First we will show the following lemma.
\begin{lemma}\label{lema34}
Let $1 \leq s \leq 3$, $\lambda \in \mathbb{C}$, $\Re(\lambda) \geq 0$. Then the operator $\lambda + S_A: V^{s+1} \rightarrow RH^{s-1}$ is invertible. The inverse satisfies:
\begin{align}
 \label{angelpaco1} \|(\lambda + S_A)^{-1}Rf\|_{H^{s+1}} \leq C(\|Rf\|_{H^{s-1}} + |\lambda|^{\frac{s-1}{2}}\|Rf\|_{L^{2}})
\end{align}
\end{lemma}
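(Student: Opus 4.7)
The plan is to split the argument into three steps: existence of a weak solution via a shifted Lax-Milgram argument, a basic $L^2$ energy estimate that captures the decay of $\|u\|_{L^2}$ in $|\lambda|$, and a bootstrap to $H^{s+1}$ via Lemma \ref{lema33} combined with interpolation. The key point is that Lemma \ref{lema33} already delivers full elliptic regularity for $S_A$, so only the $\lambda$-dependence is genuinely new; one therefore reduces the problem to the resolvent identity $S_A u = Rf - \lambda u$.

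First I would extend the forms $(\cdot,\cdot)$ and $\langle \cdot,\cdot\rangle$ of the proof of Lemma \ref{lema33} sesquilinearly to complex-valued fields and consider, for $u,\phi \in RH^1$,
\begin{equation*}
a_\lambda(u,\phi) = \lambda\,(u,\phi) + (u,\phi) + \langle u,\phi\rangle.
\end{equation*}
Since $\Re\lambda \geq 0$, the real part of $a_\lambda(u,u)$ equals $(\Re\lambda+1)\|u\|_{L^2}^2 + \langle u,u\rangle$, which by the Korn-type inequality already used in Lemma \ref{lema33} dominates $c\|u\|_{H^1}^2$. Complex Lax-Milgram then produces a unique $u \in RH^1$ solving $a_\lambda(u,\phi) = (Rf,\phi)$ for every $\phi \in RH^1$; the pressure is recovered verbatim as in Lemma \ref{lema33}, and the resulting pair solves $(\lambda+S_A)u = Rf$ in the weak sense. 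Testing the weak equation against $\overline{u}$ and separating real and imaginary parts then gives
\begin{equation*}
(\Re\lambda+1)\|u\|_{L^2}^2 \leq |(Rf,u)|, \qquad |\Im\lambda|\,\|u\|_{L^2}^2 \leq |(Rf,u)|,
\end{equation*}
which, since $\Re\lambda \geq 0$, combine to the $L^2$ resolvent bound $(1+|\lambda|)\|u\|_{L^2} \leq C\|Rf\|_{L^2}$.

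Next I would bootstrap to $H^{s+1}$. Writing $S_A u = Rf - \lambda u$ and invoking Lemma \ref{lema33} yields
\begin{equation*}
\|u\|_{H^{s+1}} \leq C\|Rf\|_{H^{s-1}} + C|\lambda|\,\|u\|_{H^{s-1}}.
\end{equation*}
An interpolation $\|u\|_{H^{s-1}} \leq C\|u\|_{L^2}^{2/(s+1)} \|u\|_{H^{s+1}}^{(s-1)/(s+1)}$ followed by Young's inequality with conjugate exponents $\tfrac{s+1}{s-1}$ and $\tfrac{s+1}{2}$ absorbs a small multiple of $\|u\|_{H^{s+1}}$ into the left-hand side and leaves
\begin{equation*}
\|u\|_{H^{s+1}} \leq C\|Rf\|_{H^{s-1}} + C|\lambda|^{(s+1)/2}\|u\|_{L^2}.
\end{equation*}
Substituting the $L^2$ bound from the previous step and using the elementary inequality $|\lambda|^{(s+1)/2}/(1+|\lambda|) \leq C(1+|\lambda|^{(s-1)/2})$ yields \eqref{angelpaco1}.

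The genuinely delicate point is tracking the exponent $(s-1)/2$ rather than the naive $s/2$ or $(s+1)/2$: this is why one must combine the elliptic regularity of $S_A$ with the $|\lambda|$-decay of $\|u\|_{L^2}$ through the sharp interpolation, and simply using $\|u\|_{H^{s-1}} \leq \|u\|_{H^{s+1}}$ or $\|u\|_{H^{s-1}} \leq |\lambda|^{\text{something}}\|u\|_{L^2}$ in isolation would fail. A small amount of care is also needed at the endpoint $s=1$, where the estimate degenerates to the usual elliptic estimate with no $\lambda$-dependence, and in verifying that $\overline{u} \in RH^1$ whenever $u \in RH^1$, which is automatic because $R$ has real coefficients. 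Beyond these items the argument is routine.
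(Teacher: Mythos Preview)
Your proposal is correct and follows essentially the same route as the paper: weak solvability via (complex) Lax--Milgram with the Korn-type coercivity, the $L^2$ resolvent bound $(1+|\lambda|)\|u\|_{L^2}\le C\|Rf\|_{L^2}$, then the bootstrap $S_A u = Rf - \lambda u$ combined with Lemma~\ref{lema33}, interpolation, and Young's inequality to produce the $|\lambda|^{(s+1)/2}\|u\|_{L^2}$ term which is then reduced to $|\lambda|^{(s-1)/2}\|Rf\|_{L^2}$ via the $L^2$ bound. Your explicit separation of real and imaginary parts and the remarks on the endpoint $s=1$ and on $\overline{u}\in RH^1$ are minor clarifications of exactly what the paper does more tersely.
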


\begin{proof}
 As before, we look for a weak solution of $(\lambda + S_A)v = Rf, \quad f \in L^{2}$. Therefore,
\begin{align*}
 (1+\lambda)(v,w) + \langle v,w \rangle = \int_{\Om} Rf\bar{w} \frac{dx}{Q^{2}} \quad \forall w \in RH^{1}.
\end{align*}
The solution is given by Lax-Milgram's Theorem. For $w=v$, by virtue of Korn's inequality and $\Re(\lambda) \geq 0$, one obtains
\begin{align*}
 |(1+\lambda)(v,v) + \langle v,v \rangle| \geq C((1+|\lambda|)\|v\|_{L^{2}}^{2} + \|\nabla v \|_{L^{2}}^{2}),
\end{align*}
with $C$ independent of $\lambda$. Easily, the following bound is obtained:
\begin{align}
\label{angelpaco2} \|v\|_{L^{2}} \leq \frac{1}{1+|\lambda|} \|Rf\|_{L^{2}}.
\end{align}
If $f \in H^{s-1}$, then Lemma \ref{lema33} gives $v = S_{A}^{-1}(Rf - \lambda v) \in H^{s+1}$, and we can get the following bounds:
\begin{align*}
 \|v\|_{H^{s+1}} & \leq C\|S_A v\|_{H^{s-1}} \leq C(\|(S_A + \lambda) v\|_{H^{s-1}} + |\lambda|\|v\|_{H^{s-1}}) \\
& \leq C\|Rf\|_{H^{s-1}} + |\lambda| \|v\|_{H^{s+1}}^{\frac{s-1}{s+1}}\|v\|_{L^{2}}^{\frac{2}{s+1}}.
\end{align*}
The case $s = 1$ is already solved using \eqref{angelpaco2}. Young's inequality provides
\begin{align*}
 \|v\|_{H^{s+1}} \leq C\|Rf\|_{H^{s-1}} + \frac12\|v\|_{H^{s+1}} + |\lambda|^{\frac{s+1}{2}}\|v\|_{L^{2}}.
\end{align*}
and we can get \eqref{angelpaco1} using \eqref{angelpaco2}.

\end{proof}

In order to find the solution of $$v_t + S_A(v) = Rf \quad \text{ in } V^{s+1},$$ we take Fourier transforms in time. Since $f \in H_{(0)}^{ht,s-1}, Rf(0) = 0$, we can extend $Rf$ to a function $\overline{Rf}$ defined in $H^{ht,s-1}(\Omega \times \mathbb{R})$, with $\overline{Rf}(t) = 0$ for all $t < 0$. Since $\frac{s-1}{2} \leq \frac{3}{4} < 1$, using Lemma \ref{lema22} (ii) we get
\begin{align*}
 \|\overline{Rf}\|_{H_{(0)}^{ht,s-1}(\mathbb{R}\times\Omega_0)} \leq C\|Rf\|_{H_{(0)}^{ht,s-1}([0,T]\times\Omega_0)}\leq C\|f\|_{H_{(0)}^{ht,s-1}([0,T]\times\Omega_0)},
\end{align*}
with $C$ independent of $T$. We look for a solution of
\begin{align*}
 v_t + S_A(v) = \overline{Rf}, \quad \forall t \in \mathbb{R}, \quad v(0) = 0.
\end{align*}
By Fourier, $i \tau \hat{v}(\tau) + S_A(\hat{v})(\tau) = \hat{\overline{Rf}}$, and therefore the solution is given by
$ \hat{v}(\tau) = (i \tau + S_A)^{-1}\hat{\overline{Rf}}$.
Using \eqref{angelpaco1} and \eqref{angelpaco2} we can bound
\begin{align*}
 \|v\|^2_{H^{ht,s+1}(\mathbb{R}\times\Omega_0)} & = \int_{\R}(\|\hat{v}\|_{H^{s+1}}^{2}(\tau) + |\tau|^{s+1}\|\hat{v}\|_{L^{2}}^{2}(\tau)) d\tau \leq C\int_{\R}(\|\overline{Rf}\|_{H^{s-1}}^{2}(\tau) + |\tau|^{s-1}\|\overline{Rf}\|_{L^{2}}^{2}) d\tau \\
&\leq C\|\overline{Rf}\|^2_{H^{ht,s-1}} \leq C\|f\|^2_{H_{(0)}^{ht,s-1}}.
\end{align*}

Since $\overline{Rf}(t) = 0$ for every $t < 0$, $\hat{\overline{Rf}}(\tau)$ has an analytic extension in $\tau$ to $\Im(\tau) < 0$. Using Lemma \ref{lema34}, $\hat{v}(\tau)$ also has that extension. Moreover, \eqref{angelpaco2} gives $\|\hat{v}(\tau)\|_{L^{2}} \leq C\|\hat{\overline{Rf}(\tau)}\|_{L^{2}}$. Thus, Paley-Wiener provides $v(t) = 0 \; \forall t < 0$. Since $v \in H^{\frac{s+1}{2}}(\R; L^{2})$ and $ \frac32<\frac{s+1}{2}$ we have continuity in time and hence $v(0) = 0$. Since $\hat{v} \in L^{2}(\R; V^{s+1}(\Om))$ we have that $v \in L^{2}([0,T]; V^{s+1}(\Om))$ and therefore $Tr(\nabla v A) = 0$ and $(q + \nabla v A + (\nabla v A)^{*})A^{-1}n = 0$. We have already solved
\begin{align*}
 v_t - Q^{2}\Delta v+ A^{*} \nabla q_1 = Rf,
\end{align*}
 where $q_1$ satisfies
\begin{align*}
 Q^{2} \Delta q_1 & = 0, \text{ in } \Om_0\times [0,T] \\
q_1 & = A^{-1}n(\nabla v A + (\nabla v A)^{*}) A^{-1}n, \text{ on } \partial\Om \times[0,T].
\end{align*}
We have that $\pa_tv(0)=0$ and then  $||v||_{H_{(0)}^{ht,s+1}}\leq C||f||_{H_{(0)}^{ht,s-1}}$.
Definition
\begin{align*}
 A^{*} \nabla q =(I-R)f + A^{*} \nabla q_1,
\end{align*}
gives us the solution that we were looking for
\begin{align*}
 v_t - Q^{2} \Delta v + A^{*} \nabla q = f.
\end{align*}
The properties of $R$ allow us to obtain
\begin{align*}
\|\nabla q\|_{H_{(0)}^{ht,s-1}} & \leq C\|A^{*} \nabla q\|_{H_{(0)}^{ht,s-1}} \leq \|(I-R)f\|_{H_{(0)}^{ht,s-1}} + \|A^{*} \nabla q_1\|_{H_{(0)}^{ht,s-1}}\\
& \leq C( \|f\|_{H_{(0)}^{ht,s-1}} + \|A^{*} \nabla q_1\|_{H_{(0)}^{ht,s-1}}).
\end{align*}
We have the following bounds:
\begin{align*}
 \|A^{*}\nabla q_1\|_{L^{2}H^{s-1}} & \leq \|\nabla q_1\|_{L^{2}H^{s-1}} \leq |A^{-1}n ((\nabla v A)+(\nabla v A)^{*})A^{-1}n|_{L^{2}H^{s-\frac12}}  \\
&
\leq \|(\nabla v A) + (\nabla v A)^{*}\|_{L^{2}H^{s}}\leq \|\nabla v\|_{L^{2}H^{s}}
\leq \|v\|_{L^{2}([0,T]; H^{s+1})},
\end{align*}
\begin{align*}
 \|A^{*}\nabla q_1\|_{H_{(0)}^{\frac{s-1}{2}}L^{2}}\leq |A^{-1}n ((\nabla v A)+(\nabla v A)^{*})A^{-1}n|_{H_{(0)}^{\frac{s-1}{2}}H^{\frac12}}.
\end{align*}
Decomposing $\nabla v_i$ into the tangential and normal components: $\nabla v_i = (\nabla v_i \cdot n_0) n_0 + (\nabla v_i \cdot t_0) t_0$, we can bound each of them by
\begin{align*}
 |\nabla v_i \cdot t_0|_{H_{(0)}^{\frac{s-1}{2}}H^{\frac12}} & \leq C|v_i|_{H_{(0)}^{\frac{s-1}{2}} H^{\frac{3}{2}}}
\leq C\|v_i\|_{H_{(0)}^{\frac{s-1}{2}}H^{2}} \leq C\|v_i\|_{H_{(0)}^{ht,s+1}}, \\
 |\nabla v_i \cdot n_0|_{H_{(0)}^{\frac{s-1}{2}}H^{\frac12}} & \leq |\nabla v_i \cdot n_0|_{H_{(0)}^{ht,s-\frac12}} \leq C\|v_i\|_{H_{(0)}^{ht,s+1}}.
\end{align*}
Above we have used Lemma \ref{lema21} (i). This yields
\begin{align*}
 \|A^{*} \nabla q_1\|_{H_{(0)}^{\frac{s-1}{2}}L^{2}} \leq C\|v\|_{H_{(0)}^{ht,s+1}}.
\end{align*}
and therefore
\begin{align*}
 \|A^{*} \nabla q\|_{H_{(0)}^{ht,s-1}} \leq C\|f\|_{H_{(0)}^{ht,s-1}}.
\end{align*}
Since $(I-R) f |_{\pa \Om_0} = 0$, we have that $q = q_1$ on $\pa \Om_0$. This implies
\begin{align*}
 |q|_{H_{(0)}^{ht,s-\frac12}} & = |q_1|_{H_{(0)}^{ht,s-\frac12}} = |A^{-1}n((\nabla v A) + (\nabla v A)^{*} )A^{-1}n|_{H_{(0)}^{ht,s-\frac12}} \\
& \leq |A^{-1}n((\nabla v A) + (\nabla v A)^{*} )A^{-1}n|_{L^{2}H^{s-\frac12}} + |A^{-1}n((\nabla v A) + (\nabla v A)^{*} )A^{-1}n|_{H_{(0)}^{\frac{s}{2}-\frac{1}{4}}L^{2}}.
\end{align*}
Decomposing into the tangential and normal components as before, we find finally
\begin{align*}
|q|_{H_{(0)}^{ht,s-\frac12}} & \leq \|\nabla v\|_{L^{2}H^{s}} + |\nabla v_i \cdot t_0|_{H_{(0)}^{\frac{s}{2}-\frac{1}{4}}L^{2}} + |\nabla v_i \cdot n_0|_{H_{(0)}^{\frac{s}{2}-\frac{1}{4}}L^{2}} \\
& \leq \|v\|_{L^{2}H^{s+1}} + |v_i|_{H_{(0)}^{\frac{s}{2}-\frac{1}{4}}H^{1}} + |\nabla v_i \cdot n_0|_{H_{(0)}^{ht,s-\frac12}}\\
& \leq \|v\|_{H_{(0)}^{ht,s+1}} + \|v\|_{H_{(0)}^{\frac{s}{2}-\frac{1}{4}}H^{1+\frac12}} + \|v\|_{H_{(0)}^{ht,s+1}} \leq \|v\|_{H_{(0)}^{ht,s+1}}.
\end{align*}

\subsection{Reduction for arbitrary $g$ and $h$}\label{reduction}

In this section we want to extend the result with $g = 0$ and $h = 0$  to the  case:
\begin{align}
v_t - \nu Q^{2} \Delta v + A^{*} \nabla q & = f & \text{ in } \Omega_0 \times [0,T] \nonumber \\
Tr(\nabla v A) & = g & \text{ in } \Omega_0 \times [0,T] \nonumber \\
(q + (\nabla v A) + (\nabla v A)^{*})A^{-1} n & = h & \text{ on } \pa \Omega_0 \times [0,T] \nonumber \\
 v(x,0) & = 0 & \text{ in } \Omega_0
\end{align}
To get the space where $g$ belongs we proceed as follows. For $\phi \in H^{1}_{0}$ it is easy to find
$$
\int \pa_t^{j}(Tr(\nabla v A)) \phi(x) \frac{dx}{Q^{2}(x)}  = \int Tr(\nabla \pa_{t}^{j} v A) \phi(x) \frac{dx}{Q^{2}(x)}
= \int \pa_{t}^{j} v A^* \nabla \phi \frac{dx}{Q^{2}(x)}.
$$
Then
$$
\left|\int \pa_t^{j}(Tr(\nabla v A)) \phi(x) \frac{dx}{Q^{2}(x)}\right|  \leq \|\pa_{t}^{j} v\|_{L^{2}}(t)\|\nabla \phi\|_{H^{1}},
$$
and duality provides
$$
\|\pa_{t}^{j}(Tr(\nabla v A))\|_{H^{-1}}   \leq \|\pa_{t}^{j} v\|_{L^{2}}(t), \text{ with } H^{-1} = (H^{1}_{0})^*.
$$
For $j = \frac{s+1}{2}$, integration in time yields
\begin{align*}
 \|Tr(\nabla v A)\|_{H^{\frac{s+1}{2}}H^{-1}} \leq \|v\|_{H^{ht,s+1}}.
\end{align*}
Here we remark that we use \eqref{norma1} for the norm of fractional derivatives on time. Also, $Tr(\nabla v A) \in L^{2}([0,T]; H^{s})$, which implies:

\begin{align*}
 Tr(\nabla v A) \in L^{2}([0,T]; H^{s}) \cap H^{\frac{s+1}{2}}([0,T]; H^{-1}),
\end{align*}
 To prove this fact, one can proceed for an integer number of derivatives, then interpolate for fractional ones (see \cite{Lions-Magenes:non-homogeneous-bvp-I}).

We check next the compatibility conditions of the initial data:
\begin{align}
 Tr(\nabla v_0 A) & = g(0) & \text{ in } \Omega_0 \label{paco1} \\
(A^{-1} n)^{\perp} (\nabla v_0 A + (\nabla v_0 A)^{*}) A^{-1} n & = h(0) (A^{-1} n)^{\perp} & \text{ on } \partial\Omega_0\label{paco2}.
\end{align}

Defining the following spaces:
\begin{align*}
 X_0 & := \{(v,q) : v \in H_{(0)}^{ht,s+1}, q \in H^{ht,s}_{pr,\,(0)}\} \\
Y_0 & := \{(f,g,h): f \in H_{(0)}^{ht,s-1}, g \in \overline{H}_{(0)}^{ht,s}, h \in H_{(0)}^{ht,s-\frac12}(\pa \Om \times [0,T]), \}
\end{align*}
(here we remark that in $X_0$ and $Y_0$: $v(0)=\pa_tv(0)=q(0)=f(0)=g(0)=\pa_tg(0)=h(0)=0$), then, for $(f,g,h)\in Y_0$, \eqref{plg} is equivalent to solve:
\begin{align*}
 L(v,q) = (f,g,h,0); \quad L: X_0 \rightarrow Y_0, \quad 2 < s < \frac{5}{2}.
\end{align*}

\begin{thm}
 $L: X_0 \to Y_0$ is invertible for $2<s< \frac{5}{2}$. Moreover, $\|L^{-1}\|$ is bounded uniformly if $T$ is bounded above.
\end{thm}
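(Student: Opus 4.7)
The plan is to leverage Theorem \ref{teoremalineal} and then check two things: (a) that $L$ actually restricts to a bijection $X_0 \to Y_0$, and (b) that every step in the proof of Theorem \ref{teoremalineal} can be carried out with constants independent of $T$ when the data lies in $Y_0$.

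First I would show $L(X_0) \subset Y_0$. If $(v,q) \in X_0$ then $v(0)=v_t(0)=q(0)=0$, hence $f(0) = v_t(0) - Q^2\Delta v(0) + A^*\nabla q(0) = 0$, $g(0) = Tr(\nabla v(0) A)=0$, $g_t(0) = Tr(\nabla v_t(0) A) = 0$, $h(0) = (q(0)\I+\ldots) A^{-1}n_0 = 0$ and $v_0 = v(0) = 0$.

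Next I would show that for $(f,g,h,0) \in Y_0$, the solution $(v,q) \in X$ given by Theorem \ref{teoremalineal} in fact belongs to $X_0$. Since $v_0 = 0$, trivially $v(0)=0$. Evaluating the momentum equation at $t=0$ yields $v_t(0) = -A^*\nabla q(0)$. Evaluating the trace condition and its time derivative at $t=0$ gives $Tr(\nabla v_t(0) A) = g_t(0) = 0$, so by the definition of $R$ we have $R v_t(0) = v_t(0)$. On the other hand, $h(0)=0$ combined with $v(0)=0$ forces $q(0)=0$ on $\pa\Omega_0$. By Lemma \ref{lema31}(iii), $R(A^*\nabla q(0)) = A^*\nabla \phi_1$ with $Q^2\Delta\phi_1=0$ in $\Omega_0$ and $\phi_1 = q(0) = 0$ on $\pa\Omega_0$, hence $\phi_1 \equiv 0$. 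Therefore $v_t(0) = R v_t(0) = -R(A^*\nabla q(0)) = 0$, which in turn forces $A^*\nabla q(0) = 0$; since $A$ is invertible and $q(0)$ vanishes on $\pa\Omega_0$, we conclude $q(0)\equiv 0$. Thus $(v,q) \in X_0$.

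The main work, and the main obstacle, is the $T$-uniform bound. I would revisit the four reduction steps in the proof of Theorem \ref{teoremalineal} and observe that under the assumption $(f,g,h,0) \in Y_0$ they simplify dramatically:

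\begin{itemize}
\item In Step~1 of the proof of Theorem \ref{teoremalineal}, the initial lift uses $f(0),h(0),v_0$, all of which vanish for data in $Y_0$. One may therefore take $v^{(1)}=0$, $q^{(1)}=0$, giving trivially $T$-independent bounds.
\item In Step~2, one extends $\overline g = g - g^{(1)} = g$ to $\R$. Because $g(0)=g_t(0)=0$, Lemma \ref{lema22}(2) supplies an extension with norm bounded independently of $T$. The elliptic problem defining $\phi$ and the resulting bound $\|\nabla\phi\|_{H^{ht,s+1}} \le C\|g\|_{\overline H^{ht,s}}$ is $T$-uniform.
\item In Step~3, Lemma \ref{lemapaco42} is applied to $\eta = (h-h^{(2)})\cdot(A^{-1}n)^{\perp}$, which vanishes at $t=0$; the parabolic trace (Lemma \ref{lema21}) used to construct $\psi$ applies uniformly in $T$ once the compatibility $\eta(0)=0$ is in place.
\item In Step~4, the boundary pressure correction $\overline q$ vanishes at $t=0$, and again the parabolic trace yields a $T$-uniform extension.
\end{itemize}

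After these four reductions, the problem is reduced to the case $g=h=v_0=0$ treated in Subsection \ref{subsectionlinealceros}, and Theorem \ref{thm3.2} provides a constant independent of $T$. Composing, every constant in the chain is $T$-uniform, so $\|L^{-1}\|_{Y_0 \to X_0}$ is bounded uniformly for $T$ in any bounded interval. The bulk of the technical work is really in verifying the $T$-independence of the auxiliary extensions and elliptic estimates at each step, all of which rely on the vanishing-initial-data conditions built into the definition of $Y_0$.
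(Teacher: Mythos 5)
Your proof is correct and takes essentially the same approach as the paper: show $L(X_0)\subset Y_0$, use the definition of $R$ together with Lemma \ref{lema31}(iii) to conclude $L^{-1}(Y_0)\subset X_0$ (the paper phrases this via the decomposition $v=A^*\nabla\pi+Rv$ with $Q^2\Delta\pi=g$, but it is the same computation), and then observe that the vanishing conditions defining $Y_0$ are precisely what make the parabolic-trace extensions in each of the four reduction steps of Theorem \ref{teoremalineal} uniformly bounded in $T$. Your remark that one can simply take $v^{(1)}=q^{(1)}=0$ in Step~1 is a minor streamlining of the paper's version.
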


\begin{proofthm}{Lmenos1}
 We structure the proof in 3 steps, in order to get to the previous case ($g = h = v_0 = 0$).
Let $(f,g,h) \in Y_0$.
\begin{enumerate}
 \item[Step 1:]\underline{$A$-divergence adjustment.} We want to find $(v^{1},q^{1}) \in X$ such that $L(v^{1},q^{1}) = (f^{1},g^{1},h^{1},0)$ with
\begin{align}
\label{paco6} f^{1}(0) = 0; \quad g^{1}(t) = g(t);  \; \forall t \in [0,T]  \quad h^{1}(0) = 0.
\end{align}
We define $\phi$ by solving the following elliptic problem for every $t \in (-\infty,\infty)$, after extending $g$ to the whole real line:
\begin{align*}
 Q^{2} \Delta \phi  &= g(t) \qquad \text{ in } \Omega_0\times\R \\
 \phi  &= 0 \qquad \text{ on } \pa \Omega_0\times\R \\
\end{align*}
This system satisfies $\|\phi\|_{H^{s+2}}(t) \leq C\|g\|_{H^{s}}(t)$. In particular
\begin{align}
 \label{paco7} \|\nabla \phi \|_{H^{s+1}}(t) \leq C\|g\|_{H^{s}}(t).
\end{align}
Taking the Fourier Transform in time, one has, for every $\tau$:
\begin{align*}
 Q^{2} \Delta \hat{\phi}(\tau) & = \hat{g}(\tau) \qquad \text{ in } \Omega_0\times\R, \\
 \hat{\phi}(\tau) & = 0 \qquad \text{ on } \pa \Omega_0\times\R.
\end{align*}
For $\lambda \in H^{1}_{0}$ it is possible to find
\begin{align*}
 \int \hat{\overline{g}}(\tau) \lambda \frac{dx}{Q^{2}}=\int\Delta \hat{\phi}(\tau) \lambda dx = \int \nabla(\nabla \hat{\phi} A)A \lambda \frac{dx}{Q^{2}} = - \int \nabla \hat{\phi} A \underbrace{\nabla \lambda A}_{A^{*} \nabla \lambda} \frac{dx}{Q^{2}} = - \int \nabla \hat{\phi} \nabla \lambda dx
\end{align*}
Therefore:
\begin{align*}
 \int \nabla \hat{\phi} \nabla \lambda dx = -\int \hat{g}(\tau) \lambda \frac{dx}{Q^{2}} \quad \forall \lambda \in H^{1}_{0}.
\end{align*}

This implies:

\begin{align*}
 \|\nabla \hat{\phi}\|_{L^{2}}(\tau) & \leq C\|\hat{g}\|_{H^{-1}}(\tau) \\
\Rightarrow \int |\tau|^{\frac{s+1}{2}}\|\nabla \hat{\phi}\|_{L^{2}}(\tau) d\tau & \leq C \int |\tau|^{\frac{s+1}{2}} \|\hat{g}\|_{H^{-1}}(\tau) d\tau.
\end{align*}

We can conclude that $\nabla \phi \in H_{(0)}^{ht,s+1}$.  We now define
\begin{align*}
 v^{1} & =  A^{*} \nabla \phi \in H_{(0)}^{ht,s+1}, \quad q^{1} = q.
\end{align*}
It is easy to check that
\begin{align*}
f^{1}(0) & =  A^{*} \nabla \phi_t(0)=0.
\end{align*}

By construction, it is obvious that $g^{1}(t) = g(t)$ for every $t \in [0,T]$. Since $\phi(0) = 0$, $h^{1}(0) = 0$. This shows that \eqref{paco6} is satisfied.

  \item[Step 2:] \underline{Adjusting the boundary conditions in the tangential direction without modifying the A-divergence}.

We want to find $(v^{2},q^{2}) \in X_0$ such that $L(v^{2},q^{2}) = (f^{2},g^{2},h^{2}, 0)$ with
\begin{align}
&f^{2}(0) = 0; \quad g^{2}(t) = g(t);  \; \forall t \in [0,T]  \nonumber \\
&\label{paco8}  h^{2}(t) \cdot (A^{-1} n)^{\perp} = h(t) \cdot (A^{-1} n)^{\perp};  \; \forall t \in [0,T], \quad h^{2}(0) = 0.
\end{align}
We will use the following Lemma:

\begin{lemma}
 \label{lemapaco42}
Let $\eta \in H_{(0)}^{ht,s-\frac12}(\pa \Om_0 \times [0,T])$, $2 < s < \frac{5}{2}$, $\eta(0) = 0$. Then there exists $w \in H_{(0)}^{ht,s+1}$ such that $\|w\|_{H_{(0)}^{ht,s+1}} \leq C|\eta|_{H_{(0)}^{ht,s-\frac12}}$, $w(0) = w_t(0) = Tr(\nabla w A) = 0$ and
$$(A^{-1}n)^{\perp}(\nabla w A + (\nabla w A)^{*})A^{-1}n = \eta \text{ on } \pa \Om_0 \times[0,T].$$
\end{lemma}

\begin{proof}

Let $\psi \in H_{(0)}^{ht,s+2}$, with $\psi(0) = \psi_t(0) = 0, \psi(x) = \pa_{n} \psi(x) = 0, \; \forall x \in \pa \Omega_0$, and moreover $\pa_{n}^{2} \psi(x) = \eta(x) \; \forall x \in \pa \Om_0\times[0,T]$. This choice is possible because of the parabolic trace. All that is left is to check that the compatibility conditions from Lemma \ref{lema21}. Defining

\begin{align*}
 w = \nabla_{A}^{\perp} \psi = (- \pa_2 P_1 \circ P^{-1} \pa_1 \psi - \pa_2 P_2 \circ P^{-1} \pa_2 \psi, \pa_1 P_1 \circ P^{-1} \pa_1 \psi + \pa_1 P_2 \circ P^{-1} \pa_2 \psi),
\end{align*}

it is immediate that $w(0) = w_t(0) = 0$. A straightforward, but long calculation gives:

\begin{align*}
 Tr(\nabla (\nabla_{A}^{\perp} \psi) A) = 0 = Tr(\nabla w A).
\end{align*}

We will now show that $(A^{-1}n)^{\perp}(\nabla w A + (\nabla w A)^{*})(A^{-1}n) = \eta$. Let $x_0 \in \pa \Om_0$. We perform an euclidean change of coordinates in a way that $x_0 = 0, n_0 = (0,1)$. Thus, $\psi(0,0) = 0$, as well as $\pa_1 \psi(0,0)$ and $\pa_1^{2} \psi(0,0)$. The condition $\pa_n \psi(0,0) = 0$ implies $\pa_2 \psi(0,0) = 0, \pa_1 \pa_2 \psi(0,0) = 0$. That gives:

\begin{align*}
 \nabla (\nabla_A^{\perp} \psi) = \left(
\begin{array}{cc}
 0 & - \pa_2 P_2 \circ P^{-1} \pa_2^{2} \psi(0,0) \\
0 & \pa_1 P_2 \circ P^{-1} \pa_2^{2} \psi(0,0)
\end{array}\right).
\end{align*}

Computing further:

\begin{align*}
 \nabla (\nabla_A^{\perp} \psi)A = \pa_{2}^{2} \psi(0,0)
\left(
\begin{array}{cc}
 -\pa_2 P_2 \pa_1 P_2 & -(\pa_2 P_2)^2 \\
(\pa_1 P_2)^2 & \pa_1 P_2 \pa_2 P_2
\end{array}\right) \circ P^{-1}
\end{align*}

Thus

\begin{align*}
 ( (\nabla (\nabla_A^{\perp} \psi)A) + ( \nabla (\nabla_A^{\perp} \psi)A)^{*}) =
\pa_{2}^{2} \psi(0,0)
\left(
\begin{array}{cc}
 -2\pa_2 P_2 \pa_1 P_2 & (\pa_1 P_2)^2 -(\pa_2 P_2)^2 \\
(\pa_1 P_2)^2 -(\pa_2 P_2)^2 & 2\pa_1 P_2 \pa_2 P_2
\end{array}\right) \circ P^{-1}.
\end{align*}
We also have that
\begin{align*}
 A^{-1}n_0 = \frac{1}{Q^{2}} \left(\begin{array}{c}-\pa_2 P_1 \\ \pa_1 P_1\end{array}\right)\circ P^{-1}; \quad
(A^{-1}n_0)^{\perp} = \frac{1}{Q^{2}} \left(\begin{array}{c}-\pa_1 P_1 \\ -\pa_2 P_1\end{array}\right)\circ P^{-1};
\end{align*}
Combining everything, we get

\begin{align*}
  ( (\nabla (\nabla_A^{\perp} \psi)A) + ( \nabla (\nabla_A^{\perp} \psi)A)^{*})A^{-1} n_0 & =  \frac{\pa_2 \psi(0,0)}{Q^{2}}(-\pa_1 P_1\circ P^{-1} Q^{2}, \pa_1 P_2\circ P^{-1} Q^{2}) \\
  \Rightarrow (A^{-1}n)^{\perp}( (\nabla (\nabla_A^{\perp} \psi)A) + ( \nabla (\nabla_A^{\perp} \psi)A)^{*})A^{-1} n_0 & =  \pa_2^{2} \psi(0,0)
\end{align*}
Since $ \pa_n^{2} \psi(0,0) = \sum_{i,j}(n_0)_i \pa_i \pa_j \psi (n_0)_j = \pa_2^{2} \psi(0,0),$ we are done.
\end{proof}

 We now apply Lemma \ref{lemapaco42} to $\eta = h(t)(A^{-1}n)^{\perp} - h^{1}(t) (A^{-1}n)^{\perp}$. Equation \eqref{paco6} shows that $\eta(0) = 0$. Then, we can take:

\begin{align*}
 v^{2} = v^{1} + w, \quad q^{2} = q^{1}.
\end{align*}

Since $w(0) = w_t(0) = 0$, we have that $f^2(0)=0$. Also, since $Tr(\nabla w A) = 0$, by \eqref{paco6}, $g^{2}(t) = g(t) \; \forall t \in [0,T]$. By construction, one has that $h^{2}(t) (A^{-1}n)^{\perp} = h(t)(A^{-1}n)^{\perp} $ and since $\eta(0)=0$ we have that $h^2(0)=0$.

  \item[Step 3:] \underline{$h(t) = h^{3}(t)\, \forall t$ without modifying the rest.}
We want to have $(v^{3},q^{3}) \in X_0$.  We have that $L(v^{3},q^{3}) = (f^{3},g^{3},h^{3},0)$ with

\begin{align}
f^{3}(0) = 0; \quad g^{3}(t) = g(t);  \; \forall t \in [0,T]  \nonumber \\
\label{paco9} \quad h^{3}(t) = h(t);  \; \forall t \in [0,T]   .
\end{align}

We first take $v^{3} = v^{2}$ and define $\overline{q}$ by
\begin{align*}
 \overline{q} &= h(t) \cdot \frac{A^{-1}n_0}{|A^{-1}n_0|^{2}} - \underbrace{(q^{2}I + (\nabla v^{2}A) + (\nabla v^{2}A)^{*})A^{-1}n_0}_{h^{2}(t)} \cdot \frac{A^{-1}n_0}{|A^{-1}n_0|^{2}}
\text{ on } \pa \Om_0 \times [0,T],\\
\overline{q}(x,0)&=0,\quad \text{ in }  \Om_0.
\end{align*}

Using once again the parabolic trace, the compatibility condition is satisfied since $\overline{q}(x,0) = 0$ in $\pa \Om \times [0,T]$ by means of \eqref{paco8}. Therefore, we take $q^{3} = q^{2} + \overline{q}$.

Since $\nabla \overline{q}(x,0) = 0$ we have as before $f^{3}(0) = 0$, and $Tr(\nabla v^{3} A) = 0$ because the velocity was not modified. At the boundary we find:
\begin{align*}
 ((q^{3}I + (\nabla v^{3}A) + (\nabla v^{3}A)^{*})A^{-1}n
& = \overline{q}A^{-1}n + (q^{2}I + (\nabla v^{2}A) + (\nabla v^{2}A)^{*}) A^{-1}n\\
 = \overline{q}A^{-1}n + h^{2}(t)&  =  h(t) \cdot \frac{A^{-1}n}{|A^{-1}n|}\frac{A^{-1}n}{|A^{-1}n|} - h^{2}(t) \cdot \frac{A^{-1}n}{|A^{-1}n|}\frac{A^{-1}n}{|A^{-1}n|} + h^{2}(t)\\
& = h(t) \cdot \frac{A^{-1}n}{|A^{-1}n|}\frac{A^{-1}n}{|A^{-1}n|} + h^{2}(t) \cdot \frac{(A^{-1}n)^{\perp}}{|A^{-1}n|}\frac{(A^{-1}n)^{\perp}}{|A^{-1}n|}\\
& \stackrel{\eqref{paco8}}{=}   h(t) \cdot \frac{A^{-1}n}{|A^{-1}n|}\frac{A^{-1}n}{|A^{-1}n|} + h(t) \cdot \frac{(A^{-1}n)^{\perp}}{|A^{-1}n|}\frac{(A^{-1}n)^{\perp}}{|A^{-1}n|} = h(t).
\end{align*}
We find $v^{3}(0) = 0$ trivially. By construction, we conclude that
\begin{align*}
 \|(v^{3},q^{3})\|_{X_0} \leq C \|(f,g,h)\|_{Y_0}.
\end{align*}

If we consider the variables $v_{\tilde{f}} = v - v^{3}, q_{\tilde{f}} = q - q^{3}$, we obtain the following problem:

\begin{align}
\pa_t v_{\tilde{f}} - Q^{2} \Delta v_{\tilde{f}} + A^{*} \nabla q_{\tilde{f}} & = \tilde{f} & \text{ in } \Omega \times [0,T] \nonumber \\
Tr(\nabla v_{\tilde{f}} A) & = 0 & \text{ in } \Omega \times [0,T] \nonumber \\
(q_{\tilde{f}} + (\nabla v_{\tilde{f}} A) + (\nabla v_{\tilde{f}} A)^{*})A^{-1} n & = 0 & \text{ in } \pa \Omega \times [0,T] \nonumber \\
 v_{\tilde{f}}(x,0) & = 0 & \text{ in } \Omega,
\end{align}
with $\tilde{f}(0) = 0$. Using Theorem \ref{thm3.2}, the Theorem is proved.

\end{enumerate}
\end{proofthm}

\section{Proofs of Structural Stability Theorems}

\subsection{Proof of Proposition \ref{prop1}}
\label{appendixprop1}

Part 1:

First we notice that $||\int_{0}^t\tau\psi||_{F^{s+1}}=\frac{1}{2}\fn{t^2\psi}$
thus we need to control $\lit{t^2\psi}{s+1}\leq C[v_0]T^{1.75}$ and $\hhn{t^2\psi}{2}{\gamma}\leq C[v_0]||t^2||_{H_{(0)}^2}\leq C[v_0]$.

In addition, we have that $$f_\phi\equiv -\pa_t \phi+Q^2\Delta \phi -A^*\nabla q_{\phi}=tQ^2\Delta\p{Q^2\Delta v_0-A^*\nabla q_{\phi}}\equiv tQ^2\Delta \psi.$$
Therefore $f_\phi|_{t=0}=0$. By definition \eqref{q0b} of $q_{\phi}$ we have that $ A^{-1} n_0\cdot h_{\phi}|_{t=0}=0$ and the tangential component of $h_{\phi}|_{t=0}=0$ by the choice of the initial data. Then $h_\phi|_{t=0}=0$ and in fact $h_\phi=O(t)$ when $t\to 0$ . Also $g_\phi|=O(t^2)$, when $t\to 0$ by the choice of the initial data (incompressibility condition).  Then we can apply theorem \ref{Lmenos1} to obtain that $\left|\left|L^{-1}(f_\phi,\overline{g}_\phi,h_\phi)\right|\right|_{H_{(0)}^{ht,s+1}\times H^{ht,s}_{pr \, (0)}}$ is bounded independently of $T$ for $T$ small if the norms $||f_\phi||_{H_{(0)}^{ht,s-1}}$, $||g||_{\overline{H}_{(0)}^{ht,s}}$ and $ |h|_{H_{(0)}^{ht,s-\frac{1}{2}}}$ are bounded independently of $T$ for $T$ small. Since $||t^n||_{H_{(0)}^{\frac{s-1}{2}}}\leq C[n]$, for $n=1,2,...$, we see that $f_\phi$ is under control. Since $||t^n||_{H_{(0)}^{\frac{s+1}{2}}}\leq C[n]$, for $n=2,...$, $g_\phi$ is under control. Finally $h_\phi$ is under control because $||t^n||_{H_{(0)}^{\frac{s}{2}-\frac{1}{4}}}\leq C[n]$, for $n=1,2,...$.

Therefore $N$ is a number that does not depend on $T$ for $T$ small. Indeed $N\leq C[v_0]$.

The definition of $F^{s+1}$ and $\vn$ imply
\begin{align*}
&\fn{X^{(n+1)}-\alpha-\int_{0}^tA\phi}\leq \fn{\int_{0}^tA\circ \xn \wn}+\fn{\int_{0}^t\left(A\circ \xn - A\right)\phi d\tau}\\
&\leq \fn{\int_{0}^tA\circ \xn \wn}+\fn{\int_{0}^t\left(A\circ \xn - A\right)v_0 d\tau}+\fn{\int_{0}^t\left(A\circ \xn - A\right)\tau\psi d\tau}\\
& \leq \lit{\int_{0}^tA\circ \xn \wn}{s+1}+\hhn{\int_{0}^tA\circ \xn \wn}{2}{\gamma}\\
& + \lit{\int_{0}^t\left(A\circ \xn - A\right)v_0 d\tau}{s+1}+\hhn{\int_{0}^t\left(A\circ \xn - A\right)v_0 d\tau}{2}{\gamma}\\
& + \lit{\int_{0}^t\left(A\circ \xn - A\right)\tau\psi d\tau}{s+1}+\hhn{\int_{0}^t\left(A\circ \xn - A\right)\tau\psi d\tau}{2}{\gamma}\\
& \equiv I_{1}+I_{2}+I_{3}+I_{4}+I_{5}+I_6.
\end{align*}
To estimate $I_1$ we proceed as follows. Applying H\"{o}lder and Minkowski inequalities yields
\begin{align*}
\left|\left|\int_{0}^t A\circ\xn \wn d\tau\right|\right|_{L^{\infty}_{1/4}H^{s+1}}\leq  T^\frac{1}{4} \lhn{A\circ \xn \wn}{s+1}\leq T^\frac{1}{4}\li{A\circ \xn}{s+1}\hhn{\wn}{0}{s+1},
\end{align*}
In order to bound $\li{A\circ \xn}{s+1}$ we will use lemma \ref{composicion1}.
Therefore
\begin{align*}
I_1\leq T^\frac{1}{4}C[v_0]\lhn{\wn}{s+1}\leq T^\frac{1}{4}C[v_0]||\wn||_{H_{(0)}^{ht,s+1}}\leq T^\frac{1}{4}C[v_0].
\end{align*}

For $I_2$  we have that
\begin{align*}
&I_2\leq \hhn{\int_{0}^t A\circ \xn \wn}{2}{\gamma}\leq C \hhn{A\circ \xn \wn}{1}{\gamma}\\
&\leq C\hhn{\p{A\circ \xn-A}\wn}{1}{\gamma}
+\hhn{A\wn}{1}{\gamma}\\&\leq C \p{\hhn{A\circ\xn-A}{1}{\gamma}\hhn{\wn}{1}{\gamma}+\hhn{A\wn}{1}{\gamma}}\\
\end{align*}
By lemma \ref{composicion2}, we have that $I_2\leq C[v_0]\hhn{w}{1}{\gamma}$.  In addition
\begin{align*}
&\hhn{ \wn}{1}{\gamma}\leq \hhn{\int_{0}^t \pa_t \wn}{1}{\gamma}\leq C\hhn{\int_{0}^t \pa_t \wn}{1+\delta-\ep}{\gamma}  \leq CT^\ep \hhn{\wn}{1+\delta}{\gamma}\leq CT^\ep C[v_0].
\end{align*}
Here $\ep>0$, $\ep<\delta$, $\delta < \frac{s-1-\gamma}{2}$ and we have used lemma \ref{2.3} and \ref{2.4}.

For $I_3$ we have that \begin{align*}
&\lit{\int_{0}^t \p{A\circ X-A}v_0d\tau}{1+s}= \sup_{t\in[0,T]}\frac{1}{t^\frac{1}{4}}\left|\left|\int_{0}^t\p{A\circ\xn-A}v_0d\tau\right|\right|_{H^{s+1}}\\
&\leq C[v_0]T^\frac{3}{4}\li{A\circ X-A}{1+s}\leq C[v_0]T^\frac{3}{4}
\end{align*}
by lemma \ref{composicion1}.

For $I_4$  by applying the second part of Lemma \ref{2.4} with $\ep=0$  we have that
\begin{align*}
I_4\leq \hhn{ (A\circ \xn-A)v_0 }{1}{\gamma}\leq C[v_0] \hhn{ (A\circ \xn-A) }{1}{\gamma}
\end{align*}

Now we apply lemma \ref{composicion2} and we obtain that, for small enough $T$, $I_4\leq C[v_0]||\xn-\alpha||_{H_{(0)}^1H^\gamma}$.  In addition
\begin{align*}
\hhn{\xn-\alpha}{1}{\gamma}\leq \hhn{\xn-\alpha-Av_0t}{1}{\gamma}+\hhn{Av_0t}{1}{\gamma}
\end{align*}
with $\hhn{Av_0t}{1}{\gamma}\leq C[v_0]||t||_{H_{(0)}^1}\leq C[v_0]T^\frac{1}{2}.$ Also
\begin{align*}
&\hhn{ \xn-\alpha-Av_0t}{1}{\gamma}=\hhn{\int_{0}^t \pa_t(\xn-\alpha-Av_0t)}{1}{\gamma}\leq \hhn{\int_{0}^t \pa_t(\xn-\alpha-Av_0t)}{1+\delta-\ep}{\gamma}\\
&\leq C T^\ep \hhn{\xn-\alpha-Av_0t}{1+\delta}{\gamma}\leq C T^\ep \hhn{\xn-\alpha-Av_0t}{1+\delta}{\gamma}\leq CT^\ep C[v_0].
\end{align*}
where we have applied lemma \ref{2.4}, for $\ep>0$ and $\ep<\delta<\frac{1}{2}$. This concludes the estimate for $I_4$. The estimates for $I_5$ and $I_6$ follow in a similar way.
Therefore
\begin{align*}
\fn{X^{(n+1)}-\alpha-\int_{0}^tA\phi}\leq  C[v_0]T^\ep
\end{align*}

This concludes the proof of part 1 of proposition \ref{prop1}.

Part 2:

From \eqref{Xtilda} we have that
\begin{align*}\fn{X^{(n+1)}-\xn}& =\fn{\int_0^t\left(A\circ \xn \vn-A\circ \xm \vm\right)d\tau}\\
&\leq \lit{\int_0^t\left(A\circ \xn \vn-A\circ \xm \vm\right)d\tau}{s+1}\\&+
\hhn{\int_0^t\left(A\circ \xn \vn-A\circ \xm \vm\right)d\tau}{2}{\gamma}\\
& \equiv I_1 +I_2
\end{align*}
In order to bound  $I_1$ we notice that
\begin{align*}
&\left|\left|\int_0^t\left(A\circ \xn \vn-A\circ \xm \vm\right)d\tau \right|\right|_{H^{s+1}}\\&\leq t^\frac{1}{2}\lhn{A\circ \xn \vn-A\circ \xm \vm d\tau}{s+1}
\end{align*}
Therefore
\begin{align*}
I_1& \leq T^\frac{1}{4}\lhn{\left(A\circ \xn \vn-A\circ \xm \vm\right)}{s+1}\\
& \leq T^\frac{1}{4}\lhn{\left(A\circ \xn-A\circ \xm\right)\vn}{s+1}\\
& + T^\frac{1}{4}\lhn{\left(\vn-\vm\right)A\circ\xm}{s+1} \\
& \equiv I_{11} + I_{12}
\end{align*}
In addition

\begin{align*}
I_{11}&\leq T^\frac{1}{4}\li{A\circ \xn-A\circ \xm}{s+1}\lhn{\vn}{s+1}.
\end{align*}
Here we notice that $\lhn{\vn}{s+1}\leq \lhn{\wn}{s+1}+\lhn{v_0}{s+1}+\lhn{t\psi}{s+1}\leq C[v_0].$ Then by applying lemma \ref{composicion3} we have that $I_{11}\leq C[v_0]T^\frac{1}{2}\fn{\xn-\xm}$

Also we have, by applying lemma \ref{composicion1}, that
\begin{align*}
I_{12}&\leq  C[v_0]T^\frac{1}{4}\lhn{\wn-\wm}{s+1}
\end{align*}
Thus $$I_1\leq C[v_0]T^\frac{1}{4}\left(\lit{\xn-\xm}{s+1}+\htn{\wn-\wm}{s+1}\right).$$
It remains to bound $I_2$.
\begin{align*}
I_2=\hhn{\int_{0}^t (A\circ \xn \vn - A\circ \xm \vm)d\tau}{2}{\gamma}
\end{align*}
and applying Lemma \ref{2.4} with $\ep=0$ we have that
\begin{align*}
I_2\leq \hhn{A\circ \xn \vn-A\circ\xm \vm}{1}{\gamma}.
\end{align*}
We will decompose that term in the following way
\begin{align*}
A\circ \xn \vn-A\circ\xm \vm= &(A\circ \xn-A\circ \xm)\wn+ (A\circ \xn-A\circ \xm)\phi\\
&+ A\circ\xm (\wn-\wm).
\end{align*}
Thus
\begin{align*}
I_2\leq I_{21}+I_{22}+I_{23}
\end{align*}
with
\begin{align*}
I_{21}=& \hhn{(A\circ\xn-A\circ\xm)\wn}{1}{\gamma}\\
I_{22}=& \hhn{(A\circ\xn-A\circ\xm)\phi}{1}{\gamma}\\
I_{23}=& \hhn{A\circ \xm(\wn-\wm)}{1}{\gamma}.
\end{align*}
For $I_{21}$, by applying Lemma \ref{2.6} ($1<\gamma<s-1$), we have that
\begin{align*}
& \hhn{(A\circ\xn-A\circ\xm)\wn}{1}{\gamma}\leq  C\hhn{A\circ\xn-A\circ\xm}{1}{\gamma}\hhn{\wn}{1}{\gamma}\\
& \leq C[v_0]\hhn{A\circ \xn-A\circ \xm}{1}{\gamma}\leq C[v_0]\hhn{\xn-\xm}{1}{\gamma},
\end{align*}
because of lemma \ref{composicion3}.
Then
\begin{align*}
I_{21}\leq & C[v_0]\hhn{\xn-\xm}{1}{\gamma}\leq \hhn{\int_0^t(\pa_t\left(\xn-\xm\right)d\tau)}{1+\delta-\ep}{\gamma}\\
& \leq C[v_0]T^\ep \hhn{\xn-\xm}{1+\delta}{\gamma},
\end{align*}
where we have applied Lemma \ref{2.4} with $\ep>0$, $\delta >\ep$ and $\delta<\frac{1}{2}$.

The term $I_{22}$ is quite similar to $I_{21}$. The only difference is that we can not take $\hhn{\phi}{1}{\gamma}$. Instead of that,
\begin{align*}
&\hhn{(A\circ \xn-A\circ \xm)\phi}{1}{\gamma}\leq C[v_0]||v_0||_{H^\gamma}\hhn{A\circ\xn-A\circ\xm}{1}{\gamma}\\
&+C[v_0]\hhn{t\psi}{1}{\gamma}\hhn{A\circ\xn-A\circ\xm}{1}{\gamma}.
\end{align*}
Finally
\begin{align*}
I_{23}\leq C\p{\hhn{A\circ\xm-A}{1}{\gamma}+1} \hhn{\wn-\wm}{1}{\gamma}\leq C[v_0]\hhn{\wn-\wm}{1}{\gamma},
\end{align*}
by applying lemma \ref{composicion1}. Also we can compute
\begin{align*}
I_{23}\leq & C[v_0] \hhn{\wn-\wm}{1+\delta-\ep}{\gamma}\leq C[v_0]T^\ep \hhn{\wn-\wm}{1+\delta}{\gamma}\\
\leq &C[v_0]T^\ep \htn{\wn-\wm}{s+1},
\end{align*}
for $\ep>0$, $\delta>\ep$ and $\delta<\frac{s-1-\gamma}{2}$.

This concludes the proof of part 2 of proposition \ref{prop1}. Therefore proposition \ref{prop1} is shown.

\subsection{Proof of Proposition \ref{prop2}}
\label{appendixprop2}

\subsection*{Proof of part 1:}

We split the proof in three parts, corresponding with the functions $f^{(n)}$, $\overline{g}^{(n)}$ and $h^{(n)}$:\\

%%%%%%%%%%%%%%%%%%5
%%%%%%%%%%%%%%%%%%%%5
%%%%%%%%%%%%%%%%%%5
%%%%%%%%%%%%%%%%%%%%%%%

\underline{P1.1. Estimate for $f^{(n)}$:}\\

In this section we have to deal with $f^{(n)}$ to estimate
$$
\htn{f^{(n)}}{s-1}=\lhn{f^{(n)}}{s-1}+\hln{f^{(n)}}{\frac{s-1}{2}}.
$$
We will gather terms by writing $f^{(n)}=f^{(n)}_w+f^{(n)}_\phi+f^{(n)}_{q}$ as follows,
$$
f^{(n)}_w= Q^2\circ \xn\zn\pa\left(\zn\pa\wn\right)-Q^2\Delta \wn,
$$
\begin{equation}\label{fsplit}
f^{(n)}_\phi=Q^2\circ \xn\zn\pa\left(\zn\pa\phi\right)-Q^2\Delta \phi,
\end{equation}
and
$$
f^{(n)}_{q}=-A\circ\xn \zn\pa \qn+A\pa\qn.
$$
Above we removed subscripts to alleviate notation. Next we also ignore the superscripts for the same reason. We firstly bound $\lhn{\cdot}{s-1}$:

\begin{align*}
\lhn{f_w}{s-1}\leq & \lhn{(Q^2\circ X-Q^2)\zeta\pa (\zeta \pa w)}{s-1}+\lhn{Q^2(\zeta-\I)\pa(\zeta\pa w)}{s-1}\\
&+\lhn{Q^2\pa((\zeta-\I)\pa w)}{s-1}\equiv I_{1}+I_{2}+I_{3}.
\end{align*}
We can deal with these three terms as follows. For $I_1$ one can get
\begin{align*}
I_{1}\leq&\lhn{(Q^2\circ X -Q^2)\zeta\pa\zeta\pa w}{s-1}+\lhn{(Q^2\circ X-Q^2)\zeta\zeta\pa^2 w}{s-1}\\
\leq& \li{Q^2\circ X-Q^2}{s-1}\li{\zeta}{s-1}\li{\pa \zeta}{s-1}\lhn{\pa w}{s-1}\\
&+ \li{Q^2\circ X-Q^2}{s-1}\li{\zeta}{s-1}^2\lhn{\pa^2w}{s-1},
\end{align*}
By applying lemma \ref{composicionQ1} we have that $\li{Q^2\circ X-Q^2}{s-1}\leq C[v_0]\li{ X-\alpha}{s-1}$ and by applying lemma \ref{composicionz}, $\li{\zeta}{s-1}$ and $\li{\zeta}{s-1}\leq C[v_0]$.
\begin{align*}
I_{1}\leq& C\li{Q^2\circ X-Q^2}{s-1}\li{X}{s+1}^2\htn{w}{s+1}\leq C[v_0]\li{X-\alpha}{s-1}\htn{w}{s+1}\leq C[v_0]T^\frac{1}{4}.
\end{align*}
Also, by lemma \ref{composicionz},
\begin{align*}
I_{2}\leq& \lhn{Q^2(\zeta-\I)\pa\zeta\pa w}{s-1}+\lhn{Q^2(\zeta-\I)\zeta\pa^2 w}{s-1}\\
\leq &C\li{\zeta-\I}{s-1}(\li{\pa\zeta}{s-1}\lhn{\pa w}{s-1}+\li{\zeta}{s-1}\lhn{\pa^2 w}{s-1})\\
\leq & C[v_0] \li{\zeta-\I}{s-1}\htn{w}{s+1}.
\end{align*}
The identity $
\zeta-\I=\zeta\left(\I-\nabla X\right)=\zeta\nabla\left( \alpha-X\right)
$, together with lemma \ref{composicionz},
allows us to get
\begin{align}\label{zetamenosi}
\li{\zeta-\I}{s} \leq \li{\zeta}{s}\li{X-\alpha}{s+1}\leq C[v_0]\p{\li{X-\alpha-Av_0t}{s+1}+T||Av_0||_{s+1}}\leq C[v_0]T^\frac{1}{4},
\end{align}
 Thus
$$I_{2}\leq C[v_0]T^\frac{1}{4}.$$ %where we have used again lemma \ref{composicionz} and \eqref{zetamenosi}.
It remains $I_{3}$ for which we compute
\begin{align*}
I_{3}=&\lhn{Q^2\pa((\zeta-\I)\pa w)}{s-1}\leq \lhn{Q^2\pa\zeta\pa w}{s-1}+\lhn{Q^2(\zeta-\I)\pa^2 w}{s-1}\\
\leq & C[v_0](\li{\pa \zeta}{s-1}\lhn{w}{s}+\li{\zeta-\I}{s-1}\lhn{w}{s+1})\\
\leq & C[v_0]\li{\zeta-\I}{s}\htn{w}{s+1}\leq C[v_0]T^\frac{1}{4}.
\end{align*}
We are done with $\lhn{f_w}{s-1}$. At this point it is easy to check that an analogous procedure yields
$$
\lhn{f_\phi}{s-1}\leq C[v_0]T^\frac{1}{4}\lhn{\phi}{s+1}\leq C[v_0]T^\frac{1}{4}.
$$
We notice that in the bound of $\lhn{f_w}{s-1}$ the only bound we really need on $w$ is for $\lhn{w}{s+1}$. In addition $\lhn{\phi}{s+1}\leq C[v_0]$.

Next we deal with $\lhn{f_{q}}{s-1}$. We separate into two terms, $\lhn{f_{q_w}+f_{q_\phi}}{s-1}\leq \lhn{f_{q_w}}{s-1}+\lhn{f_{q_\phi}}{s-1}$Indeed, by lemmas \ref{composicion1}, \ref{composicionz} and expression \eqref{zetamenosi} he have that
\begin{align*}
\lhn{f_{q_w}}{s-1}\leq& \lhn{A\circ X(\zeta-\I)\pa q_w}{s-1}+\lhn{(A-A\circ X)\pa q_w}{s-1}\\
\leq& (\li{A\circ X}{s-1}\li{\zeta-\I}{s-1}+\li{A\circ X-A}{s-1})\lhn{\pa q_w}{s-1}\\
\leq& C[v_0]T^\frac{1}{4}||q_w||_{H^{ht,{s}}_{pr\, (0)}}\leq C[v_0]T^\frac{1}{4}.
\end{align*}
Here we notice that in the previous inequality the only bound we really need on $q_w$ is for the norm $\lhn{\pa q_w}{s-1}$. Therefore we have that
 \begin{align*}
\lhn{f_{q_\phi}}{s-1}\leq C[v_0]T^\frac{1}{4}\lhn{\pa q_\phi}{s-1}\leq C[v_0]T^\frac{1}{4}.
\end{align*}

Thus we are done with $\lhn{f}{s-1}$.

Using the same splitting we now deal with $\hln{f}{\frac{s-1}{2}}$:
\begin{align*}
\hln{f_w}{\frac{s-1}{2}}\leq & \hln{(Q^2\circ X-Q^2)\zeta\pa (\zeta \pa w)}{\frac{s-1}{2}}+\hln{Q^2(\zeta-\I)\pa(\zeta\pa w)}{\frac{s-1}{2}}\\
&+\hln{Q^2\pa((\zeta-\I)\pa w)}{\frac{s-1}{2}}\equiv I_{4}+I_{5}+I_{6}.
\end{align*}
We need to split \begin{align*}
&(Q^2\circ X-Q^2)\zeta\pa\p{\zeta\pa w}\\
&=(Q^2\circ X-Q^2)(\zeta-\I)\pa\p{\zeta\pa w}+ (Q^2\circ X-Q^2)\pa\p{(\zeta-\I)\pa w}+(Q^2\circ X-Q^2)\pa^2 w\\&=(Q^2\circ X-Q^2)(\zeta-\I)\pa\p{(\zeta-\I)\pa w}+(Q^2\circ X-Q^2)(\zeta-\I)\pa^2 w+ (Q^2\circ X-Q^2)\pa\p{(\zeta-\I)\pa w}\\
&+(Q^2\circ X-Q^2)\pa^2 w.\end{align*}
We bound $I_{4}$ by using lemma \ref{2.6} as follows,
\begin{align*}
I_4\leq \hhn{Q^2\circ X-Q^2}{\frac{s-1}{2}}{1+\delta}\p{1+\hhn{\zeta-\I}{\frac{s-1}{2}}{1+\delta}}\p{\hhn{\pa\p{(\zeta-\I)\pa w}}{\frac{s-1}{2}}{0}+\hhn{\pa^2 w}{\frac{s-1}{2}}{0}}.
\end{align*}
Using lemma \ref{2.5m}, with $\frac{1}{q}=\ep$, yields
\begin{align*}
&\hln{\pa(\zeta-\I)\pa w}{\frac{s-1}{2}}\leq  \hhn{\pa(\zeta-\I)}{\frac{s-1}{2}}{\ep}\hhn{\pa w}{\frac{s-1}{2}}{1-\ep}\\
\leq &C[v_0]\hhn{X-\al}{\frac{s-1}{2}}{2+\ep}\hhn{ w}{\frac{s-1}{2}}{2-\ep}\leq  C[v_0]\p{\hhn{X-\alpha-Av_0t}{\frac{s-1}{2}}{2+\ep}+\hhn{tv_0}{\frac{s-1}{2}}{2+\ep}}\leq C[v_0],
\end{align*}
for $\ep>0$ and small enough. Also
\begin{align*}
\hln{(\zeta-\I)\pa^2 w}{\frac{s-1}{2}}\leq \hhn{\zeta-\I}{\frac{s\!-\!1}{2}}{1\!+\!\delta}\hln{\partial^2w}{\frac{s-1}2}\leq C[v_0].
\end{align*}
Finally,
\begin{align}\label{comoQ}
&\hhn{Q^2\circ X\!-\!Q^2}{\frac{s\!-\!1}{2}}{1\!+\!\delta}\leq C[v_0] \p{\hhn{X\!-\!\alpha-Av_0t}{\frac{s\!-\!1}{2}}{1\!+\!\delta}+\hhn{tAv_0}{\frac{s-1}{2}}{1+\delta}}
\end{align}
and we have that $||t||_{H_{(0)}^\frac{s-1}{2}}\leq C\sqrt{T}.$ Also,
\begin{align*}
&\hhn{\pa_t\!\!\! \int_{0}^t\!\!(X\!-\!\alpha-Av_0t)}{\frac{s\!-\!1}{2}}{1\!+\!\delta}\\
&\leq C[v_0]\hhn{ \int_{0}^t (X\!-\!\alpha-Av_0t)}{\frac{s\!-\!1}{2}+\ep +1-\ep}{1\!+\!\delta}
\leq C[v_0]T^\ep \hhn{X-\alpha-Av_0t}{\frac{s-1}{2}+\ep}{1+\delta}\\
&\leq C[v_0]T^\ep \fn{X-\alpha-Av_0t}\leq C[v_0]T^\ep,
\end{align*}
where we have used lemma \ref{2.4}, with $\frac12<\frac{s-1}{2}+\ep<1$ and lemma \ref{bestiario} . With this last inequality we conclude the estimate for $I_{4}$.

For $I_{5}$ we have to make the following splitting $Q^2(\zeta-\I)\pa(\zeta\pa w)=Q^2(\zeta-\I)\pa((\zeta-\I)\pa w)+Q^2(\zeta-\I)\pa^2 w$ and then
\begin{align*}
I_{5}\leq ||Q^2||_{H^{1+\delta}}\hhn{\zeta-\I}{\frac{s-1}{2}}{1+\delta}\p{\hhn{\pa\p{(\zeta-\I)\pa w}}{\frac{s-1}{2}}{0}+\hhn{\pa^2w}{\frac{s-1}{2}}{0}}.
\end{align*}
The terms inside of the parenthesis in the previous expression have already been estimated in $I_{4}$. Using lemma \ref{bestiario} and lemma \ref{2.4} we find that
\begin{align*}
\hhn{\zeta-\I}{\frac{s-1}{2}}{1+\delta}&\leq  C[v_0]T^\ep\hhn{\zeta-\I}{\frac{s-1}{2}+\ep}{1+\delta}\leq C(M)T^\ep\hhn{X-\alpha}{\frac{s-1}{2}+\ep}{2+\delta},
\end{align*}
by lemma \ref{composicionz}. Proceeding as for $I_4$ we have that $I_5\leq C[v_0]T^\ep$.
We are done with $I_5$. For $I_{6}$ we have that
\begin{align*}
&I_{6}\leq \hln{Q^2\pa(\zeta-\I)\pa w}{\frac{s-1}{2}} +\hln{Q^2(\zeta-\I)\pa^2 w}{\frac{s-1}{2}}.
\end{align*}
Both terms can be handled as before. In fact
\begin{align*}
\hln{Q^2(\zeta-\I)\pa^2 w}{\frac{s-1}{2}}\leq & ||Q^2||_{H^{1+\delta}}\hhn{\zeta-\I}{\frac{s-1}{2}}{1+\delta}\hln{\partial^2w}{\frac{s-1}2}\\
\leq &C[v_0] T^\ep \htn{w}{s+1}\leq C[v_0] T^\ep,
\end{align*}
and
\begin{align*}
\hln{Q^2\pa(\zeta-\I)\pa w}{\frac{s-1}{2}}&\leq ||Q^2||_{H^{1+\delta}}\hhn{\pa(\zeta-\I)}{\frac{s-1}{2}}{\ep}\hhn{\partial w}{\frac{s-1}{2}}{1-\ep}\\
&\leq C \hhn{\zeta-\I}{\frac{s-1}{2}}{1+\epsilon}\hhn{w}{\frac{s-1}{2}}{2-\ep}\leq C[v_0] T^\ep \htn{w}{s+1}\\
&\leq C[v_0] T^\ep,
\end{align*}
for $\ep>0$ and small enough. We are done with $I_6$ and therefore with $\hln{f_w}{\frac{s-1}{2}}$. A similar procedure allows us to get
$$\hln{f_\phi}{\frac{s-1}{2}}\leq C[v_0]T^\epsilon.$$
Here we remark the main differences to get it.  We need split $\phi=v_0+t\psi$. For the terms coming from $v_0$ we do not find any problem because $v_0$ does not depend on time, and for the terms coming from $t\psi$ we can use that $\psi$ does not depend on $t$ and $||t||_{H_{(0)}^\frac{s-1}{2}}\leq C\sqrt{T}.$

The estimate of $\hln{f_{q}}{\frac{s-1}{2}}$ is obtained as follows. First we split $f_q=f_{q_w}+f_{q_\phi}$, we  can write
\begin{align*}
f_{q_w}=\p{-A\circ X+A}\p{\zeta-\I}\pa q_{w}+(-A\circ X+A)\pa q_w-A(\zeta-\I)\pa q_w.
\end{align*}
Then we can estimate
\begin{align*}
&\hln{f_{q_w}}{\frac{s-1}{2}}\\ &\leq  \hhn{(A\circ X-A)(\zeta-\I)\pa q_w}{\frac{s-1}{2}}{0}+\hhn{(A\circ X-A)\pa q_w}{\frac{s-1}{2}}{0}+\hhn{A(\zeta-\I)\pa q_w}{\frac{s-1}{2}}{0}
\\& \leq C[v_0]\p{\hhn{(A\circ X-A)}{\frac{s-1}{2}}{0}\hhn{(\zeta-\I)}{\frac{s-1}{2}}{0}+\hhn{(A\circ X-A)}{\frac{s-1}{2}}{0}+\hhn{(\zeta-\I)}{\frac{s-1}{2}}{0}}\\&\leq C[v_0]T^\ep.
\end{align*}
 For $f_{q_\phi}$ we have that \begin{align*}
 f_{q_\phi}=\p{-A\circ X+A}\p{\zeta-\I}\pa q_{\phi}+(-A\circ X+A)\pa q_\phi-A(\zeta-\I)\pa q_\phi.
 \end{align*}
 Here we can not take $\hhn{\nabla q_\phi}{\frac{s-1}{2}}{0}$ because $\nabla q_\phi|_{t=0}\neq 0$. Fortunately we do not need it since $q_\phi$ does not depend on $t$. Similarly to $\hhn{f_{q_w}}{\frac{s-1}{2}}{0}$ we have that $\hhn{f_{q_\phi}}{\frac{s-1}{2}}{0}\leq C[v_0]T^\ep$.
 This finishes the bounds for $\hln{f}{\frac{s-1}2}$ and therefore we are done with $\htn{f}{s-1}$.
 \\

\underline{P1.2. Estimate for $\overline{g}^{(n)}$:}\\

We recall that
\begin{align*}
\kt{\overline{g}^{(n)}}=\lhn{\overline{g}^{(n)}}{s}+\hhn{\overline{g}^{(n)}}{\frac{s+1}{2}}{-1}.
\end{align*}
We will first estimate the $H^0H^s$-norm and after that the $H^{\frac{s+1}{2}}H^{-1}$-norm.
We will split $\overline{g}^{(n)}$ in the following terms,
\begin{align*}
\overline{g}^{(n)}=&-Tr\left(\nabla\vn\zn A\circ\xn\right)+Tr\left(\nabla\vn A\right)\\&
+Tr\left(\nabla \phi \zeta_\phi A_\phi\right)-Tr\left(\nabla\phi A\right)\\
=&-Tr\left(\nabla \wn \zn A\circ\xn\right)-Tr\left(\nabla\phi (\zn A\circ \xn-\zeta_\phi A_\phi)\right)\\
&+ Tr\left(\nabla\wn A\right)+Tr\left(\nabla \phi \zeta_\phi A_\phi\right)\\
=&-Tr\left(\nabla \wn \left(\zn-\I\right) A\circ\xn\right)-Tr\left(\nabla\phi \left(\zn-\zeta_\phi\right) A\circ \xn\right)\\
&+ Tr\left(\nabla\wn \left(A-A\circ\xn\right)\right)+Tr\left(\nabla \phi \zeta_\phi\left(A_\phi-A\circ\xn\right)\right),
\end{align*}
From the partition of $\overline{g}^{(n)}
$ we have that
\begin{align*}
\lhn{\overline{g}^{(n)}}{s}=&\lhn{\nabla \wn \left(\zn-\I\right) A\circ\xn}{s}+\lhn{\nabla\phi \left(\zn-\zeta_\phi\right) A\circ \xn}{s}\\\
&+\lhn{\nabla\wn \left(A-A\circ\xn\right)}{s}+\lhn{\nabla \phi \left(A_\phi-A\circ\xn\right)}{s}\\
& \equiv I_1+I_2+I_3+I_4.
\end{align*}
Since $H^s$ is an algebra for $s>1$ as stated in lemma \ref{2.5} we have that
\begin{align*}
I_1\leq \li{\zn-\I}{s}\li{A\circ\xn}{s}\lhn{\nabla\wn}{s}.
\end{align*}
Therefore, applying lemma \ref{composicion1} yields
\begin{align*}
I_1\leq C[v_0]\li{\zn-\I}{s}\htn{\wn}{s+1}.
\end{align*}
 and using \eqref{zetamenosi} we get $I_1\leq C[v_0]\htn{\wn}{s+1}T^\frac{1}{4}.$ Similarly we obtain that
$I_2\leq C[v_0]T^\frac{1}{4}.$
In addition, by using  lemma \ref{composicion1} it  can be checked that $I_3\leq  C[v_0]T^\frac{1}{4}$, since $\lhn{\nabla \phi}{s}\leq C[v_0]$. And by lemma \ref{composicion1} $I_4\leq C[v_0]T^\frac{1}{4}.$

Then we have proved that
$\lhn{\overline{g}^{(n)}}{s}\leq C[v_0] T^\frac{1}{4}.$

To estimate the $H^{\frac{s+1}{2}}H^{-1}$-norm of $\overline{g}^{(n)}$, we split in a different way
\begin{align}\label{gsplit}
\overline{g}^{(n)}= &-Tr\left(\nabla \wn \zn A\circ\xn\right)+ Tr\left(\nabla\wn A\right)\nonumber\\
&-Tr\left(\nabla\phi (\zn-\zeta_\phi) A\circ \xn\right)+Tr\left(\nabla \phi \zeta_\phi (A_\phi-A\circ \xn)\right)\nonumber\\
&\equiv -\overline{g}_w^{(n)}-\overline{g}_\phi^{(n)},
\end{align}
where
\begin{align*}
\overline{g}^{(n)}_w = & Tr\left(\nabla \wn \zn A\circ\xn\right)- Tr\left(\nabla\wn A\right)\\
= & Tr\left(\nabla \wn (\zn-\zeta_\phi)(A\circ \xn-A_\phi)\right)+ Tr\left(\nabla\wn (\zn-\zeta_\phi) A_\phi\right)\\&+Tr\left(\nabla\wn \zeta_\phi (A\circ \xn-A_\phi)\right)+
  Tr\left(\nabla \wn (\zeta_\phi-\I) A_\phi\right)+Tr\left(\nabla\wn (A_\phi-A)\right)\\
\end{align*}
and
\begin{align*}
&\overline{g}_\phi^{(n)}=Tr\left(\nabla\phi (\zn-\zeta_\phi) A\circ \xn\right)-Tr\left(\nabla \phi \zeta_\phi (A_\phi-A\circ \xn)\right)\\
&=Tr\left(\nabla\phi (\zn-\zeta_\phi) (A\circ \xn-A_\phi)\right)-Tr\left(\nabla \phi \zeta_\phi (A_\phi-A\circ \xn)\right)+Tr\p{\nabla\phi\zeta_\phi A_\phi}.
\end{align*}

First we will bound $\overline{g}_{w}^{(n)}$ and then we will do the same with $\overline{g}_\phi^{(n)}$. Taking $H^{\frac{s+1}{2}}H^{-1}$-norms yields,
\begin{align}\label{gw}
&\hhn{\overline{g}^{(n)}_w}{\frac{s+1}{2}}{-1}\leq \hhn{\nabla \wn (\zn-\zeta_\phi)\p{A\circ \xn-A_\phi}}{\frac{s+1}{2}}{-1}+\hhn{\nabla \wn (\zn-\zeta_\phi)A_\phi}{\frac{s+1}{2}}{-1}
\nonumber\\&+\hhn{\nabla\wn \zeta_\phi (A\circ \xn-A_\phi)}{\frac{s+1}{2}}{-1}+\hhn{\nabla \wn  (A_\phi-A)+\nabla \wn (\zeta_\phi-\I) A_\phi}{\frac{s+1}{2}}{-1}\\ \nonumber
&=I_1+I_2+I_3+I_4. \nonumber
\end{align}
For $I_1$ we have that
\begin{align*}
&I_1=\hhn{\nabla \wn (\zn-\zeta_\phi)(A\circ \xn-A_\phi)}{\frac{s+1}{2}}{-1}=\hhn{\int_{0}^t\pa_t\left(\nabla \wn (\zn-\zeta_\phi)(A\circ \xn-A_\phi)\right)}{\frac{s+1}{2}}{-1}\\
&\leq \hhn{\int_{0}^t\pa_t\nabla\wn (\zn-\zeta_\phi)(A\circ \xn-A_\phi) }{\frac{s+1}{2}}{-1}+\hhn{\int_{0}^t\nabla\wn \pa_t(\zn-\zeta_\phi)(A\circ \xn-A_\phi)}{\frac{s+1}{2}}{-1}\\
&+\hhn{\int_{0}^t\nabla\wn (\zn-\zeta_\phi)\pa_t (A\circ \xn-A_\phi)}{\frac{s+1}{2}}{-1}\\
& \equiv I_{11}+I_{12}+I_{13}.
\end{align*}
And we bound $I_{11}$, $I_{12}$ and $I_{13}$ as follows
\begin{align*}
I_{11}\leq \hhn{\pa_t\nabla\wn (\zn-\zeta_\phi)(A\circ \xn-A_\phi)}{\frac{s-1}{2}}{-1},
\end{align*}
since $0<\frac{s-1}{2}<1$ and we can apply  lemma \ref{2.4} with $\ep=0$. Moreover, applying  lemma \ref{2.6} we obtain that
\begin{align*}
I_{11}\leq C[v_0]\hhn{\nabla\pa_t\wn}{\frac{s-1}{2}}{-1}\hhn{A\circ\xn-A_\phi}{\frac{s-1}{2}}{1+\delta}\hhn{\zn-\zeta_\phi}{\frac{s-1}{2}}{1+\delta}.
\end{align*}
In addition
\begin{align*}
&\hhn{\zeta^{(n)}-\zeta_\phi}{\frac{s-1}{2}}{1+\delta}\leq \hhn{\pa_t\int_{0}^t\zeta^{(n)}-\zeta_\phi}{\frac{s-1}{2}}{1+\delta}\\
&\leq C\hhn{\int_{0}^t\zn-\zeta_\phi}{\frac{s-1}{2}+\ep+1-\ep}{1+\delta}\leq C T^\ep \hhn{\zn-\zeta_\phi}{\frac{s-1}{2}+\ep}{1+\delta}
\end{align*}
by  lemma \ref{2.4}, for $0<\frac{s-1}{2}+\ep<1$. In addition, by lemma \ref{composicionzfi}, $$\hhn{\zn-\zeta_\phi}{\frac{s-1}{2}+\ep}{1+\delta}\leq C[v_0]\hhn{\xn-\alpha-Av_0t}{\frac{s-1}{2}+\ep}{2+\delta}.$$ Then lemmas \ref{composicionAfi} and \ref{bestiario} close the estimate for $I_{11}$.

For $I_{12}$ we have that, applying lemma \ref{2.5} and  lemma \ref{2.6},
\begin{align*}
I_{12}\leq C[v_0] \hhn{\nabla\wn}{\frac{s-1}{2}}{1}\hhn{A\circ\xn-A_\phi}{\frac{s-1}{2}}{1+\delta}\hhn{\pa_t(\zn-\zeta_\phi)}{\frac{s-1}{2}}{0}.
\end{align*}
By lemmas \ref{2.3} and \ref{composicionAfi} we obtain
\begin{align*}
I_{12}\leq C[v_0]\hhn{\pa_t(\zn-\zeta_\phi)}{\frac{s-1}{2}}{0}.
\end{align*}

Again we can apply lemma \ref{2.4} to get

\begin{align*}
I_{12}\leq & C[v_0] T^\ep \hhn{\pa_t(\zn-\zeta_\phi)}{\frac{s-1}{2}+\ep}{0}\leq  C[v_0] T^\ep \hhn{\zn-\zeta_\phi}{\frac{s+1}{2}+\ep}{0}
\end{align*}
for $0<\frac{s-1}{2}+\ep<1$. Therefore lemmas \ref{composicionzfi} and \ref{bestiario} yield a suitable estimate for $I_{12}$. The term $I_{13}$ is bounded in a similar way by using  lemmas \ref{composicionAfi}, \ref{bestiario} and \ref{2.3}.

Next we bound the second term in \eqref{gw}. In order to do it we split $\zeta_\phi = \I-t\nabla(Av_0)$. The terms coming from the identity can be bounded as $I_{13}$. For the terms containing the factor $t\nabla (A v_0)$ we just notice that we can proceed as for $I_{13}$ but putting the factor $t$ together with $\wn$ (here we remark that we can not take $||t||_{H_{(0)}^\frac{s+1}{2}}$ since $\frac{s+1}{2}>1.5$). Indeed
\begin{align*}
I_2= \hhn{t\nabla\wn \nabla(Av_0) (A\circ \xn-A_\phi)}{\frac{s+1}{2}}{-1}\leq & C[v_0]\hhn{t\nabla\wn}{\frac{s+1}{2}}{-1}\hhn{A\circ \xn-A_\phi}{\frac{s+1}{2}}{1+\delta},
\end{align*}
where
\begin{align*}
&\hhn{A\circ \xn -A_\phi}{\frac{s+1}{2}}{1+\delta}=\hhn{\int_0^t\pa_t (A\circ \xn-A_\phi)}{\frac{s-1}{2}+1}{1+\delta}\leq CT^\ep \hhn{\pa_t(A\circ\xn-A_\phi)}{\frac{s-1}{2}+\ep}{1+\delta},
\end{align*}
for $0<\frac{s-1}{2}+\ep<1$. Finally we can apply lemma \ref{composicionAfi}, \ref{bestiario} and \ref{littlelemma}.

For $I_3$ we can proceed in a similar way that for $I_2$. Finally for $I_4$ we just need to use lemma \ref{littlelemma}.

The estimate of $\hhn{\overline{g}_\phi}{\frac{s+1}{2}}{-1}$ follows similar steps. We just notice the need to split $\phi=v_0+t\psi$ and use lemma \ref{littlelemma} and the fact that $Tr\p{\nabla \phi \zeta_\phi A_\phi}=O(t^2)$.\\

\underline{P1.3. Estimate for $h^{(n)}$:}\\

We will show the appropriate estimate for $h^{(n)}$
decomposing $h^{(n)}=h^{(n)}_v+h^{(n)}_{v*}+h^{(n)}_q$ given by
$$
h^{(n)}_v=(\nabla \vn\zn\gradj\xn-\nabla\vn)n_0,
$$
\begin{equation}\label{hsplit}
	h^{(n)}_{v*}=((\nabla \vn\zn A\circ \xn)^*A^{-1}\circ\xn\gradj\xn-(\nabla\vn A)^*A^{-1})n_0,
\end{equation}
$$
h^{(n)}_q=(-\qn A^{-1}\circ\xn \gradj\xn +\qn A^{-1})n_0.
$$
As before, we ignore the superscripts for simplicity. We deal first with the $\lhb{\,\cdot\,}{s-\frac12}$ norm. Then
\begin{align*}
	\lhb{h_v}{s-\frac12}&\leq  \lhb{\nabla v (\zeta-\I)\gradj X }{s-\frac12}+\lhb{\nabla v (\gradj X-\I)}{s-\frac12}\equiv I_1+I_2.
\end{align*}
For $I_1$ we find
\begin{align*}
	I_1&\leq  C \lhb{\nabla v}{s-\frac12} \lib{\zeta-\I}{s-\frac12}\lib{\grad X}{s-\frac12}\\
	&\leq C \lhn{v}{s+1} \li{\zeta-\I}{s}\li{X}{s+1}\\
	&\leq C[v_0]\li{X-\alpha}{s+1}\leq C[v_0]T^\frac14 \lit{X-\alpha}{s+1}\leq C[v_0]T^\frac14.
\end{align*}
For $I_2$ the computation is analogous:
\begin{align*}
	I_2&\leq  C \lhb{\nabla v}{s-\frac12} \lib{\grad X-\I}{s-\frac12}\leq C[v_0]\li{X-\alpha}{s+1}\\
	&\leq C[v_0]T^\frac14 \lit{X-\alpha}{s+1}\leq C[v_0]T^\frac14.
\end{align*}
We are done with $\lhb{h_v}{s-\frac12}$. Next we deal with $\lhb{h_{v*}}{s-\frac12}$. Indeed
\begin{align*}
	&\lhb{h_{v*}}{s-\frac12}\leq   \lhb{(\nabla v(\zeta-\I) A\circ X)^*A^{-1}\circ X \gradj X}{s-\frac12}\\
	&+\!\lhb{(\nabla v (A\!\circ\! X\!-\!A))^* A^{-1}\!\!\circ\! X \gradj X}{s\!-\!\frac12}\!+\!\lhb{(\nabla v A )^* (A^{-1}\!\!\circ\!X\!-\!A^{-1})\gradj X}{s\!-\!\frac12}\\
	&\qquad +\lhb{(\nabla v A)^*A^{-1}(\gradj X-\I)}{s-\frac12}\equiv I_3+I_4+I_5+I_6.
\end{align*}
It is possible to obtain
\begin{align*}
	I_3&\leq  C\lhb{\nabla v}{s-\frac12} \lib{\zeta-\I}{s-\frac12}\lib{X}{s-\frac12}^2\lib{\grad X}{s-\frac12}\\
	&\leq C \lhn{v}{s+1} \li{\zeta-\I}{s}\li{X}{s+1}^3\leq C[v_0]\li{X-\alpha}{s+1}\\
	&\leq C[v_0]T^\frac14.
\end{align*}
Similarly, using Lemma \ref{composicion1}
\begin{align*}
	I_4\!+\!I_5\!+\!I_6&\leq  C[v_0]\lhb{\nabla v}{s-\frac12} \li{X\!-\!\al}{s+1}(\li{X}{s+1}^2\!+\!\li{X}{s+1}\!+\!1)\\
	&\leq C[v_0]T^\frac14 \lit{X-\al}{s+1}\leq C[v_0]T^\frac14.
\end{align*}
It remains to control $\lhb{h_{q}}{s-\frac12}$. We proceed as follows:
\begin{align*}
	\lhb{h_{q}}{s-\frac12}\leq & \lhb{q (A^{-1}\circ X-A^{-1}) \gradj X}{s-\frac12}+\lhb{q A^{-1} (\gradj X-\I)}{s-\frac12}\\
	\leq & C[v_0]\lhb{q}{s-\frac12}\li{X-\alpha}{s+1}(\li{\grad X}{s+1}+1)\\
	\leq & C[v_0]T^{1/4},
\end{align*}
using again Lemma \ref{composicion1} to end with the bounds for $\lhb{h}{s-\frac12}$.

The next step is to deal with $\hlb{h}{\frac s2-\frac14}$.
\begin{align*}
	\hlb{h_v}{\frac s2-\frac14}&\leq \hlb{\nabla v (\zeta-\I)\gradj X }{\frac s2-\frac14}+\hlb{\nabla v (\gradj X-\I)}{\frac s2-\frac14} \equiv K_1+K_2.
\end{align*}
This splitting provides
\begin{align*}
	K_1&\leq \hlb{\nabla w (\zeta-\I)\gradj X }{\frac s2-\frac14}+\hlb{\nabla v_0 (\zeta-\I)\gradj X }{\frac s2-\frac14}\\
	&\qquad\quad+\hlb{t\nabla\psi (\zeta-\I)\gradj X }{\frac s2-\frac14}\equiv K_{11}+K_{12}+K_{13}.
\end{align*}
and therefore
\begin{align*}
	K_{11}&\leq  \hlb{\nabla w (\zeta-\I)(\gradj X-\I)}{\frac s2-\frac14}+\hlb{\nabla w (\zeta-\I)}{\frac s2-\frac14}\\
	&\leq C\hlb{\nabla w}{\frac{s}2-\frac14}\hhb{\zeta-\I}{\frac{s}2-\frac14}{\frac12+\delta}
	(\hhb{\gradj X-\I}{\frac{s}2-\frac14}{\frac12+\delta}+1).
\end{align*}
We remark that the constant above is independent of time due to Lemma \ref{2.6}. Then we use that $\nabla w_i=(\nabla w_i\cdot n_0) n_0+(\nabla w_i\cdot t_0) t_0$ for $i=1,2$
and
\begin{equation*}
	\hlb{(\nabla w_i\cdot t_0)t_0}{\frac{s}2-\frac14}\leq C\hlb{\nabla w_i\cdot t_0}{\frac{s}2-\frac14}\leq C\hhb{w_i}{\frac{s}2-\frac14}{1}\leq C\hhn{w_i}{\frac{s}2-\frac14}{\frac32}\leq
	C\htn{w_i}{s+1},
\end{equation*}
together with
\begin{equation*}
	\hlb{(\nabla w_i\cdot n_0)n_0}{\frac{s}2-\frac14}\leq C\hlb{\nabla w_i\cdot n_0}{\frac{s}2-\frac14}\leq \htb{\nabla w_i\cdot n_0}{s-\frac12}\leq C\htn{w_i}{s+1}.
\end{equation*}
These two yield
\begin{equation}\label{tnYtraza}
	\hlb{\nabla w}{\frac{s}2-\frac14}\leq C\htn{w}{s+1},
\end{equation}
and therefore
\begin{align*}
	K_{11}
	&\leq C \htn{w}{s+1}\hhn{\zeta-\I}{\frac{s}2-\frac14}{1+\delta}
	(\hhn{X-\alpha}{\frac{s}2-\frac14}{2+\delta}+1)\\
	&\leq C[v_0](\hhn{X-\alpha-Av_0t}{\frac{s}2-\frac14}{2+\delta}+T^{\frac12})\leq C[v_0](T^\epsilon \hhn{X-\alpha-Av_0t}{\frac{s}2-\frac14+\epsilon}{2+\delta}+T^{\frac12})\\
	&\leq C[v_0](T^\epsilon\fn{X-\al-Av_0t}+T^{\frac12})\leq C[v_0]T^\epsilon.
\end{align*}
For $K_{12}$ it is easy to find
$$
K_{12}\leq C\| v_0\|_{H^{\frac32}}\hhb{(\zeta-\I)\gradj X }{\frac s2-\frac14}{\frac12+\delta}\leq C[v_0]T^\epsilon,
$$
proceeding as before. For the last term $K_{13}$ we obtain
\begin{align*}
	K_{13}&\leq C\hlb{t\nabla \psi}{\frac{s}2-\frac14}\hhb{(\zeta-\I)\gradj X}{\frac{s}2-\frac14}{\frac12+\delta}\leq CT^{\frac12}\|\psi\|_{H^{\frac32}}
	\hhb{(\zeta-\I)\gradj X}{\frac{s}2-\frac14}{\frac12+\delta}\leq C[v_0]T^\epsilon,
\end{align*}
as before. We are then done with $K_{1}$.

Using that $v=w+v_0+t\psi$, it is possible to estimate $K_2$ similarly as $K_1$. Therefore the appropriate estimate for $\hlb{h_v}{\frac s2-\frac14}$ follows.

Next we deal with $\hlb{h_{v*}}{\frac s2-\frac14}$. Indeed
\begin{align*}
	\hlb{h_{v*}}{\frac s2-\frac14}\leq&\hlb{(\nabla v(\zeta-\I) A\circ X)^*A^{-1}\circ X \gradj X}{\frac s2-\frac14}+\hlb{(\nabla v (A\circ X-A))^* A^{-1}\circ X \gradj X}{\frac s2-\frac14}\\
	&+\hlb{(\nabla v A )^* (A^{-1}\circ X-A^{-1})\gradj X}{\frac s2-\frac14}+\hlb{(\nabla v A)^*A^{-1}(\gradj X-\I)}{\frac s2-\frac14}\\
	&\qquad \equiv K_3+K_4+K_5+K_6.
\end{align*}
Taking $v=w+v_0+t\psi$, we find
\begin{align*}
	K_3\leq & \hlb{(\nabla w(\zeta-\I) A\circ X)^*A^{-1}\circ X \gradj X}{\frac s2-\frac14}+\hlb{(\nabla v_0(\zeta-\I) A\circ X)^*A^{-1}\circ X \gradj X}{\frac s2-\frac14}\\
	&+\hlb{t(\nabla \psi(\zeta-\I) A\circ X)^*A^{-1}\circ X \gradj X}{\frac s2-\frac14}\equiv K_{31}+K_{32}+K_{33}.
\end{align*}
Then using \eqref{tnYtraza} we find
\begin{align*}
	K_{31}\leq &  C\hlb{\nabla w}{\frac{s}2-\frac14}\hhb{((\zeta-\I) A\circ X)^*A^{-1}\circ X \gradj X}{\frac{s}2-\frac14}{\frac12+\delta}\\
	\leq& C\htn{w}{s+1}\hhn{((\zeta-\I) A\circ X)^*A^{-1}\circ X \gradj X}{\frac{s}2-\frac14}{1+\delta},
\end{align*}
where the constant is independent of time due to Lemma \ref{2.6}. The splitting
\begin{align}
	\begin{split}\label{nvchqpem}
		((\zeta-\I)&A\circ X)^*A^{-1}\circ X \gradj X=((\zeta-\I) (A\circ X-A))^*(A^{-1}\circ X-A^{-1})(\gradj X-\I)\\ &+((\zeta-\I) (A\circ X-A))^*(A^{-1}\circ X-A^{-1})+((\zeta-\I) (A\circ X-A))^*A^{-1} (\gradj X-\I)\\
		&+((\zeta-\I) (A\circ X-A))^*A^{-1}+((\zeta-\I)A)^*(A^{-1}\circ X-A^{-1})(\gradj X-\I)\\
		&+((\zeta-\I)A)^*(A^{-1}\circ X-A^{-1})+((\zeta-\I)A)^*A^{-1}(\gradj X-\I)+((\zeta-\I)A)^*A^{-1}
	\end{split}
\end{align}
allows us to bound $K_{31}$ with bounds independent of time due to Lemma \ref{composicion2} to find
\begin{align*}
	K_{31}\leq &  C[v_0]\hhn{\zeta-\I}{\frac s2-\frac14}{1+\delta}\leq C[v_0](\hhn{X-\alpha-Av_0t}{\frac{s}2-\frac14}{2+\delta}+T^{\frac12})\leq C[v_0]T^{\epsilon}.
\end{align*}
We proceed for $K_{32}$ as for $K_{12}$ getting
$$
K_{32}\leq C\|v_0\|_{H^{3/2}}\hhb{((\zeta-\I) A\circ X)^*A^{-1}\circ X \gradj X}{\frac s2-\frac14}{\frac12+\delta},
$$
so that splitting \eqref{nvchqpem} allows us to bound as before $K_{32}\leq C[v_0]T^\epsilon$. We continue by using previous estimates to obtain
$$
K_{33}\leq C\hlb{t\nabla \psi}{\frac{s}2-\frac14}\hhb{((\zeta-\I) A\circ X)^*A^{-1}\circ X \gradj X}{\frac s2-\frac14}{\frac12+\delta}\leq C[v_0]T^{\frac12}.
$$
We are done with $K_3$. Taking $v=w+v_0+t\psi$ it is possible to control $K_4$, $K_5$ and $K_6$ analogously
\begin{align*}
	K_4+K_5+K_6
	&\leq C[v_0]T^\epsilon.
\end{align*}
The term $\hlb{h_{v^*}}{\frac s2-\frac14}$ is then controlled and it remains to handle $\hlb{h_{q}}{\frac s2-\frac14}$. We proceed as follows:
\begin{align*}
	\hlb{h_{q}}{\frac s2-\frac14}\leq & \hlb{q (A^{-1}\circ X-A^{-1})(\gradj X-\I)}{\frac s2-\frac14}+\hlb{q (A^{-1}\circ X-A^{-1})}{\frac s2-\frac14}\\
	&\quad +\hlb{q A^{-1} (\gradj X-\I)}{\frac s2-\frac14}\equiv L_1+L_2+L_3.
\end{align*}
In $L_1$ we split $q=q_{w}+q_{\phi}$ to find
\begin{align*}
	L_1\leq & C (\hlb{q_{w}}{\frac s2-\frac14}+|q_\phi|_{L^2})\hhb{(A^{-1}\circ X-A^{-1})(\gradj X-\I)}{\frac s2-\frac14}{\frac12+\delta}\\
	\leq &C(||q_{w}||_{H^{ht,s}_{pr \, (0)}}+C[v_0])\hhn{(A^{-1}\circ X-A^{-1})(\gradj X-\I)}{\frac s2-\frac14}{1+\delta}\leq C[v_0]T^{\epsilon},
\end{align*}
where the time $T^{\epsilon}$ is found as before using Lemmas \ref{2.6}, \ref{2.4}, \ref{composicionQ1} and \ref{composicionz}. The terms $L_2$ and $L_3$ are estimated analogously to get $L_2+L_3\leq C[v_0]T^{\epsilon}$ and finally
\begin{align*}
	\hlb{h_{q}}{\frac s2-\frac14}\leq &  C[v_0]T^{\epsilon},
\end{align*}
to end with the bounds for $\hlb{h}{\frac s2-\frac14}$. We are done with $h$.\\

\subsection*{Proof of part 2:}

It will be enough to show that
\begin{align*}
\htn{f^{(n)}-f^{(n-1)}}{s-1}\leq&C[v_0]\left(\fn{\xn-\xm}+\htn{\wn-\wm}{s+1}\right.\\
&\qquad\qquad\left.+||\qn-\qm||_{H^{ht,s}_{pr \, (0)}}\right),\\
\kt{g^{(n)}-g^{(n-1)}}\leq&  C[v_0]\left(\fn{\xn-\xm}+\htn{\wn-\wm}{s+1}\right),\\
\htb{h^{(n)}-h^{(n-1)}}{s-\frac{1}{2}}\leq &  C[v_0]\left(\fn{\xn-\xm}+\htn{\wn-\wm}{s+1}\right.\\
&\qquad\qquad\left.+||\qn-\qm||_{H^{ht,s}_{pr \, (0)}}\right).
\end{align*}
Again we split the proof in three parts:\\

\underline{P2.1. Estimate for $f^{(n)}-f^{(n-1)}$}\\

We split as in \eqref{fsplit}: $f^{(j)}=f^{(j)}_w+f^{(j)}_\phi+f^{(j)}_q$.
In $f^{(n)}_w-f^{(n-1)}_w$ we split further $f^{(n)}_w-f^{(n-1)}_w=d_1+...+d_6$ with the following differences
\begin{equation*}
d_1=(Q^2\circ X^{(n)}-Q^2\circ X^{(n-1)})\zeta^{(n)}\pa\left(\zeta^{(n)}\pa w^{(n)}\right),
\end{equation*}
\begin{equation*}
d_2=Q^2\circ X^{(n-1)}(\zeta^{(n)}-\zeta^{(n-1)})\pa\left(\zeta^{(n)}\pa w^{(n)}\right),
\end{equation*}
\begin{equation}\label{diferenciaf}
d_3=Q^2\circ X^{(n-1)}\zeta^{(n-1)}\pa\left((\zeta^{(n)}-\zeta^{(n-1)})\pa w^{(n)}\right),
\end{equation}
\begin{equation*}
d_4=(Q^2\circ X^{(n-1)}-Q^2)\zeta^{(n-1)}\pa\left(\zeta^{(n-1)}\pa (w^{(n)}-w^{(n-1)})\right),
\end{equation*}
\begin{equation*}
d_5=Q^2(\zeta^{(n-1)}-\mathbb{I})\pa\left(\zeta^{(n-1)}\pa (w^{(n)}-w^{(n-1)})\right),
\end{equation*}
\begin{equation*}
d_6=Q^2\pa\left((\zeta^{(n-1)}-\mathbb{I})\pa (w^{(n)}-w^{(n-1)})\right).
\end{equation*}
Above we do not distinguish from coordinates and partial derivatives, as all the cases can be handled in the same manner.
Next we estimate $d_1$. In order to do that we split further
$d_1=d_{11}+d_{12}$ with
\begin{equation*}
d_{11}=(Q^2\circ X^{(n)}-Q^2\circ X^{(n-1)})\zeta^{(n)}\pa\zeta^{(n)}\pa w^{(n)},
\end{equation*}
\begin{equation*}
d_{12}=(Q^2\circ X^{(n)}-Q^2\circ X^{(n-1)})\zeta^{(n)}\zeta^{(n)}\pa^2 w^{(n)}.
\end{equation*}
We take
\begin{align*}
\lhn{d_{11}}{s-1}&\leq  C \li{Q^2\circ X^{(n)}-Q^2\circ X^{(n-1)}}{s-1}\li{\zeta^{(n)}}{s-1}\lhn{\pa\zeta^{(n)}\pa w^{(n)}}{s-1}\\
&\leq  C[v_0]
\li{X^{(n)}-X^{(n-1)}}{s-1}\li{\pa\zeta^{(n)}}{s-1}
\lhn{\pa w^{(n)}}{s-1}\\
&\leq C[v_0]\li{X^{(n)}-X^{(n-1)}}{s-1}\leq C[v_0]T^{1/4}\fn{\xn-\xm},
\end{align*}
and, since $$d_{11}= \p{Q^2\circ \xn-Q^2\circ\xm}\p{\zn-\I}\pa\p{\zn-\I}\pa \wn+\p{Q^2\circ\xn-Q^2\circ \xm}\pa (\zn-\I)\pa \wn,$$
\begin{align*}
\hln{d_{11}}{\frac{s-1}2}&\leq  C \hhn{Q^2\circ X^{(n)}-Q^2\circ X^{(n-1)}}{\frac{s-1}2}{1+\epsilon}\\
&\qquad\qquad\qquad\times(\hhn{\zeta^{(n)}-\I}{\frac{s-1}2}{1+\epsilon}+1)\hln{\pa(\zeta^{(n)}-\I)\pa w^{(n)}}{\frac{s-1}2}\\
&\leq  C[v_0]
\hhn{X^{(n)}-X^{(n-1)}}{\frac{s-1}2}{1+\epsilon}
\hhn{\pa(\zeta^{(n)}-\I)}{\frac{s-1}2}{\epsilon}
\hhn{\pa w^{(n)}}{\frac{s-1}2}{1-\epsilon}.
\end{align*}
We deal with above terms as before to get
\begin{align*}
\hln{d_{11}}{\frac{s-1}2}& \leq C[v_0]T^\epsilon\fn{\xn-\xm}.
\end{align*}
For $d_{12}$ we find
\begin{align*}
\lhn{d_{12}}{s-1}&\leq  C \li{Q^2\circ X^{(n)}-Q^2\circ X^{(n-1)}}{s-1}\li{\zeta^{(n)}}{s-1}^2\lhn{\pa^2 w^{(n)}}{s-1}\\
&\leq  C[v_0]
\li{X^{(n)}-X^{(n-1)}}{s-1}\leq C[v_0]T^{1/4}\fn{\xn-\xm}
\end{align*}
and
\begin{align*}
\hln{d_{12}}{\frac{s-1}2}&\leq  C \hhn{Q^2\circ X^{(n)}-Q^2\circ X^{(n-1)}}{\frac{s-1}2}{1+\epsilon}\\
&\qquad\qquad\times(\hhn{\zeta^{(n)}-\I}{\frac{s-1}2}{1+\epsilon}^2+1)\hln{\pa^2 w^{(n)}}{\frac{s-1}2}\\
&\leq  C[v_0]\hhn{X^{(n)}-X^{(n-1)}}{\frac{s-1}2}{1+\epsilon}\leq C[v_0]T^\epsilon\fn{\xn-\xm}.
\end{align*}
In order to continue we decompose the next term,
$d_2=d_{21}+d_{22}$ where
\begin{equation*}
d_{21}=Q^2\circ X^{(n-1)}(\zeta^{(n)}-\zm)\pa\zeta^{(n)}\pa w^{(n)},
\end{equation*}
\begin{equation*}
d_{22}=Q^2\circ X^{(n-1)}(\zeta^{(n)}-\zm)\zeta^{(n)}\pa^2 w^{(n)}.
\end{equation*}
We take
\begin{align*}
\lhn{d_{21}}{s-1}&\leq  C \li{Q^2\circ X^{(n-1)}}{s-1}\li{\zeta^{(n)}-\zm}{s-1}\lhn{\pa\zeta^{(n)}\pa w^{(n)}}{s-1}\\
&\leq  C[v_0]
\li{X^{(n)}-X^{(n-1)}}{s}\li{\pa\zeta^{(n)}}{s-1}
\lhn{\pa w^{(n)}}{s-1}\\
&\leq C[v_0]T^{1/4}\fn{\xn-\xm}.
\end{align*}

\begin{align*}
\hln{d_{21}}{\frac{s-1}2}&\leq  C (\hhn{Q^2\circ X^{(n-1)}-Q^2}{\frac{s-1}2}{1+\epsilon}+1)\\
&\qquad\qquad\qquad\times \hhn{\zeta^{(n)}-\zm}{\frac{s-1}2}{1+\epsilon}\hln{\pa\zeta^{(n)}\pa w^{(n)}}{\frac{s-1}2}\\
&\leq  C[v_0]\hhn{X^{(n)}-X^{(n-1)}}{\frac{s-1}2}{2+\epsilon}
\hhn{\pa(\zeta^{(n)}-\I)}{\frac{s-1}2}{\epsilon}
\hhn{\pa w^{(n)}}{\frac{s-1}2}{1-\epsilon}\\
&\leq C[v_0]T^\epsilon\fn{\xn-\xm}.
\end{align*}
For $d_{22}$ we find
\begin{align*}
\lhn{d_{22}}{s-1}&\leq  C[v_0] \li{\zn-\zm}{s-1}\lhn{\pa^2 w^{(n)}}{s-1}\\
&\leq  C[v_0]
\li{X^{(n)}-X^{(n-1)}}{s}\leq C[v_0]T^{1/4}\fn{\xn-\xm},
\end{align*}
and
\begin{align*}
\hln{d_{22}}{\frac{s-1}2}&\leq  C[v_0]\hhn{\zeta^{(n)}-\zm}{\frac{s-1}2}{1+\epsilon}\hln{\pa^2 w^{(n)}}{\frac{s-1}2}\\
&\leq  C[v_0]\hhn{X^{(n)}-X^{(n-1)}}{\frac{s-1}2}{2+\epsilon}\leq C[v_0]T^\epsilon\fn{\xn-\xm}.
\end{align*}
In $d_3$ we split as follows $d_3=d_{31}+d_{32}$ with
\begin{equation*}
d_{31}=Q^2\circ X^{(n-1)}\zeta^{(n-1)}\pa(\zeta^{(n)}-\zeta^{(n-1)})\pa w^{(n)},
\end{equation*}
and
\begin{equation*}
d_{32}=Q^2\circ X^{(n-1)}\zeta^{(n-1)}(\zeta^{(n)}-\zeta^{(n-1)})\pa^2 w^{(n)}.
\end{equation*}
For $d_{31}$ it is possible to get
\begin{align*}
\lhn{d_{31}}{s-1}&\leq  C \li{Q^2\circ X^{(n-1)}}{s-1}\li{\zeta^{(n-1)}}{s-1}\lhn{\pa(\zeta^{(n)}-\zm)\pa w^{(n)}}{s-1}\\
&\leq  C[v_0]\li{\pa(\zeta^{(n)}-\zm)}{s-1}\lhn{\pa w^{(n)}}{s-1}\\
&\leq C[v_0]T^{1/4}\fn{\xn-\xm},
\end{align*}
and
\begin{align*}
\hln{d_{31}}{\frac{s-1}2}&\leq  C (\hhn{Q^2\circ X^{(n-1)}-Q^2}{\frac{s-1}2}{1+\epsilon}+1)\\
&\qquad\quad\times\left(\hhn{\zm-\I}{\frac{s-1}2}{1+\epsilon}+1\right)\hln{\pa(\zeta^{(n)}-\zm)\pa w^{(n)}}{\frac{s-1}2}\\
&\leq  C[v_0]\hhn{\partial(\zeta^{(n)}-\zeta^{(n-1)})}{\frac{s-1}2}{\delta}\hhn{\pa w^{(n)}}{\frac{s-1}2}{1-\delta}\\
&\leq C[v_0]\hhn{X^{(n)}-X^{(n-1)}}{\frac{s-1}2}{2+\delta}\leq C[v_0]T^\epsilon\fn{\xn-\xm}.
\end{align*}
Here, in order to bound $\hhn{Q^2\circ X^{(n-1)}-Q^2}{\frac{s-1}2}{1+\epsilon}$ we can do the same than in \eqref{comoQ}.

Next, for $d_{32}$ we obtain
\begin{align*}
\lhn{d_{32}}{s-1}&\leq  C \li{Q^2\circ X^{(n-1)}}{s-1}\li{\zeta^{(n-1)}}{s-1}\lhn{(\zeta^{(n)}-\zm)\pa^2 w^{(n)}}{s-1}\\
&\leq  C[v_0]\li{\zeta^{(n)}-\zm}{s-1}\lhn{\pa^2 w^{(n)}}{s-1}\\
&\leq  C[v_0]\li{X^{(n)}-\xm}{s} \leq C[v_0]T^{1/4}\fn{\xn-\xm}.
\end{align*}
The usual approach also gives
\begin{align*}
\hln{d_{32}}{\frac{s-1}2}&\leq  C \Big(\hhn{Q^2\circ X^{(n-1)}-Q^2}{\frac{s-1}2}{1+\epsilon}+1\Big)\\
&\qquad\times\Big(\hhn{\zm-\I}{\frac{s-1}2}{1+\epsilon}+1\Big)\hln{(\zeta^{(n)}-\zm)\pa^2 w^{(n)}}{\frac{s-1}2}\\
&\leq  C[v_0]\hhn{\zeta^{(n)}-\zeta^{(n-1)}}{\frac{s-1}2}{1+\delta}\hln{\pa^2 w^{(n)}}{\frac{s-1}2}\\
&\leq C[v_0]\hhn{X^{(n)}-X^{(n-1)}}{\frac{s-1}2}{2+\delta}\leq C[v_0]T^\epsilon\fn{\xn-\xm}.
\end{align*}
In $d_{4}$ we proceed by considering $d_{4}=d_{41}+d_{42}$ with
\begin{equation*}
d_{41}=(Q^2\circ X^{(n-1)}-Q^2)\zeta^{(n-1)}\pa\zeta^{(n-1)}\pa (w^{(n)}-w^{(n-1)}),
\end{equation*}
and
\begin{equation*}
d_{42}=(Q^2\circ X^{(n-1)}-Q^2)\zeta^{(n-1)}\zeta^{(n-1)}\pa^2(w^{(n)}-w^{(n-1)}).
\end{equation*}
For $d_{41}$ it is possible to get
\begin{align*}
\lhn{d_{41}}{s-1}&\leq  C \li{Q^2\circ X^{(n-1)}-Q^2}{s-1}\\
&\qquad\qquad\times\li{\zeta^{(n-1)}}{s-1}\lhn{\pa\zm\pa (w^{(n)}-\wm) }{s-1}\\
&\leq  C[v_0]\li{X^{(n-1)}-\alpha}{s-1}\li{\pa\zm}{s-1}\lhn{\pa (w^{(n)}-\wm)}{s-1}\\
&\leq C[v_0]T^{1/4}\htn{\wn-\wm}{s+1},
\end{align*}
and
\begin{align*}
\hln{d_{42}}{\frac{s-1}2}&\leq  C \hhn{Q^2\circ X^{(n-1)}-Q^2}{\frac{s-1}2}{1+\epsilon}\\
&\qquad\qquad\times\Big(\hhn{\zm-\I}{\frac{s-1}2}{1+\epsilon}+1\Big)\hln{\pa\zm\pa (w^{(n)}-\wm)}{\frac{s-1}2}\\
&\leq  C[v_0]\hhn{X^{(n-1)}-\alpha}{\frac{s-1}2}{1+\epsilon}\hhn{\pa(\zm-\I)}{\frac{s-1}2}{\delta}\hhn{\pa (w^{(n)}-\wm)}{\frac{s-1}2}{1-\delta}\\
&\leq C[v_0]T^\epsilon\htn{\wn-\wm}{s+1}.
\end{align*}
For $d_{42}$ it is possible to get
\begin{align*}
\lhn{d_{42}}{s-1}&\leq  C \li{Q^2\circ X^{(n-1)}-Q^2}{s-1}\li{\zeta^{(n-1)}}{s-1}^2\lhn{\pa^2(\wn-\wm)}{s-1}\\
&\leq  C[v_0]\li{\xn-\alpha}{s-1}\lhn{\pa^2 (w^{(n)}-\wm)}{s-1}\\
&\leq C[v_0]T^{1/4}\htn{\wn-\wm}{s+1},
\end{align*}
and
\begin{align*}
\hln{d_{42}}{\frac{s-1}2}&\leq  C \hhn{Q^2\circ X^{(n-1)}-Q^2}{\frac{s-1}2}{1+\epsilon}\\
&\qquad\qquad\times\Big(\hhn{\zm-\I}{\frac{s-1}2}{1+\epsilon}^2+1\Big)\hln{\pa^2(w^{(n)}-\wm)}{\frac{s-1}2}\\
&\leq  C[v_0]\hhn{\xn-\al}{\frac{s-1}2}{1+\epsilon}\hln{\pa^2(w^{(n)}-\wm)}{\frac{s-1}2}\\
&\leq C[v_0]T^{\epsilon}\htn{\wn-\wm}{s+1}.
\end{align*}
Analogously, one could take $d_5=d_{51}+d_{52}$ with
\begin{equation*}
d_{51}=Q^2(\zeta^{(n-1)}-\mathbb{I})\pa\zeta^{(n-1)}\pa (w^{(n)}-w^{(n-1)}),
\end{equation*}
and
\begin{equation*}
d_{52}=Q^2(\zeta^{(n-1)}-\mathbb{I})\zeta^{(n-1)}\pa^2(w^{(n)}-w^{(n-1)}).
\end{equation*}
The splitting yields
\begin{align*}
\lhn{d_{51}}{s-1}&\leq  C \li{\zeta^{(n-1)}-\mathbb{I}}{s-1}\li{\pa\zeta^{(n-1)}}{s-1}\lhn{\pa (w^{(n)}-\wm) }{s-1}\\
&\leq C[v_0]T^{1/4}\htn{\wn-\wm}{s+1},
\end{align*}
and
\begin{align*}
\hln{d_{51}}{\frac{s-1}2}&\leq  C \hhn{\zeta^{(n-1)}-\mathbb{I}}{\frac{s-1}2}{1+\epsilon}\hhn{\partial(\zm-\I)}{\frac{s-1}2}{\delta}
\hhn{\pa (w^{(n)}-\wm)}{\frac{s-1}2}{1-\delta}\\
&\leq C[v_0]T^\epsilon\htn{\wn-\wm}{s+1}.
\end{align*}
For $d_{52}$ we proceed as follows
\begin{align*}
\lhn{d_{52}}{s-1}&\leq  C \li{\zeta^{(n-1)}-\mathbb{I}}{s-1}\li{\zeta^{(n-1)}}{s-1}\lhn{\pa^2 (w^{(n)}-\wm) }{s-1}\\
&\leq C[v_0]T^{1/4}\htn{\wn-\wm}{s+1},
\end{align*}
\begin{align*}
\hln{d_{52}}{\frac{s-1}2}&\leq  C \hhn{\zeta^{(n-1)}-\mathbb{I}}{\frac{s-1}2}{1+\epsilon}\\
&\qquad\qquad\times\Big(\hhn{\zeta^{(n-1)}}{\frac{s-1}2}{1+\epsilon}+1\Big)\hln{\pa^2 (w^{(n)}-\wm)}{\frac{s-1}2}\\
&\leq C[v_0]T^\epsilon\htn{\wn-\wm}{s+1}.
\end{align*}
Finally, writing $d_6=d_{61}+d_{62}$ where
\begin{equation*}
d_{61}=Q^2\pa\zeta^{(n-1)}\pa (w^{(n)}-w^{(n-1)}),\mbox{ and } d_{62}=Q^2(\zeta^{(n-1)}-\mathbb{I})\pa^2(w^{(n)}-w^{(n-1)}),
\end{equation*}
it is possible to find
\begin{align*}
\lhn{d_{61}}{s-1}&\leq  C\li{\pa\zeta^{(n-1)}}{s-1}\lhn{\pa (w^{(n)}-\wm) }{s-1}\\
&\leq C[v_0]T^{1/4}\htn{\wn-\wm}{s+1},
\end{align*}
and
\begin{align*}
\hln{d_{61}}{\frac{s-1}2}&\leq  C \hhn{\partial\zm}{\frac{s-1}2}{\delta}
\hhn{\pa (w^{(n)}-\wm)}{\frac{s-1}2}{1-\delta}\\
&\leq C[v_0]T^\epsilon\htn{\wn-\wm}{s+1}.
\end{align*}
For $d_{62}$ we proceed as for $d_{52}$ to get
\begin{align*}
\lhn{d_{62}}{s-1}&\leq C[v_0]
T^{1/4}\htn{\wn-\wm}{s+1},\\
\hln{d_{62}}{\frac{s-1}2}&\leq C[v_0]T^\epsilon\htn{\wn-\wm}{s+1}.
\end{align*}
We are done with $f^{(n)}_w-f^{(n-1)}_w$. To continue with $f^{(n)}_\phi-f^{(n-1)}_\phi$ we decompose as before but this time using $f^{(n)}_\phi-f^{(n-1)}_\phi=d_7+d_8+d_9$ where
\begin{equation*}
d_7=(Q^2\circ X^{(n)}-Q^2\circ X^{(n-1)})\zeta^{(n)}\pa\left(\zeta^{(n)}\pa \phi\right),
\end{equation*}
\begin{equation*}
d_8=Q^2\circ X^{(n-1)}(\zeta^{(n)}-\zeta^{(n-1)})\pa\left(\zeta^{(n)}\pa \phi\right),
\end{equation*}
\begin{equation*}
d_9=Q^2\circ X^{(n-1)}\zeta^{(n-1)}\pa\left((\zeta^{(n)}-\zeta^{(n-1)})\pa \phi\right).
\end{equation*}
Here we need to split $\phi=v_0+t\psi$. After that  we proceed as for $d_1$, $d_2$ and $d_3$  to find
\begin{equation*}
\htn{d_7+d_8+d_9}{s-1}\leq C[v_0]T^\epsilon\fn{\xn-\xm}.
\end{equation*}
We move to the $f$ term involving $q$. We split $f^{(n)}_{q}=f^{(n)}_{q_w}+f^{(n)}_{q_\phi}$. The splitting $f^{(n)}_{q_w}-f^{(n-1)}_{q_w}=d^q_1+d^q_2+d^q_3+d^q_4$
with
\begin{equation*}
d^q_1=-(A\circ\xn(\zn-\zm))^*\nabla \qn_w,
\end{equation*}
\begin{equation}\label{diferenciafq}
d^q_2=-((A\circ\xn-A\circ\xm)\zm)^*\nabla \qn_w,
\end{equation}
\begin{equation*}
d^q_3=-(A\circ\xm(\zm-\I))^*\nabla (\qn_w-\qm_w),
\end{equation*}
\begin{equation*}
d^q_4=-(A\circ\xm-A)^*\nabla (\qn_w-\qm_w),
\end{equation*}
allows us to do the work. In fact
\begin{align*}
\lhn{d^q_1}{s-1}&\leq  C \li{\zeta^{(n)}-\zm}{s-1}\li{A\circ\xn}{s-1}\lhn{\grad\qn_w}{s-1}\\
&\leq C[v_0]T^{1/4}\fn{\xn-\xm},
\end{align*}
\begin{align*}
\hln{d^q_{1}}{\frac{s-1}2}&\leq  C \hhn{\zeta^{(n)}-\zm}{\frac{s-1}2}{1+\epsilon}\\
&\qquad\qquad\times\Big(\hhn{A\circ\xn-A}{\frac{s-1}2}{1+\epsilon}+1\Big)\hln{\nabla\qn_w}{\frac{s-1}2}\\
&\leq C[v_0]T^\epsilon\fn{\xn-\xm}.
\end{align*}
Sharing norms in the same manner gives
\begin{align*}
\lhn{d^q_2}{s-1}&\leq  C[v_0]T^{1/4}\fn{\xn-\xm},
\end{align*}
and
\begin{align*}
\hln{d^q_2}{\frac{s-1}2}&\leq  C[v_0]T^\epsilon\fn{\xn-\xm}.
\end{align*}
In a similar way
\begin{align*}
\lhn{d^q_3}{s-1}&\leq  C \li{\zm-\I}{s-1}\li{A\circ\xm}{s-1}\lhn{\grad(\qn_w-\qm_w)}{s-1}\\
&\leq C[v_0]T^{1/4}\lhn{\grad(\qn_w-\qm_w)}{s-1}\leq C[v_0]T^{1/4}\|\qn_w-\qm_w\|_{H_{pr \, (0)}^{ht,s}},
\end{align*}
and
\begin{align*}
\hln{d^q_{3}}{\frac{s-1}2}&\leq  C \hhn{\zm-\I}{\frac{s-1}2}{1+\epsilon}\\
&\qquad\times\Big(\hhn{A\circ\xm-A}{\frac{s-1}2}{1+\epsilon}+1\Big)\hln{\nabla(\qn_w-\qm_w)}{\frac{s-1}2}\\
&\leq C[v_0]T^\epsilon\hln{\nabla(\qn_w-\qm_w)}{\frac{s-1}2}\\
&\leq C[v_0]T^{\epsilon}\|\qn_w-\qm_w\|_{H_{pr \, (0)}^{ht,s}}.
\end{align*}
Finally
\begin{align*}
\lhn{d^q_4}{s-1}&\leq  C \li{A\circ\xm-A}{s-1}\lhn{\grad(\qn_w-\qm_w)}{s-1}\\
&\leq C[v_0]T^{1/4}\|\qn_w-\qm_w\|_{H_{pr \, (0)}^{ht,s}},
\end{align*}
and
\begin{align*}
\hln{d^q_4}{\frac{s-1}2}&\leq  C \hhn{A\circ\xm-A}{\frac{s-1}2}{1+\epsilon}\hln{\nabla(\qn_w-\qm_w)}{\frac{s-1}2}\\
&\leq  C[v_0]T^{\epsilon}\|\qn_w-\qm_w\|_{H_{pr \, (0)}^{ht,s}}.
\end{align*}

The estimation for $f^{(n)}_{q_\phi}-f^{(n-1)}_{q_\phi}$ follows similar steps. We only need to take into account that $q_\phi$ does not depend on time.

\underline{P2.2. Estimate for $\overline{g}^{(n)}-\overline{g}^{(n-1)}$:}\\

We are concerned with the estimates of the norms
\begin{align*}
\kt{\overline{g}^{(n)}-\overline{g}^{(n-1)}}\leq\lhn{\overline{g}^{(n)}-\overline{g}^{(n-1)}}{s}+ \hhn{\overline{g}^{(n)}-\overline{g}^{(n-1)}}{\frac{s+1}{2}}{-1},
\end{align*}

We will split $\overline{g}^{(n)}= -\overline{g}^{(n)}_w-\overline{g}^{(n)}_\phi$ (see \eqref{gsplit}). First we will estimate the $\lhn{\cdot}{s}$-norm. For $\overline{g}^{(n)}_w-\overline{g}^{(n-1)}_w$ we have that
\begin{align*}
&\overline{g}^{(n)}_w-\overline{g}^{(n-1)}_w= Tr\p{\p{\nabla \wn -\nabla \wm}\zn A\circ \xn}+Tr\p{\nabla \wm(\zn-\zm)A\circ \xn}\\
&Tr\p{\nabla \wm \zm \p{A\circ \xn-A\circ \xm}}-Tr\p{\p{\nabla \wn-\nabla \wm}A}\\
&= Tr\p{\p{\nabla \wn -\nabla \wm}(\zn-\I) A\circ \xn}+Tr\p{\nabla \wm(\zn-\zm)A\circ \xn}\\
&Tr\p{\nabla \wm \zm \p{A\circ \xn-A\circ \xm}}+Tr\p{\p{\nabla \wn-\nabla \wm}(A\circ \xn-A}\\
&=d_1+d_2+d_3+d_4.
\end{align*}

With $d_1$ we proceed as follows
\begin{align}
\begin{split}\label{d1g}	
\lhn{d_1}{s}=&\lhn{(\nabla\wn-\nabla\wm)(\zn-\I)A\circ \xn}{s}\\
\leq & \li{\zn-\I}{s}\li{A\circ\xn}{s}\lhn{\nabla\wn-\nabla\wm}{s}.
\end{split}
\end{align}
Then lemma \ref{composicionz} and \eqref{zetamenosi} implies
\begin{align*}
\lhn{d_1}{s}\leq C[v_0]\li{\zn-\I}{s}\htn{\wn-\wm}{s+1}\leq C[v_0]T^\frac{1}{4}\htn{\wn-\wm}{s+1}.
\end{align*}
For $d_2$ we have that
\begin{align*}
\lhn{d_2}{s}\leq \lhn{\nabla\wm}{s}\li{A\circ \xn}{s}\li{\zn-\zm}{s}.
\end{align*}
Thus, using lemma \ref{composiciondizeta} yields,
\begin{align*}
\lhn{d_2}{s}\leq C[v_0]T^\frac{1}{4}\lit{\xn-\xm}{s+1}.
\end{align*}
The estimate for $d_3$ follows the next steps
\begin{align*}
\lhn{d_3}{s}&\leq \lhn{\nabla\wm}{s}\li{\zm}{s}\li{A\circ\xn-A\circ\xm}{s}\\
&\leq C[v_0]\li{\xn-\xm}{s}\leq C[v_0]T^\frac{1}{4}\lit{\xn-\xm}{s}.
\end{align*}
And for $d_4$ we have that, using \ref{composicion1} and proceeding as in $\eqref{comoQ}$
\begin{align*}
\lhn{d_4}{s}&\leq \htn{\wn-\wm}{s+1}\li{A\circ \xn-A}{s}\\
&\leq C[v_0]\htn{\wn-\wm}{s+1}\li{\xn-\alpha}{s}\\
&\leq C[v_0]T^\frac{1}{4}\htn{\wn-\wm}{s+1}.
\end{align*}

It remains to estimate the $\hhn{\,\cdot\,}{\frac{s+1}{2}}{-1}$-norm.

We will split $\overline{g}_w^{(n)}-\overline{g}_w^{(n-1)}$ in the following terms
\begin{align*}
&\overline{g}_w^{(n)}-\overline{g}_w^{(n-1)}\\
&=Tr\p{(\nabla \wn-\nabla\wm)(\zn-\zeta_\phi)(A\circ \xn-A_\phi)}\\
&+Tr\p{(\nabla \wn-\nabla\wm)(\zn-\zeta_\phi)A_\phi}+Tr\p{(\nabla\wn-\nabla\wm)\zeta_\phi(A\circ\xn-A_\phi)}\\
&+Tr\p{\nabla \wm(\zn-\zm)(A\circ\xn-A_\phi)}+Tr\p{\nabla \wm(\zn-\zm)A_\phi}\\
&+Tr\p{\nabla \wm(\zm-\zeta_\phi)(A\circ \xn-A\circ \xm)}+Tr\p{\nabla\wm\zeta_\phi\p{A\circ\xn-A\circ \xm}}\\
&+Tr\p{(\nabla \wn-\nabla\wm)(\zeta_\phi-\I)A_\phi+(\nabla \wn-\nabla\wm)(A_\phi-A)}\\
&=D_1+D_2+D_3+D_4+D_5+D_6+D_7+D_8.
\end{align*}

For $D_1$ we have that

$$
D_1=-\int_{0}^t Tr(\pa_t((\nabla \wn-\nabla\wm)(\zn-\zeta_\phi) (A\circ \xn-A_\phi))) d\tau=D_{11}+D_{12}+D_{13}
$$
%For $I_1$ we have that
%\begin{align*}
%&\hhn{(\nabla \vn-\nabla\vm)(\zn-\I) A\circ \xn}{\frac{s+1}{2}}{-1}\\&=\hhn{\int_{0}^t\pa_t((\nabla \vn-\nabla\vm)(\zn-\I) A\circ %\xn)}{\frac{s+1}{2}}{-1}\\
%&\leq \hhn{\int_{0}^t\pa_t(\nabla \vn-\nabla\vm)(\zn-\I) A\circ \xn}{\frac{s+1}{2}}{-1}\\
%&+\hhn{\int_{0}^t(\nabla \vn-\nabla\vm)\pa_t(\zn-\I) A\circ \xn)}{\frac{s+1}{2}}{-1}\\
%&+\hhn{\int_{0}^t(\nabla \vn-\nabla\vm)(\zn-\I)\pa_t A\circ \xn)}{\frac{s+1}{2}}{-1} \\
%& \equiv I_{11}+ I_{12}+I_{13}.
%\end{align*}
where
\begin{align}
D_{11}=&-\int_{0}^t Tr(\pa_t(\nabla \wn-\nabla\wm)(\zn-\zeta_\phi) (A\circ \xn-A_\phi))d\tau,\nonumber\\
D_{12}=&-\int_{0}^t Tr((\nabla \wn-\nabla\wm)\pa_t(\zn-\zeta_\phi) (A\circ \xn-A_\phi))d\tau,\label{d1j}\\
D_{13}=&-\int_{0}^t Tr((\nabla \wn-\nabla\wm)(\zn-\zeta_\phi)\pa_t (A\circ \xn-A_\phi))d\tau.\nonumber
\end{align}
By applying lemma \ref{2.4} with $\ep=0$ we obtain
\begin{align*}
\hhn{D_{11}}{\frac{s+1}{2}}{-1}\leq \hhn{\pa_t(\nabla \wn-\nabla\wm)(\zn-\zeta_\phi) (A\circ \xn-A_\phi)}{\frac{s-1}{2}}{-1}.
\end{align*}
Now we use lemma \ref{2.5} to yield
\begin{align*}
\hhn{D_{11}}{\frac{s+1}{2}}{-1}\leq \hhn{\pa_t(\nabla \wn-\nabla\wm)}{\frac{s-1}{2}}{-1}\hhn{\zn-\zeta_\phi}{\frac{s-1}{2}}{1+\delta}\hhn{A\circ\xn-A_\phi}{\frac{s-1}{2}}{1+\delta}
\end{align*}
In addition lemmas \ref{composicionAfi}, \ref{bestiario} and \ref{2.3} and proceeding as in \eqref{comoQ} imply
\begin{align*}
\hhn{D_{11}}{\frac{s+1}{2}}{-1}\leq C[v_0]\htn{\wn-\wm}{s+1}\hhn{\zn-\zeta_\phi}{\frac{s-1}{2}}{1+\delta}.
\end{align*}
And  we can apply   lemma \ref{2.4} to get
\begin{align*}
\hhn{\zn-\zeta_\phi}{\frac{s-1}{2}}{1+\delta}&\leq \hhn{\pa_t\int_{0}^t(\zn-\zeta_\phi)}{\frac{s-1}{2}}{1+\delta}\hhn{\int_{0}^t(\zn-\zeta_\phi)}{\frac{s-1}{2}+\ep +1-\ep}{1+\delta}\\
&\leq CT^\ep \hhn{\zn-\zeta_\phi}{\frac{s-1}{2}+\ep}{1+\delta},
\end{align*}
for $0<\frac{s-1}{2}+\ep<1$. Thus, by lemmas \ref{composicionzfi} and \ref{bestiario},
$$\hhn{D_{11}}{\frac{s+1}{2}}{-1} \leq C[v_0]T^\ep\htn{\wn-\wm}{s+1}.$$

Next we bound $D_{12}$.  Indeed
\begin{align*}
&\hhn{D_{12}}{\frac{s+1}{2}}{-1}=\hhn{(\nabla\wn-\nabla\wm)\pa_t(\zn-\zeta_\phi) (A\circ\xn-A_\phi)}{\frac{s-1}{2}}{-1}\\
&\leq C \hhn{(\nabla\wn-\nabla\wm)\pa_t(\zn-\zeta_\phi)}{\frac{s-1}{2}}{-1}\hhn{A\circ \xn-A_\phi}{\frac{s-1}{2}}{1+\delta}\\ &\leq C[v_0]\hhn{\nabla\wn-\nabla\wm}{\frac{s-1}{2}}{1}\hhn{\pa_t(\zn-\zeta_\phi)}{\frac{s-1}{2}}{0}.
\end{align*}
Since, $\hhn{\pa_t\zn-\pa_t\zeta_\phi}{\frac{s-1}{2}}{0}\leq C[v_0] T^\ep$, and
$
\hhn{\nabla\wn-\nabla\wm}{\frac{s-1}{2}}{1}\leq C[v_0]T^\ep\htn{\wn-\wm}{s+1}$
we obtain a suitable estimate for $D_{12}.$

The term $D_{13}$ does not cause any difficulty and it can be handled as before.

For $D_2$ we just split $(A_\phi)_{ij}=A_{ij}+t \pa_k A_{ij}A_{kl}v_{0}^l$. For the terms coming from $A$ we just notice that $A$ does not depend on $t$. For the term coming from $t\pa_k A_{ij}A_{kl}v_{0}^l$ we use lemma \ref{littlelemma} and the fact that  $\pa_k A_{ij}A_{kl}v_{0}^l$ does not depend on $t$. For $D_3$ a similar argument holds after splitting $\zeta_\phi =t-\nabla(Av_0)$.

The estimate for $D_4$ follows the following steps,
\begin{align}
\hhn{D_{4}}{\frac{s+1}{2}}{-1}\leq&\hhn{\nabla \wm (\zn-\zm)(A\circ \xn-A_\phi)}{\frac{s+1}{2}}{-1}\label{d2}\\
\leq &\hhn{\int_{0}^t\pa_t (\nabla \wm (\zn-\zm)(A\circ \xn-A_\phi))}{\frac{s+1}{2}}{-1}\nonumber,
\end{align}
and therefore
\begin{align*}
\hhn{D_{4}}{\frac{s+1}{2}}{-1}\leq & \hhn{\pa_t\nabla\wm(\zn-\zm)(A\circ\xn-A_\phi)}{\frac{s-1}{2}}{-1}\\
&+  \hhn{\nabla\wm\pa_t(\zn-\zm)(A\circ\xn-A_\phi)}{\frac{s-1}{2}}{-1}\\
&+\hhn{\nabla\wm(\zn-\zm)\pa_t(A\circ\xn-A_\phi)}{\frac{s-1}{2}}{-1}.
\end{align*}
Each addend is estimated in a different manner as follows
\begin{align*}
\hhn{D_{4}}{\frac{s+1}{2}}{-1}\leq & C[v_0]\hhn{\nabla \pa_t\wm}{\frac{s-1}{2}}{-1}\hhn{\zn-\zm}{\frac{s-1}{2}}{1+\delta}\hhn{A\circ\xn-A_\phi}{\frac{s-1}{2}}{1+\delta}  \\
+&C[v_0] \hhn{\nabla\wn}{\frac{s-1}{2}}{1}\hhn{\pa_t (\zn-\zm)}{\frac{s-1}{2}}{0}\hhn{A\circ\xn-A_\phi}{\frac{s-1}{2}}{1+\delta}\\
+ & C[v_0]\hhn{\nabla\wn}{\frac{s-1}{2}}{1}\hhn{\zn-\zm}{\frac{s-1}{2}}{0}\hhn{\pa_t (A\circ\xn-A_\phi)}{\frac{s-1}{2}}{1+\delta},
\end{align*}
and we have already bounded every term in here.

For $D_5$ we proceed as for $D_2$ and $D_3$.

We will give full detail for $D_6$.

$$
D_6=-\int_{0}^t Tr\pa_t(\nabla \wm (\zm-\zeta_\phi) (A\circ \xn-A\circ \xm))d\tau=d_{31}+d_{32}+d_{33}
$$
where
\begin{align}
D_{61}=&-\int_0^t Tr(\pa_t\nabla\wm (\zm-\zeta_\phi)(A\circ \xn-A\circ\xm))d\tau \nonumber\\
D_{62}=&-\int_0^t Tr(\nabla\wm \pa_t(\zm-\zeta_\phi) (A\circ \xn-A\circ\xm)) d\tau \label{d3j}\\
D_{63}=&-\int_0^t Tr(\nabla \wm (\zm-\zeta_\phi) \pa_t(A\circ \xn-A\circ\xm))d\tau. \nonumber
\end{align}

The estimates of these three terms follow similar steps as those one for $D_{1}$ and $D_4$. First we apply  lemma \ref{2.4} with $\ep=0$ since $0<\frac{s-1}{2}<1$ to find
\begin{align*}
&\hhn{D_{61}}{\frac{s+1}{2}}{-1}\leq \hhn{\pa_t\nabla\wm (\zm-\zeta_\phi)(A\circ \xn-A\circ\xm)}{\frac{s-1}{2}}{-1}\\
 &\leq C[v_0]\hhn{\nabla\pa_t\vm}{\frac{s-1}{2}}{-1}\hhn{\zm-\zeta_\phi}{\frac{s-1}{2}}{1+\delta})\hhn{A\circ \xn-A\circ\xm}{\frac{s-1}{2}}{1+\delta}\\
&\leq  C[v_0]\hhn{A\circ\xn-A\circ\xm}{\frac{s-1}{2}}{1+\delta}\leq C[v_0]\hhn{\xn-\xm}{\frac{s-1}{2}}{1+\delta}.
\end{align*}
Above  lemma \ref{2.5},  lemma \ref{2.6}, lemma \ref{composicionzfi} and lemma \ref{composicionAfi}. In addition
\begin{align*}
\hhn{D_{61}}{\frac{s+1}{2}}{-1}\leq & C[v_0]\hhn{\pa_t\int_{0}^t\xn-\xm}{\frac{s-1}{2}}{1+\delta}\leq \hhn{\int_{0}^t\xn- \xm}{\frac{s-1}{2}+\ep+1-\ep}{1+\delta}\\
 \leq & CT^\ep \hhn{\xn- \xm}{\frac{s-1}{2}+\ep}{1+\delta}\leq CT^\ep \fn{\xn-\xm},
\end{align*}
for $0<\frac{s-1}{2}+\ep<1$. The estimate for $D_{62}$ is quite similar. Indeed,  lemma \ref{2.4} with $\ep=0$ to obtain
\begin{align*}
\hhn{D_{62}}{\frac{s+1}{2}}{-1}\leq&C\hhn{\nabla\wm \pa_t(\zm-\zeta_\phi)(A\circ \xn-A\circ\xm)}{\frac{s-1}{2}}{-1} \\
\leq C[v_0]&\hhn{\nabla \wn}{\frac{s-1}{2}}{1}\hhn{\pa_t(\zm-\zeta_\phi)}{\frac{s-1}{2}}{0}\hhn{A\circ \xn-A\circ\xm}{\frac{s-1}{2}}{1+\delta}.
\end{align*}
Above we apply  lemma \ref{2.5} and the rest of the estimate follows the same steps. Next  we can make use of  \ref{2.4} with $\ep=0$ since $0<\frac{s-1}{2}<1$ to obtain
\begin{align*}
&\hhn{D_{63}}{\frac{s+1}{2}}{-1}\leq C[v_0]\hhn{\nabla\wn}{\frac{s-1}{2}}{1}\hhn{\zm-\zeta_\phi}{\frac{s-1}{2}}{1+\delta}\hhn{\pa_t (A\circ\xn-A\circ\xm)}{\frac{s-1}{2}}{0},
\end{align*}
and therefore, it is enough to estimate $\hhn{\pa_t (\xn-\xm)}{\frac{s-1}{2}}{0}$. As we did before we obtain
\begin{align*}
\hhn{\pa_t (\xn-\xm)}{\frac{s-1}{2}}{0}\leq & CT^\ep \hhn{\pa_t(\xn-\xm)}{\frac{s-1}{2}+\ep}{0} \\\leq& CT^\ep \fn{\xn-\xm}.
\end{align*}

To estimate $D_7$ we just split $\zeta_\phi=\I-t\nabla(Av_0)$ and use lemma \ref{littlelemma}.  Finally we estimate $D_8$ by using lemma \ref{littlelemma}.

\underline{P2.3. Estimate for $h^{(n)}-h^{(n-1)}$:}\\

We split using \eqref{hsplit} and decompose further $h^{(n)}_v-h^{(n-1)}_v=d_1+d_2+d_3+d_4$ where the differences are given by
\begin{equation}\label{diferenciahv}
	d_1=\nabla \vn(\zn-\zm)\gradj\xn n_0, \quad
	d_2=\nabla \vn\zm (\gradj\xn -\gradj\xm) n_0,
\end{equation}
\begin{equation*}
	d_3=\nabla (\vn-\vm)(\zm-\I)\gradj\xm n_0, \quad
	d_4=\nabla (\vn-\vm)(\gradj\xm -\I) n_0.
\end{equation*}
We estimate as follows. For $d_1$ we find:
\begin{align*}
	\lhb{d_1}{s-\frac12}&\leq  C \lhb{\nabla\vn}{s-\frac12} \lib{\zn-\zm}{s-\frac12}\lib{\grad\xn}{s-\frac12}\\
	&\leq C \lhn{\vn}{s+1} \li{\zn-\zm}{s}\li{\xn}{s+1}\\
	&\leq C[v_0]\li{\xn-\xm}{s+1}\leq C[v_0]T^\frac14 \fn{\xn-\xm}.
\end{align*}
The time norm needs the splitting $\vn=\wn+v_0+t\psi$ which gives
\begin{align*}
	\hlb{d_1}{\frac{s}2-\frac14}&\leq  C \Big(\hlb{\nabla \wn}{\frac{s}2-\frac14}+\|v_0\|_{H^{3/2}}+\hlb{t\nabla \psi}{\frac{s}2-\frac14}\Big)\hhb{(\zn-\zm)\gradj\xn}{\frac{s}2-\frac14}{\frac12+\delta}\\
	&\leq C \Big(\htn{\wn}{s+1}+C[v_0]\Big)\hhn{\zn-\zm}{\frac{s}2-\frac14}{1+\delta}\Big(
	\hhn{\gradj\xn-\I}{\frac{s}2-\frac14}{1+\delta}+1\Big)\\
	&\leq C[v_0]\hhn{\xn-\xm}{\frac{s}2-\frac14}{2+\delta}\leq C[v_0]T^\epsilon \fn{\xn-\xm}.
\end{align*}
Above we have used \eqref{tnYtraza} together with Lemmas \ref{2.4}, \ref{2.6} and \ref{composiciondizeta}. Similarly
\begin{align*}
	\lhb{d_2}{s-\frac12}&\leq  C \lhb{\nabla \vn}{s-\frac12} \lib{\zm}{s-\frac12}\lib{\grad\xn-\grad\xm}{s-\frac12}\\
	&\leq C[v_0]T^\frac14 \fn{\xn-\xm},
\end{align*}
and
\begin{align*}
	\hlb{d_2}{\frac{s}2-\frac14}&\leq  C \Big(\hlb{\nabla \wn}{\frac{s}2-\frac14}\!+\!\|v_0\|_{H^{3/2}}\!+\!\hlb{t\nabla \psi}{\frac{s}2-\frac14}\Big)\hhb{\zm(\gradj\xn-\gradj\xm)}{\frac{s}2-\frac14}{\frac12+\delta}\\
	&\leq C[v_0]\Big(\hhn{\zn-\I}{\frac{s}2-\frac14}{1+\delta}+1\Big)
	\hhn{\gradj\xn-\gradj\xm}{\frac{s}2-\frac14}{1+\delta}
	\\
	&\leq C[v_0]T^\epsilon \fn{\xn-\xm}.
\end{align*}
For $d_3$ we find:
\begin{align*}
	\lhb{d_3}{s-\frac12}&\leq  C \lhb{\nabla(\vn-\vm)}{s-\frac12} \lib{\zm-\I}{s-\frac12}\lib{\grad\xn}{s-\frac12}\\
	&\leq C \lhn{\vn-\vm}{s+1} \li{\zm-\I}{s}\li{\xn}{s+1}\\
	&\leq C[v_0]T^\frac14 \htn{\vn-\vm}{s+1},
\end{align*}
and
\begin{align*}
	\hlb{d_3}{\frac{s}2-\frac14}&\leq  C \hlb{\nabla (\vn-\vm)}{\frac{s}2-\frac14}\hhb{\zm-\I}{\frac{s}2-\frac14}{\frac12+\delta}\Big(\hhb{\gradj\xn-\I}{\frac{s}2-\frac14}{\frac12+\delta}+1\Big)\\
	&\leq C \htn{\vn-\vm}{s+1}\hhn{\zm-\I}{\frac{s}2-\frac14}{1+\delta}
	\Big(\hhn{\xn-\al}{\frac{s}2-\frac14}{2+\delta}+1\Big)\\
	&\leq C[v_0]T^\epsilon \htn{\vn-\vm}{s+1}.
\end{align*}
Above we have used \eqref{tnYtraza} together with Lemmas \ref{2.4}, \ref{bestiario} and \ref{composicionz}. Similarly
\begin{align*}
	\lhb{d_4}{s-\frac12}&\leq  C[v_0]T^\frac14 \htn{\vn-\vm}{s+1},\,\mbox{and}\,\hlb{d_4}{\frac{s}2-\frac14}\leq C[v_0]T^\epsilon \htn{\vn-\vm}{s+1}.
\end{align*}
We are done with $h_{v}^{(n)}-h_{v}^{(n-1)}$. In order to deal with $h_{v*}^{(n)}-h_{v*}^{(n-1)}=d_1^*+...+d_8^*$ we use the splitting where
\begin{equation*}
	d_1^*=(\nabla \vn(\zn-\zm) A\circ \xn)^* A^{-1}\circ \xn \gradj\xn n_0,
\end{equation*}
\begin{equation*}
	d_2^*=(\nabla \vn\zm (A\circ \xn-A\circ \xm))^* A^{-1}\circ \xn \gradj\xn n_0,
\end{equation*}
\begin{equation*}
	d_3^*=(\nabla \vn\zm A\circ \xm)^* (A^{-1}\circ \xn-A^{-1}\circ \xm))  \gradj\xn n_0,
\end{equation*}
\begin{equation}\label{diferenciahves}
	d_4^*=(\nabla \vn\zm A\circ\xm)^*A^{-1}\circ\xm(\gradj\xn-\gradj\xm) n_0,
\end{equation}
\begin{equation*}
	d_5^*=(\nabla (\vn-\vm)(\zm-\I) A\circ \xm)^* A^{-1}\circ \xm \gradj\xm n_0,
\end{equation*}
\begin{equation*}
	d_6^*=(\nabla (\vn-\vm)(A\circ \xm-A))^* A^{-1}\circ \xm \gradj\xm n_0,
\end{equation*}
\begin{equation*}
	d_7^*=(\nabla (\vn-\vm) A)^* (A^{-1}\circ \xm -A^{-1})\gradj\xm n_0,
\end{equation*}
\begin{equation*}
	d_8^*=(\nabla (\vn-\vm) A)^*  A^{-1}(\gradj\xn-\I) n_0.
\end{equation*}
Then
\begin{align*}
	\lhb{d_1^*}{s-\frac12}&\leq  C \lhb{\nabla\vn}{s-\frac12} \lib{\zn-\zm}{s-\frac12}\lib{\xn}{s-\frac12}^2\lib{\grad\xn}{s-\frac12}\\
	&\leq C \lhn{\vn}{s+1} \li{\zn-\zm}{s}\li{\xn}{s+1}^3\\
	&\leq C[v_0]\li{\xn-\xm}{s+1}\leq C[v_0]T^\frac14 \fn{\xn-\xm}.
\end{align*}
For the norm in time we split as follows
\begin{align*}
	\hlb{d_1^*}{\frac{s}2-\frac14}&\leq  C \Big(\hlb{\nabla \wn}{\frac{s}2-\frac14}+\|v_0\|_{H^{3/2}}+\hlb{t\nabla \psi}{\frac{s}2-\frac14}\Big)\\
	&\qquad\qquad \times \hhb{((\zn-\zm) A\circ \xn)^* A^{-1}\circ \xn \gradj\xn}{\frac{s}2-\frac14}{\frac12+\delta}\\
	&\leq C[v_0]\hhn{((\zn-\zm) A\circ \xn)^* A^{-1}\circ \xn \gradj\xn}{\frac{s}2-\frac14}{1+\delta}.
\end{align*}
With a decomposition similar to \eqref{nvchqpem}, it is possible to find
$$
\hlb{d_1^*}{\frac{s}2-\frac14}\leq  C[v_0]\hhn{\xn-\xm}{\frac{s}2-\frac14}{2+\delta}\leq C[v_0]T^\epsilon \fn{\xn-\xm},
$$
with the use of Lemmas \ref{2.4} and \ref{bestiario}. In an analogous manner
\begin{align*}
	\lhb{d_2^*}{s-\frac12}&\leq  C[v_0] \lhb{\nabla \vn}{s-\frac12} \lib{\zm}{s-\frac12}\lib{\xn-\xm}{s-\frac12}\li{\xn}{s+1}^2\\
	&\leq C[v_0]T^\frac14 \fn{\xn-\xm},
\end{align*}
and
\begin{align*}
	\hlb{d_2^*}{\frac{s}2-\frac14}&\leq  C[v_0] \hhb{((\zm) (A\circ \xn-A\circ \xm)^* A^{-1}\circ \xn \gradj\xn}{\frac{s}2-\frac14}{\frac12+\delta}\\
	&\leq  C[v_0]\hhn{\xn-\xm}{\frac{s}2-\frac14}{1+\delta}\leq C[v_0]T^\epsilon \fn{\xn-\xm}.
\end{align*}
We proceed for $d^*_3$ and $d^*_4$ as for $d^*_2$ to find
\begin{align*}
	\lhb{d^*_3}{s-\frac12}+\lhb{d^*_4}{s-\frac12}&\leq  C[v_0]T^\frac14 \fn{\xn-\xm},\\
	\hlb{d^*_3}{\frac{s}2-\frac14}+\hlb{d^*_4}{\frac{s}2-\frac14}&\leq C[v_0]T^\epsilon \fn{\xn-\xm}.
\end{align*}
A similar approach is used to find
\begin{align*}
	\lhb{d^*_5}{s-\frac12}&\leq  C[v_0] \lhb{\grad(\vn-\vm)}{s-\frac12}\lib{\zm-\I}{s-\frac12}\Big(\li{\xm}{s+1}^3+1\Big)\\
	&\leq C[v_0] \lhn{\vn-\vm}{s+1} \li{\zm-\I}{s}\\
	&\leq C[v_0]T^\frac14\htn{\wn-\wm}{s+1},
\end{align*}
and
\begin{align*}
	\hlb{d^*_5}{\frac{s}2-\frac14}&\leq  C \hlb{\nabla (\vn\!-\!\vm)}{\frac{s}2-\frac14}\hhb{(\zm\!-\!\I) A\circ \xm)^* A^{-1}\circ \xm \gradj\xm}{\frac{s}2-\frac14}{\frac12+\delta}\\
	&\leq C[v_0] \htn{\vn-\vm}{s+1}\hhn{\zm-\I}{\frac{s}2-\frac14}{1+\delta}\\
	&\leq C[v_0]T^\epsilon \htn{\wn-\wm}{s+1}.
\end{align*}
It is possible to distribute the norms in a similar manner as in $d_5^*$ to conclude that
\begin{align*}
	\lhb{d^*_6}{s-\frac12}+\lhb{d^*_7}{s-\frac12}+\lhb{d^*_8}{s-\frac12}&\leq  C[v_0]T^\frac14 \htn{\wn-\wm}{s+1},\\
	\hlb{d^*_6}{\frac{s}2-\frac14}+\hlb{d^*_7}{\frac{s}2-\frac14}+\lhb{d^*_8}{s-\frac12}&\leq C[v_0]T^\epsilon \htn{\wn-\wm}{s+1}.
\end{align*}

Next, we deal with $h^{(n)}_q-h^{(n-1)}_q=d_1^q+d_2^q+d_3^q+d_4^q$ where
\begin{equation*}
	d_1^q=\qm (A^{-1}\circ\xm-A^{-1}\circ\xn)\gradj\xm n_0,
\end{equation*}
\begin{equation}\label{diferenciahq}
	d_2^q=\qm A^{-1}\circ \xn (\gradj\xm-\gradj\xn) n_0,
\end{equation}
\begin{equation*}
	d_3^q=(\qm_w-\qn_w) (A^{-1}\circ \xn-A^{-1}) \gradj\xn n_0,
\end{equation*}
\begin{equation*}
	d_4^q=(\qm_w-\qn_w) A^{-1} (\gradj\xn-\I) n_0.
\end{equation*}
We start as follows
\begin{align*}
	\lhb{d_1^q}{s-\frac12}&\leq  C \lhb{\qm}{s-\frac12}\lib{\xn-\xm}{s-\frac12}\lib{\grad\xm}{s-\frac12}\\
	&\leq C[v_0]
	T^\frac14 \fn{\xn-\xm},
\end{align*}
and
\begin{align*}
	\hlb{d_1^q}{\frac{s}2-\frac14}&\leq  C \Big(\hlb{\qm_w}{\frac{s}2-\frac14}+|q_{\phi}|_{L^2}\Big)\hhb{(A^{-1}\circ\xm-A^{-1}\circ\xn)\gradj\xm}{\frac{s}2-\frac14}{\frac12+\delta}\\
	&\leq C[v_0]T^\epsilon \fn{\xn-\xm},
\end{align*}
following before estimates. Likewise, it is possible to find next
\begin{align*}
	\lhb{d_2^q}{s-\frac12}&\leq  C \lhb{\qm}{s-\frac12}\lib{\xn}{s-\frac12}\lib{\grad\xn-\grad\xm}{s-\frac12}\\
	&\leq C[v_0]T^\frac14 \fn{\xn-\xm},
\end{align*}
and
\begin{align*}
	\hlb{d_2^q}{\frac{s}2-\frac14}&\leq  C \Big(\hlb{\qm_w}{\frac{s}2-\frac14}+|q_{\phi}|_{L^2}\Big)\hhb{A^{-1}\circ \xn (\gradj\xm-\gradj\xn)}{\frac{s}2-\frac14}{\frac12+\delta}\\
	&\leq C[v_0]T^\epsilon \fn{\xn-\xm}.
\end{align*}
The next term provides
\begin{align*}
	\lhb{d_3^q}{s-\frac12}&\leq  C \lhb{\qn_w-\qm_w}{s-\frac12}\lib{\xn-\al}{s+\frac12}\lib{\grad\xm}{s-\frac12}\\
	&\leq C[v_0]T^\frac14 \|\qn_w-\qm_w\|_{H_{pr \, (0)}^{ht,s}},
\end{align*}
and
\begin{align*}
	\hlb{d_3^q}{\frac{s}2-\frac14}&\leq  C \hlb{\qn_w-\qm_w}{\frac{s}2-\frac14}\hhb{(A^{-1}\circ \xn-A^{-1}) \gradj\xn}{\frac{s}2-\frac14}{\frac12+\delta}\\
	&\leq C[v_0]T^\epsilon\|\qn_w-\qm_w\|_{H_{pr \, (0)}^{ht,s}}.
\end{align*}
As for $d_3^q$ we find
\begin{align*}
	\lhb{d_4^q}{s-\frac12}\leq C[v_0]T^\frac14\|\qn_w-\qm_w\|_{H_{pr \, (0)}^{ht,s}},\quad\hlb{d_4^q}{\frac{s}2-\frac14}\leq  C[v_0]T^\epsilon \|\qn_w-\qm_w\|_{H_{pr \, (0)}^{ht,s}}.
\end{align*}

\bibliographystyle{abbrv}
\bibliography{references}

\def\cprime{$'$}
\begin{thebibliography}{10}

\bibitem{Abels:initial-value-problem-navier-stokes-free-surface-lq}
H.~Abels.
\newblock The initial-value problem for the {N}avier-{S}tokes equations with a
  free surface in {$L^q$}-{S}obolev spaces.
\newblock {\em Adv. Differential Equations}, 10(1):45--64, 2005.

\bibitem{Agmon-Douglis-Nirenberg:estimates-boundary-elliptic-pde-II}
S.~Agmon, A.~Douglis, and L.~Nirenberg.
\newblock Estimates near the boundary for solutions of elliptic partial
  differential equations satisfying general boundary conditions. {II}.
\newblock {\em Comm. Pure Appl. Math.}, 17:35--92, 1964.

\bibitem{Ambrose-Masmoudi:zero-surface-tension-2d-waterwaves}
D.~M. Ambrose and N.~Masmoudi.
\newblock The zero surface tension limit of two-dimensional water waves.
\newblock {\em Comm. Pure Appl. Math.}, 58(10):1287--1315, 2005.

\bibitem{Bae:solvability-free-boundary-navier-stokes}
H.~Bae.
\newblock Solvability of the free boundary value problem of the
  {N}avier-{S}tokes equations.
\newblock {\em Discrete Contin. Dyn. Syst.}, 29(3):769--801, 2011.

\bibitem{Beale:initial-value-problem-navier-stokes}
J.~T. Beale.
\newblock The initial value problem for the {N}avier-{S}tokes equations with a
  free surface.
\newblock {\em Comm. Pure Appl. Math.}, 34(3):359--392, 1981.

\bibitem{Beale:large-time-regularity-viscous-waves}
J.~T. Beale.
\newblock Large-time regularity of viscous surface waves.
\newblock {\em Arch. Rational Mech. Anal.}, 84(4):307--352, 1983/84.

\bibitem{Beale-Nishida:large-time-viscous-surface-waves}
J.~T. Beale and T.~Nishida.
\newblock Large-time behavior of viscous surface waves.
\newblock In {\em Recent topics in nonlinear {PDE}, {II} ({S}endai, 1984)},
  volume 128 of {\em North-Holland Math. Stud.}, pages 1--14. North-Holland,
  Amsterdam, 1985.

\bibitem{Castro-Cordoba-Fefferman-Gancedo-GomezSerrano:finite-time-singularities-water-waves-surface-tension}
A.~Castro, D.~C{\'o}rdoba, C.~Fefferman, F.~Gancedo, and J.~G{\'o}mez-Serrano.
\newblock Finite time singularities for water waves with surface tension.
\newblock {\em Journal of Mathematical Physics}, 53(11):115622--115622, 2012.

\bibitem{Castro-Cordoba-Fefferman-Gancedo-GomezSerrano:finite-time-singularities-free-boundary-euler}
A.~Castro, D.~C{\'o}rdoba, C.~Fefferman, F.~Gancedo, and J.~G{\'o}mez-Serrano.
\newblock Finite time singularities for the free boundary incompressible
  {E}uler equations.
\newblock {\em Ann. of Math. (2)}, 178(3):1061--1134, 2013.

\bibitem{Cordoba-Cordoba-Gancedo:interface-water-waves-2d}
A.~C{\'o}rdoba, D.~C{\'o}rdoba, and F.~Gancedo.
\newblock Interface evolution: water waves in 2-{D}.
\newblock {\em Adv. Math.}, 223(1):120--173, 2010.

\bibitem{Cordoba-Enciso-Grubic:splash-almost-splash-stationary-Euler}
D.~C{\'o}rdoba, A.~Enciso, and N.~Grubic.
\newblock On the existence of stationary splash singularities for the {E}uler
  equations.
\newblock {\em Adv. Math.}, 288:922--941, 2016.

\bibitem{Coutand-Shkoller:unique-solvability-free-boundary-navier-stokes-surface-tension}
D.~Coutand and S.~Shkoller.
\newblock Unique solvability of the free-boundary navier-stokes equations with
  surface tension.
\newblock {\em arXiv preprint math/0212116}, 2002.

\bibitem{Coutand-Shkoller:finite-time-splash}
D.~Coutand and S.~Shkoller.
\newblock On the {F}inite-{T}ime {S}plash and {S}plat {S}ingularities for the
  3-{D} {F}ree-{S}urface {E}uler {E}quations.
\newblock {\em Comm. Math. Phys.}, 325(1):143--183, 2014.

\bibitem{Coutand-Shkoller:splash-navier-stokes}
D.~Coutand and S.~Shkoller.
\newblock On the splash singularity for the free-surface of a {N}avier-{S}tokes
  fluid.
\newblock {\em arXiv preprint arXiv:1505.01929}, 2015.

\bibitem{Coutand-Shkoller:no-splash-vortex-sheet}
D.~Coutand and S.~Shkoller.
\newblock On the impossibility of finite-time splash singularities for vortex
  sheets.
\newblock {\em Arch. Ration. Mech. Anal.}, 221(2):987--1033, 2016.

\bibitem{Danchin-Mucha:lagrangian-approach-incompressible-navier-stokes-variable-density}
R.~Danchin and P.~B. Mucha.
\newblock A {L}agrangian approach for the incompressible {N}avier-{S}tokes
  equations with variable density.
\newblock {\em Comm. Pure Appl. Math.}, 65(10):1458--1480, 2012.

\bibitem{Danchin-Mucha:incompressible-flows-piecewise-constant-density}
R.~Danchin and P.~B. Mucha.
\newblock Incompressible flows with piecewise constant density.
\newblock {\em Arch. Ration. Mech. Anal.}, 207(3):991--1023, 2013.

\bibitem{Elgindi-Lee:uniform-regularity-free-boundary-navier-stokes-surface-tension}
T.~Elgindi and D.~Lee.
\newblock Uniform regularity for free-boundary {N}avier-{S}tokes equations with
  surface tension.
\newblock {\em Journal of Hyperbolic Differential Equations}, 15(01):37--118,
  2018.

\bibitem{Fefferman-Ionescu-Lie:absence-splash-singularities}
C.~Fefferman, A.~D. Ionescu, and V.~Lie.
\newblock On the absence of splash singularities in the case of two-fluid
  interfaces.
\newblock {\em Duke Math. J.}, 165(3):417--462, 2016.

\bibitem{Guo-Tice:almost-exponential-decay-surface-waves-no-surface-tension}
Y.~Guo and I.~Tice.
\newblock Almost exponential decay of periodic viscous surface waves without
  surface tension.
\newblock {\em Arch. Ration. Mech. Anal.}, 207(2):459--531, 2013.

\bibitem{Guo-Tice:decay-viscous-surface-waves-no-surface-tension}
Y.~Guo and I.~Tice.
\newblock Decay of viscous surface waves without surface tension in
  horizontally infinite domains.
\newblock {\em Anal. PDE}, 6(6):1429--1533, 2013.

\bibitem{Guo-Tice:local-well-posedness-surface-wave-no-surface-tension}
Y.~Guo and I.~Tice.
\newblock Local well-posedness of the viscous surface wave problem without
  surface tension.
\newblock {\em Anal. PDE}, 6(2):287--369, 2013.

\bibitem{Hataya:decaying-solution-navier-stokes-no-surface-tension}
Y.~Hataya.
\newblock Decaying solution of a {N}avier-{S}tokes flow without surface
  tension.
\newblock {\em J. Math. Kyoto Univ.}, 49(4):691--717, 2009.

\bibitem{Huang-Paicu-Zhang:global-well-posedness-bounded-density-non-lipschitz-velocity}
J.~Huang, M.~Paicu, and P.~Zhang.
\newblock Global well-posedness of incompressible inhomogeneous fluid systems
  with bounded density or non-{L}ipschitz velocity.
\newblock {\em Arch. Ration. Mech. Anal.}, 209(2):631--682, 2013.

\bibitem{Lannes:water-waves-book}
D.~Lannes.
\newblock {\em The Water Waves Problem: Mathematical Analysis and Asymptotics}.
\newblock Mathematical Surveys and Monographs. Amer Mathematical Society, 2013.

\bibitem{Liao-Zhang:global-regularity-2d-patch-navier-stokes}
X.~Liao and P.~Zhang.
\newblock On the global regularity of the two-dimensional density patch for
  inhomogeneous incompressible viscous flow.
\newblock {\em Arch. Ration. Mech. Anal.}, 220(3):937--981, 2016.

\bibitem{Lions-Magenes:non-homogeneous-bvp-I}
J.-L. Lions and E.~Magenes.
\newblock {\em Non-homogeneous boundary value problems and applications. {V}ol.
  {I}}.
\newblock Springer-Verlag, New York-Heidelberg, 1972.
\newblock Translated from the French by P. Kenneth, Die Grundlehren der
  mathematischen Wissenschaften, Band 181.

\bibitem{Masmoudi-Rousset:uniform-regularity-navier-stokes}
N.~Masmoudi and F.~Rousset.
\newblock Uniform regularity and vanishing viscosity limit for the free surface
  {N}avier-{S}tokes equations.
\newblock {\em Arch. Ration. Mech. Anal.}, 223(1):301--417, 2017.

\bibitem{Neff:Korn-first-inequality-nonconstant-coefficients}
P.~Neff.
\newblock On {K}orn's first inequality with non-constant coefficients.
\newblock {\em Proceedings of the Royal Society of Edinburgh, Section: A
  Mathematics}, 132:221--243, 2 2002.

\bibitem{Nishida-Teramoto-Yoshihara:global-existence-viscous-surface-waves-horizontally-periodic}
T.~Nishida, Y.~Teramoto, and H.~Yoshihara.
\newblock Global in time behavior of viscous surface waves: horizontally
  periodic motion.
\newblock {\em J. Math. Kyoto Univ.}, 44(2):271--323, 2004.

\bibitem{Paicu-Zhang-Zhang:global-existence-inhomogeneous-navier-stokes}
M.~Paicu, P.~Zhang, and Z.~Zhang.
\newblock Global unique solvability of inhomogeneous {N}avier-{S}tokes
  equations with bounded density.
\newblock {\em Comm. Partial Differential Equations}, 38(7):1208--1234, 2013.

\bibitem{Solonnikov:solvability-motion-viscous-incompressible}
V.~A. Solonnikov.
\newblock Solvability of the problem of the motion of a viscous incompressible
  fluid that is bounded by a free surface.
\newblock {\em Izv. Akad. Nauk SSSR Ser. Mat.}, 41(6):1388--1424, 1448, 1977.

\bibitem{Solonnikov:an-initial-bvp-stokes-free-boundary}
V.~A. Solonnikov.
\newblock An initial-boundary value problem for a {S}tokes system that arises
  in the study of a problem with a free boundary.
\newblock {\em Trudy Mat. Inst. Steklov.}, 188:150--188, 192, 1990.
\newblock Translated in Proc. Steklov Inst. Math. {{\bf{1}}991}, no. 3,
  191--239, Boundary value problems of mathematical physics, 14 (Russian).

\bibitem{Solonnikov:solvability-problem-evolution-bounded-free-surface}
V.~A. Solonnikov.
\newblock Solvability of a problem on the evolution of a viscous incompressible
  fluid, bounded by a free surface, on a finite time interval.
\newblock {\em Algebra i Analiz}, 3(1):222--257, 1991.

\bibitem{Sylvester:large-time-small-viscous-waves-no-surface-tension}
D.~L.~G. Sylvester.
\newblock Large time existence of small viscous surface waves without surface
  tension.
\newblock {\em Comm. Partial Differential Equations}, 15(6):823--903, 1990.

\bibitem{Tani:small-time-existence-3d-navier-stokes}
A.~Tani.
\newblock Small-time existence for the three-dimensional {N}avier-{S}tokes
  equations for an incompressible fluid with a free surface.
\newblock {\em Arch. Rational Mech. Anal.}, 133(4):299--331, 1996.

\bibitem{Tani-Tanaka:large-time-surface-waves-viscous-fluids}
A.~Tani and N.~Tanaka.
\newblock Large-time existence of surface waves in incompressible viscous
  fluids with or without surface tension.
\newblock {\em Arch. Rational Mech. Anal.}, 130(4):303--314, 1995.

\bibitem{Wu:well-posedness-water-waves-2d}
S.~Wu.
\newblock Well-posedness in {S}obolev spaces of the full water wave problem in
  {$2$}-{D}.
\newblock {\em Invent. Math.}, 130(1):39--72, 1997.

\end{thebibliography}

\begin{tabular}{ll}

\textbf{Angel Castro} &  \\
{\small Departamento de Matem\'aticas} & \\
{\small Facultad de Ciencias} & \\
{\small Universidad Aut\'onoma de Madrid} & \\
{\small Campus Cantoblanco UAM, 28049 Madrid} & \\
\\
{\small Instituto de Ciencias Matem\'aticas, CSIC-UAM-UC3M-UCM} & \\
{\small C/ Nicol\'as Cabrera 13-15} & \\
{\small Campus Cantoblanco UAM, 28049 Madrid} & \\
{\small Email: angel\_castro@icmat.es} & \\
   & \\
\textbf{Diego C\'ordoba} &  \textbf{Charles Fefferman}\\
{\small Instituto de Ciencias Matem\'aticas} & {\small Department of Mathematics}\\
{\small Consejo Superior de Investigaciones Cient\'ificas} & {\small Princeton University}\\
{\small C/ Nicol\'{a}s Cabrera, 13-15} & {\small 1102 Fine Hall, Washington Rd, }\\
{\small Campus Cantoblanco UAM, 28049 Madrid} & {\small Princeton, NJ 08544, USA}\\
{\small Email: dcg@icmat.es} & {\small Email: cf@math.princeton.edu}\\
 & \\
\textbf{Francisco Gancedo} &  \textbf{Javier G\'omez-Serrano}\\
{\small Departamento de An\'alisis Matem\'atico} \& IMUS & {\small Department of Mathematics}\\
{\small Universidad de Sevilla} & {\small Princeton University}\\
{\small C/ Tarfia, s/n } & {\small 610 Fine Hall, Washington Rd,} \\
{\small Campus Reina Mercedes, 41012 Sevilla}  & {\small Princeton, NJ 08544, USA} \\
{\small Email: fgancedo@us.es} & {\small Email: jg27@math.princeton.edu}\\
\end{tabular}

\end{document}